\numberwithin{equation}{section}
\let\lam=\lambda
\let\f=\frac
\let\Om=\Omega
\let\th=T
\let\pa=\partial
\def\dive{\mathop{\rm div}\nolimits}
\newcommand{\beq}{\begin{equation}}
\newcommand{\eeq}{\end{equation}}
\newcommand{\ben}{\begin{eqnarray}}
\newcommand{\een}{\end{eqnarray}}
\newcommand{\beno}{\begin{eqnarray*}}
\newcommand{\eeno}{\end{eqnarray*}}
\newtheorem{theorem}{Theorem}[section]
\newtheorem{definition}[theorem]{Definition}
\newtheorem{lemma}[theorem]{Lemma}
\newtheorem{proposition}[theorem]{Proposition}
\newtheorem{remark}[theorem]{Remark}
\begin{document}

\title[B\'enard convection]{ local well-posedness for the B\'enard convection without surface tension}

\author{Yunrui Zheng}
\address{Beijing International Center for Mathematical Research, Peking University, 100871, P. R. China}
\email{ruixue@mail.ustc.edu.cn}
\keywords{B\'enard convection, Boussinesq apporoximation, energy method}
\begin{abstract}
  We consider the B\'enard convection in a three-dimensional domain bounded below by a fixed flatten boundary and above by a free moving surface. The domain is horizontally periodic. The fluid dynamics are governed by the Boussinesq approximation and the effect of surface tension is neglected on the free surface. Here we develop a local well-posedness theory for the equations of general case in the framework of the nonlinear energy method.
\end{abstract}

\maketitle

\section{Introduction}

\subsection{Formulation of the problem}

In this paper, we consider the B\'enard convection in a shallow horizontal layer of a fluid heated from below evolving in a moving domain
\beno
\Om(t)=\left\{y\in\Sigma\times\mathbb{R}\mid-1<y_3<\eta(y_1,y_2,t)\right\}.
\eeno
Here we assume that $\Sigma=(L_1\mathbb{T})\times(L_2\mathbb{T})$ for $\mathbb{T}=\mathbb{R}/\mathbb{Z}$ the usual $1$-torus and $L_1$, $L_2>0$ the periodicity lengths.
Assuming the Boussinesq approximation \cite{Chand}, we obtain the basic hydrodynamic equations governing B\'enard convection as
\beno
  \pa_t u+u\cdot\nabla u+\frac{1}{\rho_0}\nabla p&=&\nu\Delta u+g\alpha \theta\mathbf{e}_{y_3},\quad \text{in}\ \Om(t),\\
  \pa_t \theta+u\cdot\nabla \theta&=&\kappa\Delta \theta,\quad \text{in}\ \Om(t),\\
  u\mid_{t=0}&=&u_0(y_1,y_2,y_3),\quad \theta\mid_{t=0}=\theta_0(y_1,y_2,y_3),
\eeno
Here, $u=(u_1,u_2,u_3)$ is the velocity field of the fluid satisfying $\dive u=0$, $p$ the pressure, $g>0$ the strength of gravity, $\nu>0$ the kinematic viscosity, $\alpha$ the thermal expansion coefficient, $e_{y_3}=(0,0,1)$ the unit upward vector, $\theta$ the temperature field of the fluid, $\kappa$ the thermal diffusively coefficient, and $\rho_0$ the density at the temperature $T_0$. Notice that, we have made the shift of actual pressure $\bar{p}$ by $p=\bar{p}+gy_3-p_{atm}$ with the constant atmosphere pressure $p_{atm}$.

 The boundary condition is
 \beno
 \pa_t\eta-u^\prime\cdot\nabla\eta+u_3&=&0,\quad \text{on}\ \{y_3=\eta(t,y_1,y_2)\},\\
  (pI-\nu\mathbb{D}(u))n&=&g\eta n+\sigma Hn+(\mathbf{t}\cdot\nabla)\sigma\mathbf{t},\quad \text{on}\ \{y_3=\eta(t,y_1,y_2)\},\\
  n\cdot\nabla \theta+Bi \theta&=&-1,\quad \text{on}\ \{y_3=\eta(t,y_1,y_2)\},\\
  u\mid_{y_3=-1}&=&0,\quad \theta\mid_{y_3=-1}=0,
 \eeno
 Here, $u^\prime=(u_1,u_2)$, $I$ the $3\times3$ identity matrix, $\mathbb{D}(u)_{ij}=\pa_iu_j+\pa_ju_i$ the symmetric gradient of $u$, $\mathscr{N}$ the upward normal vector of the free surface $\{y_3=\eta\}$, $n=\mathscr{N}/|\mathscr{N}|$ the unit upward normal vector of the free surface $\{y_3=\eta\}$ where $\mathscr{N}=(-\pa_1\eta, -\pa_2\eta, 1)$ is the upward normal vector of the free surface $\{y_3=\eta\}$ and $|\mathscr{N}|=\sqrt{(\pa_1\eta)^2+(\pa_2\eta)^2+1}$, $\mathbf{t}$ the unit tangential vector of the free surface, $Bi\ge0$ the Biot number and $H$ the mean curvature of the free surface. For simplicity, we only consider the case without surface tension in this paper, i.e. $\sigma=0$.

 We will always assume the natural condition that there exists a positive number $\delta_0$ such that $1+\eta_0\ge\delta_0>0$ on $\Sigma$, which means that the initial free surface is strictly separated from the bottom. And without loss of generality, we may assume that $\rho_0=\mu=\kappa=\alpha=g=Bi=1$. That is, we will consider the equations
 \ben\label{equ:BC}
 \left\{
 \begin{aligned}
   \pa_tu+u\cdot\nabla u+\nabla p-\Delta u-\theta e_{y_3}&=0& \quad \text{in}\quad \Om(t), \\
   \dive u&=0& \quad \text{in}\quad \Om(t),\\
   \pa_t\theta+u\cdot\nabla \theta-\Delta \theta&=0& \quad \text{in}\quad \Om(t),\\
   (pI-\mathbb{D}u)n&=\eta n& \quad \text{on}\quad \{y_3=\eta(t,y_1,y_2)\},\\
   \nabla \theta\cdot n+\theta &=-1&\quad\text{on}\quad\{y_3=\eta(t,y_1,y_2)\},\\
   u=0,\quad \theta&=0& \quad\text{on}\quad\{y_3=-1\},\\
   u\mid_{t=0}=u_0, \quad \theta\mid_{t=0}&=\theta_0&\quad \text{in}\quad \Om(0),\\
   \pa_t\eta+u_1\pa_1\eta+u_2\pa_2\eta_2&=u_3& \quad\text{on}\quad\{y_3=\eta(t,y_1,y_2)\},\\
   \eta\mid_{t=0}&=\eta_0&\quad\text{on}\quad\{y_3=\eta(t,y_1,y_2)\}.
 \end{aligned}
 \right.
 \een
 The discussion of fourth equation in \eqref{equ:BC} may be found in \cite{WL}. The eighth equation in \eqref{equ:BC} implies that the free surface is advected with the fluid.

 \subsection{Previous results}

 Traditionally, the B\'enard convection problem has been studied in fixed upper boundary and in free boundary surface with surface tension.

  For the problem with surface tension case, the existence and decay of global in time solutions of B\'enard convection problem with free boundary surface was proved by T. Iohara, T. Nishida and Y. Teramoto in $L^2$ spaces. T. Iohara proved this in $2$-D setting. T. Nishida and Y. Teramoto proved this in $3$-D background. They all utilized the framework of \cite{Beale2} in the Lagrangian coordinates.

 \subsection{Geometrical formulation}
 In the absence of surface tension effect, we will solve this problem in Eulerian coordinates. First, we straighten the time dependent domain $\Om(t)$ to a time independent domain $\Om$. The idea was introduced by J. T. Beale in section 5 of \cite{Beale2}. And in \cite{GT1}, \cite{GT2} and \cite{GT3}, Y. Guo and I. Tice proved the local and global existence results for the incompressible Navier--Stokes equations with a deformable surface using this idea.  In \cite{GT1}, \cite{GT2} and \cite{GT3}, Guo and  Tice assume that the surface function $\eta$ in some norms is small, which means $\eta$ is a small perturbation for the plane $\{y_3=0\}$. In order to study the free boundary problem of the incompressible Navier--Stokes equations with a general surface function $\eta$, L. Wu introduced the $\varepsilon$-Poisson integral method in \cite{LW}. In this paper, we will use the flattening transformation method introduced by L. Wu.  We define $\bar{\eta}^\varepsilon$ by
 \[
 \bar{\eta}^\varepsilon=\mathscr{P}^\varepsilon\eta=\thinspace\text{the parametrized harmonic extension of}\thinspace \eta.
 \]
 The definition of $\mathscr{P}^\varepsilon\eta$ can be seen in the section 1.3.1 of \cite{LW} for the periodic case.
  We introduce the mapping $\Phi^\varepsilon$ from $\Om$ to $\Om(t)$ as
 \beq\label{map:phi}
 \Phi^\varepsilon :(x_1,x_2,x_3)\mapsto (x_1,x_2,x_3+(1+x_3)\bar{\eta}^\varepsilon)=(y_1,y_2,y_3),
 \eeq
 and its Jacobian matrix
 \[
 \nabla\Phi^\varepsilon=\left(\begin{array}{ccc}
   1 & 0 & 0\\
   0 & 1 & 0\\
   A^\varepsilon  & B^\varepsilon  & J^\varepsilon
 \end{array}\right)
 \]
 and the transform matrix
 \[
 \mathscr{A}^\varepsilon=((\nabla\Phi^\varepsilon)^{-1})^\top=\left(\begin{array}{ccc}
   1 & 0 & -A^\varepsilon K^\varepsilon \\
   0 & 1 & -B^\varepsilon K^\varepsilon \\
   0 & 0 & K^\varepsilon
 \end{array}\right)
 \]
 where
 \beq\label{equ:components}
   A^\varepsilon=(1+x_3)\pa_1\bar{\eta}^{\varepsilon},\thinspace
   B^\varepsilon=(1+x_3)\pa_2\bar{\eta}^{\varepsilon},\thinspace
   J^\varepsilon=1+\bar{\eta}^\varepsilon+(1+x_3)\pa_3\bar{\eta}^\varepsilon,\thinspace
   K^\varepsilon=1/J^\varepsilon.
 \eeq
According to Theorem 2.7 in \cite{LW} and the assumption that $1+\eta_0>\delta_0>0$, there exists a $\delta>0$ such that $J^\varepsilon(0)>\delta>0$ for a sufficiently small $\varepsilon$ depending on $\|\eta_0\|_{H^{5/2}}$. This implies that $\Phi^\varepsilon(0)$ is a homomorphism. Furthermore, $\Phi^\varepsilon(0)$ is a $C^1$ diffeomorphism deduced from Lemma 2.5 and 2.6 in \cite{LW}. For simplicity, in the following, we just write $\bar{\eta}$ instead of $\bar{\eta}^\varepsilon$, while the same fashion applies to $\mathscr{A}$, $\Phi$, $A$, $B$, $J$ and $K$.
 Then, we define some transformed operators. The differential operators $\nabla_{\mathscr{A}}$, $\dive_{\mathscr{A}}$ and  $\Delta_{\mathscr{A}}$ are defined as follows.
 \begin{align*}
   &(\nabla_{\mathscr{A}}f)_i=\mathscr{A}_{ij}\pa_jf,\\
   &\dive_{\mathscr{A}} u=\mathscr{A}_{ij}\pa_ju_i,\\
   &\Delta_{\mathscr{A}}f=\nabla_{\mathscr{A}}\cdot\nabla_{\mathscr{A}}f.
 \end{align*}
  The symmetric $\mathscr{A}$-gradient $\mathbb{D}_{\mathscr{A}}$ is defined as $(\mathbb{D}_{\mathscr{A}}u)_{ij}=\mathscr{A}_{ik}\pa_ku_j+\mathscr{A}_{jk}\pa_ku_i$. And we write the stress tensor as $S_{\mathscr{A}}(p,u)=pI-\mathbb{D}_{\mathscr{A}}u$, where $I$ is the $3\times3$ identity matrix. Then we note that $\dive_{\mathscr{A}}S_{\mathscr{A}}(p,u)=\nabla_{\mathscr{A}}p-\Delta_{\mathscr{A}}u$ for vector fields satisfying $\dive_{\mathscr{A}}u=0$. We have also written $\mathscr{N}=(-\pa_1\eta,-\pa_2\eta,1)$ for the nonunit normal to $\{y_3=\eta(y_1,y_2,t)\}$.
 Then the original equations \eqref{equ:BC} becomes
 \ben\label{equ:NBC}
 \left\{
 \begin{aligned}
 &\pa_tu-\pa_t\bar{\eta}(1+x_3)K\pa_3u+u\cdot\nabla_{\mathscr{A}}u-\Delta_{\mathscr{A}}u+\nabla_{\mathscr{A}}p-\theta \nabla_{\mathscr{A}}y_3=0 &\quad\text{in}\quad\Om \\
 &\nabla_{\mathscr{A}}\cdot u=0&\quad\text{in}\quad\Om \\
 &\pa_t\theta-\pa_t\bar{\eta}(1+x_3)K\pa_3\theta+u\cdot\nabla_{\mathscr{A}}\theta-\Delta_{\mathscr{A}}\theta=0&\quad\text{in}\quad\Om \\
&(p I-\mathbb{D}_{\mathscr{A}}u)\mathscr{N}=\eta\mathscr{N}&\quad\text{on}\quad \Sigma\\
 &\nabla_{\mathscr{A}}\theta\cdot\mathscr{N}+\theta\left|\mathscr{N}\right|=-\left|\mathscr{N}\right|&\quad\text{on}\quad \Sigma\\
 &u=0,\quad\theta=0& \quad\text{on}\quad \Sigma_b\\
 &u(x,0)=u_0, \quad \theta(x,0)=\theta_0&\quad \text{in}\quad \Om\\
 &\pa_t\eta+u_1\pa_1\eta+u_2\pa_2\eta=u_3&\quad\text{on}\quad \Sigma\\
 &\eta(x^\prime,0)=\eta_0(x^\prime)&\quad\text{on}\quad \Sigma
 \end{aligned}
 \right.
 \een
 where $e_3=(0, 0, 1)$ and we can split the equation \eqref{equ:NBC} into a equation governing B\'enard convection and a transport equation, i.e.
 \ben\label{equ:nonlinear BC}
 \left\{
 \begin{aligned}
 &\pa_tu-\pa_t\bar{\eta}(1+x_3)K\pa_3u+u\cdot\nabla_{\mathscr{A}}u-\Delta_{\mathscr{A}}u+\nabla_{\mathscr{A}}p-\theta \nabla_{\mathscr{A}}y_3=0& \quad\text{in}\quad\Om\\
 &\nabla_{\mathscr{A}}\cdot u=0&\quad\text{in}\quad\Om\\
 &\pa_t\theta-\pa_t\bar{\eta}(1+x_3)K\pa_3\theta+u\cdot\nabla_{\mathscr{A}}\theta-\Delta_{\mathscr{A}}\theta=0&\quad\text{in}\quad\Om\\
& (p I-\mathbb{D}_{\mathscr{A}}u)\mathscr{N}=\eta\mathscr{N}&\quad\text{on}\quad \Sigma\\
 &\nabla_{\mathscr{A}}\theta\cdot\mathscr{N}+\theta\left|\mathscr{N}\right|=-\left|\mathscr{N}\right|&\quad\text{on}\quad \Sigma\\
 &u=0,\quad\theta=0& \quad\text{on}\quad \Sigma_b\\
 &u(x,0)=u_0, \quad \theta(x,0)=\theta_0&\quad \text{in}\quad \Om\\
 \end{aligned}
 \right.
 \een
 and
 \ben
 \left\{
 \begin{aligned}
 &\pa_t\eta+u_1\pa_1\eta+u_2\pa_2\eta=u_3&\quad\text{on}\quad \Sigma\\
 &\eta(x^\prime,0)=\eta_0(x^\prime)&\quad\text{on}\quad \Sigma
 \end{aligned}
 \right.
 \een
 Clearly, all the quantities in these two above systems are related to $\eta$.

 \subsection{Main theorem}

The main result of this paper is the local well-posedness of the B\'enard convection. Before stating our result, we need to mention the issue of compatibility conditions for the initial data $(u_0,\theta_0,\eta_0)$. We will study for the regularity up to $N$ temporal derivatives for $N\ge2$ an integer. This requires us to use $u_0$, $\theta_0$ and $\eta_0$ to construct the initial data $\pa_t^ju(0)$, $\pa_t^j\theta(0)$ and $\pa_t^j\eta(0)$ for $j=1,\ldots,N$ and $\pa_t^jp(0)$ for $j=0,\ldots,N-1$. These data must then satisfy various conditions, which we describe in detail in Section 5.1, so we will not state them here.

Now for stating our result, we need to explain the notation for spaces and norms. When we write $\|\pa_t^ju\|_{H^k}$, $\|\pa_t^j\theta\|_{H^k}$ and $\|\pa_t^jp\|_{H^k}$, we always mean that the space is $H^k(\Om)$, and when we write $\|\pa_t^j\eta\|_{H^s}$, we always mean that the space is $H^s(\Sigma)$, where $H^k(\Om)$ and $H^s(\Sigma)$ are usual Sobolev spaces for $k, s\ge0$.

 \begin{theorem}\label{thm:main}
   Let $N\ge2$ be an integer. Assume that $\eta_0+1\ge\delta>0$, and that the initial data $(u_0,\theta_0,\eta_0)$ satisfies
   \[
   \mathscr{E}_0:=\|u_0\|_{H^{2N}}^2+\|\theta_0\|_{H^{2N}}^2+\|\eta_0\|_{H^{2N+1/2}}^2<\infty,
   \]
   as well as the $N$-th compatibility conditions \eqref{cond:compatibility N}.  Then there exists a $0<T_0<1$ such that for any $0<T<T_0$, there exists a solution $(u,p,\theta,\eta)$ to \eqref{equ:NBC} on the interval $[0,T]$ that achieves the initial data. The solution obeys the estimate
   \ben \label{est:main est}
   \begin{aligned}
     &\sum_{j=0}^N\left(\sup_{0\le t\le T}\|\pa_t^ju\|_{H^{2N-2j}}^2+\|\pa_t^ju\|_{L^2H^{2N-2j+1}}^2\right)+\|\pa_t^{N+1}u\|_{(\mathscr{X}_T)^\ast}\\
     &\quad+\sum_{j=0}^{N-1}\left(\sup_{0\le t\le T}\|\pa_t^jp\|_{H^{2N-2j-1}}^2+\|\pa_t^jp\|_{L^2H^{2N-2j}}^2\right)\\
     &\quad+\sum_{j=0}^N\left(\sup_{0\le t\le T}\|\pa_t^j\theta\|_{H^{2N-2j}}^2+\|\pa_t^j\theta\|_{L^2H^{2N-2j+1}}^2\right)+\|\pa_t^{N+1}\theta\|_{(\mathscr{H}^1_T)^\ast}\\
     &\quad+\Bigg(\sup_{0\le t\le T}\|\eta\|_{H^{2N+1/2}(\Sigma)}^2+\sum_{j=1}^N\sup_{0\le t\le T}\|\pa_t^j\eta\|_{H^{2N-2j+3/2}}^2\\
     &\quad+\sum_{j=2}^{N+1}\|\pa_t^j\eta\|_{L^2H^{2N-2j+5/2}}^2\Bigg)\\
     &\le C(\Om_0,\delta)P(\mathscr{E}_0),
   \end{aligned}
   \een
   where $C(\Om_0,\delta)>0$ depends on the initial domain $\Om_0$ and $\delta$, $P(\cdot)$ is a polynomial satisfying $P(0)=0$, and the temporal norm $L^2$ is computed on $[0,T]$. The solution is unique among functions that achieve the initial data and for which the left-hand side of \eqref{est:main est} is finite. Moreover, $\eta$ is such that the mapping $\Phi(\cdot,t)$ defined by \eqref{map:phi} is a $C^{2N-1}$ diffeomorphism for each $t\in [0,T]$.
 \end{theorem}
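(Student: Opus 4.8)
\emph{Overall strategy and initial data.} The plan is to follow the standard scheme for free-boundary viscous fluids written in the flattened variables of \eqref{equ:NBC}: first build a linear theory for the decoupled Stokes and heat systems with the geometry (that is, $\mathscr{A},J,K,\bar\eta$) frozen, derive a priori estimates in which temporal regularity is traded for spatial regularity through elliptic estimates, and then close a Picard-type iteration in which the smallness of $T$ absorbs all quadratic and geometric errors. I would begin by constructing $\pa_t^ju(0),\pa_t^j\theta(0),\pa_t^j\eta(0)$ for $j=1,\dots,N$ and $\pa_t^jp(0)$ for $j=0,\dots,N-1$ from $(u_0,\theta_0,\eta_0)$, by differentiating \eqref{equ:NBC} in time and evaluating at $t=0$; at each order this reduces to a stationary Stokes problem (for the pressure and a velocity correction) and a Robin elliptic problem (for the temperature) with the frozen geometry $\mathscr{A}(0)$, whose solvability is precisely the compatibility conditions \eqref{cond:compatibility N}. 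The hypothesis $\mathscr{E}_0<\infty$ together with the product rules and the parametrized-harmonic-extension estimates of \cite{LW} then bounds these data by $P(\mathscr{E}_0)$.

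\emph{Linear theory with frozen geometry.} Fixing $\eta$ in the regularity class prescribed by the left side of \eqref{est:main est} (so that $\mathscr{A},J,K,\bar\eta$ and their time derivatives are under control via \cite{LW}) and a divergence-free drift $v$, I would solve separately: (a) the linear Stokes system $\pa_tu-\Delta_{\mathscr{A}}u+\nabla_{\mathscr{A}}p=F$, $\dive_{\mathscr{A}}u=0$ in $\Om$, $S_{\mathscr{A}}(p,u)\mathscr{N}=G$ on $\Sigma$, $u=0$ on $\Sigma_b$; (b) the linear heat equation $\pa_t\theta-\Delta_{\mathscr{A}}\theta=H$ in $\Om$, $\nabla_{\mathscr{A}}\theta\cdot\mathscr{N}+\theta|\mathscr{N}|=h$ on $\Sigma$, $\theta=0$ on $\Sigma_b$; and (c) the transport equation $\pa_t\eta+v_1\pa_1\eta+v_2\pa_2\eta=v_3$ on $\Sigma$ with $\eta|_{t=0}=\eta_0$. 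For (a) and (b) the scheme is to run the energy method on the top time derivative $\pa_t^N$ (using the dissipations $\int_\Om|\mathbb{D}_{\mathscr{A}}u|^2$ and $\int_\Om|\nabla_{\mathscr{A}}\theta|^2$ and the boundary terms generated by $G$ and $h$), and then bootstrap downward in $j=N,N-1,\dots,0$ using the Stokes elliptic estimate $\|\pa_t^ju\|_{H^{2N-2j+1}}+\|\pa_t^jp\|_{H^{2N-2j}}\lesssim\|\pa_t^{j+1}u\|_{H^{2N-2j-1}}+(\text{data})+(\text{l.o.t.})$ and the analogous elliptic estimate for the Robin problem; the dual-space bounds on $\pa_t^{N+1}u$ and $\pa_t^{N+1}\theta$ come from pairing the equations with test functions. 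For (c), the norms of $\eta$ and of $\pa_t^j\eta$ below the top order follow at once from the equation and the interior regularity of $v$, while the top-order bound $\sup_t\|\eta\|_{H^{2N+1/2}(\Sigma)}^2$ is obtained by applying a fractional tangential derivative $\Lambda^{2N+1/2}$ to the equation, using a Kato--Ponce commutator estimate, and integrating in time, which produces a gain of $T^{1/2}$ because the trace of $v$ on $\Sigma$ and the commutator terms are available only in $L^2_tH^{2N+1/2}(\Sigma)$.

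\emph{Iteration, uniqueness, and the diffeomorphism property.} I would then define a map $\mathscr{T}(\bar u,\bar\theta,\bar\eta)=(u,p,\theta,\eta)$ by first solving (c) with drift $\bar u$, forming the geometry from the resulting $\eta$, and then solving (a) and (b) with right-hand sides equal to the nonlinear terms of \eqref{equ:nonlinear BC} evaluated at $(\bar u,\bar\theta,\bar\eta)$ and the new geometry. Since those nonlinearities are at least quadratic (so $P(0)=0$) and the geometric coefficients depend continuously on $\eta$ through the extension estimates of \cite{LW}, the linear bounds of the previous step show that for $R$ of the size $C(\Om_0,\delta)P(\mathscr{E}_0)$ and $T_0$ small enough $\mathscr{T}$ maps the ball $\mathscr{B}_R$ (in the norm of the left side of \eqref{est:main est}) into itself; a contraction estimate in a lower-order norm, or the uniform $\mathscr{B}_R$ bound combined with an Aubin--Lions compactness argument, then produces a fixed point, which is the asserted solution and obeys \eqref{est:main est}. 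Uniqueness follows from the same lower-order energy estimate applied to the difference of two solutions with identical data, again closed for $T$ small. Finally, since $H^{2N+1/2}(\Sigma)\hookrightarrow C^{2N-1}(\Sigma)$ and the parametrized harmonic extension preserves this regularity, $\bar\eta(\cdot,t)\in C^{2N-1}(\Om)$; because $J(0)>\delta>0$ and $\pa_t\bar\eta$ is bounded on $[0,T_0]$, $J(\cdot,t)$ stays positive there, so $\Phi(\cdot,t)$ from \eqref{map:phi} is a $C^{2N-1}$ diffeomorphism.

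\emph{Main obstacle.} The crux is reconciling the low regularity of $\eta$ --- which, in the absence of surface tension, is gained only from the kinematic equation (c) and therefore sits essentially at the regularity of the trace of $u$ --- with the need to control $\mathscr{A},J,K$, built from $\eta$ via the parametrized harmonic extension, at a level high enough to drive the Stokes and heat estimates. This is what forces the structure of the iteration (the geometry is always slaved to the previously produced velocity) and the appearance of the $T^{1/2}$ (or $T$) factors that absorb every nonlinear and geometric error; the sharp top-order transport estimate for $\|\eta\|_{H^{2N+1/2}}$ and the uniform control of the extension coefficients from \cite{LW} are the two technical points on which the whole argument turns.
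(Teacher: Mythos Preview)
Your proposal is correct and follows essentially the same route as the paper: linear Stokes/heat theory with frozen geometry (energy at the top temporal derivative plus elliptic bootstrap), transport estimates for $\eta$ with a $T$-gain, then an iteration closed by smallness of $T$, contraction in a lower-order norm, and the diffeomorphism property from the regularity of $\bar\eta$. The one technical point you gloss over that the paper handles explicitly is that $\pa_t^N u$ is not $\mathscr{A}$-divergence-free, so the pressure does not drop out of the top-order energy identity; the paper introduces the modified operator $D_t u=\pa_t u-Ru$ (with $R=\pa_t M\,M^{-1}$, $M=K\nabla\Phi$) precisely to preserve $\dive_{\mathscr{A}}(D_t^N u)=0$ and runs the energy/compatibility machinery on $D_t^N u$ rather than $\pa_t^N u$.
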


 \begin{remark}
   The space $\mathscr{X}_T$ is defined in section $2$ of \cite{GT1}.
 \end{remark}
 \begin{remark}
   Since the mapping $\Phi(\cdot,t)$ is a $C^{2N-1}$ diffeomorphism, we may change coordinates to produce solutions to \eqref{equ:BC}.
 \end{remark}

 \subsection{Notation and terminology}

Now, we mention some definitions, notation and conventions that we will use throughout this paper.

\begin{enumerate}[1.]
  \item Constants. The constant $C>0$ will denote a universal constant that only depend on the parameters of the problem, $N$ and $\Om$, but does not depend on the data, etc. They are allowed to change from line to line. We will write $C=C(z)$ to indicate that the constant $C$ depends on $z$. And we will write $a\lesssim b$ to mean that $a\le C b$ for a universal constant $C>0$.\\
  \item Polynomials. We will write $P(\cdot)$ to denote polynomials in one variable and they may change from one inequality or equality to another.\\
  \item Norms. We will write $H^k$ for $H^k(\Om)$ for $k\ge0$, and $H^s(\Sigma)$ with $s\in\mathbb{R}$ for usual Sobolev spaces. Typically, we will write $H^0=L^2$, With the exception to this is we will use $L^2([0,T];H^k)$ (or $L^2([0,T];H^s(\Sigma))$) to denote the space of temporal square--integrable functions with values in $H^k$ (or $H^s(\Sigma)$).

      Sometimes we will write $\|\cdot\|_k$ instead of $\|\cdot\|_{H^k(\Om)}$ or $\|\cdot\|_{H^k(\Sigma)}$. We assume that functions have natural spaces. For example, the functions $u$, $p$, $\theta$ and $\bar{\eta}$ live on $\Om$, while $\eta$ lives on $\Sigma$. So we may write $\|\cdot\|_{H^k}$ for the norms of $u$, $p$, $\theta$ and $\bar{\eta}$ in $\Om$, and $\|\cdot\|_{H^s}$ for norms of $\eta$ on $\Sigma$.
\end{enumerate}

 \subsection{Plan of the paper}

In section 2, we develop the machinery of time--dependent function spaces based on \cite{GT1}. In section 3, we make some elliptic estimates for the linear steady equations of \eqref{equ:linear BC}.
 In section 4, we will study the local existence theory of the following linear problem for $(u,p,\theta)$, where we think of $\eta$ (and hence $\mathscr{A}$, $\mathscr{N}$, etc.) is given:
 \ben \label{equ:linear BC}
 \left\{
 \begin{aligned}
 &\pa_t u-\Delta_{\mathscr{A}}u+\nabla_{\mathscr{A}}p-\theta \nabla_{\mathscr{A}}y_3=F^1& \quad\text{in}\quad\Om,\\
 &\nabla_{\mathscr{A}}\cdot u=0&\quad\text{in}\quad\Om,\\
 &\pa_t\theta-\Delta_{\mathscr{A}}\theta=F^3&\quad\text{in}\quad\Om,\\
& (p I-\mathbb{D}_{\mathscr{A}}u)\mathscr{N}=F^4&\quad\text{on}\quad \Sigma,\\
 &\nabla_{\mathscr{A}}\theta\cdot\mathscr{N}+\theta\left|\mathscr{N}\right|=F^5&\quad\text{on}\quad \Sigma,\\
 &u=0,\quad\theta=0& \quad\text{on}\quad \Sigma_b,
 \end{aligned}
 \right.
 \een
 subject to the initial condition $u(0)=u_0$ and $\theta(0)=\theta_0$, with the time-dependent Galerkin method. In section 5, we construct the initial data and do some estimates for the forcing terms. In section 6, we construct solutions to \eqref{equ:NBC} using iteration and contraction, and complete the proof of Theorem \ref{thm:main}.

 \section{Functional setting}

 \subsection{Function spaces}

Throughout this paper, we utilize the functional spaces defined by Guo and Tice in section 2 of \cite{GT1}. The only modification is the definition of space $\mathscr{H}^1(t)$. For the vector-valued space $\mathscr{H}^1(t)$, its definition is the same as \cite{GT1}. The following is the definition for the scalar-valued space $\mathscr{H}^1(t)$.
\[
\mathscr{H}^1(t):=\left\{\theta| \|\theta\|_{\mathscr{H}^1}<\infty, \theta|_{\Sigma_b}=0\right\}
\]
with the norm $\|\theta\|_{\mathscr{H}^1}:=\left(\theta,\theta\right)_{\mathscr{H}^1}^{1/2}$, where the inner product $\left(\cdot,\cdot\right)_{\mathscr{H}^1}$ is defined as
\[
\left(\theta,\phi\right)_{\mathscr{H}^1}:=\int_{\Om}\left(\nabla_{\mathscr{A}(t)}\theta\cdot\nabla_{\mathscr{A}(t)}\phi\right)J(t).
\]
The following lemma implies that this space $\mathscr{H}^1$ is equivalent to the usual Sobolev space $H^1$.

\begin{lemma}\label{lem:theta H0 H1}
  Suppose that $0<\varepsilon_0<1$ and $\|\eta-\eta_0\|_{H^{5/2}(\Sigma)}<\varepsilon_0$. Then it holds that
  \beq\label{est:theta H0}
  \|\theta\|_{H^0}^2\lesssim\int_\Om J|\theta|^2\lesssim\left(1+\|\eta_0\|_{H^{5/2}(\Sigma)}\right)\|\theta\|_{H^0}^2,
  \eeq
  \beq\label{est:theta H1}
  \f{1}{\left(1+\|\eta_0\|_{H^{5/2}(\Sigma)}\right)^3}\|\theta\|_{H^1(\Om)}^2\lesssim\int_\Om J|\nabla_{\mathscr{A}}\theta|^2\lesssim\left(1+\|\eta_0\|_{H^{5/2}(\Sigma)}\right)^3\|\theta\|_{H^1(\Om)}^2.
  \eeq
\end{lemma}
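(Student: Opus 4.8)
The plan is to reduce both equivalences to uniform pointwise two-sided bounds on the Jacobian $J$ and the matrix $\mathscr{A}$, together with a Poincar\'e inequality on the fixed domain $\Om$ supplied by the boundary condition $\theta|_{\Sigma_b}=0$. \textbf{Step 1 (pointwise bounds).} First I would establish that on $\Om$
\[
0<\delta\le J\le C\bigl(1+\|\eta_0\|_{H^{5/2}(\Sigma)}\bigr),\qquad \|\mathscr{A}\|_{L^\infty(\Om)}+\|\mathscr{A}^{-1}\|_{L^\infty(\Om)}\le C\bigl(1+\|\eta_0\|_{H^{5/2}(\Sigma)}\bigr).
\]
The lower bound $J\ge\delta$ comes from $J(0)>\delta$ — itself a consequence of Theorem 2.7 of \cite{LW} and the hypothesis $1+\eta_0\ge\delta>0$ — combined with the smallness assumption $\|\eta-\eta_0\|_{H^{5/2}(\Sigma)}<\varepsilon_0$: writing $\bar\eta-\bar\eta_0=\mathscr{P}^\varepsilon(\eta-\eta_0)$ and invoking the mapping properties of $\mathscr{P}^\varepsilon$ (Lemmas 2.5--2.6 of \cite{LW}) with the embedding $H^3(\Om)\hookrightarrow C^1(\bar\Om)$, one sees $J(t)$ is an $L^\infty$-small perturbation of $J(0)$, shrinking $\varepsilon_0$ if needed. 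For the upper bounds, the explicit formulas \eqref{equ:components} together with $x_3\in(-1,0)$ give $|J|\le 1+\|\bar\eta\|_{C^1(\bar\Om)}\lesssim 1+\|\bar\eta\|_{H^3(\Om)}\lesssim 1+\|\eta\|_{H^{5/2}(\Sigma)}\le 1+\|\eta_0\|_{H^{5/2}(\Sigma)}+\varepsilon_0$, and likewise for $A$ and $B$; since $K=1/J\le 1/\delta$, the entries of $\mathscr{A}$ (namely $1$, $0$, $-AK$, $-BK$, $K$) and of $\mathscr{A}^{-1}=(\nabla\Phi)^\top$ (namely $1$, $0$, $A$, $B$, $J$) are all bounded by $C(1+\|\eta_0\|_{H^{5/2}(\Sigma)})$.

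\textbf{Step 2 (the two estimates).} Granting Step 1, \eqref{est:theta H0} is immediate, since $\int_\Om J|\theta|^2$ lies between $\delta\|\theta\|_{H^0}^2$ and $C(1+\|\eta_0\|_{H^{5/2}})\|\theta\|_{H^0}^2$. For \eqref{est:theta H1} I would write $J|\nabla_{\mathscr{A}}\theta|^2=J(\mathscr{A}^\top\mathscr{A}\nabla\theta)\cdot\nabla\theta$. The upper bound is then $\le\|J\|_{L^\infty}\|\mathscr{A}\|_{L^\infty}^2\,\|\nabla\theta\|_{H^0}^2\le C(1+\|\eta_0\|_{H^{5/2}})^3\|\theta\|_{H^1(\Om)}^2$, using $\|\nabla\theta\|_{H^0}\le\|\theta\|_{H^1(\Om)}$. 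For the lower bound, $\mathscr{A}^\top\mathscr{A}$ is symmetric positive definite with least eigenvalue bounded below by $c\,\|\mathscr{A}^{-1}\|_{L^\infty}^{-2}$, so pointwise $J(\mathscr{A}^\top\mathscr{A}\nabla\theta)\cdot\nabla\theta\ge c\,\delta\,\|\mathscr{A}^{-1}\|_{L^\infty}^{-2}|\nabla\theta|^2\ge c'(1+\|\eta_0\|_{H^{5/2}})^{-2}|\nabla\theta|^2$; integrating over $\Om$ and then applying the Poincar\'e inequality (valid since $\theta|_{\Sigma_b}=0$, with constant depending only on the fixed domain $\Om$) upgrades $\|\nabla\theta\|_{H^0}^2$ to $\|\theta\|_{H^1(\Om)}^2$, and because $1+\|\eta_0\|_{H^{5/2}}\ge 1$ one may lower the exponent $-2$ to $-3$, giving exactly the stated inequality.

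The only step that is not purely mechanical is Step 1: obtaining the $\varepsilon$-uniform, two-sided pointwise control of $J$ (hence of $K$ and of $\mathscr{A}$) in terms of $\|\eta_0\|_{H^{5/2}(\Sigma)}$ alone. All of that work is carried by the properties of the flattening map $\Phi^\varepsilon$ and of the parametrized harmonic extension $\mathscr{P}^\varepsilon$ proved in \cite{LW}; once those are in hand, Step 2 is just linear algebra, H\"older's inequality, and Poincar\'e.
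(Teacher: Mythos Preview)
Your proof is correct, but your route to the lower bound in \eqref{est:theta H1} is genuinely different from the paper's. You argue directly and pointwise: since $\mathscr{A}^{-1}=(\nabla\Phi)^\top$ has entries bounded by $C(1+\|\eta_0\|_{H^{5/2}})$, the least eigenvalue of $\mathscr{A}^\top\mathscr{A}$ is bounded below by $c(1+\|\eta_0\|_{H^{5/2}})^{-2}$, and combining with $J\ge\delta$ gives the pointwise inequality $J|\nabla_{\mathscr{A}}\theta|^2\ge c(1+\|\eta_0\|_{H^{5/2}})^{-2}|\nabla\theta|^2$, after which Poincar\'e finishes. The paper instead writes $\int_\Om J|\nabla_{\mathscr{A}}\theta|^2=\int_\Om J|\nabla_{\mathscr{A}_0}\theta|^2+\int_\Om J(\nabla_{\mathscr{A}}\theta+\nabla_{\mathscr{A}_0}\theta)\cdot(\nabla_{\mathscr{A}}\theta-\nabla_{\mathscr{A}_0}\theta)$, bounds the first term from below by changing variables via the diffeomorphism $\Phi(0)$ to land on $\int_{\Om_0}|\nabla(\theta\circ\Phi(0))|^2$ and invoking the transport Lemma~\ref{lem:transport}, and absorbs the second term using $\|\mathscr{A}-\mathscr{A}_0\|_{L^\infty}\lesssim\varepsilon_0$ for $\varepsilon_0$ small. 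Your argument is shorter and purely linear-algebraic, avoiding both the change of variables and the transport lemma; the paper's perturbation-from-initial-data structure is heavier here but mirrors the pattern used repeatedly later (e.g.\ Lemma~\ref{lem:initial lower regularity}), which may be why the author chose it. Both rely on the same smallness of $\varepsilon_0$ to secure $J\ge\delta$.
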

\begin{proof}
  From the Poinc\'are inequality, we know that $\|\theta\|_{H^1}$ is equivalent to $\|\nabla\theta\|_{H^0}$. So in the following, we will use $\|\theta\|_{H^1}$ instead of $\|\nabla\theta\|_{H^0}$.

  From the assumption and the Sobolev inequalities, we may derive that
  \[
  \delta\lesssim\|J\|_{L^\infty}\lesssim1+\|\bar{\eta}\|_{L^\infty}+\|\nabla\bar{\eta}\|_{L^\infty}\lesssim 1+\|\eta\|_{H^{5/2}}\lesssim 1+\|\eta_0\|_{H^{5/2}},
  \]
  and
  \begin{align*}
  \|\mathscr{A}\|_{L^\infty}&\lesssim\max\{1, \|AK\|_{L^\infty}^2, \|BK\|_{L^\infty}^2, \|K\|_{L^\infty}^2\}\\
  &\lesssim1+(1+\|\nabla\bar{\eta}\|_{L^\infty}^2)\|K\|_{L^\infty}^2\lesssim \left(1+\|\eta_0\|_{H^{5/2}}\right)^2.
  \end{align*}
  Thus \eqref{est:theta H0} is clearly derived from the estimate of $\|J\|_{L^\infty}$ and we have that
  \begin{align*}
    \int_\Om J|\nabla_{\mathscr{A}}\theta|^2&\lesssim(1+\|\eta_0\|_{H^{5/2}})\int_\Om |\nabla_{\mathscr{A}}\theta|^2\\
    &\lesssim(1+\|\eta_0\|_{H^{5/2}})\max\{1, \|AK\|_{L^\infty}^2, \|BK\|_{L^\infty}^2, \|K\|_{L^\infty}^2\}\|\theta\|_{H^1}^2\\
    &\lesssim\left(1+\|\eta_0\|_{H^{5/2}}\right)^3\|\theta\|_{H^1}^2.
  \end{align*}
  Now we have proved the second inequality of \eqref{est:theta H1}.

  To prove the first inequality of \eqref{est:theta H1}, we rewrite the $\|\theta\|_{\mathscr{H}^1}$ as
  \[
  \int_\Om J|\nabla_{\mathscr{A}}\theta|^2=\int_\Om J|\nabla_{\mathscr{A}_0}\theta|^2+\int_\Om J(\nabla_{\mathscr{A}}\theta+\nabla_{\mathscr{A}_0}\theta)\cdot(\nabla_{\mathscr{A}}\theta-\nabla_{\mathscr{A}_0}\theta),
  \]
   Here $\mathscr{A}_0$ is in terms of $\eta_0$. By the estimates of $\|J\|_{L^\infty}$, we derive that
  \begin{align*}
    \int_\Om J|\nabla_{\mathscr{A}_0}\theta|^2&\gtrsim \f{1}{1+\|\eta_0\|_{H^{5/2}}}\int_\Om J_0|\nabla_{\mathscr{A}_0}\theta|^2\\
    &=\f{1}{1+\|\eta_0\|_{H^{5/2}}}\int_{\Om_0} |\nabla(\theta\circ\Phi(0))|^2\\
    &\gtrsim \f{1}{(1+\|\eta_0\|_{H^{5/2}})^3}\|\theta\|_{H^1},
  \end{align*}
  where in the last inequality, we have used the following Lemma \ref{lem:transport}, since $\Phi(0)$ is a diffeomorphism. Here $J_0$ is in terms of $\eta_0$.
   Then, using the estimates of $\|\mathscr{A}\|_{L^\infty}$ and $\|J\|_{L^\infty}$, we have that
   \begin{align*}
     \left|\int_\Om J(\nabla_{\mathscr{A}}\theta+\nabla_{\mathscr{A}_0}\theta)\cdot(\nabla_{\mathscr{A}}\theta-\nabla_{\mathscr{A}_0}\theta)\right|&\lesssim \|J\|_{L^\infty}\|\mathscr{A}+\mathscr{A}_0\|_{L^\infty}\|\mathscr{A}-\mathscr{A}_0\|_{L^\infty}\|\theta\|_{H^1}\\
     &\lesssim\varepsilon_0\left(1+\|\eta_0\|_{H^{5/2}}\right)^3\|\theta\|_{H^1}.
   \end{align*}
   Then taking $\varepsilon_0$ sufficiently small, we may derive that
   \begin{align*}
     \int_\Om J|\nabla_{\mathscr{A}}\theta|^2&\gtrsim\int_\Om J|\nabla_{\mathscr{A}_0}\theta|^2-\left|\int_\Om J(\nabla_{\mathscr{A}}\theta+\nabla_{\mathscr{A}_0}\theta)\cdot(\nabla_{\mathscr{A}}\theta-\nabla_{\mathscr{A}_0}\theta)\right|\\
     &\gtrsim \f{1}{(1+\|\eta_0\|_{H^{5/2}})^3}\|\theta\|_{H^1}.
   \end{align*}
   This is the first inequality of \eqref{est:theta H1}.
\end{proof}

 We define an operator $\mathcal{K}_t$ by $\mathcal{K}_t\theta=K(t)\theta$, where $K(t):=K$ is defined as \eqref{equ:components}. Clearly, $\mathcal{K}_t$ is invertible and $\mathcal{K}_t^{-1}\Theta=K(t)^{-1}\Theta=J(t)\Theta$, and $J(t):=J=1/K$.
 \begin{proposition}\label{prop:k}
  For each $t\in[0, T]$, $\mathcal{K}_t$ is a bounded linear isomorphism: from $H^k(\Om)$ to $H^k(\Om)$ for $k=0, 1, 2$; from $L^2(\Om)$ to $\mathscr{H}^0(t)$; and from ${}_0H^1(\Om)$ to $\mathscr{H}^1(t)$. In each case, the norms of the operators $\mathcal{K}_t$, $\mathcal{K}_t^{-1}$ are bounded by a polynomial $P(\|\eta(t)\|_{H^{\f72}})$.
  The mapping $\mathcal{K}$ defined by $\mathcal{K}\theta(t):=\mathcal{K}_t\theta(t)$ is a bounded linear isomorphism: from $L^2([0, T]; H^k(\Om))$ to $L^2([0, T]; H^k(\Om))$ for $k=0, 1, 2$; from $L^2([0, T]; H^0(\Om))$ to $\mathscr{H}^0_T$ and from ${_0}{H}{^1}(\Om)$ to $\mathscr{H}^1_{T}$. In each case, the operators $\mathcal{K}$ and $\mathcal{K}^{-1}$ are bounded by the polynomial $P(\sup_{0\le t\le T}\|\eta(t)\|_{H^\f72})$.
 \end{proposition}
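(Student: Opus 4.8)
The plan is to build the proposition up from the mapping property of the multiplication operator $\theta\mapsto K(t)\theta$ on the fixed domain $\Om$, and then to package the time-dependent statements by taking suprema. First I would record that $K(t)=1/J(t)$ with $J(t)=1+\bar\eta+(1+x_3)\pa_3\bar\eta$, and that under the running assumption ($1+\eta_0\ge\delta>0$ together with the smallness of $\|\eta(t)-\eta_0\|_{H^{5/2}(\Sigma)}$ used in Lemma \ref{lem:theta H0 H1}) we have a two-sided bound $0<\delta'\le J(t)\le P(\|\eta(t)\|_{H^{7/2}})$ pointwise, coming from Theorem 2.7 of \cite{LW} and the Sobolev embedding $H^{7/2}(\Sigma)\hookrightarrow$ (something controlling $\|\bar\eta\|_{L^\infty}+\|\nabla\bar\eta\|_{L^\infty}+\|\nabla^2\bar\eta\|_{L^\infty}$ on $\Om$ via the harmonic-extension estimates of \cite{LW}). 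In particular $K(t)$ and $J(t)$ both lie in $H^2(\Om)\cap L^\infty(\Om)$ with norms controlled by $P(\|\eta(t)\|_{H^{7/2}})$, and $K(t)$ is bounded below.

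Next I would treat the three pairs of spaces in turn. For $H^k(\Om)\to H^k(\Om)$, $k=0,1,2$: since $H^2(\Om)$ is an algebra (in dimension $3$) and multiplication $H^2\times H^k\to H^k$ is continuous for $k=0,1,2$, the estimate $\|\mathcal{K}_t\theta\|_{H^k}\lesssim \|K(t)\|_{H^2}\|\theta\|_{H^k}\le P(\|\eta(t)\|_{H^{7/2}})\|\theta\|_{H^k}$ is immediate, and the same bound for $\mathcal{K}_t^{-1}=$ multiplication by $J(t)$ gives that $\mathcal{K}_t$ is an isomorphism with inverse-norm $\le P(\|\eta(t)\|_{H^{7/2}})$. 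For $L^2(\Om)\to\mathscr{H}^0(t)$: by definition $\|\mathcal{K}_t\theta\|_{\mathscr{H}^0}^2=\int_\Om J|K\theta|^2=\int_\Om K|\theta|^2$; using $\delta'\le J\le P$ I get $\|\mathcal{K}_t\theta\|_{\mathscr{H}^0}^2\simeq\|\theta\|_{L^2}^2$, with both constants polynomial in $\|\eta(t)\|_{H^{7/2}}$, so $\mathcal{K}_t$ is an isomorphism onto $\mathscr{H}^0(t)$. For ${}_0H^1(\Om)\to\mathscr{H}^1(t)$: here I would compute $\nabla_{\mathscr{A}}(K\theta)=K\nabla_{\mathscr{A}}\theta+\theta\nabla_{\mathscr{A}}K$ and estimate $\int_\Om J|\nabla_{\mathscr{A}}(K\theta)|^2$ from above by $P(\|\eta(t)\|_{H^{7/2}})\|\theta\|_{H^1}^2$ using $K,\nabla_{\mathscr{A}}K\in L^\infty$ (controlled since $K\in H^2(\Om)$ and $\mathscr{A}\in H^2$, hence $\nabla_{\mathscr{A}}K\in H^1\hookrightarrow$ a suitable space, or more directly $\|\nabla_{\mathscr{A}}K\|_{L^\infty}\lesssim\|\mathscr{A}\|_{L^\infty}\|\nabla K\|_{L^\infty}$ with $\|\nabla K\|_{L^\infty}\lesssim\|K\|_{H^{?}}$ via embedding), the equivalence $\int_\Om J|\nabla_{\mathscr{A}}\cdot|^2\simeq\|\cdot\|_{H^1}^2$ from Lemma \ref{lem:theta H0 H1}, and the Poincaré inequality on ${}_0H^1$. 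The reverse bound comes by applying the same reasoning to $\mathcal{K}_t^{-1}=$ multiplication by $J(t)$ from $\mathscr{H}^1(t)$ to ${}_0H^1(\Om)$, noting that $\mathcal{K}_t$ preserves the vanishing trace on $\Sigma_b$ since $K(t)$ is smooth and bounded below there.

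Finally, for the time-dependent mapping $\mathcal{K}$ I would simply observe that all the pointwise-in-$t$ constants above are bounded by $P(\sup_{0\le t\le T}\|\eta(t)\|_{H^{7/2}})$, so integrating in $t$ over $[0,T]$ yields $\|\mathcal{K}\theta\|_{L^2([0,T];H^k)}\le P(\sup_t\|\eta(t)\|_{H^{7/2}})\|\theta\|_{L^2([0,T];H^k)}$ and similarly for the $\mathscr{H}^0_T$ and $\mathscr{H}^1_T$ statements (recalling from \cite{GT1} that $\|\cdot\|_{\mathscr{H}^0_T}^2=\int_0^T\|\cdot\|_{\mathscr{H}^0(t)}^2$ and likewise for $\mathscr{H}^1_T$); the same for $\mathcal{K}^{-1}$, and measurability in $t$ of $t\mapsto\mathcal{K}_t\theta(t)$ follows from the continuity of $t\mapsto\eta(t)$ into $H^{7/2}$ guaranteed by the ambient function-space setup. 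I expect the only mildly delicate point to be the $\mathscr{H}^1$ statement: one must be careful that the extra term $\theta\nabla_{\mathscr{A}}K$ is genuinely controlled in $L^2$ by $\|\theta\|_{H^1}$, which needs $\nabla K\in L^\infty(\Om)$ — and this is exactly where the $H^{7/2}(\Sigma)$ regularity of $\eta$ (as opposed to merely $H^{5/2}$) is used, via the harmonic-extension bounds of \cite{LW} that put $\bar\eta\in H^4(\Om)$ and hence $K\in H^3(\Om)\hookrightarrow W^{1,\infty}(\Om)$; everything else is a routine product estimate combined with Lemma \ref{lem:theta H0 H1}.
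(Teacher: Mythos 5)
Your proposal is correct and follows essentially the same path as the paper: establish $\mathcal{K}_t\colon H^k\to H^k$ as a bounded isomorphism via multiplication estimates with the norms of $K(t)=1/J(t)$ controlled by $P(\|\eta(t)\|_{H^{7/2}})$, then obtain the $\mathscr{H}^0(t)$ and $\mathscr{H}^1(t)$ statements from the norm equivalences of Lemma~\ref{lem:theta H0 H1}, and finally pass to the space-time operator by taking suprema in $t$. The only cosmetic difference is that you compute the $\mathscr{H}^0/\mathscr{H}^1$ bounds directly (and explicitly identify why $H^{7/2}(\Sigma)$, via $\bar\eta\in H^4(\Om)$ and $K\in H^3\hookrightarrow W^{1,\infty}$, is the right regularity), whereas the paper simply cites Lemma~\ref{lem:theta H0 H1} after the $H^k\to H^k$ step — the substance is the same.
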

 \begin{proof}
It is easy to see that for each $t\in[0,T]$,
\beq
\|\mathcal{K}_t\theta\|_{H^0}\lesssim \|\mathcal{K}_t\|_{C^0}\|\theta\|_{H^0}\lesssim P(\|\eta(t)\|_{H^{\f72}})\|\theta\|_{H^0},
\eeq
\beq
\|\mathcal{K}_t\theta\|_{H^1}\lesssim \|\mathcal{K}_t\|_{C^1}\|\theta\|_{H^1}\lesssim P(\|\eta(t)\|_{H^{\f72}})\|\theta\|_{H^1},
\eeq
\beq
\|\mathcal{K}_t\theta\|_{H^2}\lesssim \|\mathcal{K}_t\|_{C^1}\|\theta\|_{H^2}+\|\mathcal{K}_t\|_{H^2}\|\theta\|_{C^0}\lesssim P(\|\eta(t)\|_{H^{\f72}})\|\theta\|_{H^2}.
\eeq
These inequalities imply that $\mathcal{K}_t$ is a bounded operator from $H^k$ to $H^k$, for $k=0,1,2$. Since $\mathcal{K}_t$ is invertible, we may have the estimate $\|\mathcal{K}_t^{-1}\Theta\|_{H^k}\lesssim P(\|\eta(t)\|_{H^{\f72}})\|\Theta\|_{H^k}$. Thus, $\mathcal{K}_t$ is an isomorphism of $H^k$ to $H^k$, for $k=0,1,2$. With this fact in hand, Lemma \ref{lem:theta H0 H1} implies that $\mathcal{K}_t$ is an isomorphism of $L^2(\Om)$ to $\mathscr{H}^0(t)$ and of ${}_0H^1(\Om)$ to $\mathscr{H}^1(t)$.

The mapping properties of the operator $\mathcal{K}$ on space-time functions may be established in a similar manner.
 \end{proof}

\subsection{Pressure as a Lagrange multiplier}

The introduction of pressure function has been studied by Guo and Tice in section 2 of \cite{GT1}, of which the modification was given by L. Wu in section 2.2 of \cite{LW}. So we omit the details here.

 \section{Elliptic estimates}

 \subsection{Preliminary}

 Before studying the linear problem \eqref{equ:linear BC}, we need some elliptic estimates. In order to study the elliptic problem, we may transform the equations on the domain $\Om$ into constant coefficient equations on the domain $\Om^\prime=\Phi(\Om)$, where $\Phi$ is defined by \eqref{map:phi}. The following lemma shows that the mapping $\Phi$ is an isomorphism between $H^k(\Om^\prime)$ and $H^k(\Om)$. Here, the Sobolev spaces are either vector-valued or scalar-valued.
 \begin{lemma}\label{lem:transport}
   Let $\Psi: \Om\to\Om^\prime$ be a $C^1$ diffeomorphism satisfying $\Psi\in H^{k+1}_{loc}$, $\nabla\Psi-I\in H^k(\Om)$ and the Jacobi $J=\det(\nabla\Psi)>\delta>0$ almost everywhere in $\Om$ for an integer $k\ge3$. If $v\in H^m(\Om^\prime)$, then $v\circ\Psi\in H^m(\Om)$ for $m=0,1, \ldots, k+1$, and
   \[
   \|v\circ\Psi\|_{H^m(\Om)}\lesssim C\left(\|\nabla\Psi-I\|_{H^k(\Om)}\right)\|v\|_{H^m(\Om^\prime)},
   \]
   where $C(\|\nabla\Psi-I\|_{H^k(\Om)})$ is a constant depending on $\|\nabla\Psi-I\|_{H^k(\Om)}$. Similarly, for $u\in H^m(\Om)$, we have $u\circ\Psi^{-1}\in H^m(\Om^\prime)$ for $m=0,1, \ldots, k+1$, and
   \[
   \|u\circ\Psi^{-1}\|_{H^m(\Om^\prime)}\lesssim C\left(\|\nabla\Psi-I\|_{H^k(\Om)}\right)\|u\|_{H^m(\Om)}.
   \]
   Let $\Sigma^\prime=\Psi(\Sigma)$ be the top boundary of $\Om^\prime$. If $v\in H^{m-\f12}(\Sigma^\prime)$ for $m=1, \ldots, k-1$, then $v\circ\Psi\in H^{m-\f12}(\Sigma)$, and
   \[
   \|v\circ\Psi\|_{H^{m-\f12}(\Sigma)}\lesssim C\left(\|\nabla\Psi-I\|_{H^k(\Om)}\right)\|v\|_{H^{m-\f12}(\Sigma^\prime)}.
   \]
   If $u\in H^{m-\f12}(\Sigma)$ for $m=1, \ldots, k-1$, then $u\circ\Psi^{-1}\in H^{m-\f12}(\Sigma^\prime)$ and
   \[
   \|u\circ\Psi^{-1}\|_{H^{m-\f12}(\Sigma^\prime)}\lesssim C\left(\|\nabla\Psi-I\|_{H^k(\Om)}\right)\|u\|_{H^{m-\f12}(\Sigma)}.
   \]
 \end{lemma}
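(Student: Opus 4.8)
The plan is to establish the four estimates in the order: first the interior bound for $v\mapsto v\circ\Psi$, then the interior bound for $u\mapsto u\circ\Psi^{-1}$, and finally the two boundary bounds, deducing each item from the previous ones. For the first interior estimate I would induct on $m$. The base case $m=0$ is the change of variables formula,
\[
\|v\circ\Psi\|_{H^0(\Om)}^2=\int_{\Om^\prime}|v(y)|^2\,|\det\nabla\Psi^{-1}(y)|\,dy ,
\]
where $\det\nabla\Psi^{-1}=1/J$, and since $\nabla\Psi-I\in H^k(\Om)$ with $k\ge3$ we have $\nabla\Psi\in L^\infty$, hence $J\in L^\infty$, while $J>\delta$ gives $1/J<1/\delta$; thus $\|v\circ\Psi\|_{H^0(\Om)}\lesssim\delta^{-1/2}\|v\|_{H^0(\Om^\prime)}$.

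For $1\le m\le k+1$, I would differentiate by the chain rule, $\nabla(v\circ\Psi)=(\nabla v\circ\Psi)\,\nabla\Psi$, write $\nabla\Psi=I+(\nabla\Psi-I)$, and use the Sobolev product estimate $\|fg\|_{H^{m-1}}\lesssim\|f\|_{H^{m-1}}\|g\|_{H^k}$ — valid for $0\le m-1\le k$ because $k>3/2$, so that $H^k$ is an algebra and a multiplier on $H^s$ for $0\le s\le k$ — to obtain $\|\nabla(v\circ\Psi)\|_{H^{m-1}(\Om)}\lesssim(1+\|\nabla\Psi-I\|_{H^k})\|\nabla v\circ\Psi\|_{H^{m-1}(\Om)}$. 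Applying the inductive hypothesis at order $m-1$ to $\nabla v\in H^{m-1}(\Om^\prime)$ bounds the right-hand side by $C(\|\nabla\Psi-I\|_{H^k})\|v\|_{H^m(\Om^\prime)}$, and together with the $m=0$ bound on $\|v\circ\Psi\|_{H^0}$ this closes the induction (the cut-off $m\le k+1$ being exactly what the product estimate allows).

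For the inverse map, the cleanest route is to check that $\Psi^{-1}\colon\Om^\prime\to\Om$ satisfies the same hypotheses and then quote the estimates just proved. Here $\det\nabla\Psi^{-1}=1/J$ is bounded above and below, and from the cofactor identity $\nabla\Psi^{-1}=\bigl(J^{-1}\,\mathrm{adj}\,\nabla\Psi\bigr)\circ\Psi^{-1}$ together with the algebra property of $H^k$ and the smoothness of $t\mapsto 1/t$ on $[\delta,\infty)$ one gets $J^{-1}\,\mathrm{adj}\,\nabla\Psi-I\in H^k(\Om)$; composing with $\Psi^{-1}$ and using the change-of-variables bound, the Fa\`a di Bruno formula and the embedding $H^k\hookrightarrow W^{k-2,\infty}$ (this is where $k\ge3$ is needed) yields $\nabla\Psi^{-1}-I\in H^k(\Om^\prime)$ and $\Psi^{-1}\in H^{k+1}_{loc}$. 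Hence the first two paragraphs apply verbatim with $\Psi$ replaced by $\Psi^{-1}$. For the two boundary estimates I would use a bounded extension operator $E\colon H^{m-1/2}(\Sigma^\prime)\to H^m(\Om^\prime)$ and the trace theorem: given $v\in H^{m-1/2}(\Sigma^\prime)$, set $\widetilde v=Ev\in H^m(\Om^\prime)$, so $\widetilde v\circ\Psi\in H^m(\Om)$ by the interior estimate; since $\Psi(\Sigma)=\Sigma^\prime$, restriction to $\Sigma$ gives $(\widetilde v\circ\Psi)|_\Sigma=v\circ(\Psi|_\Sigma)$, whence $\|v\circ\Psi\|_{H^{m-1/2}(\Sigma)}\lesssim\|\widetilde v\circ\Psi\|_{H^m(\Om)}\lesssim C\|\widetilde v\|_{H^m(\Om^\prime)}\lesssim C\|v\|_{H^{m-1/2}(\Sigma^\prime)}$; the range $1\le m\le k-1$ reflects the regularity of $\Sigma^\prime$, the image under an $H^{k+1}$-type map, available for the trace/extension theory. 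The estimate for $\Psi^{-1}$ on the boundary follows identically.

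I expect the main obstacle to be the product and composition estimates at the low orders $m=1,2$, where $H^{m-1}$ fails to be an algebra in three dimensions: one must carry out a high–low splitting and exploit the $L^\infty$ bounds on $\nabla\Psi$ (and, where needed, $\nabla^2\Psi$) provided by $H^k\hookrightarrow C^1$, and likewise deal with the somewhat delicate point that $\nabla\Psi^{-1}-I\in H^k(\Om^\prime)$ has to be secured via the cofactor formula and a bootstrap in the differentiability order before the inverse composition estimate can be invoked.
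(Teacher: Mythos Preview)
Your approach is the standard one and is correct in outline; the paper itself gives no proof at all, simply stating that the argument is identical to that of Lemma~3.1 in Guo--Tice \cite{GT1} and omitting the details. The induction-on-$m$ scheme via the chain rule, the multiplier property of $H^k$, and the extension/trace argument for the boundary estimates are exactly the ingredients one expects (and almost certainly what appears in \cite{GT1}).

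One remark on the point you flag yourself: the step showing $\nabla\Psi^{-1}-I\in H^k(\Om^\prime)$ is indeed the most delicate, because it threatens circularity---you want to compose $J^{-1}\mathrm{adj}\,\nabla\Psi-I\in H^k(\Om)$ with $\Psi^{-1}$, but composition with $\Psi^{-1}$ is precisely what you are trying to justify. The clean resolution is to first use the Sobolev embedding $H^k(\Om)\hookrightarrow C^{k-2}(\overline{\Om})$ (valid since $k\ge3$ in three dimensions) to get $\Psi\in C^{k-1}$, hence $\Psi^{-1}\in C^{k-1}$ by the classical inverse function theorem; this already gives the composition bound $H^m(\Om)\to H^m(\Om^\prime)$ for $m\le k-1$ by direct Leibniz/Fa\`a~di~Bruno expansion with $L^\infty$ control on all derivatives of $\nabla\Psi^{-1}$ up to order $k-2$. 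With that in hand you obtain $\nabla\Psi^{-1}-I\in H^{k-1}(\Om^\prime)$, and then a single further step (using the interior estimate at order $k$ for $\Psi$, now available) upgrades this to $H^k(\Om^\prime)$ and closes the bootstrap. Your sketch is compatible with this, but it is worth being explicit that the circularity is broken by the classical $C^{k-1}$ regularity rather than by the Sobolev estimate itself.
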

 \begin{proof}
   The proof of this lemma is the same as Lemma $3.1$ in \cite{GT1}, which has been proved by Y. Guo and I. Tice, so we omit the details here.
 \end{proof}
 \subsection{The $\mathscr{A}$-stationary convection problem}

 In this section, we consider the stationary equations
\ben\label{equ:SBC}
\left\{
\begin{aligned}
  \dive_{\mathscr{A}}S_{\mathscr{A}}(p, u)-\theta \nabla_{\mathscr{A}}y_3&=F^1 \quad\text{in}\quad\Om\\
  \dive_{\mathscr{A}}u&=F^2\quad\text{in}\quad\Om\\
 -\Delta_{\mathscr{A}}\theta&=F^3\quad\text{in}\quad\Om\\
 S_{\mathscr{A}}(p, u)\mathscr{N}&=F^4\quad\text{on}\quad \Sigma\\
 \nabla_{\mathscr{A}}\theta\cdot\mathscr{N}+\theta\left|\mathscr{N}\right|&=F^5\quad\text{on}\quad \Sigma\\
 u=0,\quad\theta&=0\quad\text{on}\quad \Sigma_b\\
\end{aligned}
\right.
\een

Before discussing the regularity for strong solution to \eqref{equ:SBC}, we need to define the weak solution of equation \eqref{equ:SBC}. Suppose $F^1\in (\mathscr{H}^1)^\ast$, $F^2\in H^0$, $F^3\in (\mathscr{H}^1)^\ast$ $F^4\in H^{-\f12}(\Sigma)$ and $F^5\in H^{-\f12}(\Sigma)$, $(u,p,\theta)$ is called a weak solution of equation \eqref{equ:SBC} if it satisfies $\nabla_{\mathscr{A}}\cdot u=F^2$,
\beq\label{equ:weak theta}
\left(\nabla_{\mathscr{A}}\theta,\nabla_{\mathscr{A}}\phi\right)_{\mathscr{H}^0}+\left(\theta\left|\mathscr{N}\right|,\phi\right)_{H^0(\Sigma)}=\left<F^3, \phi\right>_{(\mathscr{H}^1)^\ast}+\left<F^5,\phi\right>_{H^{-\f12}(\Sigma)},
\eeq
and
\beq\label{equ:weak u}
\f12\left(\mathbb{D}_\mathscr{A}u,\mathbb{D}_\mathscr{A}\psi\right)_{\mathscr{H}^0}+\left(p, \nabla_{\mathscr{A}}\psi\right)_{\mathscr{H}^0}-\left(\theta \nabla_{\mathscr{A}}y_3, \psi\right)_{\mathscr{H}^0}=\left<F^1,\psi\right>_{(\mathscr{H}^1)\ast}-\left<F^4, \psi\right>_{H^{-\f12}(\Sigma)},
\eeq
for any $\phi, \psi\in \mathscr{H}^1$.
\begin{lemma}
  Suppose $F^1\in (\mathscr{H}^1)^\ast$, $F^2\in \mathscr{H}^0$, $F^3\in (\mathscr{H}^1)^\ast$, $F^4\in H^{-\f12}(\Sigma)$ and $F^5\in H^{-\f12}(\Sigma)$. Then there exists a unique weak solution $(u, p, \theta) \in \mathscr{H}^1\times \mathscr{H}^0 \times \mathscr{H}^1$ to \eqref{equ:SBC}.
\end{lemma}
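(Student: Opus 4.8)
The plan is to exploit the triangular structure of \eqref{equ:SBC}: the weak temperature equation \eqref{equ:weak theta} involves neither $u$ nor $p$, so I would first solve it for $\theta$ by Lax--Milgram, and then substitute $\theta$ into \eqref{equ:weak u} as part of the forcing and solve the resulting $\mathscr{A}$-Stokes system for $(u,p)$.

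For the temperature, consider on $\mathscr{H}^1$ the bilinear form
\[
a(\theta,\phi):=\left(\nabla_{\mathscr{A}}\theta,\nabla_{\mathscr{A}}\phi\right)_{\mathscr{H}^0}+\left(\theta\left|\mathscr{N}\right|,\phi\right)_{H^0(\Sigma)}.
\]
Boundedness of $a$ follows from the $L^\infty$ bounds on $\mathscr{A}$ and $J$ used in the proof of Lemma \ref{lem:theta H0 H1} together with the trace inequality $\|\cdot\|_{H^0(\Sigma)}\lesssim\|\cdot\|_{H^1(\Om)}$; coercivity is the essential point, since $\left|\mathscr{N}\right|\ge1$ makes the boundary term nonnegative while $a(\theta,\theta)\ge\int_\Om J\left|\nabla_{\mathscr{A}}\theta\right|^2\gtrsim\|\theta\|_{H^1(\Om)}^2$ by \eqref{est:theta H1}. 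Since $\phi\mapsto\langle F^3,\phi\rangle_{(\mathscr{H}^1)^\ast}+\langle F^5,\phi\rangle_{H^{-\f12}(\Sigma)}$ is a bounded functional on $\mathscr{H}^1$ (again by the trace theorem for the $F^5$ term), Lax--Milgram produces a unique $\theta\in\mathscr{H}^1$ solving \eqref{equ:weak theta}.

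With $\theta$ now fixed, $\ell(\psi):=\langle F^1,\psi\rangle_{(\mathscr{H}^1)^\ast}-\langle F^4,\psi\rangle_{H^{-\f12}(\Sigma)}+\left(\theta\nabla_{\mathscr{A}}y_3,\psi\right)_{\mathscr{H}^0}$ is a bounded functional on $\mathscr{H}^1$, and I would handle \eqref{equ:weak u} together with the constraint $\nabla_{\mathscr{A}}\cdot u=F^2$ in the usual three moves. First dispose of the inhomogeneous divergence: transferring to $\Om'=\Phi(\Om)$ by Lemma \ref{lem:transport} (via the Piola transform) and solving a Bogovskii-type problem there --- no compatibility constraint is required because $\psi$ is unconstrained on $\Sigma$ --- gives $w\in\mathscr{H}^1$ with $\nabla_{\mathscr{A}}\cdot w=F^2$ and $\|w\|_{H^1}\lesssim P(\|\eta\|_{H^{\f72}})\|F^2\|_{\mathscr{H}^0}$. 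Writing $u=w+v$, it remains to find $v$ in the closed subspace $\mathcal{V}:=\{\psi\in\mathscr{H}^1:\nabla_{\mathscr{A}}\cdot\psi=0\}$ so that $\f12\left(\mathbb{D}_{\mathscr{A}}v,\mathbb{D}_{\mathscr{A}}\psi\right)_{\mathscr{H}^0}=\ell(\psi)-\f12\left(\mathbb{D}_{\mathscr{A}}w,\mathbb{D}_{\mathscr{A}}\psi\right)_{\mathscr{H}^0}$ for every $\psi\in\mathcal{V}$; on $\mathcal{V}$ the symmetric-gradient form is coercive by the $\mathscr{A}$-version of Korn's inequality (the Euclidean Korn inequality on $\Om'$, the condition $v|_{\Sigma_b}=0$, and Lemma \ref{lem:transport}), so Lax--Milgram delivers a unique such $v$. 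Finally the pressure is recovered by the Lagrange-multiplier argument of \cite{GT1} as modified in \cite{LW}: the functional $\psi\mapsto\ell(\psi)-\f12\left(\mathbb{D}_{\mathscr{A}}u,\mathbb{D}_{\mathscr{A}}\psi\right)_{\mathscr{H}^0}$ annihilates $\mathcal{V}$, and because $\nabla_{\mathscr{A}}\cdot:\mathscr{H}^1\to\mathscr{H}^0$ is onto with the associated Ne\v{c}as inf--sup bound, there is a unique $p\in\mathscr{H}^0$ representing this functional through the pressure term of \eqref{equ:weak u}. Uniqueness of the whole triple then follows from linearity: with all $F^i=0$ the coercivity in the temperature step forces $\theta=0$, testing \eqref{equ:weak u} with $\psi=u$ and Korn forces $u=0$, and the inf--sup bound forces $p=0$.

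The step I expect to be the main obstacle is assembling the two structural facts about the $\mathscr{A}$-twisted operators with constants controlled by $P(\|\eta\|_{H^{\f72}})$ --- namely the $\mathscr{A}$-Korn coercivity of $\left(\mathbb{D}_{\mathscr{A}}\cdot,\mathbb{D}_{\mathscr{A}}\cdot\right)_{\mathscr{H}^0}$ on $\mathcal{V}$ and the surjectivity of $\nabla_{\mathscr{A}}\cdot$ with the accompanying pressure estimate. Both reduce, via Lemma \ref{lem:transport} and the Piola transform, to their classical Euclidean counterparts on $\Om'$ exactly as in \cite{GT1,LW}; once they are in place, each of the two solves is a routine Lax--Milgram application, and the decoupling of the temperature equation keeps them genuinely independent.
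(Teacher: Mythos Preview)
Your proposal is correct and follows essentially the same route as the paper: first solve the decoupled temperature equation, then lift the divergence constraint, solve for the velocity on the $\dive_{\mathscr{A}}$-free subspace, and recover the pressure as a Lagrange multiplier via the results quoted from \cite{GT1,LW}. The only cosmetic difference is that the paper phrases each step as an application of the Riesz representation theorem (the bilinear forms being symmetric and defining equivalent inner products on $\mathscr{H}^1$ and $\mathscr{X}$), whereas you invoke Lax--Milgram; for symmetric coercive forms these are the same argument.
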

\begin{proof}
  For the Hilbert space $\mathscr{H}^1$ with the inner product $\left(\theta,\phi\right)=\left(\nabla_{\mathscr{A}}\theta, \nabla_{\mathscr{A}}\phi\right)_{\mathscr{H}^0}+\left(\theta\left|\mathscr{N}\right|,\phi\right)_{H^0(\Sigma)}$, we can define a linear functional $\ell\in (\mathscr{H}^1)^\ast$ by
  \[
  \ell(\phi)=\left<F^3, \phi\right>_{(\mathscr{H}^1)^\ast}+\left<H^5,\phi\right>_{H^{-\f12}(\Sigma)},
  \]
  for all $\phi\in \mathscr{H}^1$. Then by using the Riesz representation theorem, there exists a unique $\theta\in \mathscr{H}^1$ such that
  \[
  \left(\nabla_{\mathscr{A}}\theta, \nabla_{\mathscr{A}}\phi\right)_{\mathscr{H}^0}+\left(\theta\left|\mathscr{N}\right|,\phi\right)_{H^0(\Sigma)}=\left<F^3, \phi\right>_{(\mathscr{H}^1)^\ast}+\left<H^5,\phi\right>_{H^{-\f12}(\Sigma)},
  \]
  for all $\phi\in \mathscr{H}^1$.

  By Lemma 2.6 in \cite{GT1}, there exists a $\bar{u}\in \mathscr{H}^1$ such that $\dive_{\mathscr{A}}\bar{u}=F^2$. Then, we may restrict our test function to $\psi\in \mathscr{X}$. A straight application of Riesz representation theorem to the Hilbert space $\mathscr{X}$ with inner product defined as $\left(u,\psi\right)=\left(\mathbb{D}_{\mathscr{A}}u, \mathbb{D}_{\mathscr{A}}\psi\right)_{\mathscr{H}^0}$ provides a unique $w\in \mathscr{X}$ such that
  \beq \label{eq:velocity}
  \f12\left(\mathbb{D}_{\mathscr{A}}w, \mathbb{D}_{\mathscr{A}}\psi\right)_{\mathscr{H}^0}=-\f12\left(\mathbb{D}_{\mathscr{A}}\bar{u}, \mathbb{D}_{\mathscr{A}}\psi\right)_{\mathscr{H}^0}+\left(\theta \nabla_{\mathscr{A}}y_3, \psi\right)_{\mathscr{H}^0}+\left<F^1, \psi\right>_{(\mathscr{H}^1)^\ast}-\left<F^4, \psi\right>_{H^{-\f12}(\Sigma)}
  \eeq
  for all $\psi\in \mathscr{X}$. Then we can find $u$ satisfying
  \beq\label{equ:pressureless weak u}
\f12\left(\mathbb{D}_\mathscr{A}u,\mathbb{D}_\mathscr{A}\psi\right)_{\mathscr{H}^0}-\left(\theta \nabla_{\mathscr{A}}y_3, \psi\right)_{\mathscr{H}^0}=\left<F^1,\psi\right>_{(\mathscr{H}^1)\ast}-\left<F^4, \psi\right>_{H^{-\f12}(\Sigma)},
\eeq
 by $u=w+\bar{u}\in\mathscr{H}^1$, with $\dive_{\mathscr{A}}u=F^2$.

It is easily to be seen that $u$ is unique. Suppose that there exists another $\tilde{u}$ still satisfies \eqref{equ:pressureless weak u}. Then we have $\dive_{\mathscr{A}}(u-\tilde{u})=0$, and $\left(\mathbb{D}_\mathscr{A}(u-\tilde{u}),\mathbb{D}_\mathscr{A}\psi\right)_{\mathscr{H}^0}=0$ for any $\psi\in \mathscr{X}$. By taking $\psi=u-\tilde{u}$, and using the Korn's inequality, we know that $\|u-\tilde{u}\|_{H^0}=0$ which implies $u=\tilde{u}$.

  In order to introduce the pressure $p$, we can define $\lam\in (\mathscr{H}^1)^\ast$ as the difference of the left and right hand sides of \eqref{eq:velocity}. Then $\lam(\psi)=0$ for all $\psi\in \mathscr{X}$. According to the Proposition $2.12$ in \cite{LW}, there exists a unique $p\in \mathscr{H}^0$ satisfying $\left(p, \dive_{\mathscr{A}}\psi\right)_{\mathscr{H}^0}=\lam(\psi)$ for all $\psi\in \mathscr{H}^1$.
\end{proof}
In the next result, we establish the strong solutions of \eqref{equ:SBC} and present some elliptic estimates.
\begin{lemma}\label{lem:S lower regularity}
  Suppose that $\eta\in H^{k+\f12}(\Sigma)$ for $k\ge3$ such that the mapping $\Phi$ defined in \eqref{map:phi} is a $C^1$ diffeomorphism of $\Om$ to $\Om^\prime=\Phi(\Om)$. If $F^1\in H^0$, $F^2\in H^1$, $F^3\in H^0$, $F^4\in H^{\f12}$ and $F^5\in H^{\f12}$, then the problem \eqref{equ:SBC} admits a unique strong solution $(u, p, \theta)\in H^2(\Om)\times H^1(\Om)\times H^2(\Om)$, i.e. $(u, p, \theta)$ satisfy \eqref{equ:SBC} a.e. in $\Om$ and on $\Sigma$, $\Sigma_b$. Moreover, for $r=2, \ldots, k-1$, we have the estimate
  \begin{equation}\label{ineq:lower elliptic}
  \begin{aligned}
  \|u\|_{H^r}+\|p\|_{H^{r-1}}+\|\theta\|_{H^r}
  &\lesssim& C(\eta)\Big(\|F^1\|_{H^{r-2}}+\|F^2\|_{H^{r-1}}+\|F^3\|_{H^{r-2}}\\
  &&+\|F^4\|_{H^{r-\f32}(\Sigma)}+\|F^5\|_{H^{r-\f32}(\Sigma)}\Big),
  \end{aligned}
  \end{equation}
  whenever the right-hand side is finite, where $C(\eta)$ is a constant depending on $\|\eta\|_{H^{k+\f12}(\Sigma)}$.
\end{lemma}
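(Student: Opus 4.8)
The plan is to decouple the system \eqref{equ:SBC} into the temperature equation, which is a scalar Neumann-type elliptic problem for $\theta$ alone, and the Stokes-type system for $(u,p)$ in which $\theta$ enters only as a forcing term. First I would flatten the domain: using the diffeomorphism $\Phi$ of \eqref{map:phi} and Lemma \ref{lem:transport}, I pull back the $\mathscr{A}$-dependent operators $\dive_{\mathscr{A}}$, $\nabla_{\mathscr{A}}$, $\Delta_{\mathscr{A}}$ and $\mathbb{D}_{\mathscr{A}}$ to the fixed domain $\Om^\prime=\Phi(\Om)$, where they become the standard constant-coefficient operators $\dive$, $\nabla$, $\Delta$ and $\mathbb{D}$, at the price of introducing lower-order commutator terms whose coefficients are controlled by $\|\nabla\Phi-I\|_{H^k}$, hence by $C(\eta)=C(\|\eta\|_{H^{k+1/2}(\Sigma)})$. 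The existence and uniqueness of a weak solution $(u,p,\theta)\in\mathscr{H}^1\times\mathscr{H}^0\times\mathscr{H}^1$ is already supplied by the preceding lemma, so the real content here is the promotion of this weak solution to a strong one together with the higher-order estimates \eqref{ineq:lower elliptic}.

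**Key steps.** (i) For the temperature: the weak formulation \eqref{equ:weak theta} says $\theta$ solves $-\Delta_{\mathscr{A}}\theta=F^3$ in $\Om$ with the Robin condition $\nabla_{\mathscr{A}}\theta\cdot\mathscr{N}+\theta|\mathscr{N}|=F^5$ on $\Sigma$ and $\theta=0$ on $\Sigma_b$. Transforming to $\Om^\prime$, this becomes a constant-coefficient mixed Dirichlet–Robin problem plus lower-order terms; classical elliptic regularity for such problems (e.g. the $H^2$ theory for the Laplacian with mixed boundary conditions, as used in \cite{GT1}) gives $\theta\in H^2$ with $\|\theta\|_{H^2}\lesssim C(\eta)(\|F^3\|_{H^0}+\|F^5\|_{H^{1/2}(\Sigma)}+\|\theta\|_{H^1})$, and the $H^1$-term is absorbed via the weak estimate. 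A bootstrap then yields $\|\theta\|_{H^r}\lesssim C(\eta)(\|F^3\|_{H^{r-2}}+\|F^5\|_{H^{r-3/2}(\Sigma)})$ for $r\le k-1$, the restriction coming from the regularity of $\mathscr{A}$ (equivalently of $\eta$). (ii) For the velocity and pressure: once $\theta$ is known, \eqref{equ:weak u} and the constraint $\dive_{\mathscr{A}}u=F^2$ say $(u,p)$ is a weak solution of the $\mathscr{A}$-Stokes problem with body force $F^1+\theta\nabla_{\mathscr{A}}y_3$ and Neumann data $F^4$. This is exactly the setting of the Stokes regularity theory of Guo–Tice (Lemma 3.3 in \cite{GT1} and its analogue in \cite{LW}); invoking it gives $u\in H^2$, $p\in H^1$ with
\[
\|u\|_{H^2}+\|p\|_{H^1}\lesssim C(\eta)\big(\|F^1\|_{H^0}+\|\theta\nabla_{\mathscr{A}}y_3\|_{H^0}+\|F^2\|_{H^1}+\|F^4\|_{H^{1/2}(\Sigma)}\big),
\]
and then the higher estimate for $r\le k-1$ by the same bootstrap. (iii) Finally I combine the two chains of estimates: since $\|\theta\nabla_{\mathscr{A}}y_3\|_{H^{r-2}}\lesssim C(\eta)\|\theta\|_{H^{r-1}}\lesssim C(\eta)\|\theta\|_{H^r}$, the temperature contribution is dominated by the right-hand side of \eqref{ineq:lower elliptic}, and adding the $\theta$-estimate to the $(u,p)$-estimate produces the stated inequality.

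**Main obstacle.** The delicate point is handling the variable-coefficient, low-regularity operators near the free surface: after transforming to $\Om^\prime$ the perturbation of the constant-coefficient operator has coefficients built from $\mathscr{A}-I$ and $J-1$, which lie only in $H^{k-1/2-\epsilon}$-type spaces, and one must check that multiplying an $H^{r-2}$ function (for $r$ up to $k-1$) by these coefficients stays in $H^{r-2}$, i.e. the product estimates in Sobolev spaces do not lose a derivative — this is exactly why the estimate is capped at $r=k-1$ rather than $r=k$. A second subtlety is the mixed boundary condition for $\theta$ (Robin on $\Sigma$, Dirichlet on $\Sigma_b$) and the Neumann-type condition $S_{\mathscr{A}}(p,u)\mathscr{N}=F^4$ for the Stokes part: the corners where $\Sigma$ meets $\Sigma_b$ are absent here because the domain is horizontally periodic, so no corner analysis is needed, but one still must verify the Lopatinskii–Shapiro (complementing) condition for the transformed systems, which for the Stokes problem with a pure-traction boundary condition is the nontrivial ingredient and is precisely what \cite{GT1} establishes; I would cite that rather than reprove it. Everything else is a routine bootstrap once these two points are in place.
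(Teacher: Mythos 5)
Your proposal is correct and follows essentially the same route as the paper: decouple the temperature (Robin/Dirichlet) problem from the $\mathscr{A}$-Stokes problem with $\theta\nabla_{\mathscr{A}}y_3$ as known forcing, transfer both to $\Om^\prime=\Phi(\Om)$ via Lemma \ref{lem:transport}, invoke the ADN/Beale and Guo--Tice elliptic theories there, bootstrap up to $r=k-1$, and transform back. The only cosmetic discrepancy is your mention of ``lower-order commutator terms'': since $\Delta_{\mathscr{A}}$, $\dive_{\mathscr{A}}$, etc.\ are by construction the exact pullbacks under $\Phi$ of the flat operators, the change of unknowns $\theta=\Theta\circ\Phi$, $u=v\circ\Phi$, $q=p\circ\Phi$ converts \eqref{equ:SBC} into the genuinely constant-coefficient problems on $\Om^\prime$ with no remainder, which is why the paper needs no perturbative absorption at this stage.
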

\begin{proof}
  First, we consider the problem
  \beno
  \left\{
  \begin{aligned}
    -\Delta_{\mathscr{A}}\theta&=F^3 \quad \text{in}\quad \Om,\\
    \nabla_{\mathscr{A}}\theta\cdot\mathscr{N}+\theta\left|\mathscr{N}\right|&=F^5 \quad \text{on}\quad \Sigma,\\
    \theta&=0 \quad \text{on}\quad \Sigma_b.
  \end{aligned}
  \right.
  \eeno
  Since the coefficients of this equation are not constants, We transform this problem to one on $\Om^\prime=\Phi(\Om)$ by introducing the unknowns $\Theta$ according to $\theta=\Theta\circ\Phi$. Then $\Theta$ should be solutions to the usual problem on $\Om^\prime=\{-1\le y_3\le \eta(y_1, y_2)\}$ with upper boundary $\Sigma^\prime=\{y_3=\eta\}$:
  \ben\label{equ:theta}
  \left\{
  \begin{aligned}
    -\Delta \Theta&=F^3\circ\Phi^{-1}=G^3 &\quad\text{in}\quad\Om^\prime,\\
    \nabla \Theta\cdot\mathscr{N}&+\Theta\left|\mathscr{N}\right|= F^5\circ\Phi^{-1}=G^5&\quad\text{on}\quad\Sigma^\prime,\\
    \Theta&=0 &\quad\text{on}\quad\Sigma^\prime_b.
  \end{aligned}
  \right.
  \een
  Note that, according to Lemma \ref{lem:transport}, $G^3\in H^0(\Om^\prime)$ and $G^5\in H^{1/2}(\Sigma^\prime)$.
  Then we may argue as the Lemma 2.8 in \cite{Beale1} and use the Theorem 10.5 in \cite{ADN}, to obtain that there exists a unique $\Theta\in H^2(\Om^\prime)$, solving problem \eqref{equ:theta} with
  \[
  \|\Theta\|_{H^2(\Om^\prime)}\lesssim C(\eta)(\|G^3\|_{H^0(\Om^\prime)}+\|G^5\|_{H^{\f12}(\Sigma^\prime)}),
  \]
  for $C(\eta)$ a constant depending on $\|\eta\|_{H^{k+\f12}}$.

  For the $\mathscr{A}$-Stokes equations, we introduce the unknowns $v, q$ by $u=v\circ\Phi$ and $q=p\circ\Phi$. For the usual Stokes problem
  \ben\label{equ:Stokes}
\left\{
\begin{aligned}
  S(q, v)-\Theta e_3&=F^1\circ\Phi^{-1}=G^1& \quad\text{in}\quad\Om^\prime\\
 \nabla\cdot v&=F^2\circ\Phi^{-1}=G^2&\quad\text{in}\quad\Om^\prime\\
 S(q, v)\mathscr{N}&=F^4\circ\Phi^{-1}=G^4&\quad\text{on}\quad \Sigma^\prime\\
 v&=0&\quad\text{on}\quad \Sigma_b,
\end{aligned}
\right.
\een
we use the same argument as in the proof of Lemma 3.6 in \cite{GT1} with $G^1+\Theta e_3$ instead of $G^1$. Then we have that there exist unique $v\in H^2(\Om^\prime)$, $q\in H^1(\Om^\prime)$, solving problem \eqref{equ:Stokes} with
\[
\|v\|_{H^2(\Om^\prime)}+\|q\|_{H^1(\Om^\prime)}\lesssim C(\eta)\left(\|G^1\|_{H^0(\Om^\prime)}+\|G^2\|_{H^1(\Om^\prime)}+\|G^4\|_{H^{\f12}(\Sigma^\prime)}+\|\Theta\|_{H^0(\Om^\prime)}\right),
\]
for $C(\eta)$ a constant depending on $\|\eta\|_{H^{k+\f12}}$.
so we have that
\begin{equation}
\begin{aligned}
\|v\|_{H^2(\Om^\prime)}+\|q\|_{H^1(\Om^\prime)}+\|\Theta\|_{H^2(\Om^\prime)}&\lesssim
C(\eta)\Big(\|G^1\|_{H^0(\Om^\prime)}+\|G^2\|_{H^1(\Om^\prime)}\\
&\quad+\|G^3\|_{H^0(\Om^\prime)}+\|G^4\|_{H^{\f12}(\Sigma^\prime)}+\|G^5\|_{H^{\f12}(\Sigma^\prime)}\Big),
\end{aligned}
\end{equation}
for $C(\eta)$ a constant depending on $\|\eta\|_{H^{k+\f12}}$.
Then we may argue it as in Lemma 3.6 of \cite{GT1} to derive that, for $r=2, \ldots, k-1$,
\begin{equation}
\begin{aligned}
&\|v\|_{H^r(\Om^\prime)}+\|q\|_{H^{r-1}(\Om^\prime)}+\|\Theta\|_{H^r(\Om^\prime)}\\
&\lesssim
C(\eta)\Big(\|G^1\|_{H^{r-2}(\Om^\prime)}+\|G^2\|_{H^{r-1}(\Om^\prime)}+\|G^3\|_{H^{r-2}(\Om^\prime)}\\
&\quad+\|G^4\|_{H^{r-\f32}(\Sigma^\prime)}+\|G^5\|_{H^{r-\f32}(\Sigma^\prime)}\Big),
\end{aligned}
\end{equation}
for $C(\eta)$ a constant depending on $\|\eta\|_{H^{k+\f12}}$.

Now, we transform back to $\Om$ with $u=v\circ\Phi$, $p=q\circ\Phi$ and $\theta=\Theta\circ\Phi$. It is readily verified that $(u, p, T)$ are strong solutions of \eqref{equ:SBC}. According to Lemma \ref{equ:SBC},
  \begin{align*}
  \|u\|_{H^r}+\|p\|_{H^{r-1}}+\|\theta\|_{H^r}
  &\lesssim C(\eta)\Big(\|F^1\|_{H^{r-2}}+\|F^2\|_{H^{r-1}}+\|F^3\|_{H^{r-2}}\\
  &\quad+\|F^4\|_{H^{r-\f32}(\Sigma)}+\|F^5\|_{H^{r-\f32}(\Sigma)}\Big),
  \end{align*}
  whenever the right-hand side is finite, where $C(\eta)$ is a constant depending on $\|\eta\|_{H^{k+\f12}(\Sigma)}$. This is what we want.
\end{proof}

In the next lemma, we verify that the constant in \eqref{ineq:lower elliptic} can actually only depend on the initial free surface.
\begin{lemma}\label{lem:initial lower regularity}
  Let $k\ge3$ be an integer and suppose that $\eta\in H^{k+\f12}(\Sigma)$ and $\eta_0\in H^{k+\f12}(\Sigma)$. Then there exists a positive number $\varepsilon_0<1$ such that if $\|\eta-\eta_0\|_{H^{k-\f32}}\le \varepsilon_0$, the solution to \eqref{equ:SBC} satisfies
  \begin{equation}\label{ineq:initial lower elliptic}
    \begin{aligned}
      \|u\|_{H^r}+\|p\|_{H^{r-1}}+\|\theta\|_{H^r}
  &\lesssim C(\eta_0)\Big(\|F^1\|_{H^{r-2}}+\|F^2\|_{H^{r-1}}+\|F^3\|_{H^{r-2}}\\
  &\quad+\|F^4\|_{H^{r-\f32}(\Sigma)}+\|F^5\|_{H^{r-\f32}(\Sigma)}\Big),
    \end{aligned}
  \end{equation}
  for $r=2, \ldots, k-1$, whenever the right hand side is finite, where $C(\eta_0)$ is a constant depending on $\|\eta_0\|_{H^{k+\f12}}$.
\end{lemma}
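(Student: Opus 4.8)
The plan is to upgrade Lemma~\ref{lem:S lower regularity}, where the elliptic constant depends on $\|\eta\|_{H^{k+1/2}}$, to a constant depending only on $\|\eta_0\|_{H^{k+1/2}}$, at the cost of requiring $\eta$ to be close to $\eta_0$ in the weaker norm $H^{k-3/2}$. The mechanism is a perturbation argument: write the $\mathscr{A}$-operators appearing in \eqref{equ:SBC} as the corresponding $\mathscr{A}_0$-operators (built from $\eta_0$) plus an error, move the error to the right-hand side, apply the already-established estimate \eqref{ineq:lower elliptic} with $\eta_0$ in place of $\eta$, and absorb the error term by taking $\varepsilon_0$ small.

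Concretely, I would first fix $\varepsilon_0<1$ small enough that $\|\eta-\eta_0\|_{H^{k-3/2}}\le\varepsilon_0$ guarantees (via the harmonic-extension bounds of \cite{LW} and Sobolev multiplication) that $J$, $K$, $\mathscr{A}$ stay within a fixed neighborhood of $J_0$, $K_0$, $\mathscr{A}_0$; in particular $\Phi$ and $\Phi(0)$ remain $C^1$ diffeomorphisms with Jacobian bounded below, so Lemma~\ref{lem:S lower regularity} applies to both $\eta$ and $\eta_0$. Next, given a strong solution $(u,p,\theta)$ of \eqref{equ:SBC} with data $(F^1,\dots,F^5)$, I rewrite it as a solution of the $\mathscr{A}_0$-problem:
\[
\dive_{\mathscr{A}_0}S_{\mathscr{A}_0}(p,u)-\theta\nabla_{\mathscr{A}_0}y_3=F^1+R^1,\quad \dive_{\mathscr{A}_0}u=F^2+R^2,\quad -\Delta_{\mathscr{A}_0}\theta=F^3+R^3,
\]
with analogous boundary identities $S_{\mathscr{A}_0}(p,u)\mathscr{N}_0=F^4+R^4$ and $\nabla_{\mathscr{A}_0}\theta\cdot\mathscr{N}_0+\theta|\mathscr{N}_0|=F^5+R^5$ on $\Sigma$, where each $R^i$ is a sum of terms in which at least one factor is a difference $\mathscr{A}-\mathscr{A}_0$, $J-J_0$, $\mathscr{N}-\mathscr{N}_0$, or $|\mathscr{N}|-|\mathscr{N}_0|$ multiplying derivatives of $u$, $p$, or $\theta$. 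Using the product estimates in $H^s$ and the fact that these differences are controlled by $\|\eta-\eta_0\|_{H^{k-1/2}}$ in $H^{k-3/2}$ (hence uniformly small, with $H^{k-3/2}\hookrightarrow L^\infty$ since $k\ge3$), one gets
\[
\|R^1\|_{H^{r-2}}+\|R^2\|_{H^{r-1}}+\|R^3\|_{H^{r-2}}+\|R^4\|_{H^{r-3/2}(\Sigma)}+\|R^5\|_{H^{r-3/2}(\Sigma)}\le C(\eta_0)\,\varepsilon_0\big(\|u\|_{H^r}+\|p\|_{H^{r-1}}+\|\theta\|_{H^r}\big).
\]

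Then applying \eqref{ineq:lower elliptic} for the $\eta_0$-problem — whose constant is $C(\eta_0)$ depending only on $\|\eta_0\|_{H^{k+1/2}}$ — and inserting the bound on the $R^i$ yields
\[
\|u\|_{H^r}+\|p\|_{H^{r-1}}+\|\theta\|_{H^r}\le C(\eta_0)\Big(\|F^1\|_{H^{r-2}}+\|F^2\|_{H^{r-1}}+\|F^3\|_{H^{r-2}}+\|F^4\|_{H^{r-3/2}(\Sigma)}+\|F^5\|_{H^{r-3/2}(\Sigma)}\Big)+C(\eta_0)\varepsilon_0\big(\|u\|_{H^r}+\|p\|_{H^{r-1}}+\|\theta\|_{H^r}\big),
\]
and choosing $\varepsilon_0$ so that $C(\eta_0)\varepsilon_0\le\tfrac12$ lets us absorb the last term into the left-hand side, giving \eqref{ineq:initial lower elliptic}. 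The main obstacle is the bookkeeping in the error estimate: one must check that every term in $R^i$ genuinely carries a small factor in the right topology — in particular that at level $r$ the difference factors only ever need to be estimated in $H^{k-3/2}$ (never in the top norm $H^{k-1/2}$), which is exactly why the closeness hypothesis is imposed in $H^{k-3/2}$ rather than in a stronger norm, and which forces the restriction $r\le k-1$. A secondary point is to confirm that the boundary difference terms $\mathscr{N}-\mathscr{N}_0$ and $|\mathscr{N}|-|\mathscr{N}_0|$, which involve one derivative of $\eta-\eta_0$, are controlled in $H^{r-3/2}(\Sigma)$ by $\|\eta-\eta_0\|_{H^{k-1/2}(\Sigma)}$ and are uniformly small; this is where the trace theory and the smallness of $\varepsilon_0$ combine. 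Everything else is a routine application of Lemma~\ref{lem:transport}, the product rules in Sobolev spaces, and the already-proved Lemma~\ref{lem:S lower regularity}.
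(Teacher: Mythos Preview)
Your proposal is correct and follows essentially the same perturbation argument as the paper: rewrite \eqref{equ:SBC} as the $\mathscr{A}_0$-problem with error terms $R^i$ (the paper calls them $F^{i,0}$), apply Lemma~\ref{lem:S lower regularity} with $\eta_0$, estimate the errors by $C(\eta_0)\|\eta-\eta_0\|_{H^{k-3/2}}(\|u\|_{H^r}+\|p\|_{H^{r-1}}+\|\theta\|_{H^r})$, and absorb. One minor slip: in a couple of places you write that the difference terms are controlled by $\|\eta-\eta_0\|_{H^{k-1/2}}$, but the hypothesis (and what the argument actually needs for $r\le k-1$) is $\|\eta-\eta_0\|_{H^{k-3/2}}$; this is exactly the index count you yourself flag at the end, so just align the earlier sentences with it.
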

\begin{proof}
  Here, we use the same idea as in Lemma 2.17 of \cite{LW}.  We rewrite the equation \eqref{equ:SBC} with its coefficients determined by $\eta_0$, i.e. it can be thought as a perturbation of equations of \eqref{equ:SBC} in terms of initial data,
  \ben
\left\{
\begin{aligned}
  \dive_{\mathscr{A}_0}S_{\mathscr{A}_0}(p, u)-\theta \nabla_{\mathscr{A}_0}y_{3,0}&=F^1+F^{1,0}& \quad\text{in}\quad\Om\\
 \nabla_{\mathscr{A}_0}\cdot u&=F^2+F^{2,0}&\quad\text{in}\quad\Om\\
 -\Delta_{\mathscr{A}_0}\theta&=F^3+F^{3,0}&\quad\text{in}\quad\Om\\
 S_{\mathscr{A}_0}(p, u)\mathscr{N}_0&=F^4+F^{4,0}&\quad\text{on}\quad \Sigma\\
 \nabla_{\mathscr{A}_0}\theta\cdot\mathscr{N}_0+\theta&\left|\mathscr{N}_0\right|=F^5+F^{5,0}&\quad\text{on}\quad \Sigma\\
 u=0,\quad\theta&=0&\quad\text{on}\quad \Sigma_b\\
\end{aligned}
\right.
\een
where
\begin{align*}
  F^{1,0}&=\nabla_{\mathscr{A}_0-\mathscr{A}}\cdot S_{\mathscr{A}}(p,u)+\nabla_{\mathscr{A}_0}\cdot S_{\mathscr{A}_0-\mathscr{A}}(p, u)+\theta\nabla_{\mathscr{A}_0-\mathscr{A}}y_3+\theta\nabla_{\mathscr{A}_0}(y_{3,0}-y_3),\\
  F^{2,0}&=\dive_{\mathscr{A}_0-\mathscr{A}}u,\\
  F^{3,0}&=\nabla_{\mathscr{A}_0-\mathscr{A}}\cdot\nabla_{\mathscr{A}}\theta+\nabla_{\mathscr{A}_0}\cdot\nabla_{\mathscr{A}_0-\mathscr{A}}\theta,\\
  F^{4,0}&=S_{\mathscr{A}_0}(p, u)(\mathscr{N}_0-\mathscr{N})+S_{\mathscr{A}_0-\mathscr{A}}(p, u)\mathscr{N},\\
  F^{5,0}&=\nabla_{\mathscr{A}_0}\theta\cdot(\mathscr{N}_0-\mathscr{N})+\nabla_{\mathscr{A}_0-\mathscr{A}}\theta\cdot\mathscr{N}+\theta\left(\left|\mathscr{N}_0\right|-\left|\mathscr{N}\right|\right).
\end{align*}
Here, $\mathscr{A}_0$, $\mathscr{N}_0$ and $y_{3,0}$ are quantities of $\mathscr{A}$, $\mathscr{N}$ and $y_{3}$ in terms of $\eta_0$. By the assumption, we know that $\eta-\eta_0\in H^{k+\f12}(\Sigma)$ and $\|\eta-\eta_0\|_{H^{k-\f32}(\Sigma)}^\ell\le \|\eta-\eta_0\|_{H^{k-\f32}(\Sigma)}<1$ for any positive integer $\ell$.
By the straightforward computation, we may derive that
\begin{align*}
  &\|F^{1,0}\|_{H^{r-2}}\le C\left(1+\|\eta_0\|_{H^{k+\f12}}\right)^4\|\eta-\eta_0\|_{H^{k-\f32}}\left(\|u\|_{H^r}+\|p\|_{H^{r-1}}+\|\theta\|_{H^{r-2}}\right),\\
  &\|F^{2,0}\|_{H^{r-1}}\le C\Big(1+\|\eta_0\|_{H^{k+\f12}}\Big)^2\|\eta-\eta_0\|_{H^{k-\f32}}\|u\|_{H^r},\\
  &\|F^{3,0}\|_{H^{r-2}}\le C\Big(1+\|\eta_0\|_{H^{k+\f12}}\Big)^4\|\eta-\eta_0\|_{H^{k-\f32}}\|\theta\|_{H^r},\\
  &\|F^{4,0}\|_{H^{r-\f32}(\Sigma)}\le C\Big(1+\|\eta_0\|_{H^{k+\f12}}\Big)^2\|\eta-\eta_0\|_{H^{k-\f32}}\left(\|u\|_{H^r}+\|p\|_{H^{r-1}}\right),\\
  &\|F^{5,0}\|_{H^{r-\f32}(\Sigma)}\le C\Big(1+\|\eta_0\|_{H^{k+\f12}}\Big)^2\|\eta-\eta_0\|_{H^{k-\f32}}\|\theta\|_{H^r},
\end{align*}
for $r=2, \ldots, k-1$.

Based on the Lemma \ref{lem:S lower regularity}, we have the estimate
\begin{align*}
  &\|u\|_{H^r}+\|p\|_{H^{r-1}}+\|\theta\|_{H^r}\\
  &\lesssim C(\eta_0)\Big(\|F^1+F^{1,0}\|_{H^{r-2}}+\|F^2+F^{2,0}\|_{H^{r-1}}+\|F^3+F^{3,0}\|_{H^{r-2}}\\
  &\quad+\|F^4+F^{4,0}\|_{H^{r-\f32}(\Sigma)}+\|F^5+F^{5,0}\|_{H^{r-\f32}(\Sigma)}\Big),
\end{align*}
where $C(\eta_0)$ is a constant depending on $\|\eta_0\|_{H^{k+\f12}}$. Combining the above estimates, we have
\begin{equation}
  \begin{aligned}
    &\|u\|_{H^r}+\|p\|_{H^{r-1}}+\|\theta\|_{H^r}\\
  &\lesssim C(\eta_0)\Big(\|F^1\|_{H^{r-2}}+\|F^2\|_{H^{r-1}}+\|F^3\|_{H^{r-2}}+\|F^4\|_{H^{r-\f32}(\Sigma)}+\|F^5\|_{H^{r-\f32}(\Sigma)}\Big)\\
  &\quad+C(\eta_0)\Big(1+\|\eta_0\|_{H^{k+\f12}}\Big)^4\|\eta-\eta_0\|_{H^{k-\f32}}\left(\|u\|_{H^r}+\|p\|_{H^{r-1}}+\|\theta\|_{H^r}\right),
  \end{aligned}
\end{equation}
for $r=2, \ldots, k-1$.
Then, if $\|\eta-\eta_0\|_{H^{k-\f32}}$ is to be chosen small enough such that the second term of the above inequality on the right-hand side less than $\f12(\|u\|_{H^r}+\|p\|_{H^{r-1}}+\|\theta\|_{H^r})$, then it can be absorbed into the left hand side, and we have that
\begin{align*}
  &\|u\|_{H^r}+\|p\|_{H^{r-1}}+\|\theta\|_{H^r}\\
  &\lesssim C(\eta_0)\Big(\|F^1\|_{H^{r-2}}+\|F^2\|_{H^{r-1}}+\|F^3\|_{H^{r-2}}+\|F^4\|_{H^{r-\f32}(\Sigma)}+\|F^5\|_{H^{r-\f32}(\Sigma)}\Big),
\end{align*}
for $r=2, \ldots, k-1$.
\end{proof}
Notice that the estimate in \eqref{ineq:initial lower elliptic} can only go up to $k-1$ order, which does not satisfy our requirement. In the next result, we can achieve two more order with a bootstrap argument, where we use the idea of \cite{LW}.
\begin{proposition}\label{prop:high regulatrity}
  Let $k\ge3$ be an integer. Suppose that $\eta\in H^{k+\f12}(\Sigma)$ as well as $\eta_0\in H^{k+\f12}(\Sigma)$ satisfying $\|\eta-\eta_0\|_{H^{k+\f12}(\Sigma)}\le\varepsilon_0$. Then the solution to \eqref{equ:SBC} satisfies
  \begin{equation}
    \begin{aligned}
      &\|u\|_{H^r}+\|p\|_{H^{r-1}}+\|\theta\|_{H^r}\\
  &\lesssim C(\eta_0)\Big(\|F^1\|_{H^{r-2}}+\|F^2\|_{H^{r-1}}+\|F^3\|_{H^{r-2}}+\|F^4\|_{H^{r-\f32}(\Sigma)}+\|F^5\|_{H^{r-\f32}(\Sigma)}\Big),
    \end{aligned}
  \end{equation}
  for $r=2, \ldots, k+1$, whenever the right hand side is finite, where $C(\eta_0)$ is a constant depending on $\|\eta_0\|_{H^{k+\f12}(\Sigma)}$.
\end{proposition}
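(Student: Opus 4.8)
The plan is to bootstrap two extra derivatives onto the estimate of Lemma \ref{lem:initial lower regularity} by differentiating the equations in the horizontal variables and recovering the normal derivatives algebraically from the system, following the idea of \cite{LW} and of Lemma 3.6 in \cite{GT1}. Lemma \ref{lem:initial lower regularity} already provides a strong solution $(u,p,\theta)$ of \eqref{equ:SBC} with \eqref{ineq:initial lower elliptic} valid for $r=2,\ldots,k-1$ and constant depending only on $\|\eta_0\|_{H^{k+\f12}(\Sigma)}$ (the present hypothesis $\|\eta-\eta_0\|_{H^{k+\f12}}\le\varepsilon_0$ is stronger than the one used there). As in the proof of that lemma we work throughout with the reformulation of \eqref{equ:SBC} whose coefficients are the $\eta_0$-quantities $\mathscr{A}_0,\mathscr{N}_0,J_0,K_0$; this introduces the additional forcing terms $F^{1,0},\ldots,F^{5,0}$, which are linear in $(u,p,\theta)$ and carry a factor $\|\eta-\eta_0\|_{H^{k+\f12}}$, and which will be absorbed into the left-hand side at the end. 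The structural fact we use is that the harmonic extension satisfies $\bar\eta_0\in H^{k+1}(\Om)$, so $\mathscr{A}_0\in H^{k}(\Om)$, $\nabla\mathscr{A}_0\in H^{k-1}(\Om)$ and the traces on $\Sigma$ lie in $H^{k-\f12}(\Sigma)$; since $k\ge 3$, products of such factors with derivatives of $(u,p,\theta)$ are handled by the $H^k$-algebra property and the standard Sobolev multiplication rules. All the manipulations below are carried out a priori (e.g.\ on a regularized problem), which is legitimate since we only claim the estimate when its right-hand side is finite.

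\emph{Horizontal differentiation.} In the flattened domain $\Om=\Sigma\times(-1,0)$ the derivatives $\pa_\ell$, $\ell\in\{1,2\}$, are tangential to both $\Sigma$ and $\Sigma_b$ and commute with the traces, so applying $\pa_\ell$ to the $\eta_0$-system creates no boundary terms: $(\pa_\ell u,\pa_\ell p,\pa_\ell\theta)$ solves a problem of the form \eqref{equ:SBC} with the same coefficients and with forcing
\[
\widetilde F^i=\pa_\ell F^i+\pa_\ell F^{i,0}+\mathcal{C}^i_\ell,\qquad i=1,\ldots,5,
\]
where $\mathcal{C}^i_\ell$ is the commutator of $\pa_\ell$ with the corresponding $\mathscr{A}_0$-operator. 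Schematically, each term of $\mathcal{C}^i_\ell$ is (one horizontal derivative of a smooth function of $\bar\eta_0$ and its first derivatives) times (at most two derivatives of $u$, one of $p$, one of $\theta$, or their traces), so that $\mathcal{C}^i_\ell$ is estimated, in the norm in which Lemma \ref{lem:initial lower regularity} requires $F^i$, by $C(\|\eta_0\|_{H^{k+\f12}})$ times $\|u\|_{H^{k-1}}+\|p\|_{H^{k-2}}+\|\theta\|_{H^{k-1}}$, which is already controlled by the right-hand side of \eqref{ineq:initial lower elliptic} at $r=k-1$. Applying Lemma \ref{lem:initial lower regularity} at order $r=k-1$ to this differentiated system and summing over $\ell=1,2$ bounds $\pa_1u,\pa_2u$ in $H^{k-1}$, $\pa_1p,\pa_2p$ in $H^{k-2}$, and $\pa_1\theta,\pa_2\theta$ in $H^{k-1}$ in terms of $\|F^1\|_{H^{k-2}}+\|F^2\|_{H^{k-1}}+\|F^3\|_{H^{k-2}}+\|F^4\|_{H^{k-\f32}(\Sigma)}+\|F^5\|_{H^{k-\f32}(\Sigma)}$, up to the $\|\eta-\eta_0\|_{H^{k+\f12}}$-small terms.

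\emph{Recovery of normal derivatives and iteration.} Since $(\mathscr{A}_0)_{33}=K_0$ is bounded away from $0$ (in terms of $\delta$ and $\eta_0$), the relation $\nabla_{\mathscr{A}_0}\cdot u=F^2+F^{2,0}$ expresses $\pa_3u_3$ in terms of $\pa_1u,\pa_2u,u$ and $F^2+F^{2,0}$; the leading term of $\Delta_{\mathscr{A}_0}$ is $K_0^2\pa_3^2$, so the first two momentum equations express $\pa_3^2u_1,\pa_3^2u_2$, the third momentum equation together with the divergence relation expresses $\pa_3p$, and $-\Delta_{\mathscr{A}_0}\theta=F^3+F^{3,0}$ expresses $\pa_3^2\theta$, all in terms of quantities already bounded. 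Feeding these identities back (and differentiating them once more in $\pa_3$ when needed) promotes the solution to $(u,p,\theta)\in H^k\times H^{k-1}\times H^k$, i.e.\ \eqref{ineq:initial lower elliptic} now holds at $r=k$. Repeating the horizontal-differentiation step once more, but now applying the just-established estimate at order $r=k$ to the $\pa_\ell$-system (the forcing $\widetilde F^i$ and the commutators $\mathcal{C}^i_\ell$ lie in the required spaces because $F^i$ and $(u,p,\theta)$ now have one more derivative and because $\nabla\mathscr{A}_0\in H^{k-1}(\Om)$), and then recovering the normal derivatives as above, raises the regularity to $r=k+1$. The coefficient regularity $\nabla\mathscr{A}_0\in H^{k-1}(\Om)$ --- equivalently $\eta_0\in H^{k+\f12}(\Sigma)$ --- is exactly what allows two gains and blocks a third. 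Finally, collecting all the $\|\eta-\eta_0\|_{H^{k+\f12}}$-small contributions produced along the way and choosing $\varepsilon_0$ small enough that they can be absorbed by a $\f12$-multiple of $\|u\|_{H^r}+\|p\|_{H^{r-1}}+\|\theta\|_{H^r}$ on the left (exactly as at the end of Lemma \ref{lem:initial lower regularity}) yields the asserted estimate for $r=2,\ldots,k+1$.

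\emph{Main difficulty.} The technical heart is the commutator bookkeeping in the horizontal-differentiation step: one must check that every product of (derivatives of $\bar\eta_0$) with (derivatives of $u$, $p$, $\theta$, or their boundary traces) lands in precisely the Sobolev space in which Lemma \ref{lem:initial lower regularity} demands the corresponding forcing, with constant controlled by $\|\eta_0\|_{H^{k+\f12}(\Sigma)}$ alone, and that the regularity budget $\bar\eta_0\in H^{k+1}(\Om)$ is exactly enough for two gains but not three; one must also verify that the matrices entering the normal-derivative recovery ($K_0$, $K_0^2$, and the algebra of the constant-coefficient Stokes system) are invertible with bounds depending only on $\eta_0$ and $\delta$.
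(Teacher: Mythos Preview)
Your bootstrap strategy---horizontal differentiation, application of the order-$(k-1)$ elliptic estimate to the differentiated system, then algebraic recovery of the normal derivatives from the divergence relation and the interior equations---is the same mechanism the paper uses, and the commutator bookkeeping you flag is indeed the heart of the matter. The two proofs differ in organization and in how the a priori manipulations are justified.

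The paper does not recast the system with $\mathscr{A}_0$-coefficients and perturbative forcing $F^{i,0}$. Instead it keeps the $\mathscr{A}$-coefficients but first regularizes $\eta$ by truncating its Fourier series, obtaining $\eta^m\in H^j(\Sigma)$ for every $j$. For each $m$, Lemma~\ref{lem:S lower regularity} (applied with the smoother $\eta^m$) already places the corresponding solution $(u^m,p^m,\theta^m)$ in $H^{k+1}\times H^k\times H^{k+1}$, albeit with constant depending on $\|\eta^m\|_{H^{k+5/2}}$; thus the horizontal differentiation and normal-derivative recovery are rigorously justified. The bootstrap (one or two horizontal derivatives, Lemma~\ref{lem:initial lower regularity} at order $k-1$, then recovery of $\pa_3^k$ and $\pa_3^{k+1}$ from the explicit form of the equations) shows this bound actually depends only on $\|\eta^m\|_{H^{k+\f12}}$; since $\eta^m\to\eta$ in $H^{k+\f12}$ and $\|\eta-\eta_0\|_{H^{k+\f12}}\le\varepsilon_0$, the bound is uniform in $m$, and a weak-compactness argument together with uniqueness identifies the limit with $(u,p,\theta)$.

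Your $\mathscr{A}_0$-reformulation with absorption of the $\varepsilon_0$-small contributions is an equally valid route, trading the weak-limit step for a perturbation step. The one place your sketch is thin is the parenthetical ``e.g.\ on a regularized problem'': without first knowing $(u,p,\theta)\in H^k\times H^{k-1}\times H^k$, you cannot treat $(\pa_\ell u,\pa_\ell p,\pa_\ell\theta)$ as a strong solution of the differentiated system, nor close the absorption of terms like $\pa_\ell F^{1,0}$, which at the $r=k$ stage carry a factor $\|u\|_{H^k}$. Some explicit regularization---most naturally the paper's Fourier cutoff of $\eta$, which makes Lemma~\ref{lem:S lower regularity} deliver the needed qualitative regularity up front---is what turns your a priori estimate into an actual proof.
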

\begin{proof}
  Here, we only consider the case for $r=k$ and $r=k+1$, since the conclusion has been proved when $r\le k-1$. For $m\in\mathbb{N}$, we define $\eta^m$ by throwing away high frequencies:

  \begin{equation*}
    {\hat{\eta}}^m(n)=\left\{
    \begin{aligned}
      &\hat{\eta}(n),\quad &\text{for}\quad |n|\le m-1,\\
      &0, \quad &\text{for}\quad |n|\ge m.
    \end{aligned}
    \right.
  \end{equation*} Then for each $m$, $\eta^m\in H^j(\Sigma)$ for arbitrary $j\ge0$ and $\eta^m\to \eta$ in $H^{k+\f12}(\Sigma)$ as $m\to\infty$.

  We consider the problem \eqref{equ:SBC} with $\mathscr{A}$ and $\mathscr{N}$ replaced by $\mathscr{A}^m$ and $\mathscr{N}^m$, and $y_3$ replaced by $y_3^m$. Since $\eta^m\in H^{\f52}$, we may apply Lemma \ref{lem:S lower regularity} to deduce that there exists a unique $(u^m, p^m, \theta^m)$ which solves
  \ben
  \left\{
  \begin{aligned}
    \dive_{\mathscr{A}^m}S_{\mathscr{A}^m}(p^m, u^m)-\theta^m \nabla_{\mathscr{A}^m}y_3^m&=F^1 \quad\text{in}\quad\Om\\
  \dive_{\mathscr{A}^m}u^m&=F^2\quad\text{in}\quad\Om\\
 -\Delta_{\mathscr{A}^m}\theta^m&=F^3\quad\text{in}\quad\Om\\
 S_{\mathscr{A}^m}(p^m, u^m)\mathscr{N}^m&=F^4\quad\text{on}\quad \Sigma\\
 \nabla_{\mathscr{A}^m}\theta^m\cdot\mathscr{N}^m+\theta^m\left|\mathscr{N}^m\right|&=F^5\quad\text{on}\quad \Sigma\\
 u^m=0,\quad \theta^m&=0\quad\text{on}\quad \Sigma_b\\
  \end{aligned}
  \right.
  \een
  and satisfies
  \begin{align*}
    \|u^m\|_{H^r}+\|p^m\|_{H^{r-1}}+\|\theta^m\|_{H^r}
  &\lesssim& C(\|\eta^m\|_{H^{k+\f52}})\Big(\|F^1\|_{H^{r-2}}+\|F^2\|_{H^{r-1}}+\|F^3\|_{H^{r-2}}\\
  &&+\|F^4\|_{H^{r-\f32}(\Sigma)}+\|F^5\|_{H^{r-\f32}(\Sigma)}\Big)
  \end{align*}
  for $r=2, \ldots, k+1$.
  In the following, we will prove that the constant $C(\|\eta^m\|_{H^{k+\f52}})$ can be improved only in terms of $\|\eta^m\|_{H^{k+\f12}}$.

  For convenience, we define
  \[
  \mathscr{Z}=C(\eta_0)P(\eta^m)\Big(\|F^1\|_{H^{r-2}}^2+\|F^2\|_{H^{r-1}}^2
  +\|F^3\|_{H^{r-2}}^2+\|F^4\|_{H^{r-\f32}(\Sigma)}^2+\|F^5\|_{H^{r-\f32}(\Sigma)}^2\Big)
  \]
  where $C(\eta_0)$ is a constant depending on $\|\eta_0\|_{H^{k+\f12}}$ and $P(\eta)$ is a polynomial of $\|\eta^m\|_{H^{k+\f12}}$. Then after the same computation as in the proof of Proposition 2.18 in \cite{LW} except for the only modification of $F$ replaced by $F^1+\theta^m \nabla_{\mathscr{A}^m}y_3^m$, we have
  \[
  \|u^m\|_{H^r}+\|p^m\|_{H^{r-1}}\lesssim\mathscr{Z},
  \]
  for $r=2, \ldots, k+1$. That's because in the above estimate, we only need to consider the terms $\|\theta^m\|_{H^r}$, for $r=2, \cdots, k-1$, but $\|\theta^m\|_{H^r}\lesssim\mathscr{Z}$ is assured by the Lemma \ref{ineq:initial lower elliptic}.

 Then we consider the temperature $\theta^m$. In the following of bootstrap argument, we may abuse the notation $\theta$ instead of $\theta^m$ and also for $\eta$, $\mathscr{A}$, $\mathscr{N}$, but they should be thought as $\eta^m$, $\mathscr{A}^m$, $\mathscr{N}^m$. We write explicitly the equation of $\theta$ as
 \ben\label{equ:equ T}
 \begin{aligned}
   &\pa_{11}\theta+\pa_{22}\theta+(1+A^2+B^2)K^2\pa_{33}\theta-2AK\pa_{13}\theta-2BK\pa_{23}\theta\\
   &\quad+(AK\pa_3(AK)+BK\pa_3(BK)-\pa_1(AK)-\pa_2(BK)+K\pa_3K)\pa_3\theta=-F^3.
 \end{aligned}
 \een
 \begin{enumerate}[step 1]
 \item $r=k$ case.
 By Lemma \ref{ineq:initial lower elliptic},
 \[
 \|\theta\|_{H^{k-1}}^2\lesssim C(\eta_0)\Big(\|F^3\|_{H^{k-3}}^2+\|F^5\|_{H^{k-\f52}(\Sigma)}^2\Big)\lesssim\mathscr{Z},
 \]
 where the constant $C(\eta_0)$ only depends on $\|\eta_0\|_{H^{k+\f12}}$.
 For $i=1,2$, since $\pa_i \theta$ satisfies the equation
 \beno
  \left\{
  \begin{aligned}
    -\Delta_{\mathscr{A}}\pa_i \theta&=\bar{F}^3 \quad \text{in}\quad \Om,\\
    \nabla_{\mathscr{A}}\pa_i \theta\cdot\mathscr{N}+\pa_i \theta\left|\mathscr{N}\right|&=\bar{F}^5 \quad \text{on}\quad \Sigma,\\
    \pa_i \theta&=0 \quad \text{on}\quad \Sigma_b,
  \end{aligned}
  \right.
  \eeno
  where
  \begin{align*}
    \bar{F}^3&=\pa_i F^3+\dive_{\pa_i\mathscr{A}}\nabla_{\mathscr{A}}\theta+\dive_{\mathscr{A}}\nabla_{\pa_i\mathscr{A}}\theta,\\
    \bar{F}^5&=\pa_i F^5-\nabla_{\pa_i\mathscr{A}}\theta\cdot\mathscr{N}-\nabla_{\mathscr{A}}\theta\cdot\pa_i\mathscr{N}-\theta\pa_i\left|\mathscr{N}\right|.
  \end{align*}
  Applying the Lemma A.1--A.2 in \cite{GT1}, we have
  \begin{align*}
    &\|\bar{F}^3\|_{H^{k-3}}^2+\|\bar{F}^5\|_{H^{k-\f52}(\Sigma)}^2\\
    &\lesssim \|F^3\|_{H^{k-2}}^2+\|F^5\|_{H^{k-\f32}(\Sigma)}^2+P(\eta)\|\theta\|_{H^{k-1}}^2\\
    &\lesssim\mathscr{Z}.
  \end{align*}
  Employing the $k-1$ order elliptic estimate, we have
  \[
  \|\pa_i \theta\|_{H^{k-1}}^2\lesssim C(\eta_0)\Big(\|\bar{F}^3\|_{H^{k-3}}^2+\|\bar{F}^5\|_{H^{k-\f52}(\Sigma)}^2\Big)\lesssim\mathscr{Z}.
  \]
  Then taking derivative $\pa_3^{k-2}$ on both sides of \eqref{equ:equ T} and focusing on the term $(1+A^2+B^2)K^2\pa_3^k\theta$, the estimates of all the other terms in $H^0$-norm implies that
  \[
  \|\pa_3^k\theta\|_{H^0}^2\lesssim\mathscr{Z}.
  \]
  Thus, we have proved that
  \[
  \|\theta\|_{H^k}^2\lesssim\mathscr{Z}.
  \]
  \item $r=k+1$ case.

  For $i,j=1,2$, since $\pa_{ij}\theta$ satisfies the equation
  \beno
  \left\{
  \begin{aligned}
    -\Delta_{\mathscr{A}}\pa_{ij} \theta&=\tilde{F}^3 \quad \text{in}\quad \Om,\\
    \nabla_{\mathscr{A}}\pa_{ij} \theta\cdot\mathscr{N}+\pa_{ij} \theta\left|\mathscr{N}\right|&=\tilde{F}^5 \quad \text{on}\quad \Sigma,\\
    \pa_{ij} \theta&=0 \quad \text{on}\quad \Sigma_b,
  \end{aligned}
  \right.
  \eeno
  where
  \begin{align*}
    \tilde{F}^3&=\pa_{ij}F^3+\dive_{\pa_{ij}\mathscr{A}}\nabla_{\mathscr{A}}\theta
    +\dive_{\mathscr{A}}\nabla_{\pa_{ij}\mathscr{A}}\theta+\dive_{\pa_i\mathscr{A}}\nabla_{\pa_j\mathscr{A}}\theta
    +\dive_{\pa_j\mathscr{A}}\nabla_{\pa_i\mathscr{A}}\theta\\
    &\quad+\dive_{\pa_i\mathscr{A}}\nabla_{\mathscr{A}}\pa_j \theta+\dive_{\pa_j\mathscr{A}}\nabla_{\mathscr{A}}\pa_i \theta+\dive_{\mathscr{A}}\nabla_{\pa_i\mathscr{A}}\pa_j \theta+\dive_{\mathscr{A}}\nabla_{\pa_j\mathscr{A}}\pa_i \theta,\\
    \tilde{F}^5&=\pa_{ij}F^5-\nabla_{\mathscr{A}} \theta\cdot\pa_{ij}\mathscr{N}-(\nabla_{\pa_i\mathscr{A}} \theta+\nabla_{\mathscr{A}}\pa_i\theta)\cdot\pa_{j}\mathscr{N}-(\nabla_{\pa_j\mathscr{A}} \theta+\nabla_{\mathscr{A}}\pa_j\theta)\cdot\pa_{i}\mathscr{N}\\
    &\quad-(\nabla_{\pa_{ij}\mathscr{A}}\theta+\nabla_{\pa_i\mathscr{A}}\pa_j\theta-\nabla_{\pa_j\mathscr{A}}\pa_i\theta)\mathscr{N}
    -\theta\pa_{ij}\left|\mathscr{N}\right|-\pa_i\theta\pa_j\left|\mathscr{N}\right|-\pa_j\theta\pa_i\left|\mathscr{N}\right|.
  \end{align*}
  Applying the Lemma A.1--A.2 in \cite{GT1} to the forcing terms, we have
  \begin{align*}
    &\|\tilde{F}^3\|_{H^{k-3}}^2+\|\tilde{F}^5\|_{H^{k-\f52}(\Sigma)}^2\\
    &\lesssim \|F^3\|_{H^{k-1}}^2+\|F^5\|_{H^{k-\f12}(\Sigma)}^2+P(\eta)\|\theta\|_{H^k}^2\\
    &\lesssim\mathscr{Z}.
  \end{align*}
  Then the Lemma \ref{ineq:initial lower elliptic} implies that
  \[
  \|\pa_{ij}\theta\|_{H^{k-1}}^2\lesssim C(\eta_0)\left(\|\tilde{F}^3\|_{H^{k-3}}^2+\|\tilde{F}^5\|_{H^{k-\f52}(\Sigma)}^2\right)
  \lesssim\mathscr{Z}.
  \]

  Since we have proved the case $r=k$, we take derivative $\pa_3^{k-2}\pa_i$ on both sides of \eqref{equ:equ T} for $i=1,2$ and focus on the term of $(1+A^2+B^2)K^2\pa_3^k\pa_i\theta$. Utilizing the estimates of all the other terms in $H^0$-norm, we have
  \[
  \|\pa_3^k\pa_i\theta\|_{H^0}^2\lesssim\mathscr{Z}.
  \]
  Then, taking derivative $\pa_3^{k-1}$ on both sides of \eqref{equ:equ T} and focusing on the term of $(1+A^2+B^2)K^2\pa_3^{k+1}\theta$, by all the estimates above, we have
  \[
  \|\pa_3^{k+1}\theta\|_{H^0}^2\lesssim\mathscr{Z}.
  \]
  Therefore, we have proved
  \[
  \|\theta\|_{H^{k+1}}^2\lesssim\mathscr{Z}.
  \]
  \end{enumerate}
  Now, we go back to the original notation. According to the convergence of $\eta^m$, we have
  \ben\label{ineq:bound}
  \begin{aligned}
  &\|u^m\|_{H^r}^2+\|p^m\|_{H^{r-1}}^2+\|\theta^m\|_{H^r}^2\\
  &\lesssim C(\eta_0)P(\eta^m)\Big(\|F^1\|_{H^{r-2}}^2+\|F^2\|_{H^{r-1}}^2
  +\|F^3\|_{H^{r-2}}^2+\|F^4\|_{H^{r-\f32}(\Sigma)}^2+\|F^5\|_{H^{r-\f32}(\Sigma)}^2\Big)\\
  &\lesssim C(\eta_0)P(\eta)\Big(\|F^1\|_{H^{r-2}}^2+\|F^2\|_{H^{r-1}}^2
  +\|F^3\|_{H^{r-2}}^2+\|F^4\|_{H^{r-\f32}(\Sigma)}^2+\|F^5\|_{H^{r-\f32}(\Sigma)}^2\Big)\\
  &\lesssim C(\eta_0)\Big(\|F^1\|_{H^{r-2}}^2+\|F^2\|_{H^{r-1}}^2
  +\|F^3\|_{H^{r-2}}^2+\|F^4\|_{H^{r-\f32}(\Sigma)}^2+\|F^5\|_{H^{r-\f32}(\Sigma)}^2\Big),
  \end{aligned}
  \een
  for $r=2, \ldots, k+1$, where in the last inequality we have used the assumption that $\|\eta-\eta_0\|_{H^{k+\f12}}\le\varepsilon_0$ and the term $P(\eta_0)$ is absorbed by $C(\eta_0)$. Here $C(\eta_0)$ depends only on $\|\eta_0\|_{H^{k+\f12}}$.

  The inequality of boundedness \eqref{ineq:bound} implies that the sequence $\{(u^m, p^m, \theta^m)\}$ is uniformly bounded in $H^r\times H^{r-1}\times H^r$, so we can extract a weakly convergent subsequence, which is still denoted by $\{(u^m, p^m, \theta^m)\}$. That is, $u^m\rightharpoonup u^0$ in $H^r(\Om)$, $p^m\rightharpoonup p^0$ in $H^{r-1}(\Om)$ and $\theta^m\rightharpoonup \theta^0$ in $H^r(\Om)$. Since $\eta^m\rightarrow\eta$ in $H^{k+\f12}(\Sigma)$, we also have that $\mathscr{A}^m\to\mathscr{A}$, $J^m\to J$ in $H^k(\Om)$, and $\mathscr{N}^m\to\mathscr{N}$ in $H^{k-\f12}(\Sigma)$.

   After multiplying the equation $\dive_{\mathscr{A}^m}u^m=F^2$ by $wJ^m$ for $w\in C_c^\infty(\Om)$ and integrating by parts, we see that
   \begin{align*}
   \int_{\Om}F^2wJ^m=\int_{\Om}\dive_{\mathscr{A}^m}(u^m)wJ^m&=-\int_{\Om}u^m\cdot\nabla_{\mathscr{A}^m}wJ^m\\
   &\to-\int_{\Om}u^0\cdot\nabla_{\mathscr{A}}wJ=\int_{\Om}\dive_{\mathscr{A}}(u^0)wJ,
   \end{align*}
   from which we deduce that $\dive_{\mathscr{A}}u^0=F^2$.
   Then multiplying the third equation in \eqref{equ:SBC} by $wJ^m$ for $w\in{}_0H^1(\Om)$ and integrating by parts, we have that
    \[
    \int_{\Om}\nabla_{\mathscr{A}^m}\theta^m\cdot\nabla_{\mathscr{A}^m}wJ^m+\int_{\Sigma}\theta^mw\left|\mathscr{N}^m\right|=\int_{\Om}F^3wJ^m+\int_{\Sigma}F^5w,
    \]
    which, by passing to the limit $m\to\infty$, reveals that
    \[
    \int_{\Om}\nabla_{\mathscr{A}}\theta^0\cdot\nabla_{\mathscr{A}}wJ+\int_{\Sigma}\theta^0w\left|\mathscr{N}\right|=\int_{\Om}F^3wJ+\int_{\Sigma}F^5w.
    \]
    Finally we multiply the first equation in \eqref{equ:SBC} by $wJ^m$ for $w\in{}_0H^1(\Om)$ and integrate by parts to see that
   \[
   \int_{\Om}\f12\mathbb{D}_{\mathscr{A}^m}u^m:\mathbb{D}_{\mathscr{A}^m}wJ^m-p^mJ^m-\theta^m\nabla_{\mathscr{A}^m}y_3^m\cdot wJ^m=\int_{\Om}F^1\cdot wJ^m-\int_{\Sigma}F^4\cdot w.
   \]
   Passing to the limit $m\to\infty$ , we deduce that
   \[
   -\int_{\Om}\f12\mathbb{D}_{\mathscr{A}}u^0:\mathbb{D}_{\mathscr{A}} wJ+p^0\dive_{\mathscr{A}}(w)J-\theta^0\nabla_{\mathscr{A}}y_3\cdot wJ=\int_{\Om}F^1\cdot wJ-\int_{\Sigma}F^4\cdot w.
   \]
   After integrating by parts again, we deduce that $(u^0, p^0, \theta^0)$ satisfies \eqref{equ:SBC}. Since $(u, p, \theta)$ is the unique solution to \eqref{equ:SBC}, we have that $u=u^0$, $p=p^0$ and $\theta=\theta^0$. Then, according to the weak lower semicontinuity and the uniform boundedness of \eqref{ineq:bound}, we have that
    \begin{align*}
      &\|u\|_{H^r}+\|p\|_{H^{r-1}}+\|\theta\|_{H^r}\\
  &\lesssim C(\eta_0)\Big(\|F^1\|_{H^{r-2}}+\|F^2\|_{H^{r-1}}+\|F^3\|_{H^{r-2}}+\|F^4\|_{H^{r-\f32}(\Sigma)}+\|F^5\|_{H^{r-\f32}(\Sigma)}\Big),
    \end{align*}
  for $r=2, \ldots, k+1$, where $C(\eta_0)$ is a constant depending on $\|\eta_0\|_{H^{k+\f12}(\Sigma)}$.
\end{proof}

\subsection{The $\mathscr{A}$-Poisson problem}

Now we consider the elliptic problem
\ben\label{equ:poisson}
\left\{
\begin{aligned}
  &\Delta_{\mathscr{A}}p=f^1\quad&\text{in}\thinspace\Om,\\
  &p=f^2\quad&\text{on}\thinspace\Sigma,\\
  &\nabla_{\mathscr{A}}p\cdot\nu=f^3\quad&\text{on}\thinspace\Sigma_b,
\end{aligned}
\right.
\een
where $\nu$ is the outward--pointing normal on $\Sigma_b$. The details of elliptic estimates of \eqref{equ:poisson} has been interpreted in \cite{GT1} and \cite{LW}, so we omit them here.

\section{Linear estimates}

Now we study the problem \eqref{equ:linear BC}, following the path of \cite{GT1}. First, we will employ two notions of solution: weak and strong.

\subsection{The weak solution}

Suppose that a smooth solution to \eqref{equ:linear BC} exists, then by integrating over $\Om$ by parts, and in time from $0$ to $T$, we see that
\ben
\begin{aligned}
\left(\pa_tu, \psi\right)_{L^2\mathscr{H}^0}+\f12\left(u, \psi\right)_{L^2\mathscr{H}^1}-\left(p, \dive_{\mathscr{A}}\psi\right)_{L^2\mathscr{H}^0}-\left(\theta \nabla_{\mathscr{A}}y_3,\psi\right)_{L^2\mathscr{H}^0}\\
=\left(F^1, \psi\right)_{L^2\mathscr{H}^0}
-\left(F^4, \psi\right)_{L^2H^0(\Sigma)},\\
\left(\pa_t\theta, \phi\right)_{L^2\mathscr{H}^0}+\left(\nabla_{\mathscr{A}}\theta, \nabla_{\mathscr{A}}\phi\right)_{L^2\mathscr{H}^0}+\left(\theta\left|\mathscr{N}\right|, \phi\right)_{L^2H^0(\Sigma)}\\
=\left(F^3, \psi\right)_{L^2\mathscr{H}^0}
+\left(F^5, \psi\right)_{L^2H^0(\Sigma)},
\end{aligned}
\een
for $\phi$, $\psi\in\mathscr{H}^1_T$.

If we were to restrict the test function $\psi$ to $\psi\in\mathscr{X}$, the term $\left(p, \dive_{\mathscr{A}}\psi\right)_{L^2\mathscr{H}^0}$ would vanish. Then we have a pressureless weak formulation.
\ben
\begin{aligned}
\left(\pa_tu, \psi\right)_{L^2\mathscr{H}^0}+\f12\left(u, \psi\right)_{L^2\mathscr{H}^1}-\left(\theta \nabla_{\mathscr{A}}y_3,\psi\right)_{L^2\mathscr{H}^0}\\
=\left(F^1, \psi\right)_{L^2\mathscr{H}^0}
-\left(F^4, \psi\right)_{L^2H^0(\Sigma)},\\
\left(\pa_t\theta, \phi\right)_{L^2\mathscr{H}^0}+\left(\nabla_{\mathscr{A}}\theta, \nabla_{\mathscr{A}}\phi\right)_{L^2\mathscr{H}^0}+\left(\theta\left|\mathscr{N}\right|, \phi\right)_{L^2H^0(\Sigma)}\\
=\left(F^3, \psi\right)_{L^2\mathscr{H}^0}
+\left(F^5, \psi\right)_{L^2H^0(\Sigma)},
\end{aligned}
\een
This leads us to define a weak solution without pressure.
\begin{definition}
  Suppose that $u_0\in \mathscr{Y}(0)$, $\theta_0\in H^0(\Om)$, $F^1-F^4\in (\mathscr{X}_T)^\ast$ and $F^3+F^5\in(\mathscr{H}^1_T)^\ast$. If there exists a pair $(u, \theta)$ achieving the initial data $u_0$, $\theta_0$ and satisfies $u\in\mathscr{H}^1_T$, $\theta\in\mathscr{H}^1_T$ and $\pa_tu\in (\mathscr{X}_T)^\ast$, $\pa_t \theta\in (\mathscr{H}^1_T)^\ast$, such that
\ben\label{equ:lpws}
\begin{aligned}
&\left<\pa_tu, \psi\right>_{(\mathscr{X}_T)^\ast}+\f12\left(u, \psi\right)_{L^2\mathscr{H}^1}-\left(\theta \nabla_{\mathscr{A}}y_3,\psi\right)_{L^2\mathscr{H}^0}=\left(F^1-F^4, \psi\right)_{(\mathscr{X}_T)^\ast},\\
&\left<\pa_t\theta, \phi\right>_{(\mathscr{H}^1_T)^\ast}+\left(\theta,\phi\right)_{L^2\mathscr{H}^1}+\left(\theta\left|\mathscr{N}\right|, \phi\right)_{L^2H^0(\Sigma)}=\left(F^3+F^5, \psi\right)_{(\mathscr{H}^1_T)^\ast},
\end{aligned}
\een
   holds for any $\psi\in\mathscr{X}_T$ and $\phi\in\mathscr{H}^1_T$, we call the pair $(u, \theta)$ a pressureless weak solution.
\end{definition}
Since our aim is to construct solutions with high regularity to \eqref{equ:linear BC}, we will directly construct strong solutions to \eqref{equ:lpws}.  And it is easy to see that weak solutions will arise as a byproduct of the construction of  strong solutions to  \eqref{equ:linear BC}. Hence, we will not study the existence of weak solutions.

Now we derive some properties and uniqueness of weak solutions.
\begin{lemma}
  Suppose that $u$, $\theta$ are weak solutions of \eqref{equ:lpws}. Then, for almost every $t\in [0,T]$,
  \ben\label{eq:integral}
  \begin{aligned}
    \f12\|u(t)\|_{\mathscr{H}^0(t)}^2+\f12\int_0^t\|u(s)\|_{\mathscr{H}^1(s)}^2\,\mathrm{d}s=\f12\|u(0)\|_{\mathscr{H}^0(0)}^2+\left(F^1-F^4,u\right)_{(\mathscr{X}_t)^\ast}\\
    +\f12\int_0^t\int_{\Om}|u(s)|^2\pa_sJ(s)\,\mathrm{d}s+\int_0^t\int_{\Om}\theta(s)\nabla_{\mathscr{A}}y_3\cdot u(s)\,\mathrm{d}s,\\
    \f12\|\theta(t)\|_{\mathscr{H}^0(t)}^2+\int_0^t\|\theta(s)\|_{\mathscr{H}^1(s)}^2\,\mathrm{d}s+\int_0^t\int_{\Sigma}|\theta(s)|^2\left|\mathscr{N}\right|\,\mathrm{d}s=\f12\|\theta(0)\|_{\mathscr{H}^0(0)}^2\\
    +\left(F^3+F^5,\theta\right)_{(\mathscr{H}^1_t)^\ast}+\f12\int_0^t\int_{\Om}|\theta(s)|^2\pa_sJ(s)\,\mathrm{d}s.
  \end{aligned}
  \een
  Also,
  \beq\label{est:weak theta}
  \sup_{0\le t\le T}\|\theta(t)\|_{\mathscr{H}^0(t)}^2+\|\theta\|_{\mathscr{H}^1_T}^2\lesssim \exp\left(C_0(\eta)T\right)\left(\|\theta(0)\|_{\mathscr{H}^0(0)}^2+\|F^3+F^5\|_{(\mathscr{H}^1_T)^\ast}^2\right),
  \eeq
  \ben\label{est:weak u}
  \begin{aligned}
    \sup_{0\le t\le T}\|u(t)\|_{\mathscr{H}^0(t)}^2+\|u\|_{\mathscr{H}^1_T}^2\lesssim \exp\left(CC_0(\eta)T\right)\Big(\|u(0)\|_{\mathscr{H}^0(0)}^2+\|\theta(0)\|_{\mathscr{H}^0(0)}^2\\
    +\|F^1-F^4\|_{(\mathscr{X}_T)^\ast}^2+\|F^3+F^5\|_{(\mathscr{H}^1_T)^\ast}^2\Big),
  \end{aligned}
  \een
  where $C_0(\eta):=\max\{\sup_{0\le t\le T}\|\pa_tJK\|_{L^\infty}, \sup_{0\le t\le T}\|\nabla_{\mathscr{A}}y_3\|_{L^\infty}\}$.
\end{lemma}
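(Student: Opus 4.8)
The plan is the classical energy argument for a linear Stokes/parabolic system carried by a moving metric: first produce the integral identities \eqref{eq:integral} by testing the pressureless weak formulation \eqref{equ:lpws} against the solution itself, and then convert them into \eqref{est:weak theta}--\eqref{est:weak u} via Young's inequality and Gr\"onwall's lemma, handling the temperature $\theta$ first (its identity decouples from $u$) and feeding the resulting bound into the velocity estimate.

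\emph{Step 1: the identities.} Since weak solutions satisfy $\nabla_{\mathscr{A}}\cdot u=0$ (as in \cite{GT1}), the function $u$ lies in $\mathscr{X}_T$ and is an admissible test function in the first line of \eqref{equ:lpws}, while $\theta\in\mathscr{H}^1_T$ is admissible in the second. The one genuine subtlety is that the inner product $(\cdot,\cdot)_{\mathscr{H}^0(t)}=\int_\Om(\cdot)(\cdot)J(t)$ and the dual pairings depend on $t$ through $J(t)$, so $\langle\partial_tu,u\rangle$ cannot be integrated in time naively. Here I invoke the time-dependent function-space machinery of Section $2$ (the analogue of the corresponding result in \cite{GT1}): for $u\in\mathscr{X}_T$ with $\partial_tu\in(\mathscr{X}_T)^\ast$ the map $t\mapsto\|u(t)\|_{\mathscr{H}^0(t)}^2$ agrees a.e.\ with an absolutely continuous function and
\[
\tfrac12\tfrac{d}{dt}\|u(t)\|_{\mathscr{H}^0(t)}^2=\langle\partial_tu,u\rangle_{(\mathscr{X}_t)^\ast}+\tfrac12\int_\Om|u(t)|^2\partial_tJ(t),
\]
with the analogous identity for $\theta$ using $\mathscr{H}^1$ in place of $\mathscr{X}$. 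Taking $\psi=u$ and $\phi=\theta$ in \eqref{equ:lpws}, inserting these formulas, and integrating from $0$ to $t$ produces exactly \eqref{eq:integral}; the nonnegative boundary term $\int_0^t\int_\Sigma|\theta|^2|\mathscr{N}|$ comes from $(\theta|\mathscr{N}|,\theta)_{L^2H^0(\Sigma)}$.

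\emph{Step 2: the bound for $\theta$.} In the second identity of \eqref{eq:integral} the boundary integral is nonnegative and can be discarded. Writing $\partial_sJ=(\partial_sJ\,K)J$ gives $\tfrac12\int_\Om|\theta|^2\partial_sJ\le\tfrac12C_0(\eta)\|\theta(s)\|_{\mathscr{H}^0(s)}^2$, while $\langle F^3+F^5,\theta\rangle_{(\mathscr{H}^1_t)^\ast}\le\tfrac12\|\theta\|_{\mathscr{H}^1_t}^2+\tfrac12\|F^3+F^5\|_{(\mathscr{H}^1_t)^\ast}^2$, and the first term is absorbed into $\int_0^t\|\theta(s)\|_{\mathscr{H}^1(s)}^2\,ds$ on the left. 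This leaves a differential inequality of the form $\tfrac{d}{dt}\|\theta(t)\|_{\mathscr{H}^0(t)}^2\le C_0(\eta)\|\theta(t)\|_{\mathscr{H}^0(t)}^2+\|(F^3+F^5)(t)\|_{(\mathscr{H}^1(t))^\ast}^2$, and Gr\"onwall's lemma bounds $\sup_t\|\theta(t)\|_{\mathscr{H}^0(t)}^2$; reinserting this into the identity (and using $T<1$) controls $\|\theta\|_{\mathscr{H}^1_T}^2$, giving \eqref{est:weak theta}.

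\emph{Step 3: the bound for $u$.} Proceed identically from the first identity of \eqref{eq:integral}. The only new term is the coupling $\int_0^t\int_\Om\theta\,\nabla_{\mathscr{A}}y_3\cdot u$, which by the definition of $C_0(\eta)$ and Lemma \ref{lem:theta H0 H1} is bounded by $C_0(\eta)\int_0^t\big(\|\theta(s)\|_{\mathscr{H}^0(s)}^2+\|u(s)\|_{\mathscr{H}^0(s)}^2\big)\,ds$, and $\int_0^t\|\theta(s)\|_{\mathscr{H}^0(s)}^2\,ds$ is already dominated by the right-hand side of \eqref{est:weak theta}. Absorbing a fraction of $\|u\|_{\mathscr{X}_t}^2$ arising from $\langle F^1-F^4,u\rangle$ and treating $\tfrac12\int_\Om|u|^2\partial_sJ$ as in Step 2, one reaches a Gr\"onwall inequality for $\|u(t)\|_{\mathscr{H}^0(t)}^2$ whose solution, combined with the $\theta$-bound, yields \eqref{est:weak u}. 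The main obstacle is the moving-metric integration-by-parts identity of Step 1, i.e.\ making the formal energy computation rigorous at the level of the weak formulation; once that is in hand, Steps 2 and 3 are routine applications of Young's and Gr\"onwall's inequalities.
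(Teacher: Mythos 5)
Your proposal is correct and follows essentially the same route as the paper: the identities come from testing \eqref{equ:lpws} with $u$ and $\theta$ themselves (the paper uses $u\chi_{[0,t]}$, $\theta\chi_{[0,t]}$) together with the moving-metric differentiation formula (Lemma 2.4 of \cite{GT1}), and the bounds then follow by discarding the nonnegative boundary term, Cauchy/Young, controlling the coupling term via $C_0(\eta)$ and the already-established $\theta$ estimate, and Gr\"onwall. The only cosmetic difference is that the paper bounds the coupling term through the Poincar\'e inequality in $\mathscr{H}^1$ norms while you keep it in $\mathscr{H}^0$ norms; both work.
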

\begin{proof}
  The identity \eqref{eq:integral} follows directly from Lemma 2.4 in \cite{GT1} and \eqref{equ:lpws} by using the test function $\psi=u\chi_{[0,t]}\in \mathscr{X}_T$, and $\phi=\theta\chi_{[0,t]}\in \mathscr{H}^1_T$, where $\chi_{[0,t]}$ is a temporal indicator function to $1$ on the interval $[0,t]$.

  From \eqref{eq:integral}, we can directly derive the inequalities
  \beq\label{est:weak theta1}
  \f12\|\theta(t)\|_{\mathscr{H}^0(t)}^2+\|\theta\|_{\mathscr{H}^1_t}^2\le \f12\|\theta(0)\|_{\mathscr{H}^0(0)}^2+\|F^3+F^5\|_{(\mathscr{H}^1_t)^\ast}\|\theta\|_{\mathscr{H}^1_t}+\f12C_0(\eta)\|\theta(t)\|_{\mathscr{H}^0_t}^2,
  \eeq
  \ben\label{est:weak u1}
  \begin{aligned}
  \f12\|u(t)\|_{\mathscr{H}^0(t)}^2+\f12\|u\|_{\mathscr{H}^1_t}^2\le \f12\|u(0)\|_{\mathscr{H}^0(0)}^2+\|F^1-F^4\|_{(\mathscr{X}_t)^\ast}\|u\|_{\mathscr{H}^1_t}\\
  +\f12C_0(\eta)\|u(t)\|_{\mathscr{H}^0_t}^2+CC_0(\eta)\|\theta\|_{\mathscr{H}^1_t}\|u\|_{\mathscr{H}^1_t},
  \end{aligned}
  \een
  where, for \eqref{est:weak u1}, we have used the Poincar\'e inequality in Lemma A.14 on \cite{GT1}, and
  \[
  \|u\|_{\mathscr{H}^k_t}^2=\int_0^t\|u(s)\|_{\mathscr{H}^k(s)}^2\,\mathrm{d}s\quad \text{for}\thinspace k=0,1,
  \]
  and similarly for $\|\theta\|_{\mathscr{H}^k_t}^2$, $\|F^1-F^4\|_{(\mathscr{X}_t)^\ast}$, $\|F^3+F^5\|_{(\mathscr{H}^1_t)^\ast}$. Inequalities \eqref{est:weak theta1}, \eqref{est:weak u1} and Cauchy inequality imply that
  \ben\label{ineq:integral}
  \begin{aligned}
    \f12\|\theta(t)\|_{\mathscr{H}^0(t)}^2+\f34\|\theta\|_{\mathscr{H}^1_t}^2\le \f12\|\theta(0)\|_{\mathscr{H}^0(0)}^2+\|F^3-F^5\|_{(\mathscr{H}^1_t)^\ast}^2+\f12C_0(\eta)\|\theta(t)\|_{\mathscr{H}^0_t}^2,\\
    \f12\|u(t)\|_{\mathscr{H}^0(t)}^2+\f18\|u\|_{\mathscr{H}^1_t}^2\le \f12\|u(0)\|_{\mathscr{H}^0(0)}^2+\|F^1-F^4\|_{(\mathscr{X}_t)^\ast}^2
  +\f12C_0(\eta)\|u(t)\|_{\mathscr{H}^0_t}^2\\
  +CC_0(\eta)\|\theta\|_{\mathscr{H}^1_t}^2,
  \end{aligned}
  \een
  Then \eqref{est:weak theta} and \eqref{est:weak u} follow from the integral inequality \eqref{ineq:integral} and Gronwall's lemma.
\end{proof}
\begin{proposition}
  Weak solutions to \eqref{equ:lpws} are unique.
\end{proposition}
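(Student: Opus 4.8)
The plan is to exploit the linearity of the system \eqref{equ:lpws} together with the energy estimates \eqref{est:weak theta} and \eqref{est:weak u} already established. Suppose $(u^1,\theta^1)$ and $(u^2,\theta^2)$ are two weak solutions corresponding to the same data $u_0$, $\theta_0$, $F^1-F^4$, $F^3+F^5$. Set $u:=u^1-u^2$ and $\theta:=\theta^1-\theta^2$. First I would check that the pair $(u,\theta)$ is itself a weak solution in the sense of the definition: it inherits $u\in\mathscr{H}^1_T$, $\theta\in\mathscr{H}^1_T$, $\pa_tu\in(\mathscr{X}_T)^\ast$, $\pa_t\theta\in(\mathscr{H}^1_T)^\ast$ from the two given solutions; it achieves the zero initial data, since both $(u^i,\theta^i)$ achieve $u_0$, $\theta_0$ and the temporal trace at $t=0$ is well defined for functions in $\mathscr{H}^1_T$ with distributional time derivative in $(\mathscr{X}_T)^\ast$ resp. $(\mathscr{H}^1_T)^\ast$ (this is exactly the structure underlying the energy identity \eqref{eq:integral}); and, subtracting the two copies of \eqref{equ:lpws}, it satisfies \eqref{equ:lpws} with $F^1-F^4=0$ and $F^3+F^5=0$.

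Then I would apply the temperature estimate \eqref{est:weak theta} to $(u,\theta)$: since $\theta(0)=0$ and $F^3+F^5=0$, it gives $\sup_{0\le t\le T}\|\theta(t)\|_{\mathscr{H}^0(t)}^2+\|\theta\|_{\mathscr{H}^1_T}^2=0$, hence $\theta\equiv 0$, after passing back to the usual Sobolev norms via Lemma \ref{lem:theta H0 H1} and Proposition \ref{prop:k}. Feeding $\theta\equiv 0$ into the velocity estimate \eqref{est:weak u}, and using $u(0)=0$, $\theta(0)=0$, $F^1-F^4=0$, we obtain $\sup_{0\le t\le T}\|u(t)\|_{\mathscr{H}^0(t)}^2+\|u\|_{\mathscr{H}^1_T}^2=0$, so $u\equiv 0$. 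Therefore $(u^1,\theta^1)=(u^2,\theta^2)$, which is the assertion.

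The argument is essentially immediate once \eqref{est:weak theta}--\eqref{est:weak u} are in hand; the only point demanding a little care — and the closest thing to an obstacle — is the justification that the difference $(u,\theta)$ genuinely meets the regularity requirements in the definition of weak solution and that ``achieving the initial data'' is preserved under subtraction, which, as noted, rests on the continuous-in-time $\mathscr{H}^0$ representatives implicit in the derivation of \eqref{eq:integral}. Alternatively, one may avoid invoking \eqref{est:weak theta}--\eqref{est:weak u} verbatim and instead rerun the energy identity \eqref{eq:integral} directly for $(u,\theta)$ with vanishing data and forcing, decoupling the $\theta$-equation first and then the $u$-equation, and close with Gronwall's lemma exactly as in the proof of the preceding lemma.
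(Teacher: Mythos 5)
Your argument is exactly the paper's: form the difference of the two solutions, observe it is a weak solution with zero forcing and zero initial data, and conclude from the energy bounds \eqref{est:weak theta} and \eqref{est:weak u} that it vanishes. The extra care you take about the difference inheriting the regularity and initial-data requirements is a reasonable elaboration but does not change the route.
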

\begin{proof}
  Suppose that $(u^1,\theta^1)$ and $(u^2,\theta^2)$ are both weak solutions to \eqref{equ:lpws}, then $(w,\vartheta)$, defined by $w=u^1-u^2$ and $\vartheta=\theta^1-\theta^2$, is a weak solution with $F^1-F^4=0$, $F^3+F^5=0$, $w(0)=u^1(0)-u^2(0)=0$ and $\vartheta(0)=\theta^1(0)-\theta^2(0)$. Then the bounds \eqref{est:weak theta} and \eqref{est:weak u} imply that $w=0$ and $\vartheta=0$. Hence, weak solutions to \eqref{equ:lpws} are unique.
\end{proof}

\subsection{The strong solution}
Before we define the strong solution, we need to define an operator $D_t$ as
\beq\label{def:Dt}
D_tu:=\pa_tu-Ru\quad \text{for}\quad R:=\pa_tMM^{-1},
\eeq
with $M=K\nabla\Phi$, where $K$, $\Phi$ are as defined in \eqref{map:phi} and \eqref{equ:components}. It is easily to be known that $D_t$ preserves the $\dive_{\mathscr{A}}$ -free condition, since
\[
J\dive_{\mathscr{A}}(D_tv)=J\dive_{\mathscr{A}}(M\pa_t(M^{-1}v))=\dive(\pa_t(M^{-1}v))=\pa_t\dive(M^{-1}v)=\pa_t(J\dive_{\mathscr{A}}v),
\]
where the equality $J\dive_{\mathscr{A}}v=\dive(M^{-1}v)$ can be found in Page 299 of \cite{GT1}.
\begin{definition}\label{def:strong solution}
  Suppose that the forcing functions satisfy
  \ben\label{cond:force}
  \begin{aligned}
    F^1&\in L^2([0, T]; H^1(\Om))\cap C^0([0, T]; H^0(\Om)),\\
    F^3&\in L^2([0, T]; H^1(\Om))\cap C^0([0, T]; H^0(\Om)),\\
    F^4&\in L^2([0, T]; H^{\f32}(\Sigma))\cap C^0([0, T]; H^{\f12}(\Sigma)),\\
    \pa_t(F^1-F^4)&\in L^2([0, T]; ({}_0H^1(\Om))^\ast), \quad\pa_t(F^3+F^5)\in L^2([0, T]; ({}_0H^1(\Om))^\ast).
  \end{aligned}
  \een
  We also assume that $u_0\in H^2\cap \mathscr{X}(0)$ and $\theta_0\in H^2\cap\mathscr{H}^1(0)$. If there exists a pair $(u, p, \theta)$ achieving the initial data $u_0$, $\theta_0$ and satisfies
  \ben\label{equ:strong solution}
  \begin{aligned}
    &u\in L^2([0, T]; H^3)\cap C^0([0,T];H^2)\cap \mathscr{X}_T \thinspace &\pa_tu\in L^2([0, T]; H^1)\cap C^0([0,T];H^0)\\
    &D_tu\in \mathscr{X}_T,\quad\pa_t^2u\in\mathscr{X}_T^\ast \thinspace &p\in L^2([0, T]; H^2)\cap C^0([0,T];H^1)\\
    &\theta\in L^2([0, T]; H^3)\cap C^0([0,T];H^2) \thinspace &\pa_t\theta\in L^2([0, T]; H^1)\cap C^0([0,T];H^0)\\
    &\pa_t^2\theta\in(\mathscr{H}_T^1)^\ast,
  \end{aligned}
  \een
  such that they satisfies \eqref{equ:linear BC} in the strong sense, we call it a strong solution.
\end{definition}
Then, we have to prove the lower regularity of strong solutions.
\begin{theorem}\label{thm:lower regularity}
  Suppose that the forcing terms and the initial data satisfy the condition in Definition \ref{def:strong solution}, and that $u_0$, $F^4(0)$ satisfy the compatibility condition
  \beq\label{cond:compatibility}
  \Pi_0\left(F^4(0)+\mathbb{D}_\mathscr{A_0}u_0\mathscr{N}_0\right)=0,\quad \text{where}\thinspace \mathscr{N}_0=(-\pa_1\eta_0, -\pa_2\eta_0, 1),
  \eeq
  and $\Pi_0$ is an orthogonal projection onto the tangent space of the surface $\{x_3=\eta_0\}$ defined by
  \beq\label{def:projection}
  \Pi_0v=v-(v\cdot\mathscr{N}_0)\mathscr{N}_0|\mathscr{N}_0|^{-2}.
  \eeq
  Then there exists a strong solution $(u, p, \theta)$ satisfying \eqref{equ:strong solution}. Moreover,
  \ben\label{inequ:est strong solution}
  \begin{aligned}
  &\|u\|_{L^\infty H^2}^2+\|u\|_{L^2H^3}^2+\|\pa_tu\|_{L^\infty H^0}^2+\|\pa_tu\|_{L^2H^1}^2+\|\pa_t^2u\|_{(\mathscr{X}_T)^\ast}+\|p\|_{L^\infty H^1}^2+\|p\|_{L^2H^2}^2\\
  &\quad+\|\theta\|_{L^\infty H^2}^2+\|\theta\|_{L^2H^3}^2+\|\pa_t\theta\|_{L^\infty H^0}^2+\|\pa_t\theta\|_{L^2H^1}^2+\|\pa_t^2\theta\|_{(\mathscr{H}^1_T)^\ast}\\
  &\lesssim P(\|\eta_0\|_{H^{5/2}})\left(1+\mathscr{K}(\eta)\right)\exp\left(C(1+\mathscr{K}(\eta))T\right)\Big(\|u_0\|_{H^2}^2+\|\theta_0\|_{H^2}^2+\|F^1(0)\|_{H^0}^2\\
  &\quad+\|F^3(0)\|_{H^0}^2+\|F^4(0)\|_{H^{1/2}(\Sigma)}^2+\|F^1\|_{L^2H^1}^2+\|F^3\|_{L^2H^1}^2+\|F^4\|_{L^2H^{3/2}(\Sigma)}^2\\
  &\quad+\|F^5\|_{L^2H^{3/2}(\Sigma)}^2+\|\pa_t(F^1-F^4)\|_{(\mathscr{X}_T)^\ast}^2+\|\pa_t(F^3+F^5)\|_{(\mathscr{H}^1_T)^\ast}^2\Big),
  \end{aligned}
  \een
  where $C$ is a constant independent of $\eta$ and $\mathscr{K}(\eta)$ is defined as
  \beq
  \mathscr{K}(\eta):=\sup_{0\le t\le T}\left(\|\eta\|_{H^{9/2}}^2+\|\pa_t\eta\|_{H^{7/2}}^2+\|\pa_t^2\eta\|_{H^{5/2}}^2\right).
  \eeq
  The initial pressure, $p(0)\in H^1(\Om)$ is determined by terms $u_0$, $\theta_0$, $F^1(0)$, $F^4(0)$ as a weak solution to
  \ben\label{equ:p0}
  \left\{
  \begin{aligned}
    &\dive_{\mathscr{A}_0}\left(\nabla_{\mathscr{A}_0}p(0)-F^1(0)-\theta_0\nabla_{\mathscr{A}_0}y_{3,0}\right)=-\dive_{\mathscr{A}_0}(R(0)u_0)\in H^0(\Om),\\
    &p(0)=(F^4(0)+\mathbb{D}_{\mathscr{A}_0}u_0\mathscr{N}_0)\cdot\mathscr{N}_0|\mathscr{N}_0|^{-2}\in H^{1/2}(\Sigma),\\
    &\left(\nabla_{\mathscr{A}_0}p(0)-F^1(0)\right)\cdot\nu=\Delta_{\mathscr{A}_0}u_0\cdot\nu\in H^{-1/2}(\Sigma_b),
  \end{aligned}
  \right.
  \een
  where $y_{3,0}$ in terms of $\eta_0$.
  Also, $\pa_t\theta(0)$ satisfies
  \beq
  \pa_t\theta(0)=\Delta_{\mathscr{A}_0}\theta_0+F^3(0) \in H^0(\Om),
  \eeq
  and $D_tu(0)=\pa_tu(0)-R(0)u_0$ satisfies
  \beq
  D_tu(0)=\Delta_{\mathscr{A}_0}u_0-\nabla_{\mathscr{A}_0}p(0)+F^1(0)+\theta_0e_3-R(0)u_0\in\mathscr{Y}(0).
  \eeq
  Moreover, $\pa_t\theta$ satisfies
  \ben\label{equ:pat theta}
  \left\{
  \begin{aligned}
    &\pa_t(\pa_t\theta)-\Delta_{\mathscr{A}}(\pa_t\theta)=\pa_tF^3+G^3\quad&\text{in}\quad\Om,\\
    &\nabla_{\mathscr{A}}(\pa_t\theta)\cdot\mathscr{N}+\pa_t\theta\left|\mathscr{N}\right|=\pa_tF^5+G^5\quad&\text{on}\quad\Sigma,\\
    &\pa_t\theta=0\quad&\text{on}\quad\Sigma_b,
  \end{aligned}
  \right.
  \een
   and $D_tu$ satisfies
  \ben\label{equ:Dt u}
  \left\{
  \begin{aligned}
    &\pa_t(D_tu)-\Delta_{\mathscr{A}}(D_tu)+\nabla_{\mathscr{A}}(\pa_tp)-D_t(\theta \nabla_{\mathscr{A}}y_3)=D_tF^1+G^1\quad&\text{in}\quad\Om,\\
    &\dive_{\mathscr{A}}(D_tu)=0\quad&\text{in}\quad\Om,\\
    &S_{\mathscr{A}}(\pa_tp,D_tu)\mathscr{N}=\pa_tF^4+G^4\quad&\text{on}\quad\Sigma,\\
    &D_tu=0\quad&\text{on}\quad\Sigma_b,
  \end{aligned}
  \right.
  \een
  in the weak sense of \eqref{equ:lpws}, where $G^1$ is defined by
  \[
  G^1=-(R+\pa_tJK)\Delta_{\mathscr{A}}u-\pa_tRu+(\pa_tJK+R+R^\top)\nabla_{\mathscr{A}}p+\dive_{\mathscr{A}}(\mathbb{D}_{\mathscr{A}}(Ru)-R\mathbb{D}_{\mathscr{A}}u+\mathbb{D}_{\pa_t\mathscr{A}}u)
  \]
  ($R^\top$ denoting the matrix transpose of $R$), $G^3$ by
  \[
  G^3=-\pa_tJK\Delta_{\mathscr{A}}\theta+\dive_{\mathscr{A}}(-R\nabla_{\mathscr{A}}\theta+\nabla_{\pa_t\mathscr{A}}\theta),
  \]
  $G^4$ by
  \[
  G^4=\mathbb{D}_{\mathscr{A}}(Ru)\mathscr{N}-(pI-\mathbb{D}_{\mathscr{A}}u)\pa_t\mathscr{N}+\mathbb{D}_{\pa_t\mathscr{A}}u\mathscr{N},
  \]
  and $G^5$ by
  \[
  G^5=-\nabla_{\mathscr{A}}\theta\cdot\pa_t\mathscr{N}-\nabla_{\pa_t\mathscr{A}}\theta\cdot\mathscr{N}-\theta\pa_t\left|\mathscr{N}\right|.
  \]
  More precisely, \eqref{equ:pat theta} and \eqref{equ:Dt u} hold in the weak sense of \eqref{equ:lpws} in that
  \ben\label{equ:weak pat theta}
  \begin{aligned}
    &\left<\pa_t^2\theta,\phi\right>_{(\mathscr{H}^1_T)^\ast}+\left(\pa_t\theta,\phi\right)_{\mathscr{H}^1_T}+\left(\pa_t\theta\left|\mathscr{N}\right|,\phi\right)_{L^2H^0(\Sigma)}\\
          &=\left<\pa_t(F^3+F^5)\right>_{(\mathscr{H}^1_T)^\ast}+\left(\pa_tJKF^3,\phi\right)_{\mathscr{H}^0_T}-\left(\pa_tJK\pa_t\theta,\phi\right)_{\mathscr{H}^0_T}\\
          &\quad-\int_0^T\int_{\Om}\left(\pa_tJK\nabla_{\mathscr{A}}\theta\cdot\nabla_{\mathscr{A}}\phi+\nabla_{\pa_t\mathscr{A}}\theta\cdot\nabla_{\mathscr{A}}\phi+\nabla_{\mathscr{A}}\theta\cdot\nabla_{\pa_t\mathscr{A}}\phi\right)J
  \end{aligned}
  \een
  and
  \ben
  \begin{aligned}
    &\left<\pa_tD_tu,\psi\right>_{(\mathscr{X}_T)^\ast}+\f12\left(\pa_tu,\psi\right)_{\mathscr{H}^1_T}\\
    &=\left<\pa_t(F^1-F^4),\psi\right>_{(\mathscr{X}_T)^\ast}+\left(\pa_t(\theta \nabla_{\mathscr{A}}y_3),\psi\right)_{\mathscr{H}^0}-\left(\pa_tRu+R\pa_tu,\psi\right)_{\mathscr{H}^0_T}\\
    &\quad+\left(\pa_tJKF^1,\psi\right)_{\mathscr{H}^0_T}-\left(\pa_tJK\theta e_3,\psi\right)_{\mathscr{H}^0_T}-\left(\pa_tJK\pa_tu,\psi\right)_{\mathscr{H}^0_T}-\left(p,\dive_{\mathscr{A}}(R\psi)\right)_{\mathscr{H}^0_T}\\
    &\quad-\f12\int_0^T\int_\Om\left(\pa_tJK\mathbb{D}_{\mathscr{A}}u:\mathbb{D}_{\mathscr{A}}\psi+\mathbb{D}_{\pa_t\mathscr{A}}u:\mathbb{D}_{\mathscr{A}}\psi+\mathbb{D}_{\mathscr{A}}u:\mathbb{D}_{\pa_t\mathscr{A}}\psi\right)J
  \end{aligned}
  \een
  for all $\phi\in\mathscr{H}^1_T$, $\psi\in\mathscr{X}_T$.
\end{theorem}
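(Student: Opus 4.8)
The plan is to follow the time-dependent Galerkin scheme of \cite{GT1}, adapted to the coupled velocity--temperature system. First I would use Proposition \ref{prop:k}: taking countable bases of the fixed spaces ${}_0H^1(\Om)$ (for the temperature) and of the fixed divergence-free space, and pushing them forward by $\mathcal{K}_t$, produces time-dependent bases adapted to $\mathscr{H}^1(t)$ and $\mathscr{X}(t)$. Projecting the pressureless formulation \eqref{equ:lpws} onto the span of the first $n$ basis functions yields a linear ODE system for the coefficients; since the temperature equation decouples from the pressure and feeds into the velocity equation only through the buoyancy term $\theta\nabla_{\mathscr{A}}y_3$, one solves first for $\theta^n$ and then for $u^n$. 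The initial data $u_0\in H^2\cap\mathscr{X}(0)$, $\theta_0\in H^2\cap\mathscr{H}^1(0)$ are projected accordingly, and the pressure is not part of the Galerkin system but recovered afterwards.

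Next I would run the energy estimates. Testing the Galerkin equations against the approximate solutions themselves reproduces, at the finite-dimensional level, the identities \eqref{eq:integral} and hence the bounds \eqref{est:weak theta}--\eqref{est:weak u}, with constants governed by $C_0(\eta)$; Gronwall then gives uniform control of $(u^n,\theta^n)$ in $L^\infty\mathscr{H}^0\cap\mathscr{H}^1_T$. To obtain the temporal regularity I would differentiate the Galerkin ODEs once in $t$ and test against $\pa_t\theta^n$ and $D_tu^n$. This is where the commutators $G^1,G^3,G^4,G^5$ and the operator $D_t$ enter: $D_t$ preserves the $\dive_{\mathscr{A}}$-free constraint (the computation already displayed before Definition \ref{def:strong solution}), so $D_tu^n$ is an admissible test direction, and one obtains estimates for $D_tu^n$ in $\mathscr{X}_T$ and $\pa_t\theta^n$ in $\mathscr{H}^1_T$ in terms of the data and $\mathscr{K}(\eta)$, provided the initial values $D_tu(0)$ and $\pa_t\theta(0)$ are under control --- which is exactly the role of the compatibility condition \eqref{cond:compatibility}, guaranteeing that $D_tu(0)$ as defined lies in $\mathscr{Y}(0)$. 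Dualizing the differentiated equations then bounds $\pa_t^2u^n$ in $(\mathscr{X}_T)^\ast$ and $\pa_t^2\theta^n$ in $(\mathscr{H}^1_T)^\ast$.

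With these uniform bounds in hand I would extract weakly convergent subsequences and pass to the limit to obtain a pair $(u,\theta)$ solving \eqref{equ:lpws} together with its time-differentiated versions \eqref{equ:weak pat theta} and the analogue for $D_tu$; the pressures $p$ and $\pa_tp$ are reconstructed as Lagrange multipliers via the construction recalled in Section 2.2 (following \cite{LW}). The final step is the elliptic bootstrap for spatial regularity: for a.e.\ fixed $t$, $(u,p,\theta)$ solves the stationary $\mathscr{A}$-convection problem \eqref{equ:SBC} with right-hand sides $F^1-\pa_tu$ (after absorbing the $D_t$ lower-order term), $0$, $F^3-\pa_t\theta$, $F^4$, $F^5$, so Lemma \ref{lem:initial lower regularity} with $k=3$ upgrades $u$ to $H^2$, $p$ to $H^1$, $\theta$ to $H^2$, and a second application using $\pa_tu,\pa_t\theta\in L^2H^1$ gives $u,\theta\in L^2H^3$, $p\in L^2H^2$; continuity in time of the top norms follows from the standard Lions--Magenes interpolation argument. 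Identifying $p(0)$, $\pa_t\theta(0)$, $D_tu(0)$ with \eqref{equ:p0} and the subsequent formulas is then a matter of evaluating the equations at $t=0$ and invoking uniqueness of the weak/elliptic problems.

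I expect the main obstacle to be the higher-order-in-time estimate: one must differentiate the weak formulation in a setting where the test space $\mathscr{X}(t)$ itself moves with $t$, carefully account for all the commutators collected in $G^1,\dots,G^5$, and keep the constants in the precise polynomial-times-exponential form $P(\|\eta_0\|_{H^{5/2}})(1+\mathscr{K}(\eta))\exp(C(1+\mathscr{K}(\eta))T)$ demanded by \eqref{inequ:est strong solution}. In particular the buoyancy coupling $\theta\nabla_{\mathscr{A}}y_3$ and its time derivative must be estimated so that the system of energy inequalities for $(\pa_t\theta,D_tu)$ still closes as a Gronwall argument --- the temperature bound feeding the velocity bound but not conversely --- and the compatibility condition must be invoked exactly at the point where $D_tu(0)$ has to be a legitimate element of $\mathscr{Y}(0)$.
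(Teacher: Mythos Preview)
Your plan is essentially the same as the paper's: a time-dependent Galerkin scheme (first for $\theta$, then for $u$ following \cite{GT1}), energy estimates on the approximations and their time derivatives, passage to the limit, and an elliptic bootstrap via the stationary $\mathscr{A}$-convection estimates. The one place to be careful is that the fixed basis must be taken in $H^2(\Om)\cap{}_0H^1(\Om)$ rather than just ${}_0H^1(\Om)$, since the paper's Step~3 estimate of $\|\pa_t\theta^m(0)\|_{\mathscr{H}^0}$ relies on integrating by parts to produce $\Delta_{\mathscr{A}_0}\theta^m(0)$, and similarly the velocity side needs $H^2$ regularity of the Galerkin data; once that is in place your outline matches the paper's nine-step proof.
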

\begin{proof}
Here we will use the Galerkin method, which may be referred to \cite{Evans}.

  Step 1. The construction of approximate solutions for $\theta$. Since the scalar-valued space $H^2(\Om)\cap{}_0H^1(\Om)$ is separable, we can choose a countable basis $\{\tilde{w}^j\}_{i=1}^\infty$. Note that this basis is time-independent. Now, we need to construct a time-dependent basis for $H^2\cap\mathscr{H}^1$. We define $\phi^j=\phi^j(t):=K(t)\tilde{w}^j$. According to the Proposition \ref{prop:k}, $\phi^j(t)\in H^2(\Om)\cap\mathscr{H}^1(t)$, and $\{\phi^j(t)\}_{j=1}^\infty$ is a basis of $H^2(\Om)\cap\mathscr{H}^1(t)$ for each $t\in [0, T]$. Moreover,
      \beq\label{equ:dt phi}
      \pa_t\phi^j(t)=\pa_tK(t)\tilde{w}^j=\pa_tKJK\tilde{w}^j=\pa_tKJ\phi^j(t),
      \eeq
      which allows us to express $\pa_t\phi^j$ in terms of $\phi^j$. For any integer $m\ge1$, we define the finite-dimensional space $\mathscr{H}^1_m(t):=$ span $\{\phi^1(t), \ldots, \phi^m(t)\}\subset H^2(\Om)\cap\mathscr{H}^1(t)$ and we define $\mathscr{P}^m_t: H^2(\Om)\to \mathscr{H}^1_m(t)$ for $H^2(\Om)$ orthogonal projection onto $\mathscr{H}^1_m(t)$. Clearly, if $\theta\in H^2(\Om)\cap\mathscr{H}^1(t)$, $\mathscr{P}^m_t\theta\to\theta$ as $m\to\infty$.

      For each $m\ge1$, we define an approximate solution
      \[
      \theta^m=d^m_j(t)\phi^j(t), \quad \text{with}\quad d^m_j(t): [0, T] \to \mathbb{R} \quad\text{for}\quad j=1, \ldots, m,
      \]
      where as usual we use the Einstein convention of summation of the repeated index $j$.
      We want to choose $d^m_j$ such that
      \beq\label{equ:thetam}
      \left(\pa_t\theta^m, \phi\right)_{\mathscr{H}^0}+\left(\theta^m, \phi\right)_{\mathscr{H}^1}+\left(\theta^m\left|\mathscr{N}\right|, \phi\right)_{H^0(\Sigma)}=\left(F^3,
      \phi\right)_{\mathscr{H}^0}+\left(F^5, \phi\right)_{H^0(\Sigma)},
      \eeq
      with the initial data $\theta^m(0)=\mathscr{P}^m_t\theta_0\in \mathscr{H}^1_m(0)$ for each $\phi\in\mathscr{H}^1_m(t)$.
      And \eqref{equ:thetam} is equivalent to the system of ODEs for $d^m_j$:
      \ben\label{equ:ode}
      \begin{aligned}
      \dot{d}^m_j\left(\phi^j, \phi^k\right)_{\mathscr{H}^0}+d^m_j\left(\left(\pa_tKJ\phi^j,\phi^k\right)_{\mathscr{H}^0}+\left(\phi^j, \phi^k\right)_{\mathscr{H}^1}+\left(\phi^j\left|\mathscr{N}\right|,\phi^k\right)_{H^0(\Sigma)}\right)\\
      =\left(F^3,
      \phi^k\right)_{\mathscr{H}^0}+\left(F^5, \phi^k\right)_{H^0(\Sigma)}
      \end{aligned}
      \een
      for $j, k=1, \ldots, m$. The $m\times m$ matrix with $j, k$ entry $\left(\phi^j, \phi^k\right)_{\mathscr{H}^0}$ is invertible, the coefficients of the linear system \eqref{equ:ode} are $C^1([0, T])$, and the forcing terms are $C^0([0, T])$, so the usual well-posedness of ODEs guarantees that the existence of a unique solution $d^m_j\in C^1([0, T])$ to \eqref{equ:ode} that satisfies the initial data. This provides the desired solution, $\theta^m$, to \eqref{equ:thetam}. Since $F^3$, $F^5$ satisfy \eqref{cond:force}, equation \eqref{equ:ode} may be differentiated in time to see that $d_j^m\in C^{1,1}([0, T])$, which means $d_j^m$ is twice differentiable almost everywhere in $[0, T]$.

  Step 2. The energy estimates for $\theta^m$. Since $\theta^m(t)\in \mathscr{H}^1_m(t)$, we take $\phi=\theta^m$ as a test function in \eqref{equ:thetam}, using the Poincar\'e-type inequalities in Lemma A.14 of \cite{GT1} and usual trace theory, we have
      \begin{align*}
      \pa_t\f12\|\theta^m\|_{\mathscr{H}^0}^2+\|\theta^m\|_{\mathscr{H}^1}^2\lesssim (\|F^3\|_{\mathscr{H}^0}+\|F^5\|_{H^{1/2}(\Sigma)})\|\theta^m\|_{\mathscr{H}^1}-\f12\int_{\Om}|\theta^m|^2\pa_tJ.
      \end{align*}
      Then, applying Cauchy's inequality, we may derive that
      \begin{align*}
        \pa_t\f12\|\theta^m\|_{\mathscr{H}^0}^2+\f14\|\theta^m\|_{\mathscr{H}^1}^2\lesssim\|F^3\|_{\mathscr{H}^0}^2+\|F^5\|_{H^{1/2}(\Sigma)}^2+C_0(\eta)\f12\|\theta^m\|_{\mathscr{H}^0}^2
      \end{align*}
      with $C_0(\eta):=1+\sup_{0\le t\le T}\|\pa_tJK\|_{L^\infty}$. Using the Lemma 2.9 in \cite{LW}, we may have
      \ben\label{equ:initial thetam}
      \begin{aligned}
      \|\theta^m(0)\|_{\mathscr{H}^0}&\le P(\|\eta_0\|_{H^{5/2}})\|\theta^m(0)\|_{H^0}\le P(\|\eta_0\|_{H^{5/2}})\|\theta^m(0)\|_{H^2}\\
      &=P(\|\eta_0\|_{H^{5/2}})\|\mathscr{P}^m_0\theta_0\|_{H^2}\le P(\|\eta_0\|_{H^{5/2}})\|\theta_0\|_{H^2}.
      \end{aligned}
      \een
      Now, we can utilize Gronwall's lemma to deduce energy estimates for $\theta^m$:
      \ben\label{equ:est thetam}
      \begin{aligned}
      \sup_{0\le t\le T}\|\theta^m\|_{\mathscr{H}^0}^2&+\|\theta^m\|_{\mathscr{H}^1_T}^2\\
      &\lesssim P(\|\eta_0\|_{H^{5/2}})\exp(C_0(\eta)T)(\|\theta_0\|_{H^2}^2+\|F^3\|_{\mathscr{H}^0_T}^2+\|F^5\|_{L^2H^{1/2}(\Sigma)}^2).
      \end{aligned}
      \een

   Step 3. Estimates for $\pa_t\theta^m(0)$. If $\theta\in H^2(\Om)\cap\mathscr{H}^1(t)$, $\phi\in\mathscr{H}^1$, the integration by parts reveals that
       \beq\label{equ:theta1}
       \left(\theta, \phi\right)_{\mathscr{H}^1}=\int_{\Om}-\Delta_{\mathscr{A}}\theta\phi J+\int_{\Sigma}(\nabla_{\mathscr{A}}\theta\cdot\mathscr{N})\phi=\left(-\Delta_{\mathscr{A}}\theta,\phi\right)_{\mathscr{H}^0}
       +\left(\nabla_{\mathscr{A}}\theta\cdot\mathscr{N},\phi\right)_{H^0(\Sigma)}
       \eeq
       Evaluating \eqref{equ:thetam} at $t=0$ and employing \eqref{equ:theta1}, we have that
       \beq\label{equ:thetam0}
       \left(\pa_t\theta^m(0), \phi\right)_{\mathscr{H}^0}=\left(\Delta_{\mathscr{A}_0}\theta^m(0)+F^3(0), \phi\right)_{\mathscr{H}^0},
       \eeq
       for all $\phi\in\mathscr{H}^1_m(t)$.

       By virtue of \eqref{equ:dt phi}, we have that
       \beq\label{equ:test theta}
       \pa_t\theta^{m}-\pa_tK(t)J(t)\theta^m(t)=\dot{d}^m_j(t)\phi^j(t)\in\mathscr{H}^1_m(t),
       \eeq
       so that $\phi=\pa_t\theta^m(0)-\pa_tK(0)J(0)\theta^m(0)\in\mathscr{H}^1_m(0)$ is a choice for the test function in \eqref{equ:thetam0}. So using this test function in \eqref{equ:thetam0}, we have
       \ben\label{equ:thetam1}
       \begin{aligned}
         \|\pa_t\theta^m(0)\|_{\mathscr{H}^0}^2&\le \|\pa_tK(0)J(0)\theta^m(0)\|_{\mathscr{H}^0}\|\pa_t\theta^m(0)\|_{\mathscr{H}^0}\\
         &+\|\pa_t\theta^m(0)-\pa_tK(0)J(0)\theta^m(0)\|_{\mathscr{H}^0}\|\Delta_{\mathscr{A}_0}\theta^m(0)+F^3(0)\|_{\mathscr{H}^0}.
       \end{aligned}
       \een
       Then after using  \eqref{equ:initial thetam} and Cauchy's inequality for the right--hand side of \eqref{equ:thetam1}, we have the bound
       \beq\label{equ:est thetam0}
       \|\pa_t\theta^m(0)\|_{\mathscr{H}^0}^2\lesssim C_1(\eta)\left(\|\theta_0\|_{H^2}^2+\|F^3(0)\|_{\mathscr{H}^0}^2\right)
       \eeq
       with $C_1(\eta)=P(\|\eta_0\|_{H^{5/2}})\left(1+\|\pa_tK(0)J(0)\|_{L^\infty}^2+\|\mathscr{A}_0\|_{C^1}^2\right)$.

   Step 4. Energy estimates for $\pa_t\theta^m$. Now, suppose that $\phi(t)=c_j^m(t)\phi^j$ for $c_j^m\in C^{0,1}([0, T])$, $j=1, \ldots, m$; it is proved as in \eqref{equ:test theta}, that $\pa_t\phi-\pa_tK(t)J(t)\phi\in \mathscr{H}^1_m(t)$ as well. Then in \eqref{equ:thetam}, using this $\phi$, and temporally differentiating the result equation, and then subtracting from the result equation \eqref{equ:thetam} with test function $\pa_t\phi-\pa_tK(t)J(t)\phi$, we find that
       \ben\label{equ:dt thetam1}
       \begin{aligned}
         &\left<\pa_t^2\theta^m, \phi\right>_{(\mathscr{H}^1)^\ast}+\left(\pa_t\theta^m,\phi\right)_{\mathscr{H}^1}+\left(\pa_t\theta^m\left|\mathscr{N}\right|, \phi\right)_{H^0(\Sigma)}\\
         &=\left<\pa_t(F^3+F^5),\phi\right>_{(\mathscr{H}^1)^\ast}+\left(F^3,(\pa_tKJ+\pa_tJK)\phi\right)_{\mathscr{H}^0}+\left(F^5,\pa_tKJ\phi\right)_{H^0(\Sigma)}\\
         &\quad-\left(\pa_t\theta^m, (\pa_tKJ+\pa_tJK)\phi\right)_{\mathscr{H}^0}-\left(\theta^m, \pa_tKJ\phi\right)_{\mathscr{H}^1}-\left(\theta^m, \pa_tKJ\phi\right)_{H^0(\Sigma)}\\
         &\quad-\int_{\Om}\left(\pa_tJK\nabla_{\mathscr{A}}\theta^m\cdot\nabla_{\mathscr{A}}\phi+\nabla_{\pa_t\mathscr{A}}\theta^m\cdot\nabla_{\mathscr{A}}\phi+\nabla_{\mathscr{A}}\theta^m\cdot\nabla_{\pa_t\mathscr{A}}\phi\right)J.
       \end{aligned}
       \een
       According to \eqref{equ:test theta} and the fact that $d_j^m(t)$ is twice differentiable almost everwhere as we have pointed in the first step, we use $\phi=\pa_t\theta^m-\pa_tKJ\theta^m$ as a test function in \eqref{equ:dt thetam1}. Utilizing Cauchy's inequality, trace theory and the Remark $2.3$ in \cite{GT1}, we have that
       \ben\label{equ:dt thetam2}
       \begin{aligned}
       &\pa_t\left(\f12\|\pa_t\theta^m\|_{\mathscr{H}^0}^2-\left(\pa_t\theta^m, \pa_tKJ\theta^m\right)_{\mathscr{H}^0}\right)+\f14\|\pa_t\theta^m\|_{\mathscr{H}^1}^2\\
       &\le C_0(\eta)\left(\f12\|\theta^m\|_{\mathscr{H}^0}^2-\left(\pa_t\theta^m, \pa_tKJ\theta^m\right)_{\mathscr{H}^0}\right)+C_2(\eta)\|\pa_t\theta^m\|_{\mathscr{H}^1}^2\\
       &\quad+C\left(\|F^3\|_{\mathscr{H}^0}^2+\|F^5\|_{H^{1/2}(\Sigma)}^2\right)+C\|\pa_t(F^3+F^5)\|_{(\mathscr{H}^1)^\ast}
       \end{aligned}
       \een
       for $C_2(\eta)$ is defined as
       \beno
       C_2(\eta):&=&\sup_{0\le t\le T}\big[1+\|\pa_t(\pa_tKJ)\|_{L^\infty}^2+\|\pa_tKJ\|_{C^1}^2+\|\pa_t\mathscr{A}\|_{L^\infty}^2\\
       &&\quad+(1+\|\mathscr{A}\|_{L^\infty}^2)(1+\|\pa_tJ K\|_{L^\infty}^2)\big](1+\|\pa_tKJ\|_{C^1}^2).
       \eeno
       Then according to Cauchy's inequality and Gronwall's lemma, \eqref{equ:dt thetam2} implies that
       \ben\label{equ:dt thetam3}
       \begin{aligned}
         &\sup_{0\le t\le T}(\|\pa_t\theta^m\|_{\mathscr{H}^0}^2+\|\pa_t\theta^m\|_{\mathscr{H}^1_T}^2\\
         &\lesssim \exp(C_0(\eta)T)\Big(\|\pa_t\theta^m(0)\|_{\mathscr{H}^0}^2 +C_1(\eta)\|\theta^m(0)\|_{\mathscr{H}^0}^2+\|F^3\|_{\mathscr{H}^0_T}^2\\
         &\quad+\|F^5\|_{L^2H^{1/2}}^2+\|\pa_t(F^3+F^5)\|_{(\mathscr{H}^1_T)^\ast}^2\Big)\\
         &\quad+C_2(\eta)\left(\sup_{0\le t\le T}\|\theta^m\|_{\mathscr{H}^0}^2+\int_0^T\exp(C_0(\eta)(T-s))\|\theta^m(s)\|_{\mathscr{H}^1}^2\,\mathrm ds\right).
       \end{aligned}
       \een
       Now, the energy estimates for $\pa_t\theta^m$ is deduced by combining \eqref{equ:dt thetam3} with the estimates \eqref{equ:initial thetam}, \eqref{equ:est thetam} and \eqref{equ:est thetam0},
       \ben\label{equ:dt thetam4}
       \begin{aligned}
         &\sup_{0\le t\le T}\|\pa_t\theta^m\|_{\mathscr{H}^0}^2+\|\pa_t\theta^m\|_{\mathscr{H}^1_T}^2\\
         &\lesssim \left(C_1(\eta)+C_2(\eta)\right)\exp(C_0(\eta)T)\left(\|\theta^m(0)\|_{\mathscr{H}^0}^2+\|F^3(0)\|_{\mathscr{H}^0}^2\right)\\
         &\quad+\exp(C_0(\eta)T)\left[C_2(\eta)\left(\|F^3\|_{\mathscr{H}^0_T}^2+\|F^5\|_{L^2H^{1/2}}^2\right)+\|\pa_t(F^3+F^5)\|_{(\mathscr{H}^1_T)^\ast}^2\right].
       \end{aligned}
       \een

   Step 5. Improved estimates for $\theta^m$. Using the $\phi=\pa_t\theta^m-\pa_tKJ\theta^m\in \mathscr{H}^1_m(t)$ as a test function in \eqref{equ:thetam}, we can improve the energy estimates for $\theta^m$.
       \ben\label{equ:improve thetam}
       \begin{aligned}
         &\pa_t\f12\left(\|\theta^m\|_{\mathscr{H}^1}^2+\|\theta^m\|_{H^0(\Sigma)}^2\right)+\|\pa_t\theta^m\|_{\mathscr{H}^0}^2\\
         &=\left(\pa_t\theta^m,\pa_tKJ\theta^m\right)_{\mathscr{H}^0}+\left(\theta^m,\pa_tKJ\theta^m\right)_{\mathscr{H}^1}+\left(F^3,\pa_t\theta^m-\pa_tKJ\theta^m\right)_{\mathscr{H}^0}\\
         &\quad+\left(F^5,\pa_t\theta^m-\pa_tKJ\theta^m\right)_{H^0(\Sigma)}+\int_{\Om}\left(\nabla_{\mathscr{A}}\theta^m\cdot\nabla_{\pa_t\mathscr{A}}\theta^m+\pa_tJK\f{|\nabla_{\mathscr{A}}\theta^m|^2}{2}J\right).
       \end{aligned}
       \een
       Since we have already controlled $\|\theta^m\|_{\mathscr{H}^1_T}^2$ and $\|\pa_t\theta^m\|_{\mathscr{H}^1_T}^2$, integrating \eqref{equ:improve thetam} in time implies that
       \ben\label{equ:improve thetam1}
       \begin{aligned}
         &\sup_{0\le t\le T}\|\theta^m\|_{\mathscr{H}^1}^2+\|\pa_t\theta^m\|_{\mathscr{H}^0_T}^2\\
         &\lesssim P(\|\eta_0\|_{H^{5/2}}) \left(C_1(\eta)+C_2(\eta)\right)\exp(C_0(\eta)T)\left(\|\theta_0\|_{H^0}^2+\|F^3(0)\|_{\mathscr{H}^0}^2\right)\\
         &\quad+P(\|\eta_0\|_{H^{5/2}})\exp(C_0(\eta)T)\Big[C_2(\eta)\left(\|F^3\|_{\mathscr{H}^0_T}^2+\|F^5\|_{L^2H^{1/2}}^2\right)\\
         &\quad+\|\pa_t(F^3+F^5)\|_{(\mathscr{H}^1_T)^\ast}^2\Big].
       \end{aligned}
       \een

   Step 6. Uniform bounds for \eqref{equ:dt thetam4} and \eqref{equ:improve thetam1}. Now, we seek to estimate the constants $C_i(\eta)$, $i=0, 1, 2$ in terms of the quantity $\mathscr{K}(\eta)$. A direct computation combining with the Lemma A.10 in \cite{GT1} reveal that
       \beq
       C_0(\eta)+C_1(\eta)+C_2(\eta)\le C(1+\mathscr{K}(\eta)),
       \eeq
       For a constant $C$ independent of $\eta$.

   Step 7. Passing to the limit. According to the energy estimates \eqref{equ:dt thetam4} and \eqref{equ:improve thetam1} and Lemma \ref{lem:theta H0 H1}, we know that the sequence $\{\theta^m\}$ is uniformly bounded in $L^\infty H^1$ and $\{\pa_t\theta^m\}$ is uniformly bounded in $L^\infty H^0\cap L^2H^1$. Then, up to extracting a subsequence, we know that
       \[
       \theta^m\stackrel{\ast}\rightharpoonup \theta \thinspace \text{weakly-}\ast \thinspace\text{in}\thinspace L^\infty H^1,\thinspace\pa_t\theta^m\stackrel{\ast}\rightharpoonup\pa_t\theta\thinspace\text{in}\thinspace L^\infty H^0,\thinspace \pa_t\theta^m\rightharpoonup\pa_t\theta\thinspace\text{weakly in}\thinspace L^2H^1,
       \]
       as $m\to\infty$. By lower semicontinuity, the energy estimates reveal that
       \[
       \|\theta\|_{L^\infty H^1}^2+\|\pa_t\theta\|_{L^\infty H^0}^2+\|\pa_t\theta\|_{L^2H^1}^2
       \]
       is bounded from above by the right-hand side of \eqref{inequ:est strong solution}.

       According these convergence results, we can integrate \eqref{equ:dt thetam1} termporally from $0$ to $T$ and let $m\to\infty$ to deduce that $\pa_t^2\theta^m\rightharpoonup\pa_t^2\theta$ weakly in $(\mathscr{H}^1_T)^\ast$, with an action of $\pa_t^2\theta$ on an element $\phi\in\mathscr{H}^1_T$ defined by replacing $\theta^m$ with $\theta$ everywhere in \eqref{equ:dt thetam1}. From passing to the limit in \eqref{equ:dt thetam1}, it is straightforward to show that $\|\pa_t^2\theta\|_{(\mathscr{H}^1_T)^\ast}^2$ is bounded from above by the right-hand side of \eqref{inequ:est strong solution}. This bound shows that $\pa_t\theta\in C^0L^2$.

   Step 8. In the limit, \eqref{equ:thetam} implies that for almost every $t$,
    \beq\label{equ:limit theta}
    \left(\pa_t\theta,\phi\right)_{\mathscr{H}^0}+\left(\theta,\phi\right)_{\mathscr{H}^1}+\left(\theta\left|\mathscr{N}\right|,\phi\right)_{H^0(\Sigma)}=\left(F^3,\phi\right)_{\mathscr{H}^0}+\left(F^5,\phi\right)_{H^0(\Sigma)}\quad\text{for every}\thinspace\phi\in\mathscr{H}^1.
    \eeq
    For almost every $t\in [0, T]$, $\theta(t)$ is the unique weak solution to the elliptic problem \eqref{equ:SBC} in the sense of \eqref{equ:weak theta}, with $F^3$ replaced by $F^3(t)-\pa_t\theta(t)$ and $F^5$ replaced by $F^5(t)$. Since $F^3(t)-\pa_t\theta(t)\in H^0(\Om)$ and $F^5(t)\in H^{1/2}(\Sigma)$, Lemma \ref{lem:S lower regularity} shows that this elliptic problem admits a unique strong solution, which must coincide with the weak solution. Then applying Proposition \ref{prop:high regulatrity}, we have the bound
       \beq\label{est:bound}
       \|\theta(t)\|_{H^r}^2\lesssim C(\eta_0) \left(\|\pa_t\theta(t)\|_{\mathscr{H}^{r-2}}^2+\|F^3(t)\|_{\mathscr{H}^{r-2}}^2+\|F^5(t)\|_{H^{r-3/2}(\Sigma)}^2\right)
       \eeq
       when $r=2,3$. When $r=2$, we take the superemum of \eqref{est:bound} over $t\in [0, T]$, and when $r=3$, we integrate over $[0, T]$; the resulting inequalities imply that $\theta\in L^\infty H^2\cap L^2H^3$ with estimates as in \eqref{inequ:est strong solution}.

       Then for the linear Navier--Stokes equations, the process is exactly the same as \cite{GT1}. Then we know that $(u, p, \theta)$ is a strong solution of \eqref{equ:linear BC} with the estimates as in \eqref{inequ:est strong solution}.

    Step 9. The weak solution satisfied by $\pa_t\theta$ and $D_tu$. We may integrate \eqref{equ:dt thetam1} in time from $0$ to $T$ and pass the limit $m\to\infty$. For any $\phi\in\mathscr{H}^1$, we have $\pa_tKJ\phi\in\mathscr{H}^1$, so that we may subsititute $\pa_tKJ\phi$ for $\phi$ in \eqref{equ:limit theta}; this yields
        \ben
        \begin{aligned}
          &\left<\pa_t^2\theta,\phi\right>_{(\mathscr{H}^1_T)^\ast}+\left(\pa_t\theta,\phi\right)_{\mathscr{H}^1_T}+\left(\pa_t\theta\left|\mathscr{N}\right|,\phi\right)_{L^2H^0(\Sigma)}\\
          &=\left<\pa_t(F^3+F^5)\right>_{(\mathscr{H}^1_T)^\ast}+\left(\pa_tJKF^3,\phi\right)_{\mathscr{H}^0_T}-\left(\pa_tJK\pa_t\theta,\phi\right)_{\mathscr{H}^0_T}\\
          &\quad-\int_0^T\int_{\Om}\left(\pa_tJK\nabla_{\mathscr{A}}\theta\cdot\nabla_{\mathscr{A}}\phi+\nabla_{\pa_t\mathscr{A}}\theta\cdot\nabla_{\mathscr{A}}\phi+\nabla_{\mathscr{A}}\theta\cdot\nabla_{\pa_t\mathscr{A}}\phi\right)J
        \end{aligned}
        \een
        for all $\phi\in\mathscr{H}^1_T$. This is exactly the \eqref{equ:weak pat theta}. To justify that \eqref{equ:weak pat theta} implies \eqref{equ:pat theta}, we may integrate by parts for the equality
        \ben
        \begin{aligned}
         &-\int_0^T\int_{\Om}\left(\pa_tJK\nabla_{\mathscr{A}}\theta\cdot\nabla_{\mathscr{A}}\phi+\nabla_{\pa_t\mathscr{A}}\theta\cdot\nabla_{\mathscr{A}}\phi+\nabla_{\mathscr{A}}\theta\cdot\nabla_{\pa_t\mathscr{A}}\phi\right)J\\
         &=-\int_0^T\int_\Om\left(-R\nabla_{\mathscr{A}}u+\nabla_{\pa_t\mathscr{A}}u\right)\cdot\nabla_{\mathscr{A}}\phi J\\
         &=\left(\dive_{\mathscr{A}}(-R\nabla_{\mathscr{A}}u+\nabla_{\pa_t\mathscr{A}}u),\phi\right)_{\mathscr{H}^0_T}-\left<\nabla_{\mathscr{A}}u\cdot\pa_t\mathscr{N}+\nabla_{\pa_t\mathscr{A}}u\cdot\mathscr{N},\phi\right>_{L^2H^{-1/2}}.
        \end{aligned}
        \een
        We then may deduce from \eqref{equ:weak pat theta} that $\pa_t\theta$ is a weak solution of \eqref{equ:pat theta} in the sense of \eqref{equ:lpws} with $\pa_t\theta(0)\in\mathscr{H}^0(0)$. Then we may appeal to  the computation in \cite{GT1} to deduce that $p(0)$ satisfies the equation \eqref{equ:p0} and $D_tu$ is a weak solution of \eqref{equ:Dt u} in the sense of \eqref{equ:lpws} with $D_tu(0)\in\mathscr{Y}(0)$.
\end{proof}

\subsection{Higher regularity}
In order to state our higher regularity results for \eqref{equ:linear BC}, we need to construct the initial data and compatible conditions. First, we define the vector or scalar fields $\mathfrak{E}^{01}$, $\mathfrak{E}^{02}$, $\mathfrak{E}^1$, $\mathfrak{E}^3$ in $\Om$ and $\mathfrak{E}^4$, $\mathfrak{E}^5$ on $\Sigma$ by
\ben
\begin{aligned}
  \mathfrak{E}^{01}(G^1, v, q)&=\Delta_{\mathscr{A}}v-\nabla_{\mathscr{A}}q+G^1-Rv,\\
  \mathfrak{E}^{02}(G^3,\Theta)&=\Delta_{\mathscr{A}}\Theta+G^3,\\
  \mathfrak{E}^1(v,q)&=-(R+\pa_tJK)\Delta_{\mathscr{A}}v-\pa_tRv+(\pa_tJK+R+R^\top)\nabla_{\mathscr{A}}q\\
  &\quad+\dive_{\mathscr{A}}(\mathbb{D}_{\mathscr{A}}(Rv)-R\mathbb{D}_{\mathscr{A}}v+\mathbb{D}_{\pa_t\mathscr{A}}v),\\
  \mathfrak{E}^3(\Theta)&=-\pa_tJK\Delta_{\mathscr{A}}\Theta+\dive_{\mathscr{A}}(-R\nabla_{\mathscr{A}}\Theta+\nabla_{\pa_t\mathscr{A}}\Theta),\\
  \mathfrak{E}^4(v,q)&=\mathbb{D}_{\mathscr{A}}(Rv)\mathscr{N}-(qI-\mathbb{D}_{\mathscr{A}}v)\pa_t\mathscr{N}+\mathbb{D}_{\pa_t\mathscr{A}}v\mathscr{N},\\
  \mathfrak{E}^5(\Theta)&=-\nabla_{\mathscr{A}}\Theta\cdot\pa_t\mathscr{N}-\nabla_{\pa_t\mathscr{A}}\Theta\cdot\mathscr{N}-\Theta\pa_t\left|\mathscr{N}\right|,
\end{aligned}
\een
and we define functions $\mathfrak{f}^1$ in $\Om$, $\mathfrak{f}^2$ on $\Sigma$ and $\mathfrak{f}^3$ on $\Sigma_b$ by
\ben
\begin{aligned}
  \mathfrak{f}^1(G^1, v)&=\dive_{\mathscr{A}}(G^1-Rv),\\
  \mathfrak{f}^2(G^4, v)&=(G^4+\mathbb{D}_{\mathscr{A}}v{\mathscr{N}})\cdot{\mathscr{N}}|{\mathscr{N}}|^{-2},\\
  \mathfrak{f}^3(G^1, v)&=(G^1+\Delta_{\mathscr{A}}v)\cdot\nu.
\end{aligned}
\een

We write $F^{1,0}=F^1+\theta \nabla_{\mathscr{A}}y_3$, $F^{3,0}=F^3$, $F^{4,0}=F^4$ and $F^{5,0}=F^5$. When $F^1$, $F^3$, $F^4$, $F^5$, $u$, $p$, and $\theta$ are regularly enough, we can recursively define
\ben \label{equ:force 1}
\begin{aligned}
  F^{1,j}&:=D_tF^{1,j-1}-\pa_t^{j-1}(\theta \nabla_{\mathscr{A}}y_3)+D_t^{j-1}(\theta \nabla_{\mathscr{A}}y_3)+\mathfrak{E}^1(D_t^{j-1}u, \pa_t^{j-1}p)\\
  &=D_t^jF^1-\left(\pa_t^{j-1}(\theta \nabla_{\mathscr{A}}y_3)-D_t^{j-1}(\theta \nabla_{\mathscr{A}}y_3)\right)+\sum_{\ell=0}^{j-1}D_t^\ell\mathfrak{E}^1(D_t^{j-\ell-1}u, \pa_t^{j-\ell-1}p),\\
  F^{3,j}&:=\pa_tF^{3,j-1}+\mathfrak{E}^3(\pa_t^{j-1}\theta)=\pa_t^jF^3+\sum_{\ell=0}^{j-1}\pa_t^\ell\mathfrak{E}^3(\pa_t^{j-\ell-1}\theta),
\end{aligned}
\een
in $\Om$ and
\ben\label{equ:force 2}
\begin{aligned}
  F^{4,j}&:=\pa_tF^{4,j-1}+\mathfrak{E}^4(D_t^{j-1}u, \pa_t^{j-1}p)=\pa_t^jF^4+\sum_{\ell=0}^{j-1}\pa_t^\ell\mathfrak{E}^4(D_t^{j-\ell-1}u, \pa_t^{j-\ell-1}p),\\
  F^{5,j}&:=\pa_tF^{5,j-1}+\mathfrak{E}^5(\pa_t^{j-1}\theta)=\pa_t^jF^5+\sum_{\ell=0}^{j-1}\pa_t^\ell\mathfrak{E}^5(\pa_t^{j-\ell-1}\theta)
\end{aligned}
\een
on $\Sigma$, for $j=1, \ldots, N$.

Now, we define the sums of norms with $F^1$, $F^3$, $F^4$ and $F^5$.
\ben \label{def:force F F0}
\begin{aligned}
  \mathfrak{F}(F^1,F^3,F^4,F^5)&:=\sum_{j=0}^{N-1}\left(\|\pa_t^jF^1\|_{L^2H^{2N-2j-1}}+\|\pa_t^jF^3\|_{L^2H^{2N-2j-1}}\right)\\
  &\quad+\|\pa_t^{N}F^1\|_{L^2({}_0H^1(\Om))^\ast}+\|\pa_t^{N}F^3\|_{L^2({}_0H^1(\Om))^\ast}\\
  &\quad+\sum_{j=0}^N\left(\|\pa_t^jF^4\|_{L^2H^{2N-2j-1/2}}+\|\pa_t^jF^5\|_{L^2H^{2N-2j-1/2}}\right)\\
  &\quad+\sum_{j=0}^{N-1}\left(\|\pa_t^jF^1\|_{L^\infty H^{2N-2j-2}}+\|\pa_t^jF^3\|_{L^\infty H^{2N-2j-2}}\right)\\
  &\quad+\sum_{j=0}^{N-1}\left(\|\pa_t^jF^4\|_{L^\infty H^{2N-2j-3/2}}+\|\pa_t^jF^5\|_{L^\infty H^{2N-2j-3/2}}\right),\\
  \mathfrak{F}_0(F^1,F^3,F^4,F^5)&:=\sum_{j=0}^{N-1}\left(\|\pa_t^jF^1(0)\|_{H^{2N-2j-2}}+\|\pa_t^jF^3(0)\|_{H^{2N-2j-2}}\right)\\
  &\quad+\sum_{j=0}^{N-1}\left(\|\pa_t^jF^4(0)\|_{H^{2N-2j-3/2}}+\|\pa_t^jF^5(0)\|_{H^{2N-2j-3/2}}\right).
\end{aligned}
\een
For simplicity, we will write $\mathfrak{F}$ for $\mathfrak{F}(F^1,F^3,F^4,F^5)$ and $\mathfrak{F}_0$ for $\mathfrak{F}_0(F^1,F^3,F^4,F^5)$ throughout the rest of this paper. From the Lemma A.4 and Lemma 2.4 of \cite{GT1}, we know that if $\mathfrak{F}<\infty$, then
\begin{align*}
  &\pa_t^jF^1\in C^0([0,T];H^{2N-2j-2}(\Om)),\quad \pa_t^jF^3\in C^0([0,T];H^{2N-2j-2}(\Om)),\\
  &\pa_t^jF^4\in C^0([0,T];H^{2N-2j-3/2}(\Sigma)),\quad\text{and}\quad \pa_t^jF^5\in C^0([0,T];H^{2N-2j-3/2}(\Sigma))
\end{align*}
for $j=0, \ldots, N-1$. For $\eta$, we define
\ben \label{def:norm eta}
\begin{aligned}
  \mathfrak{D}(\eta)&:=\sum_{j=2}^{N+1}\|\pa_t^j\eta\|_{L^2H^{2N-2j+5/2}}^2,\\
  \mathfrak{E}(\eta)&:=\|\eta\|_{L^\infty H^{2N+1/2}(\Sigma)}^2+\sum_{j=1}^{N}\|\pa_t^j\eta\|_{L^\infty H^{2N-2j+3/2}(\Sigma)}^2,\\
  \mathfrak{K}(\eta)&:=\mathfrak{D}(\eta)+\mathfrak{E}(\eta),\\
  \mathfrak{E}_0(\eta)&:=\|\eta_0\|_{H^{2N+1/2}(\Sigma)}^2+\sum_{j=1}^{N}\|\pa_t^j\eta(0)\|_{H^{2N-2j+3/2}(\Sigma)}^2.
\end{aligned}
\een

These following lemmas are similar to Lemma 4.5, 4.6, 4.7 in \cite{GT1} as well as the idea of proof, so we omit these details here.
\begin{lemma}\label{lem:pa tv Dt v}
  If $k=0,\ldots, 2N-1$ and $v$, $\Theta$ are sufficiently regular, then
  \beq\label{est:pat v Dt v l2}
  \|\pa_tv-D_tv\|_{L^2H^k}^2\lesssim P(\mathfrak{K}(\eta))\|v\|_{L^2H^k}^2,
  \eeq
  \beq
  \|\pa_t(\Theta \nabla_{\mathscr{A}}y_3)-D_t(\Theta \nabla_{\mathscr{A}}y_3)\|_{L^2H^k}^2\lesssim P(\mathfrak{K}(\eta))\|\Theta\|_{L^2H^k}^2,
  \eeq
  and if $k=0,\ldots, 2N-2$, then
  \beq\label{est:pat v Dt v linfty}
  \|\pa_tv-D_tv\|_{L^\infty H^k}^2\lesssim P(\mathfrak{K}(\eta))\|v\|_{L^\infty H^k}^2,
  \eeq
  \beq
  \|\pa_t(\Theta \nabla_{\mathscr{A}}y_3)-D_t(\Theta \nabla_{\mathscr{A}}y_3)\|_{L^\infty H^k}^2\lesssim P(\mathfrak{K}(\eta))\|\Theta\|_{L^\infty H^k}^2.
  \eeq
  If $m=1, \ldots, N-1$, $j=1, \ldots, m$, and $v$, $\Theta$ are sufficiently regular, then
  \beq
  \|\pa_t^jv-D_t^jv\|_{L^2H^{2m-2j+3}}^2\lesssim P(\mathfrak{K}(\eta))\sum_{\ell=0}^{j-1}\left(\|\pa_t^\ell v\|_{L^2H^{2m-2j+3}}^2+\|\pa_t^\ell v\|_{L^\infty H^{2m-2j+2}}^2\right),
  \eeq
  \beq\label{est:pa t Dt v j}
  \|\pa_t^jv-D_t^jv\|_{L^\infty H^{2m-2j+2}}^2\lesssim P(\mathfrak{K}(\eta))\sum_{\ell=0}^{j-1}\|\pa_t^\ell v\|_{L^\infty H^{2m-2j+2}}^2,
  \eeq
  \beq
  \|\pa_t^j(\Theta \nabla_{\mathscr{A}}y_3)-D_t^j(\Theta \nabla_{\mathscr{A}}y_3)\|_{L^2H^{2m-2j+2}}^2\lesssim P(\mathfrak{K}(\eta))\sum_{\ell=0}^{j-1}\left(\|\pa_t^\ell \Theta\|_{L^2H^{2m-2j+3}}^2+\|\pa_t^\ell \Theta\|_{L^\infty H^{2m-2j+2}}^2\right),
  \eeq
  \beq
  \|\pa_t^j(\Theta \nabla_{\mathscr{A}}y_3)-D_t^j(\Theta \nabla_{\mathscr{A}}y_3)\|_{L^\infty H^{2m-2j+3}}^2\lesssim P(\mathfrak{K}(\eta))\sum_{\ell=0}^{j-1}\|\pa_t^\ell \Theta\|_{L^\infty H^{2m-2j+2}}^2,
  \eeq
  and
  \ben
  \begin{aligned}
    &\|\pa_tD_t^mv-\pa_t^{m+1}v\|_{L^2H^1}^2+\|\pa_t^2D_t^mv-\pa_t^{m+2}v\|_{(\mathscr{X}_T)^\ast}^2\\
    &\lesssim P(\mathfrak{K}(\eta))\left(\|\pa_t^{m+1}v\|_{(\mathscr{X}_T)^\ast}^2+\sum_{\ell=0}^m\left(\|\pa_t^\ell v\|_{L^2H^1}^2+\|\pa_t^\ell v\|_{L^\infty H^2}^2\right)\right).
  \end{aligned}
  \een
  Also, if $j=0, \ldots, N$ and $v$ is sufficiently regular, then
  \beq\label{equ:initial v j}
  \|\pa_t^jv(0)-D_t^jv(0)\|_{H^{2N-2j}}^2\lesssim P(\mathfrak{E}_0(\eta))\sum_{\ell=0}^{j-1}\|\pa_t^\ell v(0)\|_{H^{2N-2j}}^2,
  \eeq
  and
  if $j=0, \ldots, N-1$ and $\Theta$ is sufficiently regular, then
  \beq\label{equ:initial theta j}
  \|\pa_t^j(\Theta(0)\nabla_{\mathscr{A}_0}y_{3,0})-D_t^j(\Theta(0)\nabla_{\mathscr{A}_0}y_{3,0})\|_{H^{2N-2j-2}}^2\lesssim P(\mathfrak{E}_0(\eta))\sum_{\ell=0}^{j-1}\|\pa_t^\ell \Theta(0)\|_{H^{2N-2j-2}}^2.
  \eeq
  Here all of the $P(\cdot)$ are polynomial, allowed to be changed from line to line.
\end{lemma}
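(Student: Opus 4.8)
The plan is to build everything on the single algebraic identity $\pa_tv-D_tv=Rv$ with $R=\pa_tMM^{-1}$, $M=K\nabla\Phi$, and then iterate. As a preliminary step I would record that $R$, and each temporal derivative $\pa_t^aR$, is a polynomial expression in $\bar\eta$, $K$ and finitely many of their spatial and temporal derivatives; combining the harmonic--extension bounds from \cite{LW} (which trade a factor of $1/2$ between $H^s(\Sigma)$ and $H^{s+1/2}(\Om)$) with the product estimates of Lemma A.1--A.2 in \cite{GT1}, every such $\pa_t^aR$ is bounded in the relevant $L^2([0,T];H^s(\Om))$ or $L^\infty([0,T];H^s(\Om))$ norm by $P(\mathfrak{K}(\eta))$, with $s$ large enough that multiplication by $\pa_t^aR$ is a bounded operator on the space in which the corresponding factor of $v$ will live; evaluated at $t=0$ the same quantities are bounded by $P(\mathfrak{E}_0(\eta))$. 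The coefficients $\nabla_{\mathscr{A}}y_3$ and their temporal derivatives, appearing in the $\Theta\nabla_{\mathscr{A}}y_3$ statements, are treated identically.

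Next I would expand $D_t^j=(\pa_t-R)^j$ and commute all the $\pa_t$'s to the right using $\pa_t(Rw)=(\pa_tR)w+R\pa_tw$: the difference $D_t^jv-\pa_t^jv$ becomes a finite sum of terms $(\pa_t^{a_1}R)\cdots(\pa_t^{a_p}R)\,\pa_t^bv$ with $p\ge1$ and $b\le j-1$, the total number of derivatives being $j$. Absorbing the coefficient product by the bounds of the previous paragraph (sometimes placing the coefficient in $L^\infty_t$ and the $v$-factor in $L^2_t$, sometimes the reverse, which accounts for the $L^\infty H^{2m-2j+2}$ terms appearing alongside the $L^2H^{2m-2j+3}$ terms in the statement) yields \eqref{est:pat v Dt v l2}--\eqref{est:pa t Dt v j} and their relatives. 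For the $\Theta\nabla_{\mathscr{A}}y_3$ estimates one applies this expansion to $w=\Theta\nabla_{\mathscr{A}}y_3$ and additionally Leibniz-expands $\pa_t^bw=\sum_c\binom bc\pa_t^c\Theta\,\pa_t^{b-c}(\nabla_{\mathscr{A}}y_3)$ with $c\le b\le j-1$, absorbing $\pa_t^{b-c}(\nabla_{\mathscr{A}}y_3)$ into the coefficient; the bound then involves only $\pa_t^\ell\Theta$ for $\ell\le j-1$. The initial-data estimates \eqref{equ:initial v j} and \eqref{equ:initial theta j} are the same expansions evaluated at $t=0$, with coefficients built from $\eta_0$ and $\pa_t^\ell\eta(0)$ and hence bounded by $P(\mathfrak{E}_0(\eta))$.

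The one point needing more care is the mixed estimate for $\pa_tD_t^mv-\pa_t^{m+1}v$ in $L^2H^1$ and for $\pa_t^2D_t^mv-\pa_t^{m+2}v$ in $(\mathscr{X}_T)^\ast$. I would write these as $\pa_t$ and $\pa_t^2$ applied to $D_t^mv-\pa_t^mv$ and differentiate the expansion above. A time derivative may now land on a $v$-factor and raise its order to $\pa_t^{m+1}v$, which lives only in $(\mathscr{X}_T)^\ast$; this is exactly the term $\|\pa_t^{m+1}v\|_{(\mathscr{X}_T)^\ast}^2$ on the right-hand side. To handle it in the negative norm, test against $\psi\in\mathscr{X}_T$ and, for that worst term, move the coefficient onto the test function: since multiplication by $\pa_t^aR$ maps $\mathscr{X}_T$ into itself with norm $\le P(\mathfrak{K}(\eta))$ (the coefficients come from $M$ and respect the structural constraints), one gets $|\langle(\pa_t^aR)\pa_t^{m+1}v,\psi\rangle|\lesssim P(\mathfrak{K}(\eta))\|\pa_t^{m+1}v\|_{(\mathscr{X}_T)^\ast}\|\psi\|_{\mathscr{X}_T}$; all remaining terms carry a $v$-factor of order $\le m$ and are estimated as before.

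I expect the main obstacle to be purely bookkeeping: checking term by term in the expansion of $(\pa_t-R)^j$ that the Sobolev degree of each coefficient product, after the harmonic-extension $1/2$-gain and the product rule, is high enough to multiply the relevant factor of $v$ (or $\Theta$) at the stated regularity, together with the duality/test-function argument for the single $(\mathscr{X}_T)^\ast$ bound. Since the algebraic structure is identical to that in Lemmas 4.5--4.7 of \cite{GT1}, the argument is otherwise routine once the coefficient bounds $P(\mathfrak{K}(\eta))$ and $P(\mathfrak{E}_0(\eta))$ are established, which is why we omit the details.
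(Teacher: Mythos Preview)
Your approach is essentially the paper's own: the paper omits the proof entirely and points to Lemmas~4.5--4.7 of \cite{GT1}, and your proposal is precisely to reproduce that argument via the expansion $D_t^j=(\pa_t-R)^j$, commutation of $\pa_t$ through $R$, and coefficient bounds in terms of $\mathfrak{K}(\eta)$ or $\mathfrak{E}_0(\eta)$. So on the level of strategy there is nothing to add.

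One technical point in your treatment of the $(\mathscr{X}_T)^\ast$ estimate is not quite right and is worth flagging. You claim that multiplication by $\pa_t^aR$ (or its adjoint) maps $\mathscr{X}_T$ into itself, so that the worst term $R\,\pa_t^{m+1}v$ can be handled by moving $R$ onto the test function. This is false in general: for $\psi\in\mathscr{X}_T$ one computes $J\dive_{\mathscr{A}}(R\psi)=\dive(M^{-1}R\psi)=-\dive(\pa_t(M^{-1})\psi)=\dive(M^{-1}\pa_t\psi)=J\dive_{\mathscr{A}}(\pa_t\psi)$, which need not vanish since $\mathscr{A}$ is time-dependent. What is true (and is what makes $D_t$ useful) is that $D_t\psi=\pa_t\psi-R\psi\in\mathscr{X}_T$; multiplication by $R$ alone does not preserve the constraint. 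In \cite{GT1} this term is handled by a slightly different manipulation---essentially by keeping the $D_t$ structure intact rather than fully expanding, or by estimating in $(\mathscr{H}^1_T)^\ast$ (which embeds into $(\mathscr{X}_T)^\ast$) so that the test function lives in the unconstrained space $\mathscr{H}^1_T$ where coefficient multiplication is harmless. This is a local fix, not a change of strategy, but you should correct the stated mechanism before claiming the dual bound.
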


\begin{lemma}\label{lem:force linear}
  For $m=1, \ldots, N-1$ and $j=1, \ldots, m$, the following estimates hold whenever the right--hand sides are finite:
  \ben\label{equ:force l2}
  \begin{aligned}
    &\|F^{1,j}\|_{L^2H^{2m-2j+1}}^2+\|F^{3,j}\|_{L^2H^{2m-2j+1}}^2+\|F^{4,j}\|_{L^2H^{2m-2j+3/2}}^2+\|F^{5,j}\|_{L^2H^{2m-2j+3/2}}^2\\
    &\lesssim P(\mathfrak{K}(\eta))\bigg(\mathfrak{F}+\sum_{\ell=0}^{j-1}\left(\|\pa_t^\ell u\|_{L^2H^{2m-2\ell+3}}^2+\|\pa_t^\ell \theta\|_{L^2H^{2m-2\ell+3}}^2\right)\\
    &\quad+\sum_{\ell=0}^{j-1}\Big(\|\pa_t^\ell u\|_{L^\infty H^{2m-2\ell+2}}^2+\|\pa_t^\ell \theta\|_{L^\infty H^{2m-2\ell+2}}^2+\|\pa_t^\ell p\|_{L^2H^{2m-2\ell+2}}^2\\
    &\quad+\|\pa_t^\ell p\|_{L^\infty H^{2m-2\ell+1}}^2\Big)\bigg),
  \end{aligned}
  \een
  \ben\label{equ:force l infity}
  \begin{aligned}
    &\|F^{1,j}\|_{L^\infty H^{2m-2j}}^2+\|F^{3,j}\|_{L^\infty H^{2m-2j}}^2+\|F^{4,j}\|_{L^\infty H^{2m-2j+1/2}}^2+\|F^{5,j}\|_{L^\infty H^{2m-2j+1/2}}^2\\
    &\lesssim P(\mathfrak{K}(\eta))\bigg(\mathfrak{F}+\sum_{\ell=0}^{j-1}\Big(\|\pa_t^\ell u\|_{L^\infty H^{2m-2\ell+2}}^2+\|\pa_t^\ell \theta\|_{L^\infty H^{2m-2\ell+2}}^2\\
    &\quad+\|\pa_t^\ell p\|_{L^\infty H^{2m-2\ell+1}}^2\Big)\bigg),
  \end{aligned}
  \een
  \ben\label{equ:force dual}
  \begin{aligned}
    &\|\pa_t(F^{1,m}-F^{4,m})\|_{L^2({}_0H^1(\Om))^\ast}^2+\|\pa_t(F^{3,m}+F^{5,m})\|_{L^2({}_0H^1(\Om))^\ast}^2\\
    &\lesssim P(\mathfrak{K}(\eta))\bigg(\mathfrak{F}+\|\pa_t^m u\|_{L^2 H^{2}}^2+\|\pa_t^m \theta\|_{L^2 H^{2}}^2+\|\pa_t^m p\|_{L^2 H^{1}}^2\\
    &\quad+\sum_{\ell=0}^{m-1}\Big(\|\pa_t^\ell u\|_{L^\infty H^{2}}^2+\|\pa_t^\ell u\|_{L^2 H^{2}}^3+\|\pa_t^\ell \theta\|_{L^\infty H^{2}}^2+\|\pa_t^\ell \theta\|_{L^2 H^{2}}^3\\
    &\quad+\|\pa_t^\ell p\|_{L^\infty H^{1}}^2+\|\pa_t^\ell p\|_{L^2 H^{2}}^2\Big)\bigg).
  \end{aligned}
  \een
  Similarly, for $j=1, \ldots, N-1$,
  \ben\label{equ:initial force j}
  \begin{aligned}
    &\|F^{1,j}(0)\|_{H^{2N-2j-2}}^2+\|F^{3,j}(0)\|_{H^{2N-2j-2}}^2+\|F^{4,j}(0)\|_{H^{2N-2j-3/2}}^2+\|F^{5,j}(0)\|_{H^{2N-2j-3/2}}^2\\
    &\lesssim P(\mathfrak{E}_0(\eta))\bigg(\mathfrak{F}_0+\|\pa_t^j \theta(0)\|_{H^{2N-2j}}+\sum_{\ell=0}^{j-1}\big(\|\pa_t^\ell u(0)\|_{H^{2N-2\ell}}\\
    &\quad+\|\pa_t^\ell \theta(0)\|_{H^{2N-2\ell}}+\|\pa_t^\ell p(0)\|_{H^{2N-2\ell-1}}\big)\bigg).
  \end{aligned}
  \een
  Here all of the $P(\cdot)$ are polynomial allowed to be changed from line to line.
\end{lemma}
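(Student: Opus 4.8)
The plan is to follow the template of Lemmas 4.5--4.7 of \cite{GT1}; the only genuinely new feature here is the buoyancy term $\theta\nabla_{\mathscr{A}}y_3$, which enters through the difference $\pa_t^j(\theta\nabla_{\mathscr{A}}y_3)-D_t^j(\theta\nabla_{\mathscr{A}}y_3)$ and is controlled by Lemma \ref{lem:pa tv Dt v}. First I would expand each of $F^{1,j}$, $F^{3,j}$, $F^{4,j}$, $F^{5,j}$ by means of the recursive definitions \eqref{equ:force 1}--\eqref{equ:force 2}, so that each becomes a sum of terms of three types: (i) $\pa_t^\ell F^i$, bounded directly by $\mathfrak{F}$ (or by $\mathfrak{F}_0$ at $t=0$); (ii) buoyancy differences $\pa_t^\ell(\theta\nabla_{\mathscr{A}}y_3)-D_t^\ell(\theta\nabla_{\mathscr{A}}y_3)$; and (iii) terms $D_t^\ell\mathfrak{E}^1(D_t^{j-\ell-1}u,\pa_t^{j-\ell-1}p)$, $\pa_t^\ell\mathfrak{E}^3(\pa_t^{j-\ell-1}\theta)$, $\pa_t^\ell\mathfrak{E}^4(D_t^{j-\ell-1}u,\pa_t^{j-\ell-1}p)$, $\pa_t^\ell\mathfrak{E}^5(\pa_t^{j-\ell-1}\theta)$.

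For the terms of type (iii) the key structural observation is that each $\mathfrak{E}^i$ is a differential operator of order at most two in its $u$- or $\theta$-argument and order at most one in $p$, whose coefficients are universal polynomial combinations of $\mathscr{A}$, $\pa_t\mathscr{A}$, $R$, $J$, $K$, $\pa_tJK$, $\mathscr{N}$ and $\pa_t\mathscr{N}$; applying $D_t^\ell$ or $\pa_t^\ell$ produces coefficients assembled from at most $\ell+1$ time derivatives of $\bar{\eta}$ and $\eta$. I would bound all of these coefficients in the relevant norms ($L^\infty_tC^1$, $L^\infty_tH^s$, $L^2_tH^s$ and their boundary analogues) by $P(\mathfrak{K}(\eta))$ (respectively by $P(\mathfrak{E}_0(\eta))$ for the initial-data statements), using the product and trace estimates of Lemmas A.1--A.4 of \cite{GT1} together with Lemma A.10 there. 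Then, distributing the outer time derivatives by the Leibniz rule and estimating each resulting product by the Sobolev product rule, I would get for each term a bound by $P(\mathfrak{K}(\eta))$ times a sum of norms $\|\pa_t^{\ell_1}D_t^{\ell_2}u\|$, $\|\pa_t^{\ell_1}\theta\|$, $\|\pa_t^{\ell_1}p\|$ in the spaces appearing on the right of \eqref{equ:force l2} and \eqref{equ:force l infity}; finally I would apply \eqref{est:pat v Dt v l2}--\eqref{est:pa t Dt v j} of Lemma \ref{lem:pa tv Dt v} to trade the operators $D_t$ for $\pa_t$ up to lower-order terms, and likewise to bound the buoyancy differences of type (ii). The estimates for $F^{3,j}$ and $F^{5,j}$ are strictly simpler, since pressure and buoyancy are absent, and the only additional ingredient needed for $F^{4,j}$, $F^{5,j}$ is the usual trace theory on $\Sigma$. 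The $L^\infty$ bound \eqref{equ:force l infity} is obtained in the same way using the $L^\infty_t$ parts of the above lemmas.

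For the dual-norm estimate \eqref{equ:force dual}, the point is that $\pa_t(F^{1,m}-F^{4,m})$ and $\pa_t(F^{3,m}+F^{5,m})$ must not be estimated termwise in $H^s(\Om)$, but rather paired against a test function $\phi\in {}_0H^1(\Om)$ and then integrated by parts, moving one spatial derivative off the top-order pieces $(R+\pa_tJK)\Delta_{\mathscr{A}}(D_t^mu)$, $\dive_{\mathscr{A}}(\mathbb{D}_{\mathscr{A}}(R\,D_t^mu)-R\mathbb{D}_{\mathscr{A}}(D_t^mu)+\cdots)$, $\pa_tJK\Delta_{\mathscr{A}}(\pa_t^m\theta)$ and $\dive_{\mathscr{A}}(-R\nabla_{\mathscr{A}}(\pa_t^m\theta)+\cdots)$. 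After this integration by parts the interior integrals $\int_\Om(\cdots)\cdot\nabla_{\mathscr{A}}\phi\,J$ involve at most one spatial derivative of $D_t^mu$, $\pa_t^m\theta$ and $\pa_t^mp$, while the boundary contributions on $\Sigma$ combine with the corresponding pieces of $\pa_tF^{4,m}$ and $\pa_tF^{5,m}$, exactly as in \cite{GT1}; this is why the right side of \eqref{equ:force dual} needs the top-order unknowns only at the levels $L^2H^2$, $L^2H^1$, together with the lower-order terms in $u$, $\theta$, $p$ displayed there. The initial-data estimate \eqref{equ:initial force j} is proved by the same computation evaluated at $t=0$, with $\mathfrak{K}(\eta)$ replaced by $\mathfrak{E}_0(\eta)$ and using \eqref{equ:initial v j}--\eqref{equ:initial theta j}.

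The main obstacle is purely the index bookkeeping: one must verify that in every product the total count of spatial derivatives plus twice the count of time derivatives never exceeds the budget ($2m+2$ for the interior $L^2H^{2m-2j+1}$ bounds, and $2N$ at $t=0$), so that the product estimates close with precisely the right-hand sides stated, and, for \eqref{equ:force dual}, that after the integration by parts no term requires more than one spatial derivative of the top-order unknowns. Once this arithmetic is laid out, each individual estimate is a direct application of the product and trace lemmas of \cite{GT1}, of Lemma \ref{lem:pa tv Dt v}, and of the coefficient bounds, exactly as in the proofs of Lemmas 4.6--4.7 of \cite{GT1}, so we omit the routine details.
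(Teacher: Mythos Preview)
Your approach is correct and matches the paper's own: the paper simply states that this lemma is similar to Lemmas~4.5--4.7 of \cite{GT1} with the same idea of proof, and omits the details. Your outline correctly identifies the only new ingredient (the buoyancy term $\theta\nabla_{\mathscr{A}}y_3$, handled via Lemma~\ref{lem:pa tv Dt v}) and otherwise faithfully reproduces the Guo--Tice template, so there is nothing to add.
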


\begin{lemma}\label{lem:v,q,G}
  Suppose that $v$, $q$, $G^1$, $G^3$ are evaluated at $t=0$ and are sufficiently regular for the right--hand sides of the following estimates to make sense. If $j=0, \ldots, N-1$, then
  \ben\label{equ:initial G1 v q}
  \begin{aligned}
    &\|\mathfrak{E}^{01}(G^1,v,q)\|_{H^{2N-2j-2}}^2\\
    &\lesssim P(\mathfrak{E}_0(\eta))\left(\|v\|_{H^{2N-2j}}^2+\|q\|_{H^{2N-2j-1}}^2+\|G^1\|_{H^{2N-2j-2}}^2\right),
  \end{aligned}
  \een
  \beq\label{equ:e02}
  \|\mathfrak{E}^{02}(G^3,\Theta)\|_{H^{2N-2j-2}}^2\lesssim P(\mathfrak{E}_0(\eta))\left(\|\Theta\|_{H^{2N-2j}}^2+\|G^3\|_{H^{2N-2j-2}}^2\right).
  \eeq
  If $j=0,\ldots,N-2$, then
  \ben\label{equ:initial g1 g4}
  \begin{aligned}
    &\|\mathfrak{f}^1(G^1,v)\|_{H^{2N-2i-3}}^2+\|\mathfrak{f}^2(G^4,v)\|_{H^{2N-2i-3/2}}^2+\|\mathfrak{f}^3(G^1,v)\|_{H^{2N-2i-5/2}}^2\\
    &\lesssim P(\mathfrak{E}_0(\eta))\left(\|G^1\|_{H^{2N-2j-2}}^2+\|G^4\|_{H^{2N-2j-3/2}}^2+\|v\|_{H^{2N-2j}}^2\right).
  \end{aligned}
  \een
  For $j=N-1$, if $\dive_{\mathscr{A}(0)}v(0)=0$ in $\Om$, then
  \beq
  \|\mathfrak{f}^2(G^4,v)\|_{H^{1/2}}^2+\|\mathfrak{f}^3(G^1,v)\|_{H^{-1/2}}^2\lesssim P(\mathfrak{E}_0(\eta))\left(\|G^1\|_{H^{2}}^2+\|G^4\|_{H^{1/2}}^2+\|v\|_{H^{2}}^2\right).
  \eeq
  Here all of the $P(\cdot)$ are polynomial allowed to be changed from line to line.
\end{lemma}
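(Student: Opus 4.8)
The plan is to prove Lemma \ref{lem:v,q,G} as a routine but careful application of Sobolev product and trace estimates, treating all the $\eta$‑dependent coefficients evaluated at $t=0$ — namely $\mathscr{A}_0$, $J_0$, $K_0$, $\mathscr{N}_0$, $|\mathscr{N}_0|^{\pm2}$, $R(0)$, $\nabla_{\mathscr{A}_0}y_{3,0}$ and smooth functions thereof — as fixed multipliers whose high Sobolev norms are controlled by a polynomial in $\mathfrak{E}_0(\eta)$. The first step is to record these coefficient bounds: by the definition of the parametrized harmonic extension and Theorem 2.7 in \cite{LW}, $\bar\eta_0\in H^{2N+1}(\Om)$ with norm $\lesssim P(\|\eta_0\|_{H^{2N+1/2}})$, so from \eqref{equ:components} one gets $A_0,B_0,J_0-1,K_0-1,\mathscr{A}_0-I\in H^{2N}(\Om)$ with norms $\lesssim P(\|\eta_0\|_{H^{2N+1/2}})\le P(\mathfrak{E}_0(\eta))$ (using that $H^{2N}(\Om)$ is a Banach algebra for $N\ge2$ and $J_0>\delta$), and similarly $\mathscr{N}_0-e_3,\,|\mathscr{N}_0|^{\pm2}-1\in H^{2N-1/2}(\Sigma)$. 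For $R(0)=\partial_tMM^{-1}|_{t=0}$ one temporal derivative is needed, so $R(0)$ also involves $\partial_t\bar\eta(0)$; since $\partial_t\eta(0)\in H^{2N-1/2}(\Sigma)$ appears in $\mathfrak{E}_0(\eta)$, we obtain $\|R(0)\|_{H^{2N-1}(\Om)}\lesssim P(\mathfrak{E}_0(\eta))$. The upshot is that every coefficient that multiplies a lower‑order object sits in some $H^s$ with $s\ge 2N-1\ge 3$, comfortably above the critical exponent $3/2$, so the multiplication lemmas A.1--A.2 of \cite{GT1} apply with no loss.

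The second step handles the interior quantities $\mathfrak{E}^{01},\mathfrak{E}^{02},\mathfrak{f}^1$. Expanding $\Delta_{\mathscr{A}_0}v=\mathscr{A}_{0,ik}\mathscr{A}_{0,il}\partial_{kl}v+\mathscr{A}_{0,ik}(\partial_k\mathscr{A}_{0,il})\partial_l v$ and $\nabla_{\mathscr{A}_0}q=\mathscr{A}_0^\top\nabla q$, each summand of $\mathfrak{E}^{01}(G^1,v,q)=\Delta_{\mathscr{A}_0}v-\nabla_{\mathscr{A}_0}q+G^1-R(0)v$ is a product of a coefficient in $H^{\ge 2N-1}(\Om)$ with one of $\partial^2 v\in H^{2N-2j-2}$, $\partial q\in H^{2N-2j-1}$, $G^1\in H^{2N-2j-2}$ or $v\in H^{2N-2j}$; the multiplication estimate, after collecting powers of the coefficients into $P(\mathfrak{E}_0(\eta))$, yields \eqref{equ:initial G1 v q}, and the same computation with $q$ and $R(0)$ absent gives \eqref{equ:e02}. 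For $\mathfrak{f}^1(G^1,v)=\dive_{\mathscr{A}_0}(G^1-R(0)v)$ the divergence costs one derivative, so $G^1-R(0)v\in H^{2N-2j-2}$ gives $\mathfrak{f}^1\in H^{2N-2j-3}$, and one checks the index $2N-2j-3\ge0$ exactly in the stated range $j\le N-2$.

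The third step treats the boundary quantities $\mathfrak{f}^2,\mathfrak{f}^3$. For $j\le N-2$: $\mathbb{D}_{\mathscr{A}_0}v\in H^{2N-2j-1}(\Om)$, so its trace on $\Sigma$ lies in $H^{2N-2j-3/2}(\Sigma)$; multiplying by the $\mathscr{N}_0$‑quantities (in $H^{2N-1/2}(\Sigma)$, above the critical $H^1(\Sigma)$) and adding $G^4\in H^{2N-2j-3/2}(\Sigma)$ keeps $\mathfrak{f}^2(G^4,v)$ in $H^{2N-2j-3/2}(\Sigma)$; likewise $G^1+\Delta_{\mathscr{A}_0}v\in H^{2N-2j-2}(\Om)$ has trace on $\Sigma_b$ in $H^{2N-2j-5/2}(\Sigma_b)$, giving $\mathfrak{f}^3(G^1,v)$, with the trace theorem applicable since $2N-2j-2>1/2$ in this range. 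The genuinely delicate point, and the one I expect to be the main obstacle, is the endpoint $j=N-1$, where $v$ is only $H^2$ and $\Delta_{\mathscr{A}_0}v\in H^0(\Om)$ has no classical trace. Here I would use the hypothesis $\dive_{\mathscr{A}_0}v=0$ exactly as on Page 299 and in the proof of Lemma 4.7 of \cite{GT1}: the highest‑order part $\dive_{\mathscr{A}_0}\Delta_{\mathscr{A}_0}v$ cancels against lower‑order terms, so $G^1+\Delta_{\mathscr{A}_0}v$ has $\mathscr{A}_0$‑divergence in $H^0(\Om)$, and the weak normal‑trace theorem ($X,\dive X\in L^2\Rightarrow X\cdot\nu\in H^{-1/2}$) places $\mathfrak{f}^3(G^1,v)$ in $H^{-1/2}(\Sigma_b)$ with the stated bound; $\mathfrak{f}^2(G^4,v)$ needs no extra care at $j=N-1$ since $\mathbb{D}_{\mathscr{A}_0}v\in H^1(\Om)$ still has a trace in $H^{1/2}(\Sigma)$. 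Apart from this endpoint, the argument is word‑for‑word parallel to Lemmas 4.5--4.7 of \cite{GT1}, the only new ingredients being the temperature terms $G^3$ and $\mathfrak{E}^{02}$, which are disposed of by the identical product estimate.
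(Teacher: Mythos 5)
Your proposal is correct and follows essentially the route the paper intends: the paper omits the proof entirely, stating that it is the same as Lemmas 4.5--4.7 of \cite{GT1}, and your argument (coefficient bounds for $\mathscr{A}_0$, $J_0$, $R(0)$, $\mathscr{N}_0$ in terms of $P(\mathfrak{E}_0(\eta))$, the product estimates of Lemmas A.1--A.2 of \cite{GT1}, trace theory for $\mathfrak{f}^2,\mathfrak{f}^3$, and the weak normal-trace argument using $\dive_{\mathscr{A}_0}v=0$ at the endpoint $j=N-1$) is exactly that proof adapted to include the temperature terms. No gaps.
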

Now we can construct the initial data and compatible conditions. We assume that $u_0\in H^{2N}(\Om)$, $\theta_0\in H^{2N}$, $\eta_0\in H^{2N+1/2}(\Sigma)$. Then we will iteratively construct the initial data $D_t^ju(0)$, $\pa_t^j\theta(0)$ for $j=1,\ldots, N$ and $\pa_t^jp(0)$ for $j=1,\ldots, N-1$. First, we denote $F^{1,0}(0)=F^1(0)\in H^{2N-2}$, $F^{3,0}(0)=F^3(0)\in H^{2N-2}$, $F^{4,0}(0)=F^4(0)\in H^{2N-3/2}$, $F^{5,0}(0)=F^5(0)\in H^{2N-3/2}$ and $D_t^0u(0)=u_0\in H^{2N}$, $\pa_t^0\theta(0)=\theta_0\in H^{2N}$. Suppose now that we have constructed $F^{1,\ell}\in H^{2N-2\ell-2}$, $F^{3,\ell}\in H^{2N-2\ell-2}$, $F^{4,\ell}\in H^{2N-2\ell-3/2}$, $F^{5,\ell}\in H^{2N-2\ell-3/2}$, and $D_t^ju(0)\in H^{2N-2\ell}$, $\pa_t^\ell\theta(0)\in H^{2N-2\ell}$ for $0\le\ell\le j\le N-2$; we will construct $\pa_t^jp(0)\in H^{2N-2j-1}$ as well as $D_t^{j+1}u(0)\in H^{2N-2j-2}$, $\pa_t^{j+1}\theta(0)\in H^{2N-2j-2}$, $F^{1,j+1}(0)\in H^{2N-2j-4}$, $F^{3,j+1}(0)\in H^{2N-2j-4}$, $F^{4,j+1}(0)\in H^{2N-2j-7/2}$ and $F^{5,j+1}(0)\in H^{2N-2j-7/2}$ as follows.

By virtue of estimate, we know that
\ben
\begin{aligned}
  f^1&=\mathfrak{f}^1(F^{1,j}(0),D_t^ju(0))\in H^{2N-2j-3},\\
  f^2&=\mathfrak{f}^2(F^{4,j}(0),D_t^ju(0))\in H^{2N-2j-3/2},\\
  f^3&=\mathfrak{f}^3(F^{1,j}(0),D_t^ju(0))\in H^{2N-2j-5/2}
\end{aligned}
\een
This allows us to define $\pa_t^jp(0)$ as the solution to \eqref{equ:poisson}. The choice of $f^1$, $f^2$, $f^3$, implies that $\pa_t^jp(0)\in H^{2N-2j-1}$, according to the Proposition 2.15 of \cite{WL}. Now the estimates \eqref{equ:initial force j}, \eqref{equ:initial v j} and
\eqref{equ:initial G1 v q} allows us to define
\begin{align*}
  D_t^{j+1}u(0)&:=\mathfrak{E}^{01}\left(F^{1,j}(0)+\pa_t^j(\theta(0) \nabla_{\mathscr{A}_0}y_{3,0}), D_t^ju(0), \pa_t^jp(0)\right)\in H^{2N-2j-2},\\
  \pa_t^{j+1}\theta(0)&:=\mathfrak{E}^{02}\left(F^{3,j}(0),\pa_t^j\theta(0)\right)\in H^{2N-2j-2},\\
  F^{1,j+1}(0)&:=D_t^jF^{1,j}(0)-\pa_t^j(\theta(0) \nabla_{\mathscr{A}_0}y_{3,0})+D_t^j(\theta(0) \nabla_{\mathscr{A}_0}y_{3,0})\\
  &\quad+\mathfrak{E}^1\left(D_t^ju(0),\pa_t^jp(0)\right)\in H^{2N-2j-4},\\
  F^{3,j+1}(0)&:=\pa_tF^{3,j}(0)+\mathfrak{E}^3\left(\pa_t^j\theta(0)\right)\in H^{2N-2j-4},\\
  F^{4,j+1}(0)&:=\pa_tF^{4,j}(0)+\mathfrak{E}^4\left(D_t^ju(0),\pa_t^jp(0)\right)\in H^{2N-2j-7/2},\\
  F^{5,j+1}(0)&:=\pa_tF^{5,j}(0)+\mathfrak{E}^5\left(\pa_t^j\theta(0)\right)\in H^{2N-2j-7/2}.
\end{align*}
Then, from the above analysis, we can iteratively construct all of the desired data except for $D_t^Nu(0)$, $\pa_t^{N-1}p(0)$ and $\pa_t^N\theta(0)$.

By construction, the initial data $D_t^ju(0)$, $\pa_t^jp(0)$ and $\pa_t^j\theta(0)$ are determined in terms of $u_0$, $\theta_0$ as well as $\pa_t^\ell F^1(0)$, $\pa_t^\ell F^3(0)$, $\pa_t^\ell F^4(0)$ and $\pa_t^\ell F^5(0)$ for $\ell=0, \ldots, N-1$. In order to use these in Theorem \ref{thm:lower regularity} and to construct $D_t^Nu(0)$, $\pa_t^{N-1}p(0)$ and $\pa_t^N\theta(0)$, we must enforce compatibility conditions for $j=0,\ldots,N-1$. We say that the $j$--th compatibility condition is satisfied if
\ben\label{cond:compatibility j}
\left\{
\begin{aligned}
  &D_t^ju(0)\in \mathscr{X}(0)\cap H^2(\Om),\\
  &\Pi_0\left(F^{4,j}(0)+\mathbb{D}_{\mathscr{A}_0}D_t^ju(0)\mathscr{N}_0\right)=0.
\end{aligned}
\right.
\een
The construction of $D_t^ju(0)$ and $\pa_t^jp(0)$ ensures that $D_t^ju(0)\in H^2(\Om)$ and $\dive_{\mathscr{A}_0}(D_t^ju(0))=0$.

In the following, we define $\pa_t^N\theta(0)\in H^0$, $\pa_t^{N-1}p(0)\in H^1$ and $D_t^N u(0)\in H^0$. First, we can define
\[
\pa_t^N\theta(0)=\mathfrak{E}^{02}(F^{3,N-1}(0),\pa_t^{N-1}\theta(0))\in H^0(\Om),
\]
employing \eqref{equ:e02} for the inclusion in $H^0$. Then using the same analysis in \cite{GT1}, the data $\pa_t^{N-1}p(0)\in H^1$ can be defined as a weak solution to \eqref{equ:poisson}. Then we define
\[
D_t^Nu(0)=\mathfrak{E}^{01}\left(F^{1,N-1}(0)+\pa_t^{N-1}(\theta(0) \nabla_{\mathscr{A}_0}y_{3,0}), D_t^{N-1}u(0),\pa_t^{N-1}p(0)\right)\in H^0,
\]
employing \eqref{equ:initial G1 v q} and \eqref{equ:initial theta j} for the inclusion in $H^0$. And $D_t^Nu(0)\in\mathscr{Y}(0)$ is guaranteed by the construction of $\pa_t^{N-1}p(0)$. Combining the inclusions above with the bounds \eqref{equ:initial force j}, \eqref{equ:initial g1 g4} ,
\eqref{equ:initial G1 v q} and \eqref{equ:e02} implies that
\ben\label{equ:est initial data j}
\begin{aligned}
  &\sum_{j=0}^N\|D_t^ju(0)\|_{H^{2N-2j}}^2+\sum_{j=0}^{N-1}\|\pa_t^jp(0)\|_{H^{2N-2j-1}}^2+\sum_{j=0}^N\|\pa_t^j\theta(0)\|_{H^{2N-2j}}^2\\
  &\lesssim P(\mathfrak{E}_0(\eta))\left(\|u_0\|_{H^{2N}}^2+\|\theta_0\|_{H^{2N}}^2+\mathfrak{F}_0\right).
\end{aligned}
\een
Before stating the result on higher regularity for solutions to \eqref{equ:linear BC} , we define some quantities:
\ben
\begin{aligned}
  \mathfrak{D}(u,p,\theta)&:=\sum_{j=0}^N\left(\|\pa_t^ju\|_{L^2H^{2N-2j+1}}^2+\|\pa_t^j\theta\|_{L^2H^{2N-2j+1}}^2\right)+\|\pa_t^{N+1}u\|_{(\mathscr{X}_T)^\ast}\\
  &\quad+\|\pa_t^{N+1}\theta\|_{(\mathscr{H}^1_T)^\ast}+\sum_{j=0}^{N-1}\|\pa_t^jp\|_{L^2H^{2N-2j}},\\
  \mathfrak{E}(u,p,\theta)&:=\sum_{j=0}^N\left(\|\pa_t^ju\|_{L^\infty H^{2N-2j}}^2+\|\pa_t^j\theta\|_{L^\infty H^{2N-2j}}^2\right)+\sum_{j=0}^{N-1}\|\pa_t^jp\|_{L^\infty H^{2N-2j-1}},\\
  \mathfrak{K}(u,p,\theta)&:=\mathfrak{D}(u,p,\theta)+\mathfrak{E}(u,p,\theta).
\end{aligned}
\een
\begin{theorem}\label{thm:higher regularity}
  Suppose that $u_0\in H^{2N}(\Om)$, $\theta_0\in H^{2N}(\Om)$, $\eta_0\in H^{2N+1/2}(\Sigma)$, and $\mathfrak{F}<\infty$. Let $D_t^ju(0)\in H^{2N-2j}(\Om)$, $\pa_t^j\theta(0)\in H^{2N-2j}(\Om)$ and $\pa_t^jp(0)\in H^{2N-2j-1}(\Om)$, for $j=1, \ldots, N-1$ along with $D_t^Nu(0)\in\mathscr{Y}(0)$ and $\pa_t^N\theta(0)\in H^0$, all be determined in terms of $u_0$, $\theta_0$ and $\pa_t^jF^1(0)$, $\pa_t^jF^3(0)$, $\pa_t^jF^4(0)$, $\pa_t^jF^5(0)$ for $j=0, \ldots, N-1$.

  There exists a universal constant $T_0>0$ such that if $0<T\le T_0$, then there exists a unique strong solution $(u,p,\theta)$ on $[0,T]$ such that
  \[
  \pa_t^ju\in C^0\left([0,T]; H^{2N-2j}(\Om)\right)\cap L^2\left([0,T];H^{2N-2j+1}(\Om)\right)\quad\text{for}\thinspace j=0,\ldots,N,
  \]
  \[
  \pa_t^jp\in C^0\left([0,T]; H^{2N-2j-1}(\Om)\right)\cap L^2\left([0,T];H^{2N-2j}(\Om)\right)\quad\text{for}\thinspace j=0,\ldots,N-1,
  \]
  \[
  \pa_t^j\theta\in C^0\left([0,T]; H^{2N-2j}(\Om)\right)\cap L^2\left([0,T];H^{2N-2j+1}(\Om)\right)\quad\text{for}\thinspace j=0,\ldots,N,
  \]
  \[
  \pa_t^{N+1}u\in(\mathscr{X}_T)^\ast,\quad\text{and}\quad\pa_t^{N+1}\theta\in(\mathscr{H}^1_T)^\ast.
  \]
  The pair $(D_t^ju,\pa_t^jp, \pa_t^j\theta)$ satisfies
  \ben\label{equ:higher linear BC}
  \left\{
  \begin{aligned}
    &\pa_t(D_t^ju)-\Delta_{\mathscr{A}}(D_t^ju)+\nabla_{\mathscr{A}}(\pa_t^jp)-\pa_t^j(\theta \nabla_{\mathscr{A}}y_3)=F^{1,j}\quad &\text{in}\thinspace\Om,\\
    &\dive_{\mathscr{A}}(D_t^ju)=0\quad &\text{in}\thinspace\Om,\\
    &\pa_t(\pa_t^j\theta)-\Delta_{\mathscr{A}}(\pa_t^j\theta)=F^{3,j}\quad &\text{in}\thinspace\Om,\\
    &S_{\mathscr{A}}(\pa_t^jp,D_t^ju)\mathscr{N}=F^{4,j}\quad &\text{on}\thinspace \Sigma,\\
    &\nabla_{\mathscr{A}}(\pa_t^j\theta)\cdot\mathscr{N}+\pa_t^j\theta\left|\mathscr{N}\right|=F^{5,j}\quad &\text{on}\thinspace \Sigma,\\
    &D_t^ju=0,\quad \pa_t^j\theta=0\quad &\text{on}\thinspace \Sigma_b,
  \end{aligned}
  \right.
  \een
  in the strong sense with initial data $\left(D_t^ju(0),\pa_t^jp(0),\pa_t^j\theta(0)\right)$ for $j=0,\ldots,N-1$, and in the weak sense with initial data $D_t^{N}u(0)\in\mathscr{Y}(0)$ and $\pa_t^{N}\theta(0)\in H^0$. Here the forcing terms $F^{1,j}$, $F^{3,j}$, $F^{4,j}$ and $F^{5,j}$ are as defined by \eqref{equ:force 1} and \eqref{equ:force 2}. Moreover, the solution satisfies the estimate
  \beq \label{est:higher regularity}
  \mathfrak{K}(u,p,\theta)\lesssim P(\mathfrak{E}_0(\eta),\mathfrak{K}(\eta))\exp\left(T P(\mathfrak{E}(\eta))\right)\left(\|u_0\|_{H^{2N}}^2+\|\theta_0\|_{H^{2N}}^2+\mathfrak{F}_0+\mathfrak{F}\right),
  \eeq
  where the constant $C>0$, is independent of $\eta$.
\end{theorem}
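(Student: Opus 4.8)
The plan is to prove the theorem by induction on the temporal order $j=0,\ldots,N$, using Theorem \ref{thm:lower regularity} as the engine at each stage and the elliptic theory of Section 3 to exchange temporal for spatial regularity. The base case $j=0$ is Theorem \ref{thm:lower regularity} itself, which provides $(u,p,\theta)$ with the asserted regularity at the bottom level together with the weak equations \eqref{equ:pat theta} and \eqref{equ:Dt u} satisfied by $\pa_t\theta$ and $D_tu$. Assume inductively that for some $0\le j\le N-1$ we have constructed $\pa_t^\ell u,\pa_t^\ell\theta$ for $0\le\ell\le j$ and $\pa_t^\ell p$ for $0\le\ell\le j-1$ with the regularity in the statement, and that $(D_t^ju,\pa_t^jp,\pa_t^j\theta)$ solves \eqref{equ:higher linear BC} with forcing $(F^{1,j},F^{3,j},F^{4,j},F^{5,j})$ and initial data $(D_t^ju(0),\pa_t^jp(0),\pa_t^j\theta(0))$ in the weak sense of \eqref{equ:lpws}.

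Since \eqref{equ:higher linear BC} has exactly the structure of \eqref{equ:linear BC}, I would apply Theorem \ref{thm:lower regularity} to it, after checking that its data meet the hypotheses of that theorem. The forcing bounds of Lemma \ref{lem:force linear} show that $F^{1,j},F^{3,j},F^{4,j},F^{5,j}$ and their dual-space temporal derivatives are finite, since by the inductive hypothesis only strictly lower-order norms of $(u,p,\theta)$ enter on the right; Lemma \ref{lem:v,q,G} together with the iterative construction of the initial data gives $D_t^ju(0)\in H^2\cap\mathscr{X}(0)$, $\pa_t^j\theta(0)\in H^2\cap\mathscr{H}^1(0)$ and $\pa_t^jp(0)\in H^1$ with the bound \eqref{equ:est initial data j}; and the $j$-th compatibility condition \eqref{cond:compatibility j} is precisely \eqref{cond:compatibility} for this system. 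Theorem \ref{thm:lower regularity} then upgrades $D_t^ju,\pa_t^j\theta$ to $C^0H^2\cap L^2H^3$, $\pa_t^jp$ to $C^0H^1\cap L^2H^2$, $\pa_t(D_t^ju),\pa_t(\pa_t^j\theta)$ to $C^0H^0\cap L^2H^1$, $\pa_t^2(D_t^ju)\in(\mathscr{X}_T)^\ast$, $\pa_t^2(\pa_t^j\theta)\in(\mathscr{H}^1_T)^\ast$, and it produces the weak equations for $D_t^{j+1}u$ and $\pa_t^{j+1}\theta$, i.e.\ \eqref{equ:higher linear BC} at level $j+1$. The $D_t$--$\pa_t$ comparison estimates of Lemma \ref{lem:pa tv Dt v} translate these into the corresponding $\pa_t^{j+1}$ statements.

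To reach the full spatial order $2N-2j+1$ in $L^2_t$ (and $2N-2j$ in $L^\infty_t$), I would rewrite the in-domain and boundary lines of \eqref{equ:higher linear BC} as the stationary problem \eqref{equ:SBC} with stationary unknowns $(D_t^ju,\pa_t^jp,\pa_t^j\theta)$ and forcing assembled from $F^{1,j},F^{3,j},F^{4,j},F^{5,j}$, the terms $\pa_t(D_t^ju)$ and $\pa_t(\pa_t^j\theta)$, and the product-rule remainder $\pa_t^j(\theta\nabla_{\mathscr{A}}y_3)-\pa_t^j\theta\,\nabla_{\mathscr{A}}y_3$; then I would apply Proposition \ref{prop:high regulatrity} --- whose constant depends only on $\eta_0$ --- and bootstrap: each round raises the spatial regularity of $D_t^ju$ and $\pa_t^j\theta$ by two orders across all levels $j$ simultaneously, the estimate at level $j$ feeding on the time-derivative term $\pa_t(D_t^ju)$ controlled through level $j+1$ in the previous round, while the pressure $\pa_t^jp$ is recovered from the $\mathscr{A}$-Poisson problem \eqref{equ:poisson}. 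Since every forcing term is controlled by Lemma \ref{lem:force linear} through strictly lower-order quantities, the bootstrap terminates after finitely many rounds, and carrying the induction to $j=N$ --- where $D_t^Nu(0)\in\mathscr{Y}(0)$ and $\pa_t^N\theta(0)\in H^0$ only force the top level to be a weak solution --- yields existence with the full regularity. For the estimate \eqref{est:higher regularity} I would sum \eqref{inequ:est strong solution} over $j=0,\ldots,N$, insert the bounds of Lemmas \ref{lem:pa tv Dt v}, \ref{lem:force linear}, \ref{lem:v,q,G} and \eqref{equ:est initial data j}, and use the elliptic estimates to replace the $D_t^j$-norms by $\pa_t^j$-norms, obtaining a schematic inequality
\[
\mathfrak{K}(u,p,\theta)\le P(\mathfrak{E}_0(\eta),\mathfrak{K}(\eta))\exp\!\big(TP(\mathfrak{E}(\eta))\big)\big(\|u_0\|_{H^{2N}}^2+\|\theta_0\|_{H^{2N}}^2+\mathfrak{F}_0+\mathfrak{F}\big)+T\,P\big(\mathfrak{K}(\eta),\mathfrak{E}(\eta)\big)\,\mathfrak{K}(u,p,\theta),
\]
where the last term gathers every contribution carrying an explicit factor of $T$ (from an $\int_0^T$ in an $L^2_t$ norm, or from the $\exp(CT)-1$ part of a Gronwall factor). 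Choosing $T_0$ so that $T_0\,P(\mathfrak{K}(\eta),\mathfrak{E}(\eta))\le\tfrac12$ absorbs that term into the left-hand side, which is \eqref{est:higher regularity}. Uniqueness follows because the difference of two solutions with identical data solves \eqref{equ:higher linear BC} at level $0$ with zero forcing and zero initial data, so \eqref{inequ:est strong solution} forces it to vanish, and then so do all of its temporal derivatives.

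The step I expect to be the main obstacle is the circular structure of the argument: the energy and elliptic estimates presuppose enough regularity to make sense of $F^{1,j}$, the operators $\mathfrak{E}^1,\mathfrak{E}^3,\mathfrak{E}^4,\mathfrak{E}^5$, and so on, so the regularity upgrade (Theorem \ref{thm:lower regularity} applied level by level) must be interleaved with the a priori bounds so that at every stage only \emph{strictly lower-order} norms of $(u,p,\theta)$ appear on the right --- this is exactly what Lemmas \ref{lem:pa tv Dt v}, \ref{lem:force linear} and \ref{lem:v,q,G} are built to guarantee. A second subtlety is the bookkeeping: one must separate the terms carrying a genuine factor of $T$ (hence absorbable once $T\le T_0$) from those belonging on the data side, and keep the $\mathfrak{K}(\eta)$ dependence out of the exponential with only $\mathfrak{E}(\eta)$ inside it; doing this correctly is what makes $T_0$ universal and the final constant of the stated form.
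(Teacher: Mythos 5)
Your overall architecture (finite induction on the temporal order, Theorem \ref{thm:lower regularity} as the engine at each level, elliptic bootstrap via Proposition \ref{prop:high regulatrity} to trade temporal for spatial regularity, absorption of $T$-weighted terms) matches the paper's. But there is a genuine gap at the heart of the induction step, precisely at the point you flag as ``the main obstacle'' and then declare resolved by the lemmas. To apply Theorem \ref{thm:lower regularity} to the level-$(m+1)$ system you must verify the hypothesis $\pa_t(F^{1,m+1}-F^{4,m+1})\in L^2(({}_0H^1(\Om))^\ast)$, and the relevant bound \eqref{equ:force dual} (equivalently \eqref{equ:force m+1 dual}) controls this quantity by $\|\pa_t^{m+1}u\|_{L^2H^2}^2+\|\pa_t^{m+1}\theta\|_{L^2H^2}^2+\|\pa_t^{m+1}p\|_{L^2H^1}^2$ plus lower-order terms. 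These are \emph{not} strictly lower-order: the inductive hypothesis $\mathbb{P}_m$ only gives $\pa_t^{m+1}u\in L^2H^1$, and it says nothing at all about $\pa_t^{m+1}p$, which is exactly the unknown being constructed at stage $m+1$. So the forcing for the level-$(m+1)$ problem cannot be estimated before the level-$(m+1)$ solution exists, and Lemmas \ref{lem:pa tv Dt v}--\ref{lem:v,q,G} do not break this circle by themselves.

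The paper resolves this with a device absent from your proposal: a Picard-type iteration \emph{within} each induction step. One solves $\mathcal{L}(v^n,q^n,\Theta^n)=\mathbb{F}^{m+1}(u^{n-1},p^{n-1},\theta^{n-1})$, where $(u^{n-1},p^{n-1},\theta^{n-1})$ is recovered from the previous iterate by integrating the ODEs \eqref{equ:ode uv}--\eqref{equ:ode theta Theta}, so that the forcing is always evaluated on a function whose top-order temporal derivatives are already known to be finite; one then proves a contraction estimate for $\mathfrak{B}(v^{n+1}-v^n,q^{n+1}-q^n,\Theta^{n+1}-\Theta^n)$ with a factor $TP(T)P(\cdot)$, so that for $T\le T_0$ the scheme converges and the limit satisfies $v=D_t^{m+1}u$, $q=\pa_t^{m+1}p$, $\Theta=\pa_t^{m+1}\theta$ self-consistently. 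This contraction is where the smallness of $T_0$ genuinely enters; your absorption argument at the very end of the proof would not produce the existence of $\pa_t^{m+1}p$ in the first place. The remainder of your outline (the elliptic bootstrap with $r$ increasing to $2(m+1)+3$, and uniqueness via the weak-solution energy bounds) is consistent with the paper.
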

\begin{proof}
  First, notice that $P(\cdot,\cdot)$ and $P(\cdot)$ throughout this proof is allowed to change from line to line.
  Theorem \ref{thm:lower regularity} guarantees the existence of $(u,p,\theta)$ satisfying the inclusions \eqref{equ:strong solution}. The $(D_t^ju,\pa_t^jp,\pa_t^j\theta)$ are solutions of \eqref{equ:higher linear BC} in the strong sense when $j=0$ and in the weak sense when $j=1$.  Finally, the estimate \eqref{inequ:est strong solution} holds.

  For an integer $m\ge0$, let $\mathbb{P}_m$ denote the proposition asserting the following three statements. First, $(D_t^ju,\pa_t^jp,\pa_t^j\theta)$ are solutions of \eqref{equ:higher linear BC} in the strong sense for $j=0,\ldots,m$ and in the weak sense when $j=m+1$. Second,
  \[
  \pa_t^ju\in L^\infty H^{2m-2j+2}\cap L^2H^{2m-2j+3},\quad \pa_t^j\theta\in L^\infty H^{2m-2j+2}\cap L^2H^{2m-2j+3}
  \]
  for $j=0,1,\ldots,m+1$, $\pa_t^{m+2}u\in(\mathscr{X}_T)^\ast$, $\pa_t^{m+2}\theta\in(\mathscr{H}^1_T)^\ast$ and
  \[
  \pa_t^jp\in L^\infty H^{2m-2j+1}\cap L^2H^{2m-2j+2}
  \]
  for $j=0,1,\ldots,m$. Third, the estimate
  \ben\label{est:bound pm}
  \begin{aligned}
    &\sum_{j=0}^{m+1}\left(\|\pa_t^ju\|_{L^\infty H^{2m-2j+2}}^2+\|\pa_t^ju\|_{L^2H^{2m-2j+3}}^2+\|\pa_t^j\theta\|_{L^\infty H^{2m-2j+2}}^2+\|\pa_t^j\theta\|_{L^2H^{2m-2j+3}}^2\right)\\
    &\quad+\|\pa_t^{m+2}u\|_{(\mathscr{X}_T)^\ast}^2+\|\pa_t^{m+2}\theta\|_{\mathscr{H}^1_T}^2+\sum_{j=0}^m\left(\|\pa_t^jp\|_{L^\infty H^{2m-2j+1}}^2+\|\pa_t^jp\|_{L^2H^{2m-2j+2}}^2\right)\\
    &\lesssim P(\mathfrak{E}_0(\eta),\mathfrak{K}(\eta))\exp\left(T P(\mathfrak{E}(\eta))\right)\left(\|u_0\|_{H^{2N}}^2+\|\theta_0\|_{H^{2N}}^2+\mathfrak{F}_0+\mathfrak{F}\right)
  \end{aligned}
  \een
  holds.

  We will use a finite induction method to prove that $\mathbb{P}_m$ holds. Theorem \ref{thm:lower regularity} implies that $\mathbb{P}_0$ holds. Then in the rest of this proof, we will divide the proof into two steps.

    Step 1. Proving the first assertion. Suppose that $\mathbb{P}_m$ holds for $m=0, \ldots, N-2$.

    From \eqref{equ:force l2}--\eqref{equ:force dual} of Lemma \ref{lem:force linear}, we have that
    \ben\label{equ:force m+1 l2}
    \begin{aligned}
    &\|F^{1,m+1}(v,q)\|_{L^2H^1}^2+\|F^{3,m+1}(\Theta)\|_{L^2H^1}^2+\|F^{4,m+1}(v,q)\|_{L^2H^{3/2}}^2\\
    &\quad+\|F^{5,m+1}(\Theta)\|_{L^2H^{3/2}}^2\\
    &\lesssim P(\mathfrak{K}(\eta))\bigg(\mathfrak{F}+\sum_{\ell=0}^{m}\left(\|\pa_t^\ell v\|_{L^2H^3}^2+\|\pa_t^\ell \Theta\|_{L^2H^3}^2\right)\\
    &\quad+\sum_{\ell=0}^{m}\Big(\|\pa_t^\ell v\|_{L^\infty H^2}^2+\|\pa_t^\ell \Theta\|_{L^\infty H^2}^2+\|\pa_t^\ell q\|_{L^2H^2}^2+\|\pa_t^\ell q\|_{L^\infty H^1}^2\Big)\bigg),
  \end{aligned}
  \een
  \ben\label{equ:force m+1 l infty}
  \begin{aligned}
    &\|F^{1,m+1}(v,q)\|_{L^\infty H^0}^2+\|F^{3,m+1}(\Theta)\|_{L^\infty H^0}^2+\|F^{4,j}(v,q)\|_{L^\infty H^{1/2}}^2\\
    &\quad+\|F^{5,j}(\Theta)\|_{L^\infty H^{1/2}}^2\\
    &\lesssim P(\mathfrak{K}(\eta))\bigg(\mathfrak{F}+\sum_{\ell=0}^m\Big(\|\pa_t^\ell v\|_{L^\infty H^2}^2+\|\pa_t^\ell \Theta\|_{L^\infty H^2}^2+\|\pa_t^\ell q\|_{L^\infty H^1}^2\Big)\bigg),
  \end{aligned}
  \een
  \ben\label{equ:force m+1 dual}
  \begin{aligned}
    &\|\pa_t(F^{1,m+1}(v,q)-F^{4,m+1}(v,q))\|_{L^2({}_0H^1(\Om))^\ast}^2\\
    &\quad+\|\pa_t(F^{3,m+1}(\Theta)-F^{5,m+1}(\Theta))\|_{L^2({}_0H^1(\Om))^\ast}^2\\
    &\lesssim P(\mathfrak{K}(\eta))\bigg(\mathfrak{F}+\|\pa_t^{m+1} v\|_{L^2 H^{2}}^2+\|\pa_t^{m+1} \Theta\|_{L^2 H^{2}}^2+\|\pa_t^{m+1} q\|_{L^2 H^{1}}^2\\
    &\quad+\sum_{\ell=0}^m\Big(\|\pa_t^\ell v\|_{L^\infty H^{2}}^2+\|\pa_t^\ell v\|_{L^2 H^{2}}^3+\|\pa_t^\ell \Theta\|_{L^\infty H^{2}}^2+\|\pa_t^\ell \Theta\|_{L^2 H^{2}}^3\\
    &\quad+\|\pa_t^\ell q\|_{L^\infty H^{1}}^2+\|\pa_t^\ell q\|_{L^2 H^{2}}^2\Big)\bigg).
  \end{aligned}
  \een
    Now we will use the iteration method. We let $u^0$ be the extension of the initial data $\pa_t^ju(0)$, $j=1,\ldots,N$, given by Lemma A.5 in \cite{GT1}, which may also give $\theta^0$, the extension of the initial data $\pa_t^j\theta(0)$, $j=1,\ldots,N$, and similarly let $p^0$ be the extension of $\pa_t^jp(0)$, $j=1,\ldots,N-1$, given by Lemma A.6 in \cite{GT1}. By \eqref{equ:est initial data j} and the estimates given in the Lemma A.5 and Lemma A.6 in \cite{GT1}, we have
  \ben\label{est:u0 p0 theta0}
    \begin{aligned}
      &\sum_{j=0}^N\left(\|\pa_t^ju^0\|_{L^2H^{2N-2j+1}}^2+\|\pa_t^ju^0\|_{L^\infty H^{2N-2j}}^2+\|\pa_t^j\theta^0\|_{L^2H^{2N-2j+1}}^2+\|\pa_t^j\theta^0\|_{L^\infty H^{2N-2j}}^2\right)\\
      &\quad+\sum_{j=0}^{N-1}\left(\|\pa_t^jp^0\|_{L^2H^{2N-2j}}^2+\|\pa_t^jp^0\|_{L^\infty H^{2N-2j-1}}^2\right)\\
      &\lesssim \sum_{j=0}^N\|D_t^ju(0)\|_{H^{2N-2j}}^2+\sum_{j=0}^{N-1}\|\pa_t^jp(0)\|_{H^{2N-2j-1}}^2+\sum_{j=0}^N\|\pa_t^j\theta(0)\|_{H^{2N-2j}}^2\\
      &\lesssim P(\mathfrak{E}_0(\eta))\left(\|u_0\|_{H^{2N}}^2+\|\theta_0\|_{H^{2N}}^2+\mathfrak{F}_0\right).
    \end{aligned}
    \een
    According to \eqref{equ:force m+1 l2}--\eqref{est:u0 p0 theta0}, we may derive that $F^{1,m+1}(u^0,p^0)$, $F^{3,m+1}(\theta^0)$, $F^{4,m+1}(u^0,p^0)$ and $F^{5,m+1}(\theta^0)$ satisfy \eqref{cond:force}. Also the compatibility condition \eqref{cond:compatibility} with $F^4$ replaced by $F^{4,m+1}(u^0,p^0)$ and $u_0$ replaced by $D_t^{m+1}u(0)$ holds by \eqref{cond:compatibility j} since $u^0$ and $p^0$ achieve the initial data. Then we can apply Theorem \ref{thm:lower regularity} to find a pair $(v^1,q^1,\Theta^1)$ satisfying the conclusions of the theorem.
    For simplicity, we abbreviate \eqref{equ:linear BC} as
    $\mathcal{L}(v,q,\Theta)=\mathbb{F}=(F^1,F^3,F^4,F^5)$. Then
    \[
    \mathcal{L}(v^1,q^1,\Theta^1)=\mathbb{F}^{m+1}:=(F^{1,m+1}(u^0,p^0),F^{3,m+1}(\theta^0),F^{4,m+1}(u^0,p^0),F^{5,m+1}(\theta^0)),
    \]
    \[
    v^1(0)=D_t^{m+1}u(0),\quad q^1(0)=\pa_t^{m+1}p(0),\quad \Theta^1(0)=\pa_t^{m+1}\theta(0).
    \]
    If we denote the left--hand side of \eqref{inequ:est strong solution} as $\mathfrak{B}(u,p,\theta)$, then we may combine \eqref{inequ:est strong solution}, \eqref{equ:initial force j}, \eqref{equ:force m+1 l2}, \eqref{equ:force m+1 dual} and \eqref{est:u0 p0 theta0} to derive that
    \[
    \mathfrak{B}(v^1,q^1,\Theta^1)\lesssim P(\mathfrak{E}_0(\eta),\mathfrak{K}(\eta))\exp\left(P(\mathfrak{E}(\eta))T\right)\left(\|u_0\|_{H^{2N}}^2+\|\theta_0\|_{H^{2N}}^2+\mathfrak{F}_0+\mathfrak{F}\right).
    \]
    Now, suppose that $(v^n,q^n,\Theta^n)$ is given and satisfies $\mathfrak{B}(v^n,q^n,\Theta^n)<\infty$, we define $(u^n,p^n,\theta^n)$ which satisfies the ODEs
    \ben\label{equ:ode uv}
    \left\{
    \begin{aligned}
      &D_t^{m+1}u^n=v^n,\\
      &\pa_t^ju^n(0)=v^n(0) \quad\text{for}\thinspace j=0,\ldots,m,
    \end{aligned}
    \right.
    \een

    \ben\label{equ:ode pq}
    \left\{
    \begin{aligned}
    &\pa_t^{m+1}p^n=q^n,\\
    &\pa_t^jp^n(0)=q^n(0) \quad\text{for}\thinspace j=0,\ldots,m,
    \end{aligned}
    \right.
    \een

    \ben\label{equ:ode theta Theta}
    \left\{
    \begin{aligned}
    &\pa_t^{m+1}\theta^n=\Theta^n,\\
    &\pa_t^j\theta^n(0)=\Theta^n(0) \quad\text{for}\thinspace j=0,\ldots,m.
    \end{aligned}
    \right.
    \een
    From the wellposedness theory of linear ODEs, we know that these ODEs have unique solutions. If we define $\mathfrak{K}(v,q,\Theta)$ by
    \begin{align*}
    \mathfrak{K}(v,q,\Theta):&=\|\pa_t^{m+1}v\|_{L^2H^2}^2+\|\pa_t^{m+1}q\|_{L^2H^1}^2+\|\pa_t^{m+1}\Theta\|_{L^2H^2}^2+\sum_{\ell=0}^m\Big(\|\pa_t^\ell v\|_{L^2H^3}^2\\
    &\quad+\|\pa_t^\ell v\|_{L^\infty H^2}^2+\|\pa_t^\ell \Theta\|_{L^2H^3}^2+\|\pa_t^\ell \Theta\|_{L^\infty H^2}^2+\|\pa_t^\ell q\|_{L^2H^2}^2+\|\pa_t^\ell q\|_{L^\infty H^1}^2\Big),
    \end{align*}
    then the solutions of \eqref{equ:ode uv}--\eqref{equ:ode theta Theta} satisfy the estimate
    \begin{equation}\label{est:un,pn,thetan}
    \begin{aligned}
    \mathfrak{K}(u^n,p^n,\theta^n)&\lesssim P(T)P(\mathfrak{K}(\eta))\bigg(\sum_{j=0}^m\|\pa_t^ju(0)\|_{H^3}^2+\|\pa_t^jp(0)\|_{H^2}^2\\
    &\quad+\|\pa_t^j\theta(0)\|_{H^3}^2+T\mathfrak{B}(v^n,q^n,\Theta^n)\bigg)<\infty,
    \end{aligned}
    \end{equation}
    where $P(T)$ is a polynomial in $T$.

    Applying Theorem \ref{thm:lower regularity} iteratively, we can obtain sequences $\{(v^n,q^n,\Theta^n)\}_{n=1}^\infty$ and $\{u^n,p^n,\theta^n\}_{n=1}^\infty$ satisfying \eqref{equ:ode uv}--\eqref{equ:ode theta Theta} and
    \ben\label{equ:iterate n n-1}
    \begin{aligned}
      \mathcal{L}(v^n,q^n,\Theta^n)&=\mathbb{F}^{m+1}(u^{n-1},p^{n-1},\theta^{n-1}),\\
      v^n(0)&=D_t^{m+1}u(0),\quad q^n(0)=\pa_t^{m+1}p(0),\quad \Theta^n(0)=\pa_t^{m+1}\theta(0).
    \end{aligned}
    \een
    Then
    \begin{align*}
      \mathcal{L}(v^{n+1}-v^n,q^{n+1}-q^n,\Theta^{n+1}-\Theta^n)&=\mathbb{F}^{m+1}(u^n-u^{n-1},p^n-p^{n-1},\theta^n-\theta^{n-1}),\\
      v^{n+1}(0)-v^n(0)=0,\quad &q^{n+1}(0)-q^n(0)=0,\quad \Theta^{n+1}(0)-\Theta^n(0)=0.
    \end{align*}
    Since the terms involving $F^1$, $F^3$, $F^4$ and $F^5$ are canceled in $\mathbb{F}^{m+1}(u^n-u^{n-1},p^n-p^{n-1},\theta^n-\theta^{n-1})$, we can use \eqref{equ:force m+1 l2} and \eqref{equ:force m+1 dual} to derive that
    \begin{align*}
      &\|F^{1,m+1}(u^n-u^{n-1},p^n-p^{n-1})\|_{L^2H^1}^2+\|F^{3,m+1}(\theta^n-\theta^{n-1})\|_{L^2H^1}^2\\
      &\quad+\|F^{4,m+1}(u^n-u^{n-1},p^n-p^{n-1})\|_{L^2H^{3/2}}^2+\|F^{5,m+1}(\theta^n-\theta^{n-1})\|_{L^2H^{3/2}}^2\\
      &\quad+\|\pa_t(F^{1,m+1}(u^n-u^{n-1},p^n-p^{n-1})-F^{4,m+1}(u^n-u^{n-1},p^n-p^{n-1}))\|_{L^2({}_0H^1(\Om))^\ast}^2\\
      &\quad+\|\pa_t(F^{3,m+1}(\theta^n-\theta^{n-1})-F^{5,m+1}(\theta^n-\theta^{n-1}))\|_{L^2({}_0H^1(\Om))^\ast}^2\\
      &\lesssim P(\mathfrak{K}(\eta))\mathfrak{K}(u^n-u^{n-1},p^n-p^{n-1},\theta^n-\theta^{n-1}).
    \end{align*}
    Since, for each $n$, $(u^n,p^n,\theta^n)$ achieves the same initial data, similar to the ODEs \eqref{equ:ode uv}--\eqref{equ:ode theta Theta}, we have that
    \beq
    \mathfrak{K}(u^n-u^{n-1},p^n-p^{n-1},\theta^n-\theta^{n-1})\lesssim P(\mathfrak{K}(\eta))T P(T)\mathfrak{B}(v^n-v^{n-1},q^n-q^{n-1},\Theta^n-\Theta^{n-1}).
    \eeq
    The above two estimates with \eqref{inequ:est strong solution} imply that
    \ben
    \begin{aligned}
      &\mathfrak{B}(v^{n+1}-v^n,q^{n+1}-q^n,\Theta^{n+1}-\Theta^n)\\
      &\lesssim P(\mathfrak{E}_0(\eta),\mathfrak{K}(\eta))\exp\left(P(\mathfrak{E}(\eta))T\right)\\
      &\quad\times T P(T)\mathfrak{B}(v^n-v^{n-1},q^n-q^{n-1},\Theta^n-\Theta^{n-1}),
    \end{aligned}
    \een
    which implies that there exists a universal $T_0>0$ such that if $T\le T_0$, then the sequence $\{(v^n,q^n,\Theta^n)\}_{n=1}^\infty$ converges to $(v,q,\Theta)$ in the norm $\sqrt{\mathfrak{B}(\cdot,\cdot)}$, which reveals that $\{(u^n,p^n,\theta^n)\}_{n=1}^\infty$ converges to $(u,p,\theta)$ in the norm $\sqrt{\mathfrak{K}(\cdot,\cdot)}$.

    By passing to the limit in \eqref{equ:ode uv}--\eqref{equ:ode theta Theta}, we have that $v=D_t^{m+1}u$, $q=\pa_t^{m+1}p$ and $\Theta=\pa_t^{m+1}\theta$. Then, passing to the limit in \eqref{equ:iterate n n-1}, we have that
    \[
    \mathcal{L}(D_t^{m+1}u,\pa_t^{m+1}p,\pa_t^{m+1}\theta)=\mathbb{F}^{m+1}(u,p,\theta).
    \]
    Then Theorem \ref{thm:lower regularity} with the assumption of $\mathbb{P}_m$, which provides that $(D_t^{m+1}u,\\ \pa_t^{m+1}p,\pa_t^{m+1}\theta)$ are solutions of \eqref{equ:higher linear BC} in the strong sense for $j=0,\ldots,m$, enables us to deduce the first assertion of $\mathbb{P}_{m+1}$.

    Theorem \ref{thm:lower regularity}, together with the estimates \eqref{equ:force l2}, \eqref{equ:force m+1 dual} and \eqref{est:bound pm}, gives us that
    \ben
    \begin{aligned}
      &\mathfrak{B}(D_t^{m+1}u,\pa_t^{m+1}p,\pa_t^{m+1}\theta)\\
      &\lesssim P(\mathfrak{E}_0(\eta),\mathfrak{K}(\eta))\exp\left(P(\mathfrak{E}(\eta))T\right)\big(\|u_0\|_{H^{2N}}^2+\|\theta_0\|_{H^{2N}}^2\\
      &\quad+\mathfrak{F}_0+\mathfrak{F}+\|\pa_t^{m+1}u\|_{L^2H^2}^2+\|\pa_t^{m+1}p\|_{L^2H^1}^2+\|\pa_t^{m+1}\theta\|_{L^2H^2}^2\big).
    \end{aligned}
    \een
    On the other hand, the estimate \eqref{est:pa t Dt v j} implies that
    \ben
    \begin{aligned}
      &\|\pa_t^{m+1}u\|_{L^2H^2}^2+\|\pa_t^{m+1}p\|_{L^2H^1}^2+\|\pa_t^{m+1}\theta\|_{L^2H^2}^2\\
      &\le T\left(\|\pa_t^{m+1}u\|_{L^\infty H^2}^2+\|\pa_t^{m+1}p\|_{L^\infty H^1}^2+\|\pa_t^{m+1}\theta\|_{L^\infty H^2}^2\right)\\
      &\lesssim T\left(\|\pa_t^{m+1}u-D_t^{m+1}u\|_{L^\infty H^2}^2+\|D_t^{m+1}u\|_{L^\infty H^2}^2+\|\pa_t^{m+1}p\|_{L^\infty H^1}^2+\|\pa_t^{m+1}\theta\|_{L^\infty H^2}^2\right)\\
      &\lesssim T\Big(P(\mathfrak{K}(\eta))\sum_{\ell=0}^m\|\pa_t^\ell u\|_{L^\infty H^2}^2+\mathfrak{B}(D_t^{m+1}u,\pa_t^{m+1}p,\pa_t^{m+1}\theta)\Big)\\
      &\lesssim T\Big(P(\mathfrak{E}_0(\eta),\mathfrak{K}(\eta))\exp\left(P(\mathfrak{E}(\eta))T\right)\big(\|u_0\|_{H^{2N}}^2+\|\theta_0\|_{H^{2N}}^2+\mathfrak{F}_0+\mathfrak{F}\big)\\
      &\quad+\mathfrak{B}(D_t^{m+1}u,\pa_t^{m+1}p,\pa_t^{m+1}\theta)\Big),
    \end{aligned}
    \een
    where in the last inequality, we have used \eqref{est:bound pm} again. Combining the above two estimates, we may further restrict the size of universal $T_0>0$ such that if $T\le T_0$, then
    \ben\label{est:B Dtu pa tp pa ttheta}
    \begin{aligned}
      &\mathfrak{B}(D_t^{m+1}u,\pa_t^{m+1}p,\pa_t^{m+1}\theta)\\
      &\lesssim P(\mathfrak{E}_0(\eta),\mathfrak{K}(\eta))\exp\left(P(\mathfrak{E}(\eta))T\right)\big(\|u_0\|_{H^{2N}}^2+\|\theta_0\|_{H^{2N}}^2+\mathfrak{F}_0+\mathfrak{F}\big).
    \end{aligned}
    \een

 Step 2. Proving the second and third assertions. In the following, the second and third assertions will be derived simultaneously. The estimate of \eqref{est:B Dtu pa tp pa ttheta} with Lemma \ref{lem:pa tv Dt v} and estimate \eqref{est:bound pm} imply that
      \ben
      \begin{aligned}
        &\|\pa_t^{m+1}u\|_{L^2H^3}^2+\|\pa_t^{m+2}u\|_{L^2H^1}^2+\|\pa_t^{m+3}u\|_{(\mathscr{X}_T)^\ast}^2+\|\pa_t^{m+1}u\|_{L^\infty H^2}^2+\|\pa_t^{m+2}u\|_{L^\infty H^0}^2\\
        &\lesssim P(\mathfrak{K}(\eta))\left(\sum_{\ell=0}^{m+2}\|\pa_t^\ell u\|_{L^2H^{2m-2\ell+3}}^2+\|\pa_t^\ell u\|_{L^\infty H^{2m-2\ell+2}}^2\right)\\
        &\quad+P(\mathfrak{E}_0(\eta),\mathfrak{K}(\eta))\exp\left(P(\mathfrak{E}(\eta))T\right)\big(\|u_0\|_{H^{2N}}^2+\|\theta_0\|_{H^{2N}}^2+\mathfrak{F}_0+\mathfrak{F}\big)\\
        &\lesssim P(\mathfrak{K}(\eta))P(\mathfrak{E}_0(\eta),\mathfrak{K}(\eta))\exp\left(p(\mathfrak{E}(\eta))T\right)\big(\|u_0\|_{H^{2N}}^2+\|\theta_0\|_{H^{2N}}^2+\mathfrak{F}_0+\mathfrak{F}\big)\\
        &\quad+P(\mathfrak{E}_0(\eta),\mathfrak{K}(\eta))\exp\left(P(\mathfrak{E}(\eta))T\right)\big(\|u_0\|_{H^{2N}}^2+\|\theta_0\|_{H^{2N}}^2+\mathfrak{F}_0+\mathfrak{F}\big)\\
        &\lesssim P(\mathfrak{E}_0(\eta),\mathfrak{K}(\eta))\exp\left(P(\mathfrak{E}(\eta))T\right)\big(\|u_0\|_{H^{2N}}^2+\|\theta_0\|_{H^{2N}}^2+\mathfrak{F}_0+\mathfrak{F}\big).
      \end{aligned}
      \een
      Thus
      \ben
      \begin{aligned}
        &\sum_{j=m+1}^{m+2}\left(\|\pa_t^ju\|_{L^2H^{2(m+1)-2j+3}}^2+\|\pa_t^ju\|_{L^\infty H^{2(m+1)-2j+2}}^2\right)+\|\pa_t^{m+3}u\|_{(\mathscr{X}_T)^\ast}^2\\
        &\quad+\sum_{j=m+1}^{m+2}\left(\|\pa_t^jp\|_{L^2H^{2(m+1)-2j+2}}^2+\|\pa_t^jp\|_{L^\infty H^{2(m+1)-2j+1}}^2\right)\\
        &\quad+\sum_{j=m+1}^{m+2}\left(\|\pa_t^j\theta\|_{L^2H^{2(m+1)-2j+3}}^2+\|\pa_t^j\theta\|_{L^\infty H^{2(m+1)-2j+2}}^2\right)+\|\pa_t^{m+3}\theta\|_{(\mathscr{X}_T)^\ast}^2\\
        &\lesssim P(\mathfrak{E}_0(\eta),\mathfrak{K}(\eta))\exp\left(P(\mathfrak{E}(\eta))T\right)\big(\|u_0\|_{H^{2N}}^2+\|\theta_0\|_{H^{2N}}^2+\mathfrak{F}_0+\mathfrak{F}\big).
      \end{aligned}
      \een
      Thus, in order to derive the second and third assertions of $\mathbb{P}_{m+1}$, it suffices to prove that
      \ben\label{est:bound pm 1}
      \begin{aligned}
        &\sum_{j=0}^{m}\left(\|\pa_t^ju\|_{L^2H^{2(m+1)-2j+3}}^2+\|\pa_t^jp\|_{L^2H^{2(m+1)-2j+2}}^2+\|\pa_t^j\theta\|_{L^2H^{2(m+1)-2j+3}}^2\right)\\
        &\quad+\sum_{j=0}^m\left(\|\pa_t^ju\|_{L^\infty H^{2(m+1)-2j+2}}^2+\|\pa_t^jp\|_{L^\infty H^{2(m+1)-2j+1}}^2+\|\pa_t^j\theta\|_{L^\infty H^{2(m+1)-2j+2}}^2\right)\\
        &\lesssim P(\mathfrak{E}_0(\eta),\mathfrak{K}(\eta))\exp\left(P(\mathfrak{E}(\eta))T\right)\big(\|u_0\|_{H^{2N}}^2+\|\theta_0\|_{H^{2N}}^2+\mathfrak{F}_0+\mathfrak{F}\big).
      \end{aligned}
      \een
      In order to prove this estimate, we will use the elliptic regularity of Proposition \ref{prop:high regulatrity} with $k=2N$ and iteration argument. As the first step, we need the estimates for the forcing terms. Combining \eqref{est:bound pm} with the estimates \eqref{equ:force l2} and \eqref{equ:force l infity} of Lemma \ref{lem:force linear} implies that
      \ben\label{est:sum force j}
  \begin{aligned}
    &\sum_{j=1}^{m+1}\Big(\|F^{1,j}\|_{L^2H^{2m-2j+1}}^2+\|F^{3,j}\|_{L^2H^{2m-2j+1}}^2+\|F^{4,j}\|_{L^2H^{2m-2j+3/2}}^2\\
    &\quad+\|F^{5,j}\|_{L^2H^{2m-2j+3/2}}^2+\|F^{1,j}\|_{L^\infty H^{2m-2j}}^2+\|F^{3,j}\|_{L^\infty H^{2m-2j}}^2\\
    &\quad+\|F^{4,j}\|_{L^\infty H^{2m-2j+1/2}}^2+\|F^{5,j}\|_{L^\infty H^{2m-2j+1/2}}^2\Big)\\
    &\lesssim P(\mathfrak{K}(\eta))\bigg(\mathfrak{F}+\sum_{\ell=0}^{j-1}\left(\|\pa_t^\ell u\|_{L^2H^{2m-2\ell+3}}^2+\|\pa_t^\ell \theta\|_{L^2H^{2m-2\ell+3}}^2\right)\\
    &\quad+\sum_{\ell=0}^{j-1}\Big(\|\pa_t^\ell u\|_{L^\infty H^{2m-2\ell+2}}^2+\|\pa_t^\ell \theta\|_{L^\infty H^{2m-2\ell+2}}^2+\|\pa_t^\ell p\|_{L^2H^{2m-2\ell+2}}^2\\
    &\quad+\|\pa_t^\ell p\|_{L^\infty H^{2m-2\ell+1}}^2\Big)\bigg)\\
    &\lesssim P(\mathfrak{E}_0(\eta),\mathfrak{K}(\eta))\exp\left(P(\mathfrak{E}(\eta))T\right)\big(\|u_0\|_{H^{2N}}^2+\|\theta_0\|_{H^{2N}}^2+\mathfrak{F}_0+\mathfrak{F}\big),
  \end{aligned}
  \een
   The estimates of \eqref{est:B Dtu pa tp pa ttheta} , \eqref{est:bound pm} as well as \eqref{est:pat v Dt v l2}, \eqref{est:pat v Dt v linfty} of Lemma \ref{lem:pa tv Dt v}, allow us to deduce that
   \ben\label{est:pat Dtm u}
   \begin{aligned}
     &\|\pa_tD_t^mu\|_{L^\infty H^2}^2+\|\pa_tD_t^mu\|_{L^2 H^3}^2\\
     &\lesssim \|\pa_tD_t^mu-D_t^{m+1}u\|_{L^\infty H^2}^2+\|\pa_tD_t^mu-D_t^{m+1}u\|_{L^2 H^3}^2\\
     &\quad+\|D_t^{m+1}u\|_{L^\infty H^2}^2+\|D_t^{m+1}u\|_{L^2 H^3}^2\\
     &\lesssim P(\mathfrak{K}(\eta))\left(\|D_t^mu\|_{L^\infty H^2}^2+\|D_t^mu\|_{L^2 H^3}^2\right)+\|D_t^{m+1}u\|_{L^\infty H^2}^2+\|D_t^{m+1}u\|_{L^2 H^3}^2\\
     &\lesssim P(\mathfrak{E}_0(\eta),\mathfrak{K}(\eta))\exp\left(P(\mathfrak{E}(\eta))T\right)\big(\|u_0\|_{H^{2N}}^2+\|\theta_0\|_{H^{2N}}^2+\mathfrak{F}_0+\mathfrak{F}\big).
   \end{aligned}
   \een
   Since \eqref{equ:higher linear BC} is satisfied in the strong sense for $j=m$, for almost $t\in [0,T]$, $(D_t^mu, \pa_t^mp,\\
   \pa_t^m\theta)$ solves elliptic system \eqref{equ:SBC} with $F^1$ replaced by $F^{1,m}-\pa_tD_t^mu$, $F^2=0$, $F^3$ replaced by $F^{3,m}-\pa_t(\pa_t^m\theta)$ and $F^4$, $F^5$ replaced by $F^{4,m}$, $F^{5,m}$, respectively. Then, we apply Proposition \ref{prop:high regulatrity} with $r=5$, then square the resulting estimate and integrate over $[0,T]$, to deduce that
   \ben
   \begin{aligned}
   &\|D_t^mu\|_{L^2H^5}^2+\|\pa_t^mp|_{L^2H^4}^2+\|\pa_t^m\theta\|_{L^2H^5}^2\\
   &\lesssim \|F^{1,m}-\pa_tD_t^mu\|_{L^2H^3}^2+\|F^{3,m}-\pa_t(\pa_t^m\theta)\|_{L^2H^3}^2\\
   &\quad+\|F^{4,m}\|_{L^2H^{7/2}}^2+\|F^{5,m}\|_{L^2H^{7/2}}^2\\
   &\lesssim \|F^{1,m}\|_{L^2H^3}^2+\|\pa_tD_t^mu\|_{L^2H^3}^2+\|F^{3,m}\|_{L^2H^3}^2+\|\pa_t(\pa_t^m\theta)\|_{L^2H^3}^2\\
   &\quad+\|F^{4,m}\|_{L^2H^{7/2}}^2+\|F^{5,m}\|_{L^2H^{7/2}}^2\\
   &\lesssim P(\mathfrak{E}_0(\eta),\mathfrak{K}(\eta))\exp\left(P(\mathfrak{E}(\eta))T\right)\big(\|u_0\|_{H^{2N}}^2+\|\theta_0\|_{H^{2N}}^2+\mathfrak{F}_0+\mathfrak{F}\big),
   \end{aligned}
   \een
   where in the last inequality, we have used \eqref{est:B Dtu pa tp pa ttheta}, \eqref{est:sum force j} and \eqref{est:pat Dtm u}. Similarly, Proposition \ref{prop:high regulatrity} with $r=4$ reveals that
   \ben
   \begin{aligned}
     &\|D_t^mu\|_{L^\infty H^4}^2+\|\pa_t^mp\|_{L^\infty H^3}^2+\|\pa_t^m\theta\|_{L^\infty H^4}^2\\
   &\lesssim \|F^{1,m}-\pa_tD_t^mu\|_{L^\infty H^2}^2+\|F^{3,m}-\pa_t(\pa_t^m\theta)\|_{L^\infty H^2}^2\\
   &\quad+\|F^{4,m}\|_{L^\infty H^{5/2}}^2+\|F^{5,m}\|_{L^\infty H^{5/2}}^2\\
   &\lesssim P(\mathfrak{E}_0(\eta),\mathfrak{K}(\eta))\exp\left(P(\mathfrak{E}(\eta))T\right)\big(\|u_0\|_{H^{2N}}^2+\|\theta_0\|_{H^{2N}}^2+\mathfrak{F}_0+\mathfrak{F}\big).
   \end{aligned}
   \een
    By iterating to estimate $\pa_t^ju$, $\pa_t^jp$ and $\pa_t^j\theta$ for $j=1,\ldots,m$, as well as the above two estimates, we have that
   \begin{align*}
    &\|\pa_t^m u\|_{L^\infty H^4}^2+\|\pa_t^mu\|_{L^2H^5}^2\\
    &\lesssim P(\mathfrak{E}_0(\eta),\mathfrak{K}(\eta))\exp\left(P(\mathfrak{E}(\eta))T\right)\big(\|u_0\|_{H^{2N}}^2+\|\theta_0\|_{H^{2N}}^2+\mathfrak{F}_0+\mathfrak{F}\big).
   \end{align*}
   Thus, we have that
   \ben\label{est:bound pm j ge1}
      \begin{aligned}
        &\sum_{j=1}^{m}\left(\|\pa_t^ju\|_{L^2H^{2(m+1)-2j+3}}^2+\|\pa_t^jp\|_{L^2H^{2(m+1)-2j+2}}^2+\|\pa_t^j\theta\|_{L^2H^{2(m+1)-2j+3}}^2\right)\\
        &\quad+\sum_{j=1}^m\left(\|\pa_t^ju\|_{L^\infty H^{2(m+1)-2j+2}}^2+\|\pa_t^jp\|_{L^\infty H^{2(m+1)-2j+1}}^2+\|\pa_t^j\theta\|_{L^\infty H^{2(m+1)-2j+2}}^2\right)\\
        &\lesssim P(\mathfrak{E}_0(\eta),\mathfrak{K}(\eta))\exp\left(P(\mathfrak{E}(\eta))T\right)\big(\|u_0\|_{H^{2N}}^2+\|\theta_0\|_{H^{2N}}^2+\mathfrak{F}_0+\mathfrak{F}\big).
      \end{aligned}
   \een
      Then we apply Proposition \ref{prop:high regulatrity} with $r=2(m+1)+3\le2N+1$, square the result estimate and integrate over $[0,T]$ to see that
      \ben\label{est:bound pm j=0 l2}
      \begin{aligned}
        &\|u\|_{L^2H^{2(m+1)+3}}^2+\|p\|_{L^2H^{2(m+1)+2}}^2+\|\theta\|_{L^2H^{2(m+1)+3}}^2\\
        &\lesssim \|F^1-\pa_tu\|_{L^2H^{2(m+1)+1}}^2+\|F^3-\pa_t\theta\|_{L^2H^{2(m+1)+1}}^2\\
        &\quad+\|F^4\|_{L^2H^{2(m+1)+3/2}}^2+\|F^5\|_{L^2H^{2(m+1)+3/2}}^2\\
        &\lesssim \|F^1\|_{L^2H^{2(m+1)+1}}^2+\|\pa_tu\|_{L^2H^{2(m+1)+1}}^2+\|F^3\|_{L^2H^{2(m+1)+1}}^2+\|\pa_t\theta\|_{L^2H^{2(m+1)+1}}^2\\
        &\quad+\|F^4\|_{L^2H^{2(m+1)+3/2}}^2+\|F^5\|_{L^2H^{2(m+1)+3/2}}^2\\
        &\lesssim P(\mathfrak{E}_0(\eta),\mathfrak{K}(\eta))\exp\left(P(\mathfrak{E}(\eta))T\right)\big(\|u_0\|_{H^{2N}}^2+\|\theta_0\|_{H^{2N}}^2+\mathfrak{F}_0+\mathfrak{F}\big),
      \end{aligned}
      \een
      and then again with $r=2(m+1)+2\le2N$ to see that
      \ben\label{est:bound pm j=0 linfty}
      \begin{aligned}
        &\|u\|_{L^\infty H^{2(m+1)+2}}^2+\|p\|_{L^\infty H^{2(m+1)+1}}^2+\|\theta\|_{L^\infty H^{2(m+1)+2}}^2\\
        &\lesssim \|F^1-\pa_tu\|_{L^\infty H^{2(m+1)}}^2+\|F^3-\pa_t\theta\|_{L^\infty H^{2(m+1)}}^2\\
        &\quad+\|F^4\|_{L^\infty H^{2(m+1)+1/2}}^2+\|F^5\|_{L^\infty H^{2(m+1)+1/2}}^2\\
        &\lesssim \|F^1\|_{L^\infty H^{2(m+1)}}^2+\|\pa_tu\|_{L^\infty H^{2(m+1)}}^2+\|F^3\|_{L^\infty H^{2(m+1)}}^2+\|\pa_t\theta\|_{L^\infty H^{2(m+1)}}^2\\
        &\quad+\|F^4\|_{L^\infty H^{2(m+1)+1/2}}^2+\|F^5\|_{L^\infty H^{2(m+1)+1/2}}^2\\
        &\lesssim P(\mathfrak{E}_0(\eta),\mathfrak{K}(\eta))\exp\left(P(\mathfrak{E}(\eta))T\right)\big(\|u_0\|_{H^{2N}}^2+\|\theta_0\|_{H^{2N}}^2+\mathfrak{F}_0+\mathfrak{F}\big).
      \end{aligned}
      \een
      Thus \eqref{est:bound pm 1} is obtained by summing \eqref{est:bound pm j ge1}--\eqref{est:bound pm j=0 linfty}. This completes the proof.
\end{proof}

\section{Preliminaries for the nonlinear problem}
In order to use linear theory for the problem \eqref{equ:linear BC} to solve the nonlinear problem \eqref{equ:nonlinear BC}, we have to define forcing terms $F^1$, $F^3$, $F^4$, $F^5$ to be used in the linear estimates. Given $u$, $\theta$, $\eta$, we define
\ben\label{equ:force u p theta}
\begin{aligned}
  F^1(u,\theta,\eta)&=\pa_t\bar{\eta}(1+x_3)K\pa_3u-u\cdot\nabla_{\mathscr{A}}u\quad \text{and}\quad F^4(u,\theta,\eta)=\eta\mathscr{N},\\
  F^3(u,\theta,\eta)&=\pa_t\bar{\eta}(1+x_3)K\pa_3\theta-u\cdot\nabla_{\mathscr{A}}\theta \quad \text{and}\quad F^5(u,\theta,\eta)=-\left|\mathscr{N}\right|,
\end{aligned}
\een
where $\mathscr{A}$, $\mathscr{N}$, $K$ are determined as before by $\eta$. Then we define the quantities $\mathfrak{K}_{N}(u,\theta)$ and $\mathfrak{K}_{N}(u)$ as
\ben\label{equ:N u theta}
\begin{aligned}
\mathfrak{K}_{N}(u,\theta)&=\sum_{j=0}^{N}\Big(\|\pa_t^ju\|_{L^2H^{2N-2j+1}}^2+\|\pa_t^ju\|_{L^\infty H^{2N-2j}}^2\\
&\quad+\|\pa_t^j\theta\|_{L^2H^{2N-2j+1}}^2+\|\pa_t^j\theta\|_{L^\infty H^{2N-2j}}^2\Big),
\end{aligned}
\een
and
\beq
\mathfrak{K}_{N}(u)=\sum_{j=0}^{N}\Big(\|\pa_t^ju\|_{L^2H^{2N-2j+1}}^2+\|\pa_t^ju\|_{L^\infty H^{2N-2j}}^2\Big).
\eeq
\subsection{Initial data estimates}\label{sec:initial data}
Since $\eta$ is unknown for the full nonlinear problem, and its evolution is coupled to that of $u$, $p$ and $\theta$, we must reconstruct the initial data to contain this coupling, only with $u_0$, $\theta_0$ and $\eta_0$. Here we will define some quantities which have minor difference from \cite{GT1}.
\beq
\mathscr{E}_0:=\|u_0\|_{H^{2N}}^2+\|\theta_0\|_{H^{2N}}^2+\|\eta_0\|_{H^{2N+1/2}}^2,
\eeq
and
\beq
\mathfrak{E}_0(u,p,\theta):=\sum_{j=0}^N\|\pa_t^ju(0)\|_{H^{2N-2j}}^2+\sum_{j=0}^{N-1}\|\pa_t^jp(0)\|_{H^{2N-2j-1}}^2+\sum_{j=0}^N\|\pa_t^j\theta(0)\|_{H^{2N-2j}}^2.
\eeq
For $j=0,\ldots,N-1$,
\ben
\begin{aligned}
  &\mathfrak{F}_0^j(F^1(u,p,\theta),F^3(u,p,\theta),F^4(u,p,\theta),F^5(u,p,\theta))\\
  &:=\sum_{\ell=0}^j\Big(\|\pa_t^\ell F^1(0)\|_{H^{2N-2\ell-2}}^2+\|\pa_t^\ell F^3(0)\|_{H^{2N-2\ell-2}}^2+\|\pa_t^\ell F^4(0)\|_{H^{2N-2\ell-3/2}}^2\\
  &\quad+\|\pa_t^\ell F^5(0)\|_{H^{2N-2\ell-3/2}}^2\Big).
\end{aligned}
\een
\beq
\mathfrak{E}_0^0(\eta):=\|\eta_0\|_{H^{2N+1/2}}^2,
\eeq
and for $j=1,\ldots,N$,
\beq
\mathfrak{E}_0^j(\eta):=\|\eta_0\|_{H^{2N+1/2}}^2+\sum_{\ell=1}^j\|\pa_t^\ell\eta(0)\|_{H^{2N-2\ell+3/2}}^2.
\eeq
\beq
\mathfrak{E}_0^0(u,p,\theta):=\|u_0\|_{H^{2N}}^2+\|\theta_0\|_{H^{2N}}^2,
\eeq
and for $j=1,\ldots,N$,
\beq
\mathfrak{E}_0^j(u,p,\theta):=\sum_{\ell=0}^j\|\pa_t^\ell u(0)\|_{H^{2N-2\ell}}^2+\sum_{\ell=0}^{j-1}\|\pa_t^\ell p\|_{H^{2N-2\ell-1}}^2+\sum_{\ell=0}^j\|\pa_t^\ell\theta(0)\|_{H^{2N-2\ell}}^2.
\eeq
The following lemma is a minor modification of Lemma 5.2 in \cite{GT1}, so we omit the details of proof.
\begin{lemma}\label{lem:preliminary}
  For $j=0, \ldots,N$,
  \beq\label{est:pa t Dt u}
  \|\pa_t^ju(0)-D_t^ju(0)\|_{H^{2N-2j}}^2\le P_j(\mathfrak{E}_0^j(\eta),\mathfrak{E}_0^j(u,p,\theta))
  \eeq
  and
  \beq \label{est:pa t Dt theta}
  \|\pa_t^j(\theta(0)\nabla_{\mathscr{A}_0}y_{3,0})-D_t^j(\theta(0)\nabla_{\mathscr{A}_0}y_{3,0})\|_{H^{2N-2j}}^2\le P_j(\mathfrak{E}_0^j(\eta),\mathfrak{E}_0^j(u,p,\theta))
  \eeq
  for $P_j(\cdot,\cdot)$ a polynomial such that $P_j(0,0)=0$.

  For $F^1(u,\theta,\eta)$, $F^3(u,\theta,\eta)$, $F^4(u,\theta,\eta)$ and $F^5(u,\theta,\eta)$ defined by \eqref{equ:force u p theta} and $j=0,\ldots,N-1$, we have that
  \beq\label{est:F 0j}
  \mathfrak{F}_0^j(F^1(u,p,\theta),F^3(u,p,\theta),F^4(u,p,\theta),F^5(u,p,\theta))\le P_j(\mathfrak{E}_0^{j+1}(\eta),\mathfrak{E}_0^j(u,p,\theta))
  \eeq
  for $P_j(\cdot,\cdot)$ a polynomial such that $P_j(0,0)=0$.

  For $j=1,\ldots,N-1$, let $F^{1,j}(0)$, $F^{3,j}(0)$, $F^{4,j}(0)$ and $F^{5,j}(0)$ are determined by \eqref{equ:force 1}, \eqref{equ:force 2} and \eqref{equ:force u p theta}. Then
  \ben\label{est:force 1345}
  \begin{aligned}
  &\|F^{1,j}(0)\|_{H^{2N-2j-2}}^2+\|F^{3,j}(0)\|_{H^{2N-2j-2}}^2\\
  &\quad+\|F^{4,j}(0)\|_{H^{2N-2j-3/2}}^2+\|F^{5,j}(0)\|_{H^{2N-2j-3/2}}^2\\
  &\le P_j(\mathfrak{E}_0^{j+1}(\eta),\mathfrak{E}_0^j(u,p,\theta))
  \end{aligned}
  \een
  for $P_j(\cdot,\cdot)$ a polynomial such that $P_j(0,0)=0$.

  For $j=1,\ldots,N-1$,
  \beq
  \left\|\sum_{\ell=0}^j{j \choose \ell}\pa_t^\ell\mathscr{N}(0)\cdot\pa_t^{j-\ell}u(0)\right\|_{H^{2N-2j+3/2}}^2\le P_j(\mathfrak{E}_0^j(\eta),\mathfrak{E}_0^j(u,p,\theta))
  \eeq
  for $P_j(\cdot,\cdot)$ a polynomial such that $P_j(0,0)=0$. Also,
  \beq\label{est:pat eta0}
  \|u_0\cdot\mathscr{N}_0\|_{H^{2N-1/2}(\Sigma)}^2\le \|u_0\|_{H^{2N}}^2\left(1+\|\eta_0\|_{H^{2N+1/2}}^2\right).
  \eeq
\end{lemma}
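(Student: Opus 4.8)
The plan is to follow the proof of Lemma 5.2 in \cite{GT1} in structure, carrying along the temperature field $\theta$ and the new terms it generates; every one of the displayed estimates is proved by a finite induction on $j$, exploiting the recursive construction of $D_t^ju(0)$, $\pa_t^j\theta(0)$ and $\pa_t^jp(0)$ performed earlier in Section 5, together with the Sobolev product estimates (Lemma A.1--A.4 of \cite{GT1}), the trace theorem, and the already-established Lemmas \ref{lem:pa tv Dt v}, \ref{lem:force linear} and \ref{lem:v,q,G} specialized to $t=0$.

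First I would prove \eqref{est:pa t Dt u} and \eqref{est:pa t Dt theta}. Since $D_t=\pa_t-R$ with $R=\pa_tMM^{-1}$, $M=K\nabla\Phi$, the difference $\pa_t^jv(0)-D_t^jv(0)$ is a finite sum of terms each containing at least one factor $\pa_t^aR(0)$, which is a smooth function of $\eta_0,\pa_t\eta(0),\dots,\pa_t^{a+1}\eta(0)$ and hence bounded in any Sobolev norm by $P(\mathfrak{E}_0^j(\eta))$, multiplied by a strictly lower-order time derivative $\pa_t^bv(0)$ with $b<j$. These are exactly the quantities estimated in \eqref{equ:initial v j} and \eqref{equ:initial theta j}; combining those with the bound on $R(0)$ and absorbing the $\pa_t^bv(0)$, $\pa_t^b\theta(0)$ factors into $\mathfrak{E}_0^j(u,p,\theta)$ gives the claims. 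The term $\theta(0)\nabla_{\mathscr{A}_0}y_{3,0}$ is handled identically, since $\nabla_{\mathscr{A}_0}y_{3,0}$ is a smooth function of $\eta_0$ and of the first spatial derivatives of $\bar{\eta}_0$.

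Next, for \eqref{est:F 0j} and \eqref{est:force 1345} I would unwind the definitions \eqref{equ:force u p theta} of $F^1,F^3,F^4,F^5$ in terms of $u,\theta,\eta$ and differentiate in time by the Leibniz rule at $t=0$. Each resulting factor is either an $\eta$-dependent coefficient ($\bar{\eta}$, $K$, the entries of $\mathscr{A}$, or $\mathscr{N}$, $|\mathscr{N}|$) or one of $u,\theta$; because of the explicit factor $\pa_t\bar{\eta}$ in $F^1$ and $F^3$, the coefficients carry at most one more time derivative than $\pa_t^\ell F^i(0)$ itself, so that $\pa_t^\ell F^i(0)$ ($\ell\le j$) is bounded by $P(\mathfrak{E}_0^{j+1}(\eta))\,P(\mathfrak{E}_0^j(u,p,\theta))$ using the algebra property of $H^s$ for $s$ large and the trace theorem for the $F^4,F^5$ contributions. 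This yields \eqref{est:F 0j}. The bound \eqref{est:force 1345} on the higher forcing data $F^{i,j}(0)$ then follows by feeding \eqref{est:F 0j}, \eqref{est:pa t Dt u}--\eqref{est:pa t Dt theta} (to pass between $\pa_t$- and $D_t$-derivatives), and the elliptic bound for $\pa_t^\ell p(0)$ (built into $\mathfrak{E}_0^j(u,p,\theta)$) into the recursive identities \eqref{equ:force 1}--\eqref{equ:force 2} and invoking \eqref{equ:initial force j} of Lemma \ref{lem:force linear}, with the $\mathfrak{E}^{\bullet}$ contributions controlled by Lemma \ref{lem:v,q,G}.

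Finally, the last two displayed estimates are direct. For $\sum_{\ell=0}^j{j\choose\ell}\pa_t^\ell\mathscr{N}(0)\cdot\pa_t^{j-\ell}u(0)$ one uses the trace theorem to put $\pa_t^{j-\ell}u(0)$ in $H^{2N-2(j-\ell)-1/2}(\Sigma)$ and $\pa_t^\ell\mathscr{N}(0)$ (a smooth function of $\pa_t^\ell\eta(0)$) in $H^{2N-2\ell-1/2}(\Sigma)$, then multiplies with the $H^s(\Sigma)$-product estimate; the net order $2N-2j+3/2$ matches. For $\|u_0\cdot\mathscr{N}_0\|_{H^{2N-1/2}(\Sigma)}^2\le\|u_0\|_{H^{2N}}^2(1+\|\eta_0\|_{H^{2N+1/2}}^2)$, since $\mathscr{N}_0=(-\pa_1\eta_0,-\pa_2\eta_0,1)$ and $2N-1/2>1$, $H^{2N-1/2}(\Sigma)$ is a Banach algebra, so $\|u_0\cdot\mathscr{N}_0\|_{H^{2N-1/2}(\Sigma)}\lesssim\|u_0\|_{H^{2N-1/2}(\Sigma)}\|\mathscr{N}_0\|_{H^{2N-1/2}(\Sigma)}\lesssim\|u_0\|_{H^{2N}}(1+\|\eta_0\|_{H^{2N+1/2}})$ by the trace theorem. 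The main obstacle throughout is purely bookkeeping: verifying that the Leibniz expansions of the nonlinear forcing terms never consume more than $j+1$ time derivatives of $\eta$ and never more than $j$ of $(u,p,\theta)$ at step $j$, so that the right-hand sides stay at the claimed orders $\mathfrak{E}_0^{j+1}(\eta)$ and $\mathfrak{E}_0^j(u,p,\theta)$; once this is tracked, each individual bound is a routine application of Sobolev multiplication, the trace theorem, and the cited lemmas.
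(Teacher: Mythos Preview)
Your proposal is correct and matches the paper's approach: the paper itself omits the proof entirely, stating only that the lemma is a minor modification of Lemma~5.2 in \cite{GT1}, which is precisely the template you follow while tracking the additional temperature terms. Your identification of the key bookkeeping point---that the explicit $\pa_t\bar{\eta}$ factor in $F^1,F^3$ forces $\mathfrak{E}_0^{j+1}(\eta)$ rather than $\mathfrak{E}_0^j(\eta)$ on the right---is exactly the sort of detail the ``minor modification'' entails.
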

This lemma allows us to construct all of the initial data $\pa_t^ju(0)$, $\pa_t^j\theta(0)$, $\pa_t^j\eta(0)$ for $j=0,\ldots,N$ and $\pa_t^jp(0)$ for $j=0,\ldots,N-1$.

Assume that $\mathscr{E}_0<\infty$. As before, we will iteratively construct the initial data, but this time we will use Lemma \ref{lem:preliminary}. We define $\pa_t\eta(0)=u_0\cdot\mathscr{N}_0$, where $u_0\in H^{2N-1/2}(\Sigma)$, and $\mathscr{N}_0$ is determined by $\eta_0$. \eqref{est:pat eta0} implies that $\|\pa_t\eta(0)\|_{H^{2N-1/2}}^2\lesssim P(\mathscr{E}_0)$ for a polynomial $P(\cdot)$ such that $P(0)=0$, and hence that $\mathfrak{E}_0^0(u,p,\theta)+\mathfrak{E}_0^1(\eta)\lesssim P(\mathscr{E}_0)$. Then \eqref{est:F 0j} with $j=0$ implies that
\beq
  \mathfrak{F}_0^0(F^1(u,p,\theta),F^3(u,p,\theta),F^4(u,p,\theta),F^5(u,p,\theta))\le P_0(\mathfrak{E}_0^0(\eta),\mathfrak{E}_0^0(u,p,\theta))\lesssim P(\mathscr{E}_0)
  \eeq
  for a polynomial $P(\cdot)$ such that $P(0)=0$. Note that in these estimates and in the estimates below, the polynomial $P(\cdot)$ of $\mathscr{E}_0$ are allowed to change from line to line, but they always satisfy $P(0)=0$.

  In this paragraph, we will give the iterative definition of $\pa_t^jp(0)$, $\pa_t^{j+1}u(0)$, $\pa_t^{j+1}\theta(0)$ and $\pa_t^{j+2}\eta(0)$ for $0\le j\le N-2$. Now suppose that $\pa_t^\ell u(0)$, $\pa_t^\ell\theta(0)$ are known for $\ell=0,\ldots,j$, $\pa_t^\ell \eta(0)$ is known for $\ell=0,\ldots,j+1$, $\pa_t^\ell p(0)$ is known for $\ell=0, \ldots,j-1$ (with the exception for $p(0)$ when $j=0$) and
  \ben\label{est:e0j f0j}
  \begin{aligned}
  &\mathfrak{E}_0^j(u,p,\theta)+\mathfrak{E}_0^{j+1}(\eta)\\
  &\quad+\mathfrak{F}_0^j(F^1(u,p,\theta),F^3(u,p,\theta),F^4(u,p,\theta),F^5(u,p,\theta))\\
  &\lesssim P(\mathscr{E}_0).
  \end{aligned}
  \een
  And according to \eqref{est:force 1345} and \eqref{est:pa t Dt u}, we know that
   \ben\label{est:f j1234 Dt u}
  \begin{aligned}
  &\|D_t^ju(0)\|_{H^{2N-2j}}^2+\|F^{1,j}(0)\|_{H^{2N-2j-2}}^2+\|F^{3,j}(0)\|_{H^{2N-2j-2}}^2\\
  &\quad+\|F^{4,j}(0)\|_{H^{2N-2j-3/2}}^2+\|F^{5,j}(0)\|_{H^{2N-2j-3/2}}^2\\
  &\lesssim P(\mathscr{E}_0).
  \end{aligned}
  \een
  By virtue of estimates \eqref{equ:initial g1 g4}
  \ben
  \begin{aligned}
    &\|\mathfrak{f}^1(F^{1,j}(0),D_t^ju(0))\|_{H^{2N-2i-3}}^2+\|\mathfrak{f}^2(F^{3,j}(0),D_t^ju(0))\|_{H^{2N-2i-3/2}}^2\\
    &\quad+\|\mathfrak{f}^3(F^{1,j}(0),D_t^ju(0))\|_{H^{2N-2i-5/2}}^2\\
    &\lesssim P(\mathscr{E}_0)
  \end{aligned}
  \een
  This allows us to define $\pa_t^jp(0)$ as the solution to \eqref{equ:poisson} with $f^1$, $f^2$, $f^3$ replaced by $\mathfrak{f}^1$, $\mathfrak{f}^2$, $\mathfrak{f}^3$. The Proposition 2.15 in \cite{LW} with $k=2N$ and $r=2N-2j-1$ implies that
  \beq\label{est:pa t j p}
  \|\pa_t^jp(0)\|_{H^{2N-2j-1}}^2\lesssim P(\mathscr{E}_0).
  \eeq
  Now we define
  \beq
  \pa_t^{j+1}\theta(0)=\mathfrak{E}^{02}(\pa_t^j\theta(0),F^{3,j}(0)) \in H^{2N-2j-2}.
  \eeq
  Then according to \eqref{est:e0j f0j} and \eqref{est:f j1234 Dt u}, we have that
  \beq
  \|\pa_t^{j+1}\theta(0)\|_{H^{2N-2j-2}}^2\lesssim P(\mathscr{E}_0).
  \eeq
  Now the estimates \eqref{equ:initial G1 v q},
  \eqref{est:e0j f0j} and \eqref{est:f j1234 Dt u} allow us to defined
  \beq
  D_t^{j+1}u(0):=\mathfrak{E}^{01}\left(F^{1,j}(0)+\pa_t^j(\theta(0)\nabla_{\mathscr{A}_0}y_{3,0}), D_t^ju(0),\pa_t^jp(0)\right) \in H^{2N-2j-2},
  \eeq
  and then according to \eqref{est:pa t Dt u}, we have
  \beq\label{est:pa t j+1 u}
  \|\pa_t^{j+1}u(0)\|_{H^{2N-2j-2}}^2\le P(\mathscr{E}_0).
  \eeq
  Now the estimates \eqref{est:pat eta0}, \eqref{est:e0j f0j} and \eqref{est:pa t j+1 u} allow us to define
  \[
  \pa_t^{j+2}\eta(0)=\sum_{\ell=0}^{j+1}{j+1 \choose \ell}\pa_t^\ell\mathscr{N}(0)\cdot\pa_t^{j+1-\ell}u(0),
   \]
   and imply the estimate
  \beq\label{est:pa t j+2 eta}
  \|\pa_t^{j+2}\eta(0)\|_{H^{2N-2j-5/2}}^2\le P(\mathscr{E}_0).
  \eeq
  Thus, \eqref{est:e0j f0j} together with \eqref{est:pa t j p}--\eqref{est:pa t j+2 eta} imply that
  \[
  \mathfrak{E}_0^{j+1}(u,p,\theta)+\mathfrak{E}_0^{j+2}(\eta)\le P(\mathscr{E}_0),
  \]
  and then \eqref{est:F 0j} implies that
  \[
  \mathfrak{F}_0^{j+1}(F^1(u,p,\theta),F^3(u,p,\theta),F^4(u,p,\theta),F^5(u,p,\theta))\le P(\mathscr{E}_0).
  \]
  Hence that we can deduce the estimate
  \begin{align*}
    &\mathfrak{E}_0^{j+1}(u,p,\theta)+\mathfrak{E}_0^{j+2}(\eta)\\
    &\quad+\mathfrak{F}_0^{j+1}(F^1(u,p,\theta),F^3(u,p,\theta),F^4(u,p,\theta),F^5(u,p,\theta))\\
    &\le P(\mathscr{E}_0).
  \end{align*}
  For $j=N-2$, we have
  \ben\label{est:e0 N-1 f0 N-1}
  \begin{aligned}
    &\mathfrak{E}_0^{N-1}(u,p,\theta)+\mathfrak{E}_0^N(\eta)\\
    &\quad+\mathfrak{F}_0^{N-1}(F^1(u,p,\theta),F^3(u,p,\theta),F^4(u,p,\theta),F^5(u,p,\theta))\\
    &\le P(\mathscr{E}_0).
  \end{aligned}
  \een

  Then, we only need to define $\pa_t^{N-1}p(0)$, $\pa_t^N\theta(0)$ and $\pa_t^Nu(0)$. Like the construction after Lemma \ref{lem:v,q,G}, we need the compatibility conditions on $u_0$ and $\eta_0$. Now we have constructed $\pa_t^jp(0)$ for $j=0,\ldots,N-2$, $\pa_t^ju(0)$, $\pa_t^j\theta(0)$, $F^{1,j}(0)$, $F^{3,j}(0)$, $F^{4,j}(0)$, $F^{5,j}(0)$ for $j=0,\ldots,N-1$, and $\pa_t^j\eta(0)$ for $j=0,\ldots,N$. We say that $u_0$ and $\eta_0$ satisfy the $N$-th order compatibility conditions if \ben\label{cond:compatibility N}
  \left\{
  \begin{aligned}
    &\nabla_{\mathscr{A}_0}\cdot(D_t^ju(0))=0\quad &\text{in}\thinspace\Om,\\
    &D_t^ju(0)=0\quad &\text{on}\thinspace\Sigma_b,\\
    &\Pi_0\left(F^{4,j}(0)+\mathbb{D}_{\mathscr{A}_0}D_t^ju(0)\mathscr{N}_0\right)=0\quad &\text{on}\thinspace\Sigma,
  \end{aligned}
  \right.
  \een
  for $j=0,\ldots,N-1$, where $\Pi_0$ is the projection defined as in \eqref{def:projection} and $D_t$ be the operator defined by \eqref{def:Dt}. Note that if $u_0$ and $\eta_0$ satisfy \eqref{cond:compatibility N}, then the $j$-th compatibility condition \eqref{cond:compatibility j} is satisfied for $j=0,\ldots,N-1$.
  Then the construction of $\pa_t^{N-1}p(0)$ is the same as \cite{GT1} using the compatibility condition \eqref{cond:compatibility N} and the elliptic theory of $\mathscr{A}$- Poisson equations \eqref{equ:poisson} derived by Y. Guo and I. Tice in \cite{GT1} and L. Wu in \cite{LW}. And
  \beq\label{est:pa t N-1 p}
  \|\pa_t^{N-1}p(0)\|_{H^1}^2\le P(\mathscr{E}_0).
  \eeq
  Then we set $\pa_t^N\theta(0)=\mathfrak{E}^{02}(\pa_t^{N-1}\theta(0),F^{3,N-1}(0))\in H^0$ due to \eqref{equ:e02} and \eqref{est:force 1345}, and set $D_t^Nu(0)=\mathfrak{E}^{01}(F^{1,N-1}(0)+\pa_t^{N-1}(\theta \nabla_{\mathscr{A}_0}y_{3,0}), D_t^{N-1}u(0),\pa_t^{N-1}p(0))\in H^0$ due to \eqref{equ:initial G1 v q} and Lemma \ref{lem:preliminary}. And $D_t^Nu(0)\in \mathscr{Y}(0)$ is guaranteed by the construction of $\pa_t^{N-1}p(0)$. As before, we have
  \beq\label{est:pa t N u theta}
  \|\pa_t^Nu(0)\|_{H^0}^2+\|\pa_t^N\theta(0)\|_{H^0}^2\lesssim P(\mathscr{E}_0).
  \eeq
  This completes the construction of initial data. Then summing the estimates \eqref{est:e0 N-1 f0 N-1}, \eqref{est:pa t N-1 p} and \eqref{est:pa t N u theta}, we directly have the following proposition.
  \begin{proposition}\label{prop:high order initial}
    Suppose that $u_0$, $\theta_0$ and $\eta_0$ satisfy $\mathscr{E}_0<\infty$. Let the initial data $\pa_t^ju(0)$, $\pa_t^j\theta(0)$, $\pa_t^j\eta(0)$ for $j=0,\ldots,N$ and $\pa_t^jp(0)$ for $j=0,\ldots,N-1$ be given as above. Then
    \beq\label{est:high order initial}
    \mathscr{E}_0\le \mathfrak{E}_0(u,p,\theta)+\mathfrak{E}_0(\eta)\lesssim P(\mathscr{E}_0).
    \eeq
    Here $\mathfrak{E}_0(\eta)=\mathfrak{E}_0^N(\eta)$, which is defined in \eqref{def:norm eta}.
  \end{proposition}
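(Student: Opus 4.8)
The estimate \eqref{est:high order initial} is essentially a bookkeeping summary of the inductive construction carried out just above, so the plan is to assemble the pieces already in hand rather than to prove anything new. The lower bound $\mathscr{E}_0\le\mathfrak{E}_0(u,p,\theta)+\mathfrak{E}_0(\eta)$ is immediate from the definitions: the three summands of $\mathscr{E}_0$, namely $\|u_0\|_{H^{2N}}^2$, $\|\theta_0\|_{H^{2N}}^2$ and $\|\eta_0\|_{H^{2N+1/2}}^2$, are precisely the $\ell=0$ terms occurring in $\mathfrak{E}_0(u,p,\theta)$ and in $\mathfrak{E}_0(\eta)=\mathfrak{E}_0^N(\eta)$.

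For the upper bound I would follow exactly the order in which the data were constructed. The base case is $\pa_t\eta(0)=u_0\cdot\mathscr{N}_0$, for which \eqref{est:pat eta0} gives $\|\pa_t\eta(0)\|_{H^{2N-1/2}}^2\lesssim P(\mathscr{E}_0)$ and hence $\mathfrak{E}_0^0(u,p,\theta)+\mathfrak{E}_0^1(\eta)\lesssim P(\mathscr{E}_0)$; then \eqref{est:F 0j} with $j=0$ controls $\mathfrak{F}_0^0$. The inductive step is to show that \eqref{est:e0j f0j} at level $j$ implies the same estimate at level $j+1$, for $0\le j\le N-2$: Lemma \ref{lem:preliminary}, via \eqref{est:force 1345} and \eqref{est:pa t Dt u}, produces \eqref{est:f j1234 Dt u}; the elliptic theory for the $\mathscr{A}$-Poisson problem \eqref{equ:poisson} (with $k=2N$, $r=2N-2j-1$) yields \eqref{est:pa t j p} for $\pa_t^jp(0)$; the definitions of $\pa_t^{j+1}\theta(0)$ and $D_t^{j+1}u(0)$ through $\mathfrak{E}^{02}$, $\mathfrak{E}^{01}$, together with \eqref{equ:e02}, \eqref{equ:initial G1 v q} and \eqref{est:pa t Dt theta}, give \eqref{est:pa t j+1 u} and the matching bound for the temperature; and $\pa_t^{j+2}\eta(0)$ is then controlled via \eqref{est:pat eta0} (in the form estimated in Lemma \ref{lem:preliminary}), giving \eqref{est:pa t j+2 eta}. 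Summing these and invoking \eqref{est:F 0j} once more closes the induction, so that \eqref{est:e0 N-1 f0 N-1} holds at $j=N-2$.

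The one step needing a little more care is the top level $j=N-1$, where the regularity drops to $H^0$ for $u$ and $\theta$ and $H^1$ for $p$, so that the generic elliptic estimates above are no longer directly applicable. There one instead uses the weak formulation for \eqref{equ:poisson} and the $N$-th order compatibility conditions \eqref{cond:compatibility N}, which, as noted, imply the $j$-th conditions \eqref{cond:compatibility j} for all $j\le N-1$; this is exactly the construction of Guo and Tice in \cite{GT1}, so it carries over verbatim, yielding \eqref{est:pa t N-1 p} for $\pa_t^{N-1}p(0)$ and then \eqref{est:pa t N u theta} for $\pa_t^N\theta(0)=\mathfrak{E}^{02}(\pa_t^{N-1}\theta(0),F^{3,N-1}(0))$ and $D_t^Nu(0)=\mathfrak{E}^{01}(F^{1,N-1}(0)+\pa_t^{N-1}(\theta(0)\nabla_{\mathscr{A}_0}y_{3,0}),D_t^{N-1}u(0),\pa_t^{N-1}p(0))$, with $D_t^Nu(0)\in\mathscr{Y}(0)$ guaranteed by the construction of $\pa_t^{N-1}p(0)$.

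Finally I would add \eqref{est:e0 N-1 f0 N-1}, \eqref{est:pa t N-1 p} and \eqref{est:pa t N u theta} to obtain $\mathfrak{E}_0(u,p,\theta)+\mathfrak{E}_0(\eta)\lesssim P(\mathscr{E}_0)$; since every polynomial furnished by Lemma \ref{lem:preliminary} vanishes at the origin, the resulting $P$ satisfies $P(0)=0$, and together with the trivial lower bound this is \eqref{est:high order initial}. The only real obstacle is propagating the polynomial bound through the $N$ iteration steps while keeping careful track of which Sobolev exponents are admissible at each stage — an essentially bookkeeping matter given the lemmas already established — together with the switch to the weak, compatibility-based argument at the final order $j=N-1$.
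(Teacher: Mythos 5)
Your proposal is correct and follows the paper's own argument: the paper gives no separate proof but simply observes that the proposition follows by summing the estimates \eqref{est:e0 N-1 f0 N-1}, \eqref{est:pa t N-1 p} and \eqref{est:pa t N u theta} obtained during the iterative construction of the data, which is exactly the induction you describe, including the switch to the weak/compatibility-based argument at order $j=N-1$. The lower bound is, as you say, immediate since the $\ell=0$ terms of $\mathfrak{E}_0(u,p,\theta)+\mathfrak{E}_0(\eta)$ reproduce $\mathscr{E}_0$.
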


\subsection{Transport equation}

Here we consider the equation
\ben\label{equ:transport}
\left\{
\begin{aligned}
  &\pa_t\eta+u_1\pa_1\eta+u_2\pa_2\eta=u_3 \quad \text{on}\thinspace \Sigma,\\
  &\eta(0)=\eta_0.
\end{aligned}
\right.
\een
The local well--posedness of \eqref{equ:transport} has been proved by  L. Wu, which is the Theorem 2.17 in \cite{LW}. The idea of his proof is similar to the proof of Theorem 5.4 in \cite{GT1}. In \cite{LW}, L. Wu has proved in Lemma 2.18, that the difference of $\eta$ and $\eta_0$ in a small time period is also small.

\subsection{Forcing estimates}

In the next section for the estimates of full nonlinear problem, we need some forcing quantities. Besides $\mathfrak{F}$ and $\mathfrak{F}_0$ which have been defined in \eqref{def:force F F0}, we define the following quantities
\begin{align*}
  \mathcal{F}:&=\sum_{j=0}^{N-1}\left(\|\pa_t^j F^1\|_{L^2H^{2N-2j-1}}^2+\|\pa_t^j F^3\|_{L^2H^{2N-2j-1}}^2\right)+\|\pa_t^NF^1\|_{L^2H^0}^2+\|\pa_t^NF^3\|_{L^2H^0}^2\\
  &\quad+\sum_{j=0}^N\left(\|\pa_t^jF^4\|_{L^\infty H^{2N-2j-1/2}(\Sigma)}^2+\|\pa_t^jF^5\|_{L^\infty H^{2N-2j-1/2}(\Sigma)}^2\right),
\end{align*}
\begin{align*}
  \mathcal{H}:&=\sum_{j=0}^{N-1}\left(\|\pa_t^j F^1\|_{L^2H^{2N-2j-1}}^2+\|\pa_t^j F^3\|_{L^2H^{2N-2j-1}}^2\right)\\
  &\quad+\sum_{j=0}^{N-1}\left(\|\pa_t^jF^4\|_{L^2 H^{2N-2j-1/2}(\Sigma)}^2+\|\pa_t^jF^5\|_{L^2 H^{2N-2j-1/2}(\Sigma)}^2\right),
\end{align*}

The following theorem is similar to Theorem 2.21 in \cite{LW} with obvious modification.
\begin{theorem}\label{lem:forcing estimates}
  The forcing terms satisfy the estimates
  \ben
  \mathfrak{F}&\lesssim& P(\mathfrak{K}(\eta))+P(\mathfrak{K}_N(u,\theta)),\\
  \mathfrak{F}_0&\lesssim& P(\mathscr{E}_0),\\
  \mathcal{F}&\lesssim& P(\mathfrak{K}(\eta))+P(\mathfrak{K}_N(u,\theta)),\\
  \mathcal{H}&\lesssim& T\left(P(\mathfrak{K}(\eta))+P(\mathfrak{K}_N(u,\theta))\right).
  \een
\end{theorem}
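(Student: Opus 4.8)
The proof follows the pattern of the forcing estimates for the free--boundary Navier--Stokes problem, Lemma~5.8 of \cite{GT1} and Theorem~2.21 of \cite{LW}; only the temperature forcing $F^3$ and the boundary term $F^5$ are new, and they are handled by the same multiplicative and trace estimates used for $F^1$ and $F^4$. \textbf{Step 1 (geometric quantities).} First I would record that everything built from $\eta$ is controlled by $\mathfrak{K}(\eta)$. Since $\bar{\eta}=\mathscr{P}^\varepsilon\eta$, the parametrized harmonic extension is bounded from $H^s(\Sigma)$ to $H^{s+1/2}(\Om)$ and commutes with $\pa_t$, and since $J\ge\delta>0$ on $[0,T]$ for $T$ small (because $\eta$ stays $H^{2N-3/2}$--close to $\eta_0$ and $1+\eta_0\ge\delta$, cf.\ Section~\ref{sec:initial data} and \cite{LW}), repeated use of the Sobolev product rule (Lemma A.1--A.2 of \cite{GT1}) and of the quotient rule for $K=1/J$ shows that every norm $\|\pa_t^k\bar{\eta}\|$, $\|\pa_t^k K\|$, $\|\pa_t^k\mathscr{A}\|$, $\|\pa_t^k\mathscr{N}\|$, $\||\mathscr{N}|\|$ appearing below is bounded by $P(\mathfrak{K}(\eta))$ in the relevant space--time norm, and by $P(\mathfrak{E}_0(\eta))$ when evaluated at $t=0$.

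\textbf{Step 2 ($F^1$ and $F^3$).} Expand $\pa_t^jF^1$ and $\pa_t^jF^3$ by the Leibniz rule. Each summand is the product of a geometric factor from Step~1 (together with the smooth weight $1+x_3$), a factor $\pa_t^\ell u$, and a factor of the form $\pa_t^m\nabla_{\mathscr{A}}u$ or $\pa_t^m\nabla_{\mathscr{A}}\theta$ (or $\pa_t^{j+1}\bar{\eta}$ paired with $\pa_3u$, $\pa_3\theta$). Applying the Sobolev multiplication and trace estimates of \cite{GT1} together with Step~1, and checking in each term that the spatial and temporal derivatives are balanced against the slots of $\mathfrak{K}_N(u,\theta)$, one bounds, for $j=0,\ldots,N-1$, the quantities $\|\pa_t^jF^1\|_{L^2H^{2N-2j-1}}^2+\|\pa_t^jF^3\|_{L^2H^{2N-2j-1}}^2$ and $\|\pa_t^jF^1\|_{L^\infty H^{2N-2j-2}}^2+\|\pa_t^jF^3\|_{L^\infty H^{2N-2j-2}}^2$ by $P(\mathfrak{K}(\eta))+P(\mathfrak{K}_N(u,\theta))$. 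At the top temporal order $j=N$, where $\pa_t^Nu,\pa_t^N\theta$ live only in $L^2H^1\cap L^\infty H^0$ and $\pa_t^{N+1}\bar{\eta}$ only in $L^2H^1(\Om)$ (the low--regularity slots of $\mathfrak{K}_N$ and of $\mathfrak{D}(\eta)$), the same estimates place $\pa_t^NF^1,\pa_t^NF^3$ in $L^2H^0$, which embeds into $L^2({}_0H^1(\Om))^\ast$; the terms carrying $\pa_t^N$ on $u$ or $\theta$ inside a $\nabla_{\mathscr{A}}$ are the tight ones and are controlled because $\nabla_{\mathscr{A}}\pa_t^Nu,\nabla_{\mathscr{A}}\pa_t^N\theta\in L^2H^0$ while the remaining factors sit in $L^\infty L^\infty$.

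\textbf{Step 3 ($F^4$, $F^5$, and the bounds for $\mathfrak{F}$, $\mathcal{F}$).} Since $F^4=\eta\mathscr{N}$ and $F^5=-|\mathscr{N}|=-\sqrt{1+|\nabla\eta|^2}$ depend only on $\eta$, their temporal derivatives are estimated on $\Sigma$ by the algebra property of $H^s(\Sigma)$ and the composition estimate for $x\mapsto\sqrt{1+|x|^2}$, giving $\sum_{j}\big(\|\pa_t^jF^4\|_{L^2H^{2N-2j-1/2}}^2+\|\pa_t^jF^5\|_{L^2H^{2N-2j-1/2}}^2\big)\lesssim P(\mathfrak{K}(\eta))$ and likewise for the $L^\infty$ norms. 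Combining Steps~2--3 with the definition \eqref{def:force F F0} yields $\mathfrak{F}\lesssim P(\mathfrak{K}(\eta))+P(\mathfrak{K}_N(u,\theta))$, and the same estimates (taking the top order of $F^1,F^3$ in $L^2H^0$ and the boundary terms in $L^\infty$ up to order $N$) give $\mathcal{F}\lesssim P(\mathfrak{K}(\eta))+P(\mathfrak{K}_N(u,\theta))$.

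\textbf{Step 4 ($\mathfrak{F}_0$, $\mathcal{H}$) and the main obstacle.} The bound $\mathfrak{F}_0\lesssim P(\mathscr{E}_0)$ is already contained in the initial--data construction of Section~\ref{sec:initial data}: by Lemma~\ref{lem:preliminary} and the chain of estimates leading to \eqref{est:e0 N-1 f0 N-1} (equivalently Proposition~\ref{prop:high order initial}), the quantities $\mathfrak{E}_0(u,p,\theta)$, $\mathfrak{E}_0(\eta)$, and hence $\mathfrak{F}_0$, are all $\lesssim P(\mathscr{E}_0)$. Finally, $\mathcal{H}$ contains only $L^2$--in--time norms of $\pa_t^jF^1,\pa_t^jF^3,\pa_t^jF^4,\pa_t^jF^5$ for $j=0,\ldots,N-1$, so $\|\cdot\|_{L^2([0,T];X)}^2\le T\,\|\cdot\|_{L^\infty([0,T];X)}^2$ together with the $L^\infty$ estimates of Steps~2--3 produces the gain of a factor $T$ and gives $\mathcal{H}\lesssim T\big(P(\mathfrak{K}(\eta))+P(\mathfrak{K}_N(u,\theta))\big)$. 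The only real work is the index bookkeeping in Step~2: for every term of the Leibniz expansions of $\pa_t^jF^1$ and $\pa_t^jF^3$ one must verify that the available spatial regularity matches the target index — this is tightest at the highest temporal order, where one is forced into the low--regularity slots of $\mathfrak{K}_N(u,\theta)$ and $\mathfrak{D}(\eta)$ and into $L^2H^0$ (or its dual), and where one must also control the powers of $K=1/J$, whose bounds rely on the short--time closeness of $\eta$ to $\eta_0$. Everything else is a direct application of the already established product, trace, and extension estimates.
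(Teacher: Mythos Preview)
Your proposal is correct and follows exactly the route the paper takes: the paper itself gives no argument and simply defers to Theorem~2.21 of \cite{LW} (and, behind it, the forcing estimates of \cite{GT1}), and your sketch reproduces that argument with the obvious additions for $F^3$ and $F^5$. One small imprecision: in Step~4 you invoke ``the $L^\infty$ estimates of Steps~2--3'' to get the factor $T$ in $\mathcal{H}$, but in Step~2 you only recorded $\|\pa_t^jF^1\|_{L^\infty H^{2N-2j-2}}$, whereas $\mathcal{H}$ needs the $L^2H^{2N-2j-1}$ norm; the same product/Leibniz bookkeeping in fact gives $\pa_t^jF^1,\pa_t^jF^3\in L^\infty H^{2N-2j-1}$ for $j\le N-1$ (there is one more spatial derivative to spare at sub-top temporal order), so the argument goes through unchanged once you note this.
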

\begin{proof}
  The proof of this theorem is the same as the proof of Theorem $2.21$ in \cite{LW}, so we omit the details here.
\end{proof}

\section{Local well-posedness for the nonlinear problem}

\subsection{Construction of approximate solutions}

In order to solve the \eqref{equ:NBC}, we will construct a sequence of approximate solutions $(u^m, p^m,\theta^m, \eta^m)$, then take the limit $m\to\infty$. First, we construct an initial pair $(u^0,\theta^0,\eta^0)$ as a start point, then we iteratively define all sequences $(u^m,p^m,\theta^m,\eta^m)$ for $m\ge1$.

Suppose that the initial data $(u_0,\theta_0,\eta_0)$ has given. According to the Lemma A.5 in \cite{GT1}, there exist $u^0$ and $\theta^0$ defined in $\Om\times [0,\infty)$ with $\pa_t^ju^0(0)=\pa_t^ju(0)$, $\pa_t^j\theta^0(0)=\pa_t^j\theta(0)$, for $j=0,\ldots,N$, satisfying
 \beq\label{est:sequence u0 theta0}
 \mathfrak{K}_N(u^0,\theta^0)\lesssim P(\mathscr{E}_0).
 \eeq
 Then we consider the equation \eqref{equ:poisson} with $u$ replaced by $u^0$. From the Theorem 2.17 in \cite{LW}, the hypothesis of which is satisfied by \eqref{est:high order initial} and \eqref{est:sequence u0 theta0}, there exists a $\eta^0$ defined in $\Om\times [0,T_0)$, which satisfies $\pa_t^j\eta^0(0)=\pa_t^j\eta(0)$ for $j=0,\ldots,N$ as well as
 \[
 \mathfrak{K}(\eta^0)\lesssim P(\mathscr{E}_0).
 \]

Then for any integer $m\ge1$, we formally define the sequence $(u^m,p^m,\theta^m,\eta^m)$ on the time interval $[0,T_m)$ as the solutions of system
\ben\label{equ:iteration equation}
\left\{
\begin{aligned}
  &\pa_tu^m-\Delta_{\mathscr{A}^{m-1}}u^m+\nabla_{\mathscr{A}^{m-1}}p^m+\theta^m\nabla_{\mathscr{A}^{m-1}}y_3^{m-1} &\\
  &\qquad=\pa_t\bar{\eta}^{m-1}(1+x_3)K^{m-1}\pa_3u^{m-1} -u^{m-1}\cdot\nabla_{\mathscr{A}^{m-1}}u^{m-1}&\text{in}\thinspace\Om,\\
  &\dive_{\mathscr{A}^{m-1}}u^m=0&\text{in}\thinspace\Om,\\
  &\pa_t\theta^m-\Delta_{\mathscr{A}^{m-1}}\theta^m=\pa_t\bar{\eta}^{m-1}(1+x_3)K^{m-1}\pa_3\theta^{m-1}-u^{m-1}\cdot\nabla_{\mathscr{A}^{m-1}}\theta^{m-1} &\text{in}\thinspace\Om,\\
  &S_{\mathscr{A}^{m-1}}(p^m,u^m)\mathscr{N}^{m-1}=\eta^{m-1}\mathscr{N}^{m-1} &\text{on}\thinspace\Sigma,\\
  &\nabla_{\mathscr{A}^{m-1}}\theta^m\cdot\mathscr{N}^{m-1}+\theta^m\left|\mathscr{N}^{m-1}\right|=-\left|\mathscr{N}^{m-1}\right| &\text{on}\thinspace\Sigma,\\
  &u^m=0,\quad\theta^m=0 &\text{on}\thinspace\Sigma_b,
\end{aligned}
\right.
\een
and
\beq \label{equ:iteration theta}
\pa_t\eta^m=u^m\cdot\mathscr{N}^m\quad\text{on}\thinspace\Sigma,
\eeq
where $\mathscr{A}^{m-1}$, $\mathscr{N}^{m-1}$, $K^{m-1}$ are determined in terms of $\eta^{m-1}$ and $\mathscr{N}^m$ is in terms of $\eta^m$, with the initial data $(u^m(0), \theta^m(0), \eta^m(0))=(u_0,\theta_0,\eta_0)$.

In the following, we will prove that these sequences can be defined for any integer $m\ge1$ and the existence time $T_m$ does not shrink to $0$ as $m\to \infty$. The following theorem is a modified version of Theorem $2.24$ in \cite{LW}, which improves the estimate \eqref{est:higher regularity} using the energy structure and elliptic estimates.

\begin{theorem}\label{thm:boundedness}
  Suppose $J(0)>\delta>0$. Assume that the initial data $(u_0,\theta_0,\eta_0)$ satisfy $\mathscr{E}_0<\infty$ and $\pa_t^ju(0)$, $\pa_t^j\theta(0)$, $\pa_t^j\eta(0)$, for $j=0,\ldots,N$, are given as above from the Proposition \ref{prop:high order initial}. Then there exists a positive constant $\mathscr{Z}<\infty$ and $0<\bar{T}<1$ depending on $\mathscr{E}_0$, such that if $0<T<\bar{T}$, then there exists a sequence $\{(u^m,p^m,\theta^m,\eta^m)\}_{m=0}^\infty$ (when $m=0$, the sequence should be considered as $(u^0,\theta^0,\eta^0)$) satisfying the iteration equation \eqref{equ:iteration equation} within the time interval $[0,T)$ and the following properties:
  \begin{enumerate}[1.]
    \item The iteration sequence satisfies
    \beq
    \mathfrak{K}_N(u^m,\theta^m)+\mathfrak{K}(\eta^m)\le \mathscr{Z}
    \eeq
    for any integer $m\ge0$, where the temporal norm is taken with respect to $[0,T)$.\\
    \item $J^m(t)\ge\delta/2$ with $0\le t\le T$, for any integer $m\ge0$.
  \end{enumerate}
\end{theorem}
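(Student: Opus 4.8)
The plan is to argue by induction on $m$, fixing the radius $\mathscr{Z}$ first (as a function of $\mathscr{E}_0$ only) and then choosing $\bar T$ small (depending on $\mathscr{Z}$, hence on $\mathscr{E}_0$) so that the bound reproduces itself. For the base case $m=0$: the pair $(u^0,\theta^0)$ is the extension of the data $\pa_t^ju(0),\pa_t^j\theta(0)$, $j=0,\dots,N$, given by Lemma A.5 of \cite{GT1}, so that $\mathfrak{K}_N(u^0,\theta^0)\lesssim P(\mathscr{E}_0)$ by Proposition \ref{prop:high order initial}; feeding $u^0$ into \eqref{equ:transport} and invoking Theorem 2.17 of \cite{LW} yields $\eta^0$ on $[0,T_0)$ with $\mathfrak{K}(\eta^0)\lesssim P(\mathscr{E}_0)$, while Lemma 2.18 of \cite{LW} bounds $\|\eta^0(t)-\eta_0\|$ by $\sqrt T\,P(\mathscr{E}_0)$; since $J^0(0)>\delta$ and $J^0$ depends continuously on $\eta^0$, shrinking $\bar T$ forces $J^0(t)\ge\delta/2$ on $[0,T]$.

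For the inductive step, suppose $\mathfrak{K}_N(u^{m-1},\theta^{m-1})+\mathfrak{K}(\eta^{m-1})\le\mathscr{Z}$ and $J^{m-1}\ge\delta/2$ on $[0,T]$. Set $F^i:=F^i(u^{m-1},\theta^{m-1},\eta^{m-1})$ as in \eqref{equ:force u p theta}. By Theorem \ref{lem:forcing estimates} combined with the inductive bound, $\mathfrak{F}\lesssim P(\mathscr{Z})$, $\mathfrak{F}_0\lesssim P(\mathscr{E}_0)$, $\mathcal F\lesssim P(\mathscr{Z})$ and, crucially, $\mathcal H\lesssim T\,P(\mathscr{Z})$. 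Since every iterate achieves the same initial data, the compatibility conditions \eqref{cond:compatibility j} required by the linear theory follow from \eqref{cond:compatibility N}, which holds by the construction in Section \ref{sec:initial data}. Thus Theorem \ref{thm:higher regularity} (taking $\bar T\le T_0$) produces a unique strong solution $(u^m,p^m,\theta^m)$ on $[0,T]$ with the stated regularity, and then \eqref{equ:iteration theta} together with Theorem 2.17 of \cite{LW} produces $\eta^m$ on $[0,T]$ with $\eta^m(0)=\eta_0$. Because the existence times furnished by Theorem \ref{thm:higher regularity} and by Theorem 2.17 of \cite{LW} are bounded below by the universal $T_0$, the sequence is defined on all of $[0,T]$, so $T_m$ does not shrink; it remains to propagate the two quantitative bounds.

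The crude inequality \eqref{est:higher regularity} yields only $\mathfrak{K}(u^m,p^m,\theta^m)\lesssim P(\mathscr{E}_0,\mathscr{Z})$, which does not close on itself. Instead I will re-run the energy-plus-elliptic scheme underlying Theorems \ref{thm:lower regularity} and \ref{thm:higher regularity} (cf.\ \eqref{est:weak theta}--\eqref{est:weak u} and \eqref{inequ:est strong solution}): the highest temporal derivatives $D_t^Nu^m$, $\pa_t^N\theta^m$ are controlled by the Galerkin--Gronwall energy estimates, whose forcing enters only through $L^2$-in-time and dual-space quantities bounded by $\mathcal H\lesssim T\,P(\mathscr{Z})$ and whose exponential factor is $\exp(CT\,P(\mathscr{Z}))$; the remaining temporal derivatives, all spatial regularity and the pressure are then recovered from Proposition \ref{prop:high regulatrity} applied to \eqref{equ:higher linear BC}, using $\|\pa_t^jw\|_{L^2H^k}^2\le T\|\pa_t^jw\|_{L^\infty H^k}^2$ and the fact that the data-level values $F^i(0)$, $\pa_t^\ell F^i(0)$ are bounded by $P(\mathscr{E}_0)$ (Lemma \ref{lem:preliminary}), so that every genuinely $\mathscr{Z}$-dependent contribution carries a positive power of $T$. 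The transport estimates of \cite{LW} similarly give $\mathfrak{K}(\eta^m)\lesssim\exp(CT\,P(\mathscr{Z}))\big(P(\mathscr{E}_0)+\sqrt T\,P(\mathscr{Z})\big)$. Combining,
\[
\mathfrak{K}_N(u^m,\theta^m)+\mathfrak{K}(\eta^m)\le C_1P(\mathscr{E}_0)+C_2\,\sqrt T\,e^{C_3T\,P(\mathscr{Z})}\,P(\mathscr{E}_0,\mathscr{Z}).
\]
Now fix $\mathscr{Z}:=2C_1P(\mathscr{E}_0)+1$ and then choose $\bar T\le T_0$ so small that $e^{C_3\bar T\,P(\mathscr{Z})}\le2$ and $2C_2\sqrt{\bar T}\,P(\mathscr{E}_0,\mathscr{Z})\le\mathscr{Z}/2$; for $0<T<\bar T$ the right-hand side is $\le\mathscr{Z}$, which is assertion~1. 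Finally $\eta^m(0)=\eta_0$ gives $J^m(0)>\delta$, and Lemma 2.18 of \cite{LW} bounds $\|\eta^m(t)-\eta_0\|$ by $\sqrt T\,P(\mathscr{Z})$, so a further reduction of $\bar T$ secures $J^m(t)\ge\delta/2$ on $[0,T]$, which is assertion~2.

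The main obstacle is precisely the bookkeeping of the previous paragraph: one must verify that every term carrying the size $\mathscr{Z}$ of the previous iterate --- the quadratic forcing $u^{m-1}\cdot\nabla_{\mathscr{A}^{m-1}}u^{m-1}$ and $u^{m-1}\cdot\nabla_{\mathscr{A}^{m-1}}\theta^{m-1}$, the $\pa_t\bar\eta^{m-1}$-terms, the $\eta^{m-1}$-dependent boundary data $\eta^{m-1}\mathscr{N}^{m-1}$ and $|\mathscr{N}^{m-1}|$, and the feedback of the solution's own $L^2$-in-time norms through the recursion \eqref{equ:force 1}--\eqref{equ:force 2} --- either appears multiplied by a positive power of $T$ once the correct space--time norm is taken, or is dominated by its value at $t=0$, which is pure data controlled by $P(\mathscr{E}_0)$. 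A secondary technical point is to keep all Gronwall constants dependent on $\mathfrak{K}(\eta^{m-1})\le\mathscr{Z}$ but no worse, so that, after $\mathscr{Z}$ has been fixed, the exponential factors become harmless by shrinking $\bar T$.
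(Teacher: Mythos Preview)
Your proposal is correct and follows essentially the same route as the paper: induction on $m$, base case via the extension lemma and the transport result from \cite{LW}, and for the inductive step an energy estimate for the top temporal derivatives $D_t^Nu^m,\pa_t^N\theta^m$ followed by elliptic estimates (Proposition \ref{prop:high regulatrity}) iterated backward from $n=N-1$ down to $n=0$, with the key observation that every $\mathscr{Z}$-dependent contribution carries a positive power of $T$, so that fixing $\mathscr{Z}=P(\mathscr{E}_0)$ first and then shrinking $\bar T$ closes the bound. One small imprecision: at the top energy level the forcing is controlled through $\mathcal{F}$ (no intrinsic $T$-factor) rather than $\mathcal{H}$, and the smallness comes from the $\sqrt{T}$ produced by the time integration in the energy identity itself, exactly as the paper writes in \eqref{est:energy thetam um}; your bookkeeping paragraph already anticipates this, so it is only a matter of phrasing.
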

\begin{proof}
  In this proof, we will follow the path of proof of Theorem 2.24 in \cite{LW}. We will use an infinite induction to prove this theorem. Let us denote the above two assertions as statement $\mathbb{P}_m$.

    Step $1$. $\mathbb{P}_0$ case. The only modification here is that the construction of $u^0$ and $\theta^0$ reveals that $\mathfrak{K}_N(u^0,\theta^0)\lesssim P(\mathscr{E}_0)$. Then the rest proof of this case is the same as the proof of Theorem $2.24$ in \cite{LW}. Hence, $\mathbb{P}_0$ holds. That is $\mathfrak{K}_N(u^0,\theta^0)+\mathfrak{K}(\eta^0)\le \mathscr{Z}$ with the temporal norm taken with respect to $[0,T)$ and $J^0(t)\ge\delta/2$ for $0\le t\le T$.

    In the following, we suppose that $\mathbb{P}_{m-1}$ holds for $m\ge1$. Then we will prove that $\mathbb{P}_m$ also holds.

    Step $2$. $\mathbb{P}_m$ case: energy estimates of $\theta^m$ and $u^m$. By Theorem \ref{thm:higher regularity}, the pair $(D_t^Nu^m,\pa_t^Np^m,\pa_t^N\theta^m)$ satisfies the equation
  \ben
  \left\{
  \begin{aligned}
    &\pa_t(D_t^Nu^m)-\Delta_{\mathscr{A}^{m-1}}(D_t^Nu^m)+\nabla_{\mathscr{A}^{m-1}}(\pa_t^Np^m)&\\
    &\qquad-\pa_t^N(\theta^m \nabla_{\mathscr{A}^{m-1}}y_3^{m-1})=F^{1,N}\quad &\text{in}\thinspace\Om,\\
    &\dive_{\mathscr{A}^{m-1}}(D_t^Nu^m)=0\quad &\text{in}\thinspace\Om,\\
    &\pa_t(\pa_t^N\theta^m)-\Delta_{\mathscr{A}^{m-1}}(\pa_t^N\theta^m)=F^{3,N}\quad &\text{in}\thinspace\Om,\\
    &S_{\mathscr{A}^{m-1}}(\pa_t^Np^m,D_t^Nu^m)\mathscr{N}^{m-1}=F^{4,N}\quad &\text{on}\thinspace \Sigma,\\
    &\nabla_{\mathscr{A}^{m-1}}(\pa_t^N\theta^m)\cdot\mathscr{N}^{m-1}+\pa_t^j\theta^m\left|\mathscr{N}^{m-1}\right|=F^{5,N}\quad &\text{on}\thinspace \Sigma,\\
    &D_t^Nu^m=0,\quad \pa_t^N\theta^m=0\quad &\text{on}\thinspace \Sigma_b,
  \end{aligned}
  \right.
  \een
  in the weak sense, where $F^{1,N}$, $F^{3,N}$, $F^{4,N}$ and $F^{5,N}$ are given in terms of $u^m$, $p^m$, $\theta^m$, and $u^{m-1}$, $p^{m-1}$, $\theta^{m-1}$, $\eta^{m-1}$. Then for any test function $\phi\in(\mathscr{H}^1_T)^{m-1}$, where $(\mathscr{H}^1_T)^{m-1}$ is the space $\mathscr{H}^1_T$ with $\eta$ replaced by $\eta^{m-1}$, the following holds
\begin{align*}
  \left<\pa_t(\pa_t^N\theta^m), \phi\right>_{\ast}+\left(\pa_t^N\theta^m,\phi\right)_{\mathscr{H}^1_T}+\left(\pa_t^N\theta^m\left|\mathscr{N}^{m-1}\right|,\phi\right)_{L^2H^0(\Sigma)}\\
  =\left(F^{3,N},\phi\right)_{\mathscr{H}^0_T}+\left(F^{5,N},\phi\right)_{L^2H^0(\Sigma)}.
\end{align*}
Therefore, when taking the test function $\phi=\pa_t^N\theta^m$, we have the energy structure
\ben
\begin{aligned}
  &\f12\int_{\Om} J^{m-1}|\pa_t^N\theta^m|^2+\int_0^t\int_{\Om} J^{m-1}|\nabla_{\mathscr{A}^{m-1}}(\pa_t^N\theta^m)|^2+\int_0^t\int_{\Sigma}|\pa_t^N\theta^m|^2\left|\mathscr{N}^{m-1}\right|\\
  &=\f12\int_{\Om} J^{m-1}(0)|\pa_t^N\theta^m(0)|^2+\f12\int_0^t\int_{\Om} \pa_tJ^{m-1}|\pa_t^N\theta^m|^2\\
  &\quad+\int_0^t\int_{\Om} J^{m-1}F^{3,N}\pa_t^N\theta^m+\int_0^t\int_{\Sigma}F^{5,N}\pa_t^N\theta^m.
\end{aligned}
\een
By induction hypothesis, \eqref{est:high order initial}, trace theory and Cauchy inequality, we have
\ben
\begin{aligned}
  &\|\pa_t^N\theta^m\|_{L^\infty H^0}^2+\|\pa_t^N\theta^m\|_{L^2H^1}^2\\
  &\lesssim \sup_{0\le t\le T}\left(\f12\int_{\Om} J^{m-1}|\pa_t^N\theta^m|^2+\int_0^t\int_{\Om} J^{m-1}|\nabla_{\mathscr{A}^{m-1}}(\pa_t^N\theta^m)|^2+\int_0^t\int_{\Sigma}|\pa_t^N\theta^m|^2\right)\\
  &\lesssim \f12\int_{\Om} J^{m-1}(0)|\pa_t^N\theta^m(0)|^2+\f12\int_0^T\int_{\Om} \pa_tJ^{m-1}|\pa_t^N\theta^m|^2+\int_0^T\int_{\Om} J^{m-1}F^{3,N}\pa_t^N\theta^m\\
  &\quad+\int_0^T\int_{\Sigma}F^{5,N}\pa_t^N\theta^m\\
  &\lesssim P(\mathscr{E}_0)+T\mathscr{Z}\|\pa_t^N\theta^m\|_{L^\infty H^0}^2+\sqrt{T}\mathscr{Z}\|F^{3,N}\|_{L^2H^0}\|\pa_t^N\theta^m\|_{L^\infty H^0}\\
  &\quad+\sqrt{T}\|F^{5,N}\|_{L^\infty H^{-1/2}(\Sigma)}\|\pa_t^N\theta^m\|_{L^2H^{1/2}(\Sigma)}\\
  &\lesssim P(\mathscr{E}_0)+T\mathscr{Z}\|\pa_t^N\theta^m\|_{L^\infty H^0}^2+\sqrt{T}\|F^{3,N}\|_{L^2H^0}^2\\
  &\quad+\sqrt{T}\mathscr{Z}^2\|\pa_t^N\theta^m\|_{L^\infty H^0}^2+\sqrt{T}\|F^{5,N}\|_{L^\infty H^{-1/2}(\Sigma)}^2+\sqrt{T}\|\pa_t^N\theta^m\|_{L^2H^{1/2}(\Sigma)}^2
\end{aligned}
\een
for a polynomial $P(0)=0$. Taking $T\le \min\{1/4, 1/(16\mathscr{Z}^4)\}$ and absorbing the extra terms on the right--hand side into left--hand side imply
\beq\label{est:energy thetam um}
\|\pa_t^N\theta^m\|_{L^\infty H^0}^2+\|\pa_t^N\theta^m\|_{L^2H^1}^2\lesssim P(\mathscr{E}_0)+\sqrt{T}\|F^{3,N}\|_{L^2H^0}^2+\sqrt{T}\|F^{5,N}\|_{L^\infty H^{-1/2}(\Sigma)}^2.
\eeq
By induction hypothesis, we have
\begin{align*}
  &\|F^{3,N}\|_{L^2H^0}^2\\
  &\lesssim P(\mathfrak{K}(\eta^{m-1}))\left(\sum_{j=0}^{N-1}\|\pa_t^ju^m\|_{L^2H^2}^2+\|\pa_t^j\theta^m\|_{L^2H^2}^2\right)+\mathcal{F}\\
  &\lesssim P(\mathscr{E}_0+\mathscr{Z})+\mathcal{F},
\end{align*}
\begin{align*}
  &\|F^{5,N}\|_{L^\infty H^{-1/2}(\Sigma)}^2\\
  &\lesssim P(\mathfrak{K}(\eta^{m-1}))\left(\sum_{j=0}^{N-1}\|\pa_t^ju^m\|_{L^\infty H^2}^2+\|\pa_t^j\theta^m\|_{L^\infty H^2}^2\right)+\mathcal{F}\\
  &\lesssim P(\mathscr{E}_0+\mathscr{Z})+\mathcal{F}.
\end{align*}
And, the energy estimates about $u^m$ is the same as the proof of of Theorem $2.24$ in \cite{LW}. Therefore, we have
\beq
\|\pa_t^N u^m\|_{L^2H^1}^2+\|\pa_t^N \theta^m\|_{L^2H^1}^2\lesssim P(\mathscr{E}_0)+\sqrt{T}P(\mathscr{E}_0+\mathscr{Z})+\sqrt{T}\mathcal{F}.
\eeq

Step 3. $\mathbb{P}_m$ case: elliptic estimates for $\theta^m$, $u^m$. For $0\le n\le N-1$, the $n$-th order heat equation is
\ben
\left\{
\begin{aligned}
&\pa_t(\pa_t^n\theta^m)-\Delta_{\mathscr{A}^{m-1}}\pa_t^n\theta^m=F^{3,n}\quad &\text{in}\thinspace \Om,\\
&\nabla_{\mathscr{A}^{m-1}}\pa_t^n\theta^m\cdot\mathscr{N}^{m-1}+\pa_t^n\theta^m\left|\mathscr{N}^{m-1}\right|=F^{5,n}\quad &\text{on}\thinspace \Sigma,\\
&\pa_t^n\theta^m=0\quad &\text{on}\thinspace \Sigma_b.
\end{aligned}
\right.
\een
The elliptic estimate in the proof of Lemma \ref{lem:S lower regularity} reveals that
\beq \label{est:elliptic thetam}
\|\pa_t^n\theta^m\|_{L^2H^{2N-2n+1}}^2\lesssim \|F^{3,n}\|_{L^2H^{2N-2n-1}}^2+\|\pa_t^{n+1}\theta^m\|_{L^2H^{2N-2n-1}}^2+\|F^{5,n}\|_{L^2H^{2N-2n-1/2}}^2.
\eeq
As what we did before,
\begin{align*}
  &\|F^{3,n}\|_{L^2H^{2N-2n-1}}^2\\
  &\lesssim T P(\mathfrak{K}(\eta^{m-1}))\left(\sum_{j=0}^{N-2}\|\pa_t^j\theta^m\|_{L^\infty H^{2N-2j-1}}^2+\|\pa_t^ju^m\|_{L^\infty H^{2N-2j-1}}^2\right)+\mathcal{H}\\
  &\lesssim T P(\mathscr{E}_0+\mathscr{Z})+\mathcal{H}.
\end{align*}
\begin{align*}
  &\|F^{5,n}\|_{L^2H^{2N-2n-1}}^2\\
  &\lesssim T P(\mathfrak{K}(\eta^{m-1}))\left(\sum_{j=0}^{N-2}\|\pa_t^j\theta^m\|_{L^\infty H^{2N-2j-1}}^2+\|\pa_t^ju^m\|_{L^\infty H^{2N-2j-1}}^2\right)+\mathcal{H}\\
  &\lesssim T P(\mathscr{E}_0+\mathscr{Z})+\mathcal{H}.
\end{align*}
But for the term $\|\pa_t^{n+1}\theta^m\|_{L^2H^{2N-2n-1}}^2$, we estimate backward from $N-1$ to $0$. First, when $n=N-1$, this is the case of energy estimate of $\theta^m$. Then we iteratively use the elliptic estimates \eqref{est:elliptic thetam} from $n=N-2$ to $n=0$ to obtain all the control of $\|\pa_t^{n+1}\theta^m\|_{L^2H^{2N-2n-1}}^2$.

And the elliptic estimate for $u^m$ is the same as the proof of of Theorem $2.24$ in \cite{LW}. Thereore, we have that
\ben\label{est:elliptic thetam um}
\begin{aligned}
&\sum_{n=0}^{N-1}\left(\|\pa_t^nu^m\|_{L^2H^{2N-2n+1}}^2+\|\pa_t^n\theta^m\|_{L^2H^{2N-2n+1}}^2\right)\\
&\lesssim P(\mathscr{E}_0)+\sqrt{T}P(1+\mathscr{E}_0+\mathscr{Z})+\sqrt{T}\mathcal{F}+\mathcal{H}.
\end{aligned}
\een
Step $4$. $\mathbb{P}_m$ case: synthesis of estimates for $u^m$ and $\theta^m$. Combining \eqref{est:energy thetam um}, \eqref{est:elliptic thetam um} and Lemma $2.19$ in \cite{LW}, we deduce that
\beq
\mathfrak{K}_N(u^m, \theta^m)\lesssim P(\mathscr{E}_0)+\sqrt{T}P(\mathscr{E}_0+\mathscr{Z})+\sqrt{T}\mathcal{F}+\mathcal{H}.
\eeq
Then by the induction hypothesis and the forcing estimates of Lemma \ref{lem:forcing estimates}, we have that
\[
\mathcal{F}\lesssim P(\mathfrak{K}(\eta^{m-1}))+P(\mathfrak{K}_N(u^{m-1}, \theta^{m-1}))\lesssim P(\mathscr{Z}),
\]
\[
\mathcal{H}\lesssim T\left(P(\mathfrak{K}(\eta^{m-1}))+P(\mathfrak{K}_N(u^{m-1}, \theta^{m-1}))\right)\lesssim T P(\mathscr{Z}).
\]
Hence we obtain the estimate
\beq
\mathfrak{K}_N(u^m, \theta^m)\le C\left( P(\mathscr{E}_0)+\sqrt{T}P(\mathscr{E}_0+\mathscr{Z})\right)
\eeq
for some universal constant $C>0$. Taking $\mathscr{Z}\ge 2 C P(\mathscr{E}_0)$ and then  taking $T$ sufficient small  which depends on $\mathscr{Z}$, we can achieve that $\mathfrak{K}_N(u^m, \theta^m)\le 2 C P(\mathscr{E}_0)\le \mathscr{Z}$.

Step 5. $\mathbb{P}_m$ case: estimate for $\eta^m$ and $J^m(t)$. These estimates are exactly the same as the proof of of Theorem $2.24$ in \cite{LW}. So we omit the details here.

Thus, we can take $\mathscr{Z}=P(\mathscr{E}_0)$ for some polynomial $P(\cdot)$ and $T$ small enough depending on $\mathscr{Z}$ to deduce that
\beq
\mathfrak{K}_N(u^m,\theta^m)\le\mathscr{Z}
\eeq
and
\beq
J^m(t)\ge\delta/2 \quad \text{for}\thinspace t\in [0,T].
\eeq
Hence $\mathbb{P}_m$ holds. By induction, $\mathbb{P}_n$ holds for any integer $n\ge0$.
\end{proof}

\begin{theorem}\label{thm:uniform boundedness}
  Assume the same conditions as Theorem \ref{thm:boundedness}. Then
  \beq
  \mathfrak{K}(u^m,p^m,\theta^m)+\mathfrak{K}(\eta^m)\lesssim P(\mathscr{E}_0)
  \eeq
  for a polynomial $P(\cdot)$ satisfying $P(0)=0$.
\end{theorem}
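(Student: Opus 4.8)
The plan is to combine the uniform-in-$m$ bounds already supplied by Theorem \ref{thm:boundedness} with the quantitative linear estimate of Theorem \ref{thm:higher regularity}, exploiting that the right-hand side of \eqref{est:higher regularity} depends on $\eta^{m-1}$ (through $\mathfrak{E}_0(\eta^{m-1})$, $\mathfrak{K}(\eta^{m-1})$, $\mathfrak{E}(\eta^{m-1})$) and on the forcing only, all of which Theorem \ref{thm:boundedness} and Proposition \ref{prop:high order initial} control by $\mathscr{E}_0$ alone. Fix $m\ge1$. First I would note that, by the construction \eqref{equ:iteration equation}--\eqref{equ:iteration theta}, the triple $(u^m,p^m,\theta^m)$ is exactly the strong solution of the linear system \eqref{equ:linear BC} in which $\eta$ is replaced by $\eta^{m-1}$ (so that $\mathscr{A}$, $\mathscr{N}$, $K$, $y_3$ become $\mathscr{A}^{m-1}$, $\mathscr{N}^{m-1}$, $K^{m-1}$, $y_3^{m-1}$) and the forcing is $F^i=F^i(u^{m-1},\theta^{m-1},\eta^{m-1})$, $i=1,3,4,5$, as defined in \eqref{equ:force u p theta}. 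Because $u^m(0)=u_0$, $\theta^m(0)=\theta_0$ and $\eta^m(0)=\eta_0=\eta^{m-1}(0)$, the time derivatives $\pa_t^ju^m(0)$, $\pa_t^j\theta^m(0)$, $\pa_t^jp^m(0)$ are produced by the very recursion of Section \ref{sec:initial data} with the coefficients frozen at $\eta_0$; hence they agree with $\pa_t^ju(0)$, $\pa_t^j\theta(0)$, $\pa_t^jp(0)$, and the $N$-th order compatibility conditions \eqref{cond:compatibility N} on $(u_0,\eta_0)$ guarantee the $j$-th compatibility conditions \eqref{cond:compatibility j} ($j=0,\ldots,N-1$) demanded by Theorem \ref{thm:higher regularity}.

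Next I would apply Theorem \ref{thm:higher regularity} to this linear problem, which gives
\[
\mathfrak{K}(u^m,p^m,\theta^m)\lesssim P\bigl(\mathfrak{E}_0(\eta^{m-1}),\mathfrak{K}(\eta^{m-1})\bigr)\exp\bigl(TP(\mathfrak{E}(\eta^{m-1}))\bigr)\bigl(\|u_0\|_{H^{2N}}^2+\|\theta_0\|_{H^{2N}}^2+\mathfrak{F}_0+\mathfrak{F}\bigr),
\]
and then estimate each ingredient of the right-hand side. Since $\eta^{m-1}$ carries the same initial data as $\eta$, Proposition \ref{prop:high order initial} gives $\mathfrak{E}_0(\eta^{m-1})=\mathfrak{E}_0(\eta)\lesssim P(\mathscr{E}_0)$. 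Theorem \ref{thm:boundedness} gives $\mathfrak{K}_N(u^{m-1},\theta^{m-1})+\mathfrak{K}(\eta^{m-1})\le\mathscr{Z}=P(\mathscr{E}_0)$; in particular $\mathfrak{E}(\eta^{m-1})\le\mathfrak{K}(\eta^{m-1})\le\mathscr{Z}$, so the exponential factor is bounded by a constant because $0<T<1$. For the forcing, Theorem \ref{lem:forcing estimates} yields $\mathfrak{F}_0\lesssim P(\mathscr{E}_0)$ and $\mathfrak{F}\lesssim P(\mathfrak{K}(\eta^{m-1}))+P(\mathfrak{K}_N(u^{m-1},\theta^{m-1}))\lesssim P(\mathscr{Z})\lesssim P(\mathscr{E}_0)$, which also certifies the hypothesis $\mathfrak{F}<\infty$. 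Combining these with $\|u_0\|_{H^{2N}}^2+\|\theta_0\|_{H^{2N}}^2\le\mathscr{E}_0$ gives $\mathfrak{K}(u^m,p^m,\theta^m)\lesssim P(\mathscr{E}_0)$ with $P(0)=0$, uniformly in $m$. The bound $\mathfrak{K}(\eta^m)\le\mathscr{Z}=P(\mathscr{E}_0)$ is immediate from Theorem \ref{thm:boundedness}; the case $m=0$ only requires recording the bounds on $(u^0,\theta^0,\eta^0)$ already obtained in the construction (with the convention that there is no $p^0$). Adding the two estimates yields the claim.

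Most of this is routine verification — that the forcing terms $F^i(u^{m-1},\theta^{m-1},\eta^{m-1})$ satisfy the structural requirement \eqref{cond:force}, that the initial data at $t=0$ coincide with those of Section \ref{sec:initial data}, and the bookkeeping of Sobolev indices — all handled as in \cite{GT1} and \cite{LW}. The one point that really requires attention, and the reason Theorem \ref{thm:boundedness} must precede Theorem \ref{thm:uniform boundedness}, is that every constant on the right-hand side above must be traced back to $\mathscr{E}_0$ and not allowed to accumulate with $m$; this is exactly what the $m$-independent bound $\mathfrak{K}_N(u^{m-1},\theta^{m-1})+\mathfrak{K}(\eta^{m-1})\le\mathscr{Z}=P(\mathscr{E}_0)$ of Theorem \ref{thm:boundedness} provides. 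The additional pressure norms $\|\pa_t^jp^m\|_{L^\infty H^{2N-2j-1}}$, $\|\pa_t^jp^m\|_{L^2H^{2N-2j}}$ and the dual norms $\|\pa_t^{N+1}u^m\|_{(\mathscr{X}_T)^\ast}$, $\|\pa_t^{N+1}\theta^m\|_{(\mathscr{H}^1_T)^\ast}$ present in $\mathfrak{K}(u^m,p^m,\theta^m)$ but not in $\mathfrak{K}_N(u^m,\theta^m)$ need no further work: they are delivered directly by the conclusions $\pa_t^{N+1}u\in(\mathscr{X}_T)^\ast$, $\pa_t^{N+1}\theta\in(\mathscr{H}^1_T)^\ast$ and the estimate \eqref{est:higher regularity} of Theorem \ref{thm:higher regularity}.
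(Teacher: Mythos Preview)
Your proposal is correct and follows essentially the same route as the paper: apply the linear estimate \eqref{est:higher regularity} of Theorem \ref{thm:higher regularity} to $(u^m,p^m,\theta^m)$ with coefficients given by $\eta^{m-1}$ and forcing given by \eqref{equ:force u p theta}, then feed in the uniform bound $\mathfrak{K}_N(u^{m-1},\theta^{m-1})+\mathfrak{K}(\eta^{m-1})\le\mathscr{Z}=P(\mathscr{E}_0)$ from Theorem \ref{thm:boundedness} together with Proposition \ref{prop:high order initial} and the forcing estimates of Theorem \ref{lem:forcing estimates}. The paper's write-up is terser and additionally cites Theorem 2.17 of \cite{LW} for the $\mathfrak{K}(\eta^m)$ part, but as you observe, that bound is already contained in Theorem \ref{thm:boundedness}.
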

\begin{proof}
  From the estimates \eqref{est:higher regularity}, \eqref{est:high order initial}, Lemma \ref{lem:forcing estimates} as well as Theorem $2.17$ in \cite{LW},  we directly have that
  \[
  \mathfrak{K}(u^m,p^m,\theta^m)+\mathfrak{K}(\eta^m)\lesssim P(\mathscr{E}_0)+P(\mathfrak{K}_N(u^m,\theta^m)+\mathfrak{K}(\eta^m)).
  \]
  Then, applying the Theorem \ref{thm:boundedness}, we have that
  \[
  \mathfrak{K}(u^m,p^m,\theta^m)+\mathfrak{K}(\eta^m)\lesssim P(\mathscr{E}_0).
  \]
\end{proof}

\subsection{Contraction}

According to Theorem \ref{thm:uniform boundedness}, we may extract weakly converging subsequences from $\{(u^m,p^m,\theta^m,\eta^m)\}_{m=0}^\infty$. Unfortunately, the original sequence $\{(u^m,p^m,\theta^m,\eta^m)\}_{m=0}^\infty$ could not be guaranteed to converge to the same limit. In order to obtain the desired solution to \eqref{equ:NBC} by passing to the limit in \eqref{equ:iteration equation} and \eqref{equ:iteration theta}, we need to study its contraction in some norm.

For $T>0$, we define the norms
\ben
\begin{aligned}
  \mathfrak{N}(v,q,\Theta; T)&=\|v\|_{L^\infty H^2}^2+\|v\|_{L^2H^3}^2+\|\pa_tv\|_{L^\infty H^0}^2+\|\pa_tv\|_{L^2H^1}^2+\|q\|_{L^\infty H^1}^2+\|q\|_{L^2H^2}^2\\
  &\quad+\|\Theta\|_{L^\infty H^2}^2+\|\Theta\|_{L^2H^3}^2+\|\pa_t\Theta\|_{L^\infty H^0}^2+\|\pa_t\Theta\|_{L^2H^1}^2\\
  \mathfrak{M}(\zeta;T)&=\|\zeta\|_{L^\infty H^{5/2}}^2+\|\pa_t\zeta\|_{L^\infty H^{3/2}}^2+\|\pa_t^2\zeta\|_{L^2H^{1/2}}^2,
\end{aligned}
\een
where the norm $L^pH^k$ is $L^p([0,T];H^k(\Om))$ in $\mathfrak{N}$, and is $L^p([0,T];H^k(\Sigma))$ in $\mathfrak{M}$.

The next theorem is not only used to prove the contraction of approximate solutions, but also used to verify the uniqueness of solutions to \eqref{equ:NBC}. To avoid confusion with $\{(u^m,p^m,\theta^m,\eta^m)\}$, we refer to velocities as $v^j$, $w^j$, pressures as $q^j$, temperatures as $\Theta^j$, $\vartheta^j$, and surface functions as $\zeta^j$ for $j=1,2$.
\begin{theorem}\label{thm:contraction}
  For $j=1,2$, suppose that $v^j$, $q^j$, $\Theta^j$, $w^j$, $\vartheta^j$ and $\zeta^j$ satisfy the initial data $\pa_t^kv^1(0)=\pa_t^kv^2(0)$, $\pa_t^k\Theta^1(0)=\pa_t^k\Theta^2(0)$, for $k=0,1$, $q^1(0)=q^2(0)$ and $\zeta^1(0)=\zeta^2(0)$,  and that the following system holds:
  \ben\label{equ:difference}
  \left\{
  \begin{aligned}
    &\pa_tv^j-\Delta_{\mathscr{A}^j}v^j+\nabla_{\mathscr{A}^j}q^j-\Theta^j\nabla_{\mathscr{A}^j}y_3^j=\pa_t\bar{\zeta}^j(1+x_3)K^j\pa_3w^j\\
    &\qquad-w^j\cdot\nabla_{\mathscr{A}^j}w^j\quad &\text{in}\thinspace \Om,\\
    &\dive_{\mathscr{A}^j}v^j=0 \quad &\text{in}\thinspace \Om,\\
    &\pa_t\Theta^j-\Delta_{\mathscr{A}^j}\Theta^j=\pa_t\bar{\zeta}^j(1+x_3)K^j\pa_3\vartheta^j-w^j\cdot\nabla_{\mathscr{A}^j}\vartheta^j\quad &\text{in}\thinspace \Om,\\
    &S_{\mathscr{A}^j}(q^j,v^j)\mathscr{N}^j=\zeta^j\mathscr{N}^j\quad &\text{on}\thinspace \Sigma,\\
    &\nabla_{\mathscr{A}^j}\Theta^j\cdot\mathscr{N}^j+\Theta^j\left|\mathscr{N}^j\right|=-\left|\mathscr{N}^j\right|\quad &\text{on}\thinspace \Sigma,\\
    &v^j=0,\quad \Theta^j=0\quad &\text{on}\thinspace \Sigma_b,\\
    &\pa_t\zeta^j=w^j\cdot\mathscr{N}^j\quad &\text{on}\thinspace \Sigma,
  \end{aligned}
  \right.
  \een
  where $\mathscr{A}^j$, $\mathscr{N}^j$, $K^j$ are determined by $\zeta^j$. Assume that $\mathfrak{K}(v^j,q^j,\Theta^j)$, $\mathfrak{K}(w^j,0,\vartheta^j)$ and $\mathfrak{K}(\zeta^j)$ are bounded by $\mathscr{Z}$.

  Then there exists $0<T_1<1$ such that for any $0<T<T_1$, then we have
  \beq\label{est:n}
  \mathfrak{N}(v^1-v^2,q^1-q^2,\Theta^1-\Theta^2;T)\le \f12\mathfrak{N}(w^1-w^2,0,\vartheta^1-\vartheta^2;T),
  \eeq
  \beq\label{est:m}
  \mathfrak{M}(\zeta^1-\zeta^2;T)\lesssim \mathfrak{N}(w^1-w^2,0,\vartheta^1-\vartheta^2;T).
  \eeq
\end{theorem}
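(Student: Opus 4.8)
The plan is to regard $(w^j,\vartheta^j,\zeta^j)$ as the data fed into a linear problem of the form \eqref{equ:linear BC} whose output is $(v^j,q^j,\Theta^j)$, and to run a difference estimate. Throughout I write $v=v^1-v^2$, $q=q^1-q^2$, $\Theta=\Theta^1-\Theta^2$, $w=w^1-w^2$, $\vartheta=\vartheta^1-\vartheta^2$, $\zeta=\zeta^1-\zeta^2$, and let $\bar{\mathscr{A}}=\mathscr{A}^1-\mathscr{A}^2$, $\bar{\mathscr{N}}=\mathscr{N}^1-\mathscr{N}^2$, $\bar K=K^1-K^2$, $\bar y_3=y_3^1-y_3^2$ and $\bar\zeta^1-\bar\zeta^2$ denote the differences of the associated geometric quantities; by hypothesis all of $v^j,q^j,\Theta^j,w^j,\vartheta^j,\zeta^j$ have the relevant norms controlled by $\mathscr{Z}$.

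First I would treat the transport equation. Subtracting $\pa_t\zeta^j=w^j\cdot\mathscr{N}^j$ gives $\pa_t\zeta=w\cdot\mathscr{N}^1+w^2\cdot\bar{\mathscr{N}}$ on $\Sigma$ with $\zeta(0)=0$, and since $\bar{\mathscr{N}}=(-\pa_1\zeta,-\pa_2\zeta,0)$ this is a linear transport equation for $\zeta$ with horizontal velocity a component of $w^2$ and forcing $w\cdot\mathscr{N}^1$. The transport energy estimate of Theorem 2.17 in \cite{LW} (compare Theorem 5.4 in \cite{GT1}), applied with vanishing initial data, yields $\mathfrak{M}(\zeta;T)\lesssim P(\mathscr{Z})\big(\mathfrak{N}(w,0,\vartheta;T)+T\,\mathfrak{M}(\zeta;T)\big)$; taking $T$ small and absorbing gives \eqref{est:m}.

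Next I would subtract the bulk and boundary equations of \eqref{equ:difference}, using $\mathscr{A}^1,\mathscr{N}^1,y_3^1$ as the reference geometry, so that $(v,q,\Theta)$ solves a system of the form \eqref{equ:linear BC} with $\mathscr{A}=\mathscr{A}^1$, $\mathscr{N}=\mathscr{N}^1$, zero initial data, and forcing $(G^1,G^3,G^4,G^5)$ consisting of: (i) differences of the nonlinear right-hand sides, e.g. $\pa_t\bar\zeta^1(1+x_3)K^1\pa_3w-w^1\cdot\nabla_{\mathscr{A}^1}w$ together with the terms obtained by replacing one input factor by its difference and the others by $(w^2,\vartheta^2,\zeta^2)$; and (ii) commutator terms produced by inserting $\bar{\mathscr{A}},\bar{\mathscr{N}},\bar K,\bar y_3$ and $\bar\zeta^1-\bar\zeta^2$ into the $(v^2,q^2,\Theta^2)$-equations, e.g. $\dive_{\bar{\mathscr{A}}}S_{\mathscr{A}^1}(q^2,v^2)$, $\Theta^2\nabla_{\bar{\mathscr{A}}}y_3^1$, $(q^2I-\mathbb{D}_{\mathscr{A}^1}v^2)\bar{\mathscr{N}}$, and similar. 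Using the $\varepsilon$-Poisson extension bounds of \cite{LW} together with the product and trace estimates of \cite{GT1} (Lemmas A.1–A.4), every geometric difference is bounded by $P(\mathscr{Z})$ times the appropriate norm of $\zeta$, while every factor of $v^2,q^2,\Theta^2,w^j,\vartheta^j$ is bounded by $\mathscr{Z}$; moreover the matching of the $k=0,1$ data forces $G^1(0)=G^3(0)=G^4(0)=0$. Keeping track of the $\sqrt T$ gained from each time-integrated norm, this shows that the forcing norms appearing on the right of \eqref{inequ:est strong solution} are bounded by $\sqrt T\,P(\mathscr{Z})\big(\mathfrak{N}(w,0,\vartheta;T)+\mathfrak{M}(\zeta;T)\big)$.

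Finally I would apply the a priori estimate \eqref{inequ:est strong solution} of Theorem \ref{thm:lower regularity} to $(v,q,\Theta)$: all the initial-data terms and the $G^k(0)$ vanish, one has $\mathscr{K}(\zeta^1)\lesssim\mathfrak{K}(\zeta^1)\le\mathscr{Z}$, and the left-hand side dominates $\mathfrak{N}(v,q,\Theta;T)$, so that, after using \eqref{est:m} to replace $\mathfrak{M}(\zeta;T)$ by $\mathfrak{N}(w,0,\vartheta;T)$, we obtain $\mathfrak{N}(v,q,\Theta;T)\le C\exp\!\big(C(1+\mathscr{Z})T\big)\sqrt T\,P(\mathscr{Z})\,\mathfrak{N}(w,0,\vartheta;T)$. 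Choosing $0<T_1<1$, depending only on $\mathscr{Z}$, so small that $C\exp(C(1+\mathscr{Z}))\sqrt{T_1}\,P(\mathscr{Z})\le\tfrac12$ yields \eqref{est:n}; \eqref{est:m} was established in the first step. The main obstacle is the bookkeeping in the middle step: one must enumerate each difference and commutator term and check that it factors into a solution quantity bounded by $\mathscr{Z}$ times either a geometric difference bounded by $P(\mathscr{Z})\|\zeta\|$ or a factor of $w,\vartheta$, all measured one derivative below the a priori regularity so that the product and extension estimates apply without a fatal loss; the pressure difference needs no separate elliptic argument since it is already controlled by \eqref{inequ:est strong solution}, and it is the transport estimate \eqref{est:m} that decouples the free surface from the bulk and makes the scheme closed.
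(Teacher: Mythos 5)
Your organization is close to the paper's, and your observation that the transport equation $\pa_t\zeta^j=w^j\cdot\mathscr{N}^j$ depends only on $(w^j,\zeta^j)$, so that \eqref{est:m} can be derived first and then fed into the bulk estimate, is a correct and clean reorganization of the paper's Step~5. But the middle step has a genuine gap. You claim the difference $(v,q,\Theta)=(v^1-v^2,q^1-q^2,\Theta^1-\Theta^2)$ solves a system of the form \eqref{equ:linear BC} in the $\mathscr{A}^1$ geometry and propose to invoke the a priori bound \eqref{inequ:est strong solution} of Theorem~\ref{thm:lower regularity} as a black box. However \eqref{equ:linear BC} carries the constraint $\dive_{\mathscr{A}}u=0$, and the difference does not satisfy it: since $\dive_{\mathscr{A}^j}v^j=0$ for $j=1,2$, one only has $\dive_{\mathscr{A}^1}v=-\dive_{\mathscr{A}^1-\mathscr{A}^2}v^2=:H^2\neq0$. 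Theorem~\ref{thm:lower regularity} (and the $\dive_{\mathscr{A}}$-free spaces $\mathscr{X}_T$, $\mathscr{Y}(0)$ behind it) therefore cannot be applied as stated; your list of forcings $(G^1,G^3,G^4,G^5)$ silently omits the nonzero divergence source $G^2=H^2$.

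This is not merely bookkeeping: the pressure difference couples to the divergence error through exactly this mechanism, which is why the paper re-derives the energy identity for the difference and its time derivative and picks up the extra term $\int_0^t\int_\Om J^1\tilde H^2\,\pa_tq$ in \eqref{equ:evolution difference}, and why the elliptic step invokes the stationary Proposition~\ref{prop:high regulatrity} (which allows $\dive_{\mathscr{A}}u=F^2\neq0$) rather than the parabolic Theorem~\ref{thm:lower regularity}. To close your scheme you would either need to subtract a compensating $\bar v$ with $\dive_{\mathscr{A}^1}\bar v=H^2$ and control $\pa_t\bar v$, $\pa_t^2\bar v$ in the relevant dual spaces (delicate, since $H^2$ depends on $v^2$ and on $\zeta$), or redo the energy and elliptic estimates for the difference system as the paper does. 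A secondary caveat: your claim that $G^1(0)=G^3(0)=0$ tacitly uses $w^1(0)=w^2(0)$ and $\vartheta^1(0)=\vartheta^2(0)$ (so that $\pa_t\zeta^1(0)=\pa_t\zeta^2(0)$ via $\pa_t\zeta^j(0)=w^j(0)\cdot\mathscr{N}^j(0)$), which are not among the theorem's stated hypotheses; they hold in the intended iteration, but the paper's approach never needs them because it tests the energy identity against $\pa_tv$ and $\pa_t\Theta$, which do vanish at $t=0$ by hypothesis.
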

\begin{proof}
  This proof follows the path of Theorem $6.2$ in \cite{GT1}. First, we define $v=v^1-v^2$, $w=w^1-w^2$, $\Theta=\Theta^1-\Theta^2$, $\vartheta=\vartheta^1-\vartheta^2$, $q=q^1-q^2$.

  Step 1. Energy evolution for differences. Like the proof of Theorem $6.2$ in \cite{GT1}, we can derive the PDE satisfied by $v$, $q$ and $\Theta$:
  \ben
  \left\{
  \begin{aligned}
  &\pa_tv+\dive_{\mathscr{A}^1}S_{\mathscr{A}^1}(q,v)-\Theta \nabla_{\mathscr{A}^1}y_3^1=\dive_{\mathscr{A}^1}(\mathbb{D}_{(\mathscr{A}^1-\mathscr{A}^2)}v^2)+H^1 \quad &\text{in}\thinspace \Om,\\
  &\dive_{\mathscr{A}^1}v=H^2 \quad &\text{in}\thinspace \Om,\\
  &\pa_t\Theta-\Delta_{\mathscr{A}^1}\Theta=\dive_{\mathscr{A}^1}(\nabla_{(\mathscr{A}^1-\mathscr{A}^2)}\Theta^2)+H^3 \quad &\text{in}\thinspace \Om,\\
  &S_{\mathscr{A}^1}(q,v)\mathscr{N}^1=\mathbb{D}_{(\mathscr{A}^1-\mathscr{A}^2)}v^2\mathscr{N}^1+H^4 \quad &\text{on}\thinspace \Sigma,\\
  &\nabla_{\mathscr{A}^1}\Theta\cdot\mathscr{N}^1+\Theta\left|\mathscr{N}^1\right|=-\nabla_{(\mathscr{A}^1-\mathscr{A}^2)}\Theta^2\cdot\mathscr{N}^1+H^5 \quad &\text{on}\thinspace \Sigma,\\
  &v=0,\quad \Theta=0 \quad &\text{on}\thinspace \Sigma_b,\\
  &v(t=0)=0,\quad \Theta(t=0)=0,
  \end{aligned}
  \right.
  \een
  and the PDE satisfied by $\pa_tv$, $\pa_tq$, $\pa_t\Theta$ from taking temporal derivative for the above system:
  \ben
  \left\{
  \begin{aligned}
  &\pa_t(\pa_t v)+\dive_{\mathscr{A}^1}S_{\mathscr{A}^1}(\pa_t q,\pa_t v)-\pa_t(\Theta \nabla_{\mathscr{A}^1}y_3^1)&\\
  &\qquad=\dive_{\mathscr{A}^1}(\mathbb{D}_{\pa_t(\mathscr{A}^1-\mathscr{A}^2)}v^2)+\tilde{H}^1 \thinspace &\text{in}\thinspace \Om,\\
  &\dive_{\mathscr{A}^1}\pa_t v=\tilde{H}^2 \thinspace &\text{in}\thinspace \Om,\\
  &\pa_t(\pa_t\Theta)-\Delta_{\mathscr{A}^1}\pa_t\Theta=\dive_{\mathscr{A}^1}(\nabla_{(\pa_t\mathscr{A}^1-\pa_t\mathscr{A}^2)}\Theta^2)+\tilde{H}^3 \thinspace &\text{in}\thinspace \Om,\\
  &S_{\mathscr{A}^1}(\pa_t q,\pa_t v)\mathscr{N}^1=\mathbb{D}_{(\pa_t\mathscr{A}^1-\pa_t\mathscr{A}^2)}v^2\mathscr{N}^1+\tilde{H}^4 \thinspace &\text{on}\thinspace \Sigma,\\
  &\nabla_{\mathscr{A}^1}\pa_t\Theta\cdot\mathscr{N}^1+\pa_t\Theta\left|\mathscr{N}^1\right|=-\nabla_{\pa_t(\mathscr{A}^1-\mathscr{A}^2)}\Theta^2\cdot\mathscr{N}^1+\tilde{H}^5 \thinspace &\text{on}\thinspace \Sigma,\\
  &\pa_tv=0,\quad \pa_t\Theta=0 \thinspace &\text{on}\thinspace \Sigma_b,\\
  &\pa_tv(t=0)=0,\quad \pa_t\Theta(t=0)=0,
  \end{aligned}
  \right.
  \een
  where $H^2$, $H^4$, $\tilde{H}^2$ and $\tilde{H}^4$ have been given by Y. Guo and I. Tice in \cite{GT1},
  \begin{align*}
    H^1&=\Theta^2\nabla_{\mathscr{A}^1-\mathscr{A}^2}y_3^1+\Theta^2\nabla_{\mathscr{A}^2}(y_3^1-y_3^2)+\dive_{\mathscr{A}^1-\mathscr{A}^2}(\mathbb{D}_{\mathscr{A}^2}v^2)-\nabla_{\mathscr{A}^1-\mathscr{A}^2}q^2\\
    &\quad+\pa_t\bar{\zeta}^1(1+x_3)K^1(\pa_3w^1-\pa_3w^2)+(\pa_t\bar{\zeta}^1-\pa_t\bar{\zeta}^2)(1+x_3)K^1\pa_3w^2\\
    &\quad+\pa_t\bar{\zeta}^1(1+x_3)(K^1-K^2)\pa_3w^2-(w^1-w^2)\cdot\nabla_{\mathscr{A}^1}w^1-w^2\cdot\nabla_{\mathscr{A}^1}(w^1-w^2)\\
    &\quad-w^2\cdot\nabla_{\mathscr{A}^1-\mathscr{A}^2}w^2,\\
    H^3&=\dive_{\mathscr{A}^1-\mathscr{A}^2}(\nabla_{\mathscr{A}^2}\Theta^2)+\pa_t\bar{\zeta}^1(1+x_3)K^1(\pa_3\vartheta^1-\pa_3\vartheta^2)\\
    &\quad+(\pa_t\bar{\zeta^1}-\pa_t\bar{\zeta}^2)(1+x_3)K^1\pa_3\vartheta^2+\pa_t\bar{\zeta}^1(K^1-K^2)\pa_3w^2-(w^1-w^2)\cdot\nabla_{\mathscr{A}^1}\vartheta^1\\
    &\quad-w^2\cdot\nabla_{\mathscr{A}^1}(\vartheta^1-\vartheta^2)-w^2\cdot\nabla_{\mathscr{A}^1-\mathscr{A}^2}\vartheta^2,\\
    H^5&=-\nabla_{\mathscr{A}^2}\Theta^2\cdot(\mathscr{N}^1-\mathscr{N}^2)-\Theta^2\left(\left|\mathscr{N}^1\right|-\left|\mathscr{N}^2\right|\right),\\
    \tilde{H}^1&=\pa_tH^1+\dive_{\pa_t\mathscr{A}^1}(\mathbb{D}_{\mathscr{A}^1-\mathscr{A}^2}v^2)+\dive_{\mathscr{A}^1}(\mathbb{D}_{\mathscr{A}^1-\mathscr{A}^2}\pa_tv^2)+\dive_{\pa_t\mathscr{A}^1}(\mathbb{D}_{\mathscr{A}^1}v)\\
    &\quad+\dive_{\mathscr{A}^1}(\mathbb{D}_{\pa_t\mathscr{A}^1}v)-\nabla_{\pa_t\mathscr{A}^1}q,\\
    \tilde{H}^3&=\pa_tH^3+\dive_{\pa_t\mathscr{A}^1}(\nabla_{(\mathscr{A}^1-\mathscr{A}^2)}\Theta^2)+\dive_{\mathscr{A}^1}(\nabla_{(\mathscr{A}^1-\mathscr{A}^2)}\pa_t\Theta^2)+\dive_{\pa_t\mathscr{A}^1}\nabla_{\mathscr{A}^1}\Theta\\
    &\quad+\dive_{\mathscr{A}^1}\nabla_{\pa_t\mathscr{A}^1}\Theta,\\
    \tilde{H}^5&=\pa_tH^5-\nabla_{(\mathscr{A}^1-\mathscr{A}^2)}\pa_t\Theta^2\cdot\mathscr{N}^1-\nabla_{(\mathscr{A}^1-\mathscr{A}^2)}\Theta^2\cdot\pa_t\mathscr{N}^1-\nabla_{\mathscr{A}^1}\Theta\cdot\pa_t\mathscr{N}^1\\
    &\quad-\nabla_{\pa_t\mathscr{A}^1}\Theta\cdot\mathscr{N}^1-\Theta\pa_t\left|\mathscr{N}^1\right|.
  \end{align*}
  Then we can deduce the equations
  \ben\label{equ:evolution difference}
  \begin{aligned}
    &\f12\int_{\Om}|\pa_tv|^2J^1(t)+\f12\int_0^t\int_{\Om}|\mathbb{D}_{\mathscr{A}^1}\pa_tv|^2J^1\\
    &=\f12\int_0^t\int_{\Om}|\pa_tv|^2(\pa_tJ^1K^1)J^1+\int_0^t\int_{\Om}\pa_t(\Theta \nabla_{\mathscr{A}^1}y_3^1)\cdot\pa_tv J^1\\
    &\quad+\int_0^t\int_{\Om}J^1(\tilde{H}^1\cdot \pa_tv+\tilde{H}^2\pa_tq)\\
    &\quad-\f12\int_0^t\int_{\Om}J^1\mathbb{D}_{\pa_t\mathscr{A}^1-\pa_t\mathscr{A}^2}v^2:\mathbb{D}_{\mathscr{A}^1}\pa_t v-\int_0^t\int_{\Sigma}\tilde{H}^3\cdot\pa_tv,\\
    &\f12\int_{\Om}|\pa_t\Theta|^2J^1(t)+\int_0^t\int_{\Om}|\nabla_{\mathscr{A}^1}\pa_t\Theta|^2J^1+\int_0^t\int_{\Sigma}|\pa_t\Theta|^2\left|\mathscr{N}^1\right|\\
    &=\f12\int_0^t\int_{\Om}|\pa_t\Theta|^2(\pa_tJ^1K^1)J^1+\int_0^t\int_{\Om}J^1\tilde{H}^3\cdot \pa_t\Theta\\
    &\quad-\int_0^t\int_{\Om}J^1\nabla_{\pa_t\mathscr{A}^1-\pa_t\mathscr{A}^2}\Theta^2\cdot\nabla_{\mathscr{A}^1}\pa_t \Theta+\int_0^t\int_{\Sigma}\tilde{H}^5\cdot\pa_t\Theta.
  \end{aligned}
  \een

  Step 2. Estimates for the forcing terms. Now we need to estimate the forcing terms that appear on the right-hand sides of \eqref{equ:evolution difference}. Throughout this section, $P(\cdot)$ is written as a polynomial such that $P(0)=0$, which allows to be changed from line to line. The estimates for $\|\tilde{H}^1\|_0$, $\|\tilde{H}^2\|_0$, $\|\pa_t\tilde{H}^2\|_0$, $\|\tilde{H}^4\|_{-1/2}$, $\|H^1\|_r$, $\|H^2\|_{r+1}$, $\|H^4\|_{r+1/2}$, $\|\dive_{\mathscr{A}^1}(\mathbb{D}_{(\mathscr{A}^1-\mathscr{A}^2)}v^2)\|_r$ and $\|\mathbb{D}_{(\mathscr{A}^1-\mathscr{A}^2)}v^2\mathscr{N}^1\|_{r+1/2}$ have been done by Guo and Tice in \cite{GT1}. So we can directly using them only after replacing $\varepsilon$ by $\mathscr{Z}$. By the same method, we can also deduce that
  \ben\label{est:tilde H3}
  \begin{aligned}
    \|\tilde{H}^3\|_0&\lesssim P(\sqrt{\mathscr{Z}})\big(\|\Theta\|_2+\|\zeta^1-\zeta^2\|_{3/2}+\|\pa_t\zeta^1-\pa_t\zeta^2\|_{1/2}+\|\pa_t^2\zeta^1-\pa_t^2\zeta^2\|_{1/2}\\
    &\quad+\|w^1-w^2\|_0+\|\pa_tw^1-\pa_tw^2\|_0+\|\vartheta^1-\vartheta^2\|_1+\|\pa_t\vartheta^1-\pa_t\vartheta^2\|_1\big),
  \end{aligned}
  \een
  \beq\label{est:tilde H5}
  \|\tilde{H}^5\|_{-1/2}\lesssim P(\sqrt{\mathscr{Z}})\big(\|\zeta^1-\zeta^2\|_{1/2}+\|\pa_t\zeta^1-\pa_t\zeta^2\|_{1/2}+\|\Theta\|_2\big),
  \eeq
  and for $r=0,1$,
  \ben\label{est:bound H3}
  \begin{aligned}
  \|H^3\|_r &\lesssim P(\sqrt{\mathscr{Z}})\big(\|\zeta^1-\zeta^2\|_{r+1/2}+\|\pa_t\zeta^1-\pa_t\zeta^2\|_{r-1/2}\\
  &\quad+\|w^1-w^2\|_r+\|\vartheta^1-\vartheta^2\|_{r+1}\big),
  \end{aligned}
  \een
  \beq
  \|H^5\|_{r+1/2}\lesssim P(\sqrt{\mathscr{Z}})\|\zeta^1-\zeta^2\|_{r+3/2},
  \eeq
  \beq
  \|\dive_{\mathscr{A}^1}(\nabla_{\mathscr{A}^1-\mathscr{A}^2}\Theta^2)\|_r\lesssim P(\sqrt{\mathscr{Z}})\|\zeta^1-\zeta^2\|_{r+3/2},
  \eeq
  \beq\label{est:bound difference theta2}
  \|\nabla_{\mathscr{A}^1-\mathscr{A}^2}\Theta^2\cdot\mathscr{N}^1\|_{r+1/2}\lesssim P(\sqrt{\mathscr{Z}})\|\zeta^1-\zeta^2\|_{r+3/2}.
  \eeq

  Step 3. Energy estimates of $\pa_tv$ and $\pa_t\Theta$.  First, owing to the assumption and Sobolev embeddings, we obtain that
  \beq\label{est:bound J K}
  \|J^1\|_{L^\infty}+\|K^1\|_{L^\infty}\lesssim 1+P(\sqrt{\mathscr{Z}})\quad \text{and}\quad \|\pa_tJ^1\|_{L^\infty}\lesssim P(\sqrt{\mathscr{Z}}).
  \eeq
  The bounds of \eqref{est:bound J K} reveals that
  \beq\label{est:rhs 1 evolution difference}
  \f12\int_0^t\int_{\Om}|\pa_t\Theta|^2(\pa_tJ^1K^1)J^1\lesssim P(\sqrt{\mathscr{Z}})\f12\int_0^t\int_{\Om}|\pa_t\Theta|^2J^1.
  \eeq
  In addition, estimates \eqref{est:tilde H3}, \eqref{est:tilde H5} together with trace theory and the Poincar\'e inequality reveals that
  \ben\label{est:rhs 2 evolution difference}
  \begin{aligned}
    &\int_0^t\int_{\Om}J^1\tilde{H}^3\cdot \pa_t\Theta-\int_0^t\int_{\Om}J^1\nabla_{\pa_t\mathscr{A}^1-\pa_t\mathscr{A}^2}\Theta^2\cdot\nabla_{\mathscr{A}^1}\pa_t \Theta-\int_0^t\int_{\Sigma}\tilde{H}^5\cdot\pa_t\Theta\\
    &\le \int_0^t\int_{\Om}\|J^1\|_{L^\infty}\left(\|J^1\|_{L^\infty}\|\tilde{H}^3\|_0\|\pa_t\Theta\|_0+\|\nabla_{\pa_t\mathscr{A}^1-\pa_t\mathscr{A}^2}\Theta^2\|_0\|\nabla_{\mathscr{A}^1}\pa_t \Theta\|_0\right)\\
    &\quad+\int_0^t\|\tilde{H}^5\|_{-1/2}\|\pa_t\Theta\|_{1/2}\\
    &\lesssim \int_0^tP(\sqrt{\mathscr{Z}})\sqrt{\mathcal{Z}},
  \end{aligned}
  \een
  where we have written
  \ben
  \begin{aligned}
  \mathcal{Z}:&=\|\zeta^1-\zeta^2\|_{3/2}^2+\|\pa_t\zeta^1-\pa_t\zeta^2\|_{1/2}^2+\|\pa_t^2\zeta^1-\pa_t^2\zeta^2\|_{1/2}^2\\
    &\quad+\|w^1-w^2\|_1^2+\|\pa_tw^1-\pa_tw^2\|_1^2+\|\vartheta^1-\vartheta^2\|_1^2+\|\pa_t\vartheta^1-\pa_t\vartheta^2\|_1^2\\
    &\quad+\|v\|_2^2+\|q\|_1^2+\|\Theta\|_2^2.
  \end{aligned}
  \een
  Combining \eqref{est:rhs 1 evolution difference}, \eqref{est:rhs 2 evolution difference}, \eqref{equ:evolution difference}, Poincar\'e inequality of Lemma A.14 in \cite{GT1} and Lemma $2.9$ in \cite{LW} and utilizing Cauchy inequality to absorb $\|\pa_t\Theta\|_1$ into left,  yield that
  \ben
  \begin{aligned}
    &\f12\int_{\Om}|\pa_t\Theta|^2J^1(t)+\f12\int_0^t\|\pa_t\Theta\|_1^2\\
    &\le P(\sqrt{\mathscr{Z}})\f12\int_{\Om}|\pa_t\Theta|^2J^1(t)+\int_0^tP(\sqrt{\mathscr{Z}})\mathcal{Z}
  \end{aligned}
  \een
  Then Gronwall's lemma and Lemma $2.9$ in \cite{LW} imply that
  \beq
  \|\pa_t\Theta\|_{L^\infty H^0}^2+\|\pa_t\Theta\|_{L^2H^1}^2\le \exp\{P(\sqrt{\mathscr{Z}})T\}\int_0^TP(\sqrt{\mathscr{Z}})\mathcal{Z}.
  \eeq
  Then energy estimates for $\pa_tv$ are likely the same as what Guo and Tice did in \cite{GT1}, so we omit the details. The energy estimates for $\pa_tv$ and $\pa_t\Theta$ allow us to deduce that
  \ben
  \begin{aligned}
    &\|\pa_tv\|_{L^\infty H^0}^2+\|\pa_tv\|_{L^2H^1}^2+\|\pa_t\Theta\|_{L^\infty H^0}^2+\|\pa_t\Theta\|_{L^2H^1}^2\\
    &\le \exp\{P(\sqrt{\mathscr{Z}})T\}\Bigg[P(\sqrt{\mathscr{Z}})\|q\|_{L^2H^0}^2+C\|\pa_t\zeta^1-\pa_t\zeta^2\|_{L^2H^{-1/2}}^2+\int_0^TP(\sqrt{\mathscr{Z}})\mathcal{Z}\\
    &\quad+P(\sqrt{\mathscr{Z}})\|q\|_{L^\infty H^0}^2\bigg(\sum_{j=0}^1\|\pa_t^j\zeta^1-\pa_t^j\zeta^2\|_{L^\infty H^{1/2}}+\|v\|_{L^\infty H^1}\bigg)\\
    &\quad+P(\sqrt{\mathscr{Z}})\|q\|_{L^2 H^0}^2\bigg(\sum_{j=0}^2\|\pa_t^j\zeta^1-\pa_t^j\zeta^2\|_{L^2 H^{1/2}}+\|v\|_{L^2 H^1}\bigg)\Bigg],
  \end{aligned}
  \een
  where the temporal norm of $L^\infty$ and $L^2$ are computed over $[0,T]$.

  Step 4. Elliptic estimates for $v$, $q$ and $\Theta$.  For $r=0,1$, we combine Proposition \eqref{prop:high regulatrity} with estimates \eqref{est:bound H3}--\eqref{est:bound difference theta2} as well as the bounds of $\|H^1\|_r$, $\|H^2\|_{r+1}$, $\|H^4\|_{r+1/2}$ $\|\dive_{\mathscr{A}^1}(\mathbb{D}_{(\mathscr{A}^1-\mathscr{A}^2)}v^2)\|_r$, $\|\mathbb{D}_{(\mathscr{A}^1-\mathscr{A}^2)}v^2\mathscr{N}^1\|_{r+1/2}$ done in the proof of Theorem $6.2$ in \cite{GT1} to deduce that
  \ben
  \begin{aligned}
    &\|v\|_{r+2}^2+\|q\|_{r+1}^2+\|\Theta\|_{r+2}^2\\
    &\lesssim C(\eta_0)\bigg(\|\pa_tv\|_r^2+\|\dive_{\mathscr{A}^1}(\mathbb{D}_{(\mathscr{A}^1-\mathscr{A}^2)}v^2)\|_r^2+\|H^1\|_r^2+\|H^2\|_{r+1}^2+\|\pa_t\Theta\|_r^2\\
    &\quad+\|H^3\|_r^2+\|\dive_{\mathscr{A}^1}(\nabla_{\mathscr{A}^1-\mathscr{A}^2}\Theta^2)\|_r^2+\|\mathbb{D}_{(\mathscr{A}^1-\mathscr{A}^2)}v^2\mathscr{N}^1\|_{r+1/2}^2+\|H^4\|_{r+1/2}^2\\
    &\quad+\|\nabla_{\mathscr{A}^1-\mathscr{A}^2}\Theta^2\cdot\mathscr{N}^1\|_{r+1/2}^2+\|H^5\|_{r+1/2}^2\bigg)\\
    &\lesssim C(\eta_0)\bigg(\|\pa_tv\|_r^2+\|\pa_t\Theta\|_r^2+\|\zeta^1-\zeta^2\|_{r+1/2}^2\\
    &\quad+P(\sqrt{\mathscr{Z}})\big(\|\zeta^1-\zeta^2\|_{r+3/2}^2+\|\pa_t\zeta^1-\pa_t\zeta^2\|_{r-1/2}^2\\
    &\quad+\|w^1-w^2\|_{r+1}^2+\|\vartheta^1-\vartheta^2\|_{r+1}^2\big)\bigg).
  \end{aligned}
  \een
  Then we take supremum in time over $[0,T]$, when $r=0$, to deduce
  \ben
  \begin{aligned}
    &\|v\|_{L^\infty H^2}^2+\|q\|_{L^\infty H^1}^2+\|\Theta\|_{L^\infty H^2}^2\\
    &\lesssim C(\eta_0)\bigg(\|\pa_tv\|_{L^\infty H^0}^2+\|\pa_t\Theta\|_{L^\infty H^0}^2+\|\zeta^1-\zeta^2\|_{L^\infty H^{1/2}}^2\\
    &\quad+P(\sqrt{\mathscr{Z}})\big(\|\zeta^1-\zeta^2\|_{L^\infty H^{3/2}}^2+\|\pa_t\zeta^1-\pa_t\zeta^2\|_{L^\infty H^{-1/2}}^2\\
    &\quad+\|w^1-w^2\|_{L^\infty H^1}^2+\|\vartheta^1-\vartheta^2\|_{L^\infty H^1}^2\big)\bigg).
  \end{aligned}
  \een
  Then we integrate over $[0,T]$ when $r=1$ to find
  \ben
  \begin{aligned}
    &\|v\|_{L^2H^3}^2+\|q\|_{L^2H^2}^2+\|\Theta\|_{L^2H^3}^2\\
    &\lesssim C(\eta_0)\bigg(\|\pa_tv\|_{L^2H^1}^2+\|\pa_t\Theta\|_{L^2H^1}^2+\|\zeta^1-\zeta^2\|_{L^2H^{3/2}}^2\\
    &\quad+P(\sqrt{\mathscr{Z}})\big(\|\zeta^1-\zeta^2\|_{L^2H^{5/2}}^2+\|\pa_t\zeta^1-\pa_t\zeta^2\|_{L^2H^{1/2}}^2\\
    &\quad+\|w^1-w^2\|_{L^2H^2}^2+\|\vartheta^1-\vartheta^2\|_{L^2H^2}^2\big)\bigg).
  \end{aligned}
  \een

  Step 5. Estimates of $\zeta^1-\zeta^2$ and contraction. After making preparations in the above steps, we can derive the contraction results. Since this step follows exactly the same manner as the proof of  Theorem $6.2$ in \cite{GT1}, we omit the details here. Hence, we get the \eqref{est:n} and \eqref{est:m}.
\end{proof}

\subsection{Proof of Theorem \ref{thm:main}}
 Now we can combine Theorem \ref{thm:uniform boundedness} and Theorem \ref{thm:contraction} to produce a unique strong solution to \eqref{equ:NBC}. It is notable that Theorem \ref{thm:main} can be directly derived from the following theorem, which will be proved in the same manner as the proof of Theorem $6.3$ in \cite{GT1}.
 \begin{theorem}
   Assume that $u_0$, $\theta_0$, $\eta_0$ satisfy $\mathscr{E}_0<\infty$ and that the initial data $\pa_t^ju(0)$, etc. are constructed in Section \ref{sec:initial data} and satisfy the $N$-th compatibility conditions \eqref{cond:compatibility N}. Then there exists $0<T_0<1$ such that if $0<T\le T_0$, then there exists a solution $(u,p,\theta,\eta)$ to the problem \eqref{equ:NBC} on the time interval $[0,T]$ that achieves the initial data and satisfies
   \beq\label{est:bound K}
   \mathfrak{K}(u,p,\theta)+\mathfrak{K}(\eta)\le CP(\mathscr{E}_0),
   \eeq
   for a universal constant $C>0$. The solution is unique through functions that achieve the initial data. Moreover, $\eta$ is such that the mapping $\Phi(\cdot,t)$, defined by \eqref{map:phi}, is a $C^{2N-1}$ diffeomorphism for each $t\in [0, T]$.
 \end{theorem}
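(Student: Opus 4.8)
The plan is to follow the argument of Theorem~6.3 in \cite{GT1}, using the uniform bound on the iteration sequence from Theorem~\ref{thm:uniform boundedness} together with the contraction estimate of Theorem~\ref{thm:contraction} to extract a limit solving \eqref{equ:NBC}, and then invoking the contraction once more for uniqueness.

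First I would fix $T$ small enough, depending only on $\mathscr{E}_0$, that Theorem~\ref{thm:boundedness} and Theorem~\ref{thm:uniform boundedness} apply: this produces the approximate sequence $\{(u^m,p^m,\theta^m,\eta^m)\}_{m\ge0}$ solving \eqref{equ:iteration equation}--\eqref{equ:iteration theta} on $[0,T]$ with $\mathfrak{K}(u^m,p^m,\theta^m)+\mathfrak{K}(\eta^m)\lesssim P(\mathscr{E}_0)$ and with $J^m(t)\ge\delta/2$ on $[0,T]$. In particular $\mathfrak{K}(u^m,p^m,\theta^m)$, $\mathfrak{K}(u^m,0,\theta^m)$ and $\mathfrak{K}(\eta^m)$ are all bounded by a common $\mathscr{Z}=P(\mathscr{E}_0)$, which is exactly the hypothesis required to apply Theorem~\ref{thm:contraction}.

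Next I would feed consecutive iterates into Theorem~\ref{thm:contraction}, taking $(v^1,q^1,\Theta^1,w^1,\vartheta^1,\zeta^1)=(u^{m+1},p^{m+1},\theta^{m+1},u^m,\theta^m,\eta^{m+1})$ and the analogous tuple shifted by one for the superscript $2$; since each iterate achieves the same initial data (including the prescribed time derivatives at $t=0$), the required matching of initial data holds. This yields, for $0<T<T_1$,
\[
\mathfrak{N}(u^{m+1}-u^m,p^{m+1}-p^m,\theta^{m+1}-\theta^m;T)\le\f12\,\mathfrak{N}(u^m-u^{m-1},0,\theta^m-\theta^{m-1};T),
\]
together with $\mathfrak{M}(\eta^{m+1}-\eta^m;T)\lesssim\mathfrak{N}(u^m-u^{m-1},0,\theta^m-\theta^{m-1};T)$. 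Hence $\{(u^m,p^m,\theta^m)\}$ is Cauchy in $\sqrt{\mathfrak{N}(\cdot;T)}$ and $\{\eta^m\}$ is Cauchy in $\sqrt{\mathfrak{M}(\cdot;T)}$, so they converge strongly in these lower-order norms to a limit $(u,p,\theta,\eta)$. I would then upgrade the convergence: by the uniform bound the sequence also converges weakly-$*$ in the higher-order spaces controlled by $\mathfrak{K}$, the weak-$*$ limit must coincide with the strong limit, and weak lower semicontinuity gives $\mathfrak{K}(u,p,\theta)+\mathfrak{K}(\eta)\le CP(\mathscr{E}_0)$, that is, \eqref{est:bound K}; in particular $J\ge\delta/2>0$ for the limit. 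Using the strong convergence of $u^m$, $\theta^m$, $\eta^m$ in the lower norms, the coefficients $\bar{\eta}^{m-1}$, $\mathscr{A}^{m-1}$, $\mathscr{N}^{m-1}$, $K^{m-1}$ converge to those built from $\eta$, the quadratic advection terms and the transport identity $\pa_t\eta^m=u^m\cdot\mathscr{N}^m$ pass to the limit, and one checks that $(u,p,\theta,\eta)$ solves \eqref{equ:NBC} on $[0,T]$ and achieves the initial data. For uniqueness I would apply Theorem~\ref{thm:contraction} a final time to two solutions $(u,p,\theta,\eta)$ and $(\tilde u,\tilde p,\tilde\theta,\tilde\eta)$ achieving the same data and satisfying \eqref{est:bound K}, taking $w^j=v^j$, $\vartheta^j=\Theta^j$ and $\zeta^j$ the corresponding surfaces, so that \eqref{est:n} becomes $\mathfrak{N}(v^1-v^2,\ldots;T)\le\tfrac12\mathfrak{N}(v^1-v^2,\ldots;T)$, forcing the velocities, pressures and temperatures to agree for $T$ small, after which \eqref{est:m} forces the surfaces to agree. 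Finally, since $\eta\in L^\infty H^{2N+1/2}(\Sigma)$, $1+\eta_0\ge\delta$, and $J\ge\delta/2$, Sobolev embedding together with Theorem~2.7 and Lemmas~2.5--2.6 of \cite{LW} shows that $\Phi(\cdot,t)$ defined by \eqref{map:phi} is a $C^{2N-1}$ diffeomorphism for every $t\in[0,T]$.

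The step I expect to be the main obstacle is reconciling the two scales of norms: the contraction of Theorem~\ref{thm:contraction} closes only in the weak norms $\mathfrak{N}$, $\mathfrak{M}$, so one must argue carefully that the strong (low-norm) limit inherits the high-regularity bound by weak-$*$ compactness and that this regularity suffices to make every term of \eqref{equ:NBC} converge to the correct limit — in particular the quadratic advection terms, the coefficients depending nonlinearly on $\eta$ through the parametrized harmonic extension, and the boundary terms on $\Sigma$. A secondary point is to choose $T_0=\min\{\bar T,T_1,\ldots\}$ so that all smallness requirements hold simultaneously — those from Theorems~\ref{thm:boundedness}, \ref{thm:uniform boundedness}, \ref{thm:contraction}, and the requirement that $\|\eta-\eta_0\|$ be small (via Lemma~2.18 of \cite{LW}) underlying the elliptic estimates of Proposition~\ref{prop:high regulatrity} — while keeping the dependence of $T_0$ on $\mathscr{E}_0$ only.
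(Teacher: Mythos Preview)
Your proposal is correct and follows essentially the same approach as the paper: uniform bounds from Theorems~\ref{thm:boundedness}--\ref{thm:uniform boundedness}, contraction via Theorem~\ref{thm:contraction}, weak-$*$ compactness to recover the high norms, passing to the limit in \eqref{equ:iteration equation}--\eqref{equ:iteration theta}, and uniqueness by re-applying the contraction. One indexing slip to fix: in \eqref{equ:difference} the coefficients $\mathscr{A}^j,\mathscr{N}^j,K^j$ are built from $\zeta^j$, whereas in \eqref{equ:iteration equation} the equation for $u^{m+1}$ uses coefficients from $\eta^m$; so with $v^1=u^{m+1}$ you must take $\zeta^1=\eta^m$ (not $\eta^{m+1}$), and then $\pa_t\zeta^1=w^1\cdot\mathscr{N}^1$ matches \eqref{equ:iteration theta} with $w^1=u^m$. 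The paper's assignment is $v^1=u^{m+2}$, $v^2=u^{m+1}$, $w^1=u^{m+1}$, $w^2=u^m$, $\zeta^1=\eta^{m+1}$, $\zeta^2=\eta^m$, which yields $\mathfrak{M}(\eta^{m+1}-\eta^m;T)\lesssim\mathfrak{N}(u^{m+1}-u^m,0,\theta^{m+1}-\theta^m;T)$; your displayed $\mathfrak{M}$ bound should be shifted accordingly.
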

 \begin{proof}
   Step 1. The sequences of approximate solutions. From the assumptions, we know that the hypothesis of Theorems \ref{thm:boundedness} and \ref{thm:uniform boundedness} is satisfied. These two theorems allow us to produce a sequence of $\{(u^m,p^m,\theta^m,\eta^m)\}_{m=1}^\infty$, which achieve the initial data, satisfy the systems \eqref{equ:iteration equation}, and obey the uniform bounds
   \beq\label{est:uniform bound}
   \sup_{m\ge1}\left(\mathfrak{K}(u^m,p^m,\theta^m)+\mathfrak{K}(\eta^m)\right)\le CP(\mathscr{E}_0).
   \eeq
   The uniform bounds allow us to take weak and weak-$\ast$ limits, up to the extraction of a subsequence:
   \begin{align*}
     &\pa_t^ju^m\rightharpoonup \pa_t^ju\quad \text{weakly in}\thinspace L^2([0,T];H^{2N-2j+1}(\Om))\thinspace \text{for}\thinspace j=0,\ldots,N,\\
     &\pa_t^{N+1}u^m\rightharpoonup\pa_t^{N+1}u\quad \text{weakly in}\thinspace (\mathscr{X}_T)^\ast,\\
     &\pa_t^ju^m\stackrel{\ast}\rightharpoonup\pa_t^ju\quad \text{weakly}-\ast\thinspace\text{in}\thinspace L^\infty([0,T];H^{2N-2j}(\Om))\thinspace \text{for}\thinspace j=0,\ldots,N,\\
     &\pa_t^jp^m\rightharpoonup\pa_t^jp\quad \text{weakly in}\thinspace L^2([0,T];H^{2N-2j}(\Om))\thinspace \text{for}\thinspace j=0,\ldots,N,\\
     &\pa_t^jp^m\stackrel{\ast}\rightharpoonup \pa_t^jp\quad \text{weakly}-\ast\thinspace\text{in}\thinspace L^\infty([0,T];H^{2N-2j-1}(\Om))\thinspace \text{for}\thinspace j=0,\ldots,N,\\
     &\pa_t^j\theta^m\rightharpoonup \pa_t^j\theta\quad \text{weakly in}\thinspace L^2([0,T];H^{2N-2j+1}(\Om))\thinspace \text{for}\thinspace j=0,\ldots,N,\\
     &\pa_t^{N+1}\theta^m\rightharpoonup\pa_t^{N+1}\theta\quad \text{weakly in}\thinspace (\mathscr{H}^1_T)^\ast,\\
     &\pa_t^j\theta^m\stackrel{\ast}\rightharpoonup\pa_t^j\theta\quad \text{weakly}-\ast\thinspace\text{in}\thinspace L^\infty([0,T];H^{2N-2j}(\Om))\thinspace \text{for}\thinspace j=0,\ldots,N,
   \end{align*}
   and
   \begin{align*}
     &\pa_t^j\eta^m\rightharpoonup\pa_t^j\eta \quad \text{weakly in}\thinspace L^2([0,T];H^{2N-2j+5/2}(\Sigma))\thinspace\text{for}\thinspace j=2,\ldots,N+1,\\
     &\eta^m\stackrel{\ast}\rightharpoonup\eta\quad \text{weakly}-\ast\thinspace\text{in}\thinspace L^\infty([0,T];H^{2N+1/2}(\Sigma)),\\
     &\pa_t^j\eta^m\stackrel{\ast}\rightharpoonup\pa_t^j\eta\quad \text{weakly}-\ast\thinspace\text{in}\thinspace L^\infty([0,T];H^{2N-2j+3/2}(\Sigma))\thinspace \text{for}\thinspace j=1,\ldots,N.
   \end{align*}
   The collection $(v,q,\Theta,\zeta)$ achieving the initial data, that is, $\pa_t^jv(0)=\pa_t^ju(0)$, $\pa_t^j\Theta(0)=\pa_t^j\theta(0)$, $\pa_t^j\zeta(0)=\pa_t^j\eta(0)$ for $j=0,\ldots,N$ and $\pa_t^jq(0)=\pa_t^jp(0)$ for $j=0,\ldots,N-1$, is closed in the above weak topology by Lemma A.4 in \cite{GT1}. Hence the limit $(u,p,\theta,\eta)$ achieves the initial data, since each $(u^m,p^m,\theta^m,\eta^m)$ is in the above collection.

   Step 2. Contraction. For $m\ge1$, we set $v^1=u^{m+2}$, $v^2=u^{m+1}$, $w^1=u^{m+1}$, $w^2=u^m$, $q^1=p^{m+2}$, $q^2=p^{m+1}$, $\Theta^1$=$\theta^{m+2}$, $\Theta^2=\theta^{m+1}$, $\vartheta^1=\theta^{m+1}$, $\vartheta^2=\theta^m$, $\zeta^1=\eta^{m+1}$, $\zeta^2=\eta^m$. Then from the construction of initial data, the initial data of $v^j$, $w^j$, $q^j$, $\Theta^j$, $\vartheta^j$, $\zeta^j$ math the hypothesis of Theorem \ref{thm:contraction}. Because of \eqref{equ:iteration equation}, \eqref{equ:difference} holds. In addition, \eqref{est:uniform bound} holds. Thus, all hypothesis of Theorem \ref{thm:contraction} are satisfied. Then
   \ben\label{est:bound N um pm thetam}
   \begin{aligned}
   &\mathfrak{N}(u^{m+2}-u^{m+1},p^{m+2}-p^{m+1},\theta^{m+2}-\theta^{m+1};T)\\
   &\le\f12\mathfrak{N}(u^{m+1}-u^m,p^{m+1}-p^m,\theta^{m+1}-\theta^m;T),
   \end{aligned}
   \een
   \beq\label{est:bound M etam}
   \mathfrak{M}(\eta^{m+1}-\eta^m;T)\lesssim \mathfrak{N}(u^{m+1}-u^m,p^{m+1}-p^m,\theta^{m+1}-\theta^m;T).
   \eeq
   The bound \eqref{est:bound N um pm thetam} implies that the sequence $\{(u^m,p^m,\theta^m)\}_{m=0}^\infty$ is Cauchy in the norm $\sqrt{\mathfrak{N}(\cdot,\cdot,\cdot;T)}$. Thus
   \ben
   \left\{
   \begin{aligned}
     &u^m\to u\quad &\text{in}\thinspace L^\infty\left([0,T];H^2(\Om)\right)\cap L^2\left([0,T];H^3(\Om)\right),\\
     &\pa_tu^m\to\pa_tu\quad &\text{in}\thinspace L^\infty\left([0,T];H^0(\Om)\right)\cap L^2\left([0,T];H^1(\Om)\right),\\
     &p^m\to p\quad &\text{in}\thinspace L^\infty\left([0,T];H^1(\Om)\right)\cap L^2\left([0,T];H^2(\Om)\right),\\
     &\theta^m\to \theta\quad &\text{in}\thinspace L^\infty\left([0,T];H^2(\Om)\right)\cap L^2\left([0,T];H^3(\Om)\right),\\
     &\pa_t\theta^m\to\pa_t\theta\quad &\text{in}\thinspace L^\infty\left([0,T];H^0(\Om)\right)\cap L^2\left([0,T];H^1(\Om)\right),
   \end{aligned}
   \right.
   \een
   as $m\to\infty$.
   Because of \eqref{est:bound M etam}, we deduce that the sequence $\{\eta^m\}_{m=1}^\infty$ is Cauchy in the norm $\sqrt{\mathfrak{M}(\cdot;T)}$. Thus,
   \ben
   \left\{
   \begin{aligned}
   &\eta^m\to\eta\quad &\text{in}\thinspace L^\infty\left([0,T];H^{5/2}(\Sigma)\right),\\
   &\pa_t\eta^m\to\pa_t\eta\quad &\text{in}\thinspace L^\infty\left([0,T];H^{3/2}(\Sigma)\right),\\
   &\pa_t^2\eta^m\to\pa_t^2\eta\quad &\text{in}\thinspace L^2\left([0,T];H^{1/2}(\Sigma)\right),
   \end{aligned}
   \right.
   \een
   as $m\to\infty$.

   Step 3. Interpolation and passing to the limit. This section is exactly the same as the proof of Theorem 6.3 in\cite{GT1}, which gives the existence of solutions and the estimate \eqref{est:bound K}.

   Step 4. Uniqueness and diffemorphism.  This section is similar to the proof of Theorem 6.3 in\cite{GT1}.
 \end{proof}

\section{acknowledgement}

The author would like to thank professor Yan Guo for several helpful discussions.

\end{document}